\newtheorem{df}{Definition}[section]
\newtheorem{prop}{Proposition}[section]
\newtheorem{theo}{Theorem}[section]
\newtheorem{lem}{Lemma}[section]
\newtheorem{cor}{Corollary}[section]
\newtheorem{conj}{Conjecture}[section]
\newtheorem{rqqq}{Remark}[section]
\numberwithin{equation}{section}
\title[Local limits of conditioned marked Galton Watson trees.]{Local limits of conditioned marked Galton Watson trees.}
\date{\today}
\author{Romain Abraham}
\address{Romain Abraham. Universit\'{e} d'Orl\'{e}ans, 	Universit\'e de Tours, CNRS, Institut Denis Poisson, UMR 7013, Orl\'{e}ans, France.}
\email{romain.abraham@univ-orleans.fr}
\author{Sonia Boulal}
\address{Sonia Boulal. Universit\'{e} d'Orl\'{e}ans, 	Universit\'e de Tours, CNRS, Institut Denis Poisson, UMR 7013, Orl\'{e}ans, France.}
\email{sonia.boulal@univ-orleans.fr}
\author{Pierre Debs}
\address{Pierre Debs. Universit\'{e} d'Orl\'{e}ans, 	Universit\'e de Tours, CNRS, Institut Denis Poisson, UMR 7013, Orl\'{e}ans, France.}
\email{pierre.debs@univ-orleans.fr}
\subjclass[2010]{60J80; 60B10}
\keywords{Marked Galton-Watson tree, conditioning, local-limit ; non-extinction, branching process.}
\thanks{This work is supported by the ANR project ``Rawabranch" number ANR-23-CE40-0008.}
\newcommand{\N}{\mathbb{N}}
\newcommand{\NN}{\mathbb{N}^*}
\newcommand{\Z}{\mathbb{Z}}
\newcommand{\F}{\mathscr{F}}
\newcommand{\G}{\mathcal{G}}
\newcommand{\Le}{\mathscr{L}}
\newcommand{\Lq}{\mathcal{L}}
\newcommand{\U}{\mathcal{U}}
\newcommand{\p}{\mathbb{P}}
\newcommand{\T}{\mathbb{T}}
\newcommand{\Th}{\mathbb{T}^{(h)}}
\newcommand{\E}{\mathbb{E}}
\newcommand{\indic}{\mathds{1}}
\newcommand{\bt}{\mathbf{t}}
\newcommand{\bp}{\mathbf{p}}
\newcommand{\bq}{\mathbf{q}}
\begin{document}

	\begin{abstract}
		We consider a Galton-Watson tree where each node is mar\-ked independently of each others with a probability depending on its out-degree. We give a complete picture of the local convergence of critical or sub-critical
		marked Galton-Watson trees conditioned on having a large number of marks. 
		In the critical and sub-critical generic case, the limit is a random marked tree with an
		infinite spine, named marked Kesten’s tree. We focus also on the non-generic
		case, where the local limit is a random marked tree with a node with infinite out-degree. This
		case corresponds to the so-called marked condensation phenomenon. 
	\end{abstract}
	\maketitle
	\section{Introduction}
	The Galton-Watson process was introduced by Bienaymé in 1845 and independently by Galton in 1873 in order to study the disappearance of surnames.
	\smallbreak
	This process is a very simple model of population growth where all individuals give birth independently of each others to a random number of children according to the distribution $\bp$. In other words, to each generation each individual could have $k$ children with probability $\bp(k)$. Thus $\bp$ is called the offspring distribution.\\
	Let $\bp:=(\bp(n))_{n\in\N}$ be an offspring distribution satisfying:
	\begin{equation}\label{condp}
		\bp(0)>0~,~\bp(0)+\bp(1)<1~ \text{and}~ \bp \text{ has a moment of order 2.}
	\end{equation}
	We denote by $\mu(\bp):=\sum_{n\ge 0}n\bp(n)$, its mean. If $\mu(\bp)<1$ (resp. $\mu(\bp)=1$, $\mu(\bp)>1$), we say that the offspring distribution is sub-critical (resp. critical, super-critical). In the sub-critical and critical case we have almost surely population extinction.
	\smallbreak
	In 1986, Neveu introduced the notion of Galton-Watson tree (see \cite{neveu_arbres_1986}). This tree is a random genealogical tree, noted $\tau$, that describes the population growth associated with the offspring distribution $\bp$. 
	\smallbreak
	Conditioning critical or sub-critical Galton-Watson trees comes from the work of Kesten, in 1986, \cite{kesten_subdiffusive_1986}. In the sub-critical or critical cases, the tree is almost surely finite, but Kesten considered in \cite{kesten_subdiffusive_1986} the local limit of a sub-critical or critical tree conditioned to have height greater than $n$. When $n$ goes to infinity, this conditioned tree converges in distribution to an infinite tree called here Kesten's tree. This tree has an infinite spine. A random number of independent Galton-Watson trees, with the same offspring distribution $\bp$, are grafted onto this spine. This limit tree can be seen as a Galton-Watson tree conditioned on non-extinction.
	\smallbreak
	Since then, other ways of conditioning a critical Galton-Watson tree to be large have been considered: large total progeny (see Kennedy \cite{kennedy_galton-watson_1975} and Geiger and Kaufman \cite{geiger_shape_2004}), large number of leaves (see Curien and Kortchemski \cite{curien_random_2014}). In the sub-critical and critical cases, Rizzolo \cite{rizzolo_scaling_2015} introduces the conditioning on having a large number of individuals with the number of offsprings belonging to a given set $\mathcal{A}$, it also appears in the paper of Kortchemski \cite{kortchemski_invariance_2012}. 
	All these cases are contained in the general result of Abraham-Delmas \cite{abraham_local_2013} and the limiting tree in the critical case is always the Kesten's tree associated with $\bp$.
	In 2017, in \cite{abraham_local_2017}, Abraham, Bouaziz and Delmas generalized this approach by marking the nodes randomly where, conditionnally on the tree, the nodes are marked independently of each others with a probability that may depend on their out-degree.  More precisely, independently of each other, each individual gives birth to $k$ children and is
	\begin{align*}
		\bullet& \text{ marked with probability }	\bp_{0}(k,1):=\bp(k)\bq(k);\\
		\bullet& \text{ unmarked with probability }	\bp_{0}(k,0):=\bp(k)(1-\bq(k))
	\end{align*}
	where $\bq:=(\bq(k))_{k\ge0}$ is a sequence of numbers in $[0,1]$ and is called the mark function. The probability distribution $\bp_0$ on $\N\times\{0,1\}$ is called here the marking-reproduction law of the associated marked Galton-Watson tree (MGW). We always suppose that the condition 
	\begin{equation}\label{condq}
		\exists k\in\N^*,\ \bp(k)\bq(k)>0
	\end{equation}
	holds so a  MGW contains at least a mark with positive probability. Then they condition the tree on having a large number of marked nodes and prove a local convergence in the critical case of this conditioned marked tree toward Kesten's tree (as for the other conditionnings) as the number of marked nodes tends to infinity. \\
	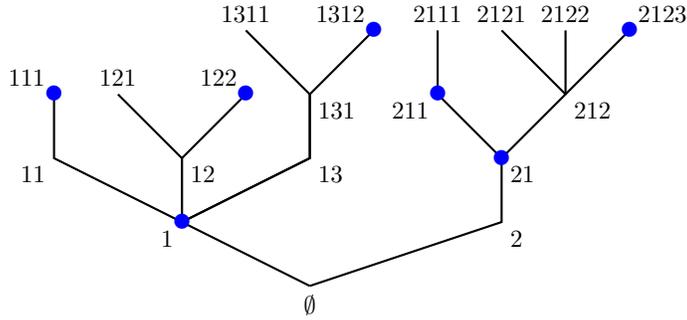
\begin{figure}[h!]
		\centering
		\begin{tikzpicture}[scale=0.85,every node/.style={scale=0.85}]
			\coordinate (0) at (0,0);
			\coordinate (1) at (-2,1);
			\coordinate (2) at (3,1);
			\coordinate (11) at (-4,2);
			\coordinate (12) at (-2,2);
			\coordinate (13) at (0,2);
			\coordinate (111) at (-4,3);
			\coordinate (121) at (-3,3);
			\coordinate (122) at (-1,3);
			\coordinate (131) at (0,3);
			\coordinate (1311) at (-1,4);
			\coordinate (1312) at (1,4);
			\coordinate (21) at (3,2);
			\coordinate (211) at (2,3);
			\coordinate (212) at (4,3);
			\coordinate (2111) at (2,4);
			\coordinate (2121) at (3,4);
			\coordinate (2122) at (4,4);
			\coordinate (2123) at (5,4);
			\draw [thick](0) -- (2) -- (21)--(211)--(2111);
			\draw [thick](0) -- (1)-- (13)--(131)--(1312);
			\draw [thick](21) -- (212)--(2121);
			\draw [thick](1) -- (12)--(121);
			\draw [thick](1) -- (11)-- (111);
			\draw [thick](1) -- (12)-- (122);
			\draw [thick](1) -- (13)-- (131)--(1311);
			\draw [thick](212) -- (2123);
			\draw [thick](212) -- (2122);
			\draw (0) node[below]{$\emptyset$};
			\draw (1) node{\textcolor{blue}{\LARGE{$\bullet$}}}node[below left]{1};
			\draw (2) node[below right]{2};
			\draw (11) node[below left]{11};
			\draw (12) node[below right]{12};
			\draw (13) node[below right]{13};
			\draw (111) node{\textcolor{blue}{\LARGE{$\bullet$}}}node[above left]{111};
			\draw (121) node[above]{121};
			\draw (122) node{\textcolor{blue}{\LARGE{$\bullet$}}}node[above left]{122};
			\draw (131) node[below right]{131};
			\draw (1311) node[above]{1311};
			\draw (1312) node{\textcolor{blue}{\LARGE{$\bullet$}}}node[above left]{1312};
			\draw (21) node{\textcolor{blue}{\LARGE{$\bullet$}}}node[below right]{21};
			\draw (211) node{\textcolor{blue}{\LARGE{$\bullet$}}}node[below left]{211};
			\draw (212) node[below right]{212};
			\draw (2111) node[above]{2111};
			\draw (2121) node[above]{2121};
			\draw (2122) node[above]{2122};
			\draw (2123) node{\textcolor{blue}{\LARGE{$\bullet$}}}node[above right]{2123};
		\end{tikzpicture}
		\caption{A marked tree $\bt^{*}$ with 5 generations.}
		\label{examplemarkedtree}
	\end{figure}
	\newpage
The situation in the sub-critical case is more involved (see Janson \cite{janson_simply_2011} when conditioning on the total progeny, and Abraham-Delmas \cite{abraham_local_2014} when conditioning the number of nodes with offspring in a given set $\mathcal{A}$). Indeed two limits may appear: a Kesten's tree associated with a modified offspring distribution in the so-called generic case, or a condensation  tree ( a tree with a node with infinite out-degree) in the non-generic case.

The main goal of the paper is to study the sub-critical case when conditioning on the number of marks. To this aim, we first need to introduce a one-parameter family of modified mark-reproduction law. 

For everey $\theta>0$,
	\begin{equation}\label{ptheta}
		\forall k \geq 0, ~\bp_\theta(k):=\theta^{k-1} \bp(k) \left(c_\theta \bq(k)+1-\bq(k)\right),
	\end{equation}
	where $c_\theta$ is a normalizing constant given by:
	\begin{equation}\label{ctheta}
		c_\theta:=\dfrac{1-\E\left[\theta^{X-1}(1-\bq(X))\right]}{\E\left[\theta^{X-1}\bq(X)\right]},
	\end{equation}
	and $X$ is a random variable distributed according to $\bp$. We also define a new mark function denoted by $\bq_\theta$ such that for all $k\ge0$:
	\begin{equation}\label{qtheta}
		\bq_{\theta}(k) := \dfrac{c_\theta \bq(k)}{c_\theta \bq(k) +1-\bq(k)}.
	\end{equation}\\
	Let $I$ be the set of positive $\theta$ such that $\bp_\theta$ defines a probability distribution on $\N$. If $\bp$ is sub-critical, according to Lemma \ref{exthetacrit} either there exists a unique $\theta_c\in I$ such that $\bp_{\theta_c}$ is critical (generic case), or $\theta_s:=\max I \in I$ and $\bp_{\theta_s}$ is sub-critical (non-generic case). 

For technical reasons, we need some additional assumptions on $\bp$ and $\bq$.
	We assume that there exists $\alpha> 2$ and a slowly varying function (SV) $ \Le$ such that, for all $k\ge 1$,
	\begin{equation}\label{probpalpha}
		\bp(k)=\Le(k)k^{-(1+\alpha)}.
	\end{equation}
	Moreover, we assume that $\bq$ admits a finite limit at infinity, so we have:
	\begin{equation}\label{limitq}
		\lim_{n\rightarrow+\infty}\bq(n)=:\ell_\bq \in[0,1].
	\end{equation}
	If $\ell_\bq=1$, we also assume that  the mark function $\bq$ satisfies for $k\in\NN$ ,  
	\begin{equation}\label{defqlim1}
		1-\bq(k)=k^{-\beta}\Lq(k) 
	\end{equation}
	with $\beta\ge 2$ and $\Lq$ is a SV function.\\
	For a marked tree $\bt^*$, we present our two main results. 
	We denote by $M(\bt^*)$ the number of marked of this tree, and by $\rho(\bp)$ the convergence radius of the generative function associated to $\bp$.
	We denote by $\tau^{*}_K(\bp,\bq)$ a marked Kesten's tree and $\tau_C^{*}(\bp,\bq)$ a marked condensation tree (see Sub-section \ref{kestcondtree}). Finally, we denote by $\mathrm{dist}(T)$ the distribution of the random variable $T$.
	\begin{theo}\label{theocondconvloc}
		Let $\tau^{*}(\bp,\bq)$ be a marked Galton-Watson tree with offspring distribution $\bp$ satisfying \eqref{condp}, and mark function $\bq$ satisfying \eqref{condq}.\\
				In the critical case we have 
		$$ \mathrm{dist}(\tau^{*}(\bp,\bq)| M(\tau^*(\bp,\bq))=n)\underset{n\to +\infty}{\to}\mathrm{dist}(\tau_K^{*}(\bp,\bq)).$$
		Generic sub-critical case ( $\exists \theta_c\in I$ s.t. $\bp_{\theta_c}$ critical), if $\bp_{\theta_c}$ admits a moment of order $2$ (always true for $\theta_c< \rho(\bp)$) we have 
		$$ \mathrm{dist}(\tau^{*}(\bp,\bq)| M(\tau^*(\bp,\bq))=n)\underset{n\to +\infty}{\to} \mathrm{dist}(\tau_K^{*}(\bp_{\theta_c},\bq_{\theta_c})).$$
		Non-generic sub-critical case ( $\forall \theta\in I$, $\bp_{\theta}$ is sub-critical), if $\bq$ satisfied \eqref{limitq} and, if $\ell_\bq=1$ \eqref{defqlim1} we have 
		$$ \mathrm{dist}(\tau^{*}(\bp,\bq)| M(\tau^*(\bp,\bq))=n)\underset{n\to +\infty}{\to}\mathrm{dist}(\tau_C^{*}(\bp,\bq)).$$
	\end{theo}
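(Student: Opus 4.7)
The cornerstone will be an exponential-tilt identity between the marking-reproduction laws. From the definitions \eqref{ptheta}--\eqref{qtheta} one checks immediately that $\bp_\theta(k)\bq_\theta(k) = \theta^{k-1}\bp(k) c_\theta \bq(k)$ and $\bp_\theta(k)(1-\bq_\theta(k)) = \theta^{k-1}\bp(k)(1-\bq(k))$. Plugging these into the Galton--Watson product formula for $\p(\tau^* = \bt^*)$ and using the elementary relation $\sum_{v\in\bt^*}(k_v-1) = -1$ valid for any finite tree yields, for every $\theta\in I$ and every finite marked tree $\bt^*$,
\begin{equation*}
\p(\tau^*(\bp,\bq)=\bt^*) \;=\; \theta\, c_\theta^{-M(\bt^*)}\,\p(\tau^*(\bp_\theta,\bq_\theta)=\bt^*).
\end{equation*}
Summing over trees with $M(\bt^*)=n$ gives the same identity at the level of $\p(M(\tau^*)=n)$, and the prefactors cancel when forming the conditional law. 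Hence the conditional law of $\tau^*$ given $M=n$ is the same under $(\bp,\bq)$ and under $(\bp_\theta,\bq_\theta)$ for every $\theta\in I$. The critical case is then covered by \cite{abraham_local_2017}, and for the generic sub-critical case the choice $\theta=\theta_c$ transfers the problem to the critical law $\bp_{\theta_c}$ with mark function $\bq_{\theta_c}$ (whose second moment is available by assumption), producing the limit $\tau_K^*(\bp_{\theta_c},\bq_{\theta_c})$.

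The non-generic sub-critical case is where the substantive work lies. After tilting to $\theta_s$, the law $\bp_{\theta_s}$ is still sub-critical and, crucially, retains a regularly varying tail of index $-(1+\alpha)$ inherited from \eqref{probpalpha}, with an additional explicit factor tied to $\ell_\bq$ and (when $\ell_\bq=1$) to the rate \eqref{defqlim1}. The problem thus reduces to the following: given a sub-critical MGW with heavy-tailed offspring, analyse the rare event $\{M(\tau^*)=n\}$ as $n\to\infty$. The heuristic is the classical one-big-jump principle used by Janson \cite{janson_simply_2011} and Abraham--Delmas \cite{abraham_local_2014}: this event is typically realised by a single vertex whose out-degree is of order $n$, from which springs a forest of i.i.d.\ MGW subtrees carrying essentially all the marks, the signature of condensation.

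To turn this heuristic into a local convergence I would proceed in two steps. First, a sharp local estimate for $\p_{\theta_s}(M(\tau^*)=n)$: decomposing at the root, $M(\tau^*) = \indic_{\{\emptyset\text{ marked}\}} + M_1+\cdots+M_X$ with $X\sim \bp_{\theta_s}$ and i.i.d.\ copies $M_i$ of $M(\tau^*)$, subexponentiality of the step distribution (granted by the regularly varying tail of $\bp_{\theta_s}$ together with $\mu(\bp_{\theta_s})<1$) should yield an asymptotic of the form
\begin{equation*}
\p_{\theta_s}(M(\tau^*)=n) \;\sim\; C\, n^{-(1+\alpha)}\tilde\Le(n),
\end{equation*}
with an explicit constant $C$ and slowly varying $\tilde\Le$. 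Second, a spine-type decomposition: for any fixed finite marked tree $T^*$, the ratio $\p(\tau^*\supset T^*,\,M=n)/\p(M=n)$ will converge, by the same one-big-jump identification of the ``atypical'' vertex, to the probability that the marked condensation tree $\tau_C^*(\bp,\bq)$ of Subsection \ref{kestcondtree} extends $T^*$, the condensation vertex being the location of the big jump.

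The main obstacle is the first step, the local tail equivalent. When $\ell_\bq<1$ a positive fraction of individuals is marked and $M(\tau^*)$ is comparable to the total progeny, so the local subexponential estimates of \cite{janson_simply_2011, abraham_local_2014} can be adapted with minor book-keeping. When $\ell_\bq=1$ marks become asymptotically ubiquitous and the deficit $1-\bq(k)=k^{-\beta}\Lq(k)$ enters the tail calculation non-trivially; the constraint $\beta\ge 2$ in \eqref{defqlim1} is exactly what is needed so that the second-moment controls survive the marking operation, and subexponentiality propagates from $\bp$ to $M(\tau^*)$ without the $\bq$-correction dominating the heavy tail of $\bp$. Once the correct equivalent is secured, the identification of the condensation limit should follow routinely from the spine/condensation description already adopted for Kesten's tree.
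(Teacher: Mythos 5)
Your tilt identity and its use to reduce the generic sub-critical case to the critical one is precisely what the paper does (Proposition \ref{loicondid} and Corollary \ref{corconvdist}), and invoking the critical result directly is a legitimate shortcut. Where you diverge from the paper is in the non-generic case, and this is where the substantive content of the paper lives.

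You propose to work directly with the root decomposition $M(\tau^*)=\indic_{\{\emptyset \text{ marked}\}}+M_1+\cdots+M_X$ and invoke subexponentiality of the step to get a local tail estimate for $\p(M=n)$, then a spine argument. The paper instead builds an intermediate object, the reduced forest $\F_\bq(\tau)$ on the marked nodes (Section \ref{Model}). This converts ``number of marks'' into ``total progeny of a new GW forest'' so Dwass's formula gives the clean identity $\p(M(\tau^*)=n)=\frac{1}{n}\E[N\indic_{\{S_n+N=n\}}]$, and all the asymptotics then reduce to strong ratio limits and local limit theorems for the random walk $S_n=\sum Z^{(1)}_i$ (Lemmata \ref{limSnT1} through \ref{limBn,lpos}). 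Your decomposition is the one used for total-progeny conditioning and is not unreasonable, but it does not separate the mark structure from the tree structure, so the ``one-big-jump'' vertex you want to single out is not visible at the level of the recursion $M=\indic+\sum M_i$; the paper's $\F_\bq$ construction is exactly what makes that vertex identifiable as an abnormally large offspring in a reduced tree, which in turn yields the condensation limit. In short, your plan skips the key structural step.

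Two further concrete remarks. First, the sentence ``after tilting to $\theta_s$, the law $\bp_{\theta_s}$ \ldots retains a regularly varying tail of index $-(1+\alpha)$'' is misleading: tilting by any $\theta>1$ converts a polynomial tail into an exponential one, so regular variation would be destroyed. What saves you is that under the standing assumption \eqref{probpalpha} one has $\rho(\bp)=1$, hence $\theta_s=1$ and the tilt is the identity; but you do not observe this, and a reader following your plan in the case $\rho(\bp)>1$ would reach a contradiction. (The paper is explicit about this: Lemma \ref{proprnongencase} shows $\rho=1$ forces non-genericity, and the genuinely tilted non-generic case $\rho>1$ is only handled conditionally, via Conjecture \ref{conject}.) Second, the assertion that the spine limit ``should follow routinely'' understates the work: identifying the limit as the marked condensation tree requires precise control of the ratios $\E[N\indic_{\{S_{n}+W_{j-1-l}+N=n\}}]/\E[N\indic_{\{S_n+N=n\}}]$ uniformly in $j$, which is exactly the content of Lemmata \ref{limSnN4}, \ref{limSnN5}, \ref{limBn,lneg} and \ref{limBn,lpos} and occupies most of the appendix. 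Your sketch correctly names the mechanism (one big jump, subexponentiality, the role of $\beta\ge 2$) but leaves the entire technical core unaddressed.
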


	\smallbreak
	The paper is organized as follows: in Section \ref{techback} we introduce the set of discrete marked trees and define the MGWs. We also explain the construction of Kesten's tree and condensation tree, and some convergence criterions which is the key to prove the convergence. \\
	In Section \ref{markcond}, we observe the behavior of the MGW, when we condition with the total number of marks. Sub-section \ref{resultcond} is devoted to prove some results about the biased law.
	We explain properties about generic and non-generic distributions in Sub-section \ref{gennongencase}. In the Sub-section \ref{Model}, we construct a model, that permits to prove our results.\\ Section \ref{condconvresult} is devoted to the proof of Theorem \ref{theocondconvloc}. \\We prove in the appendix, Section \ref{appendix}, lemmas that we used in the previous section.
	\section{Technical background}\label{techback}
	\subsection{The set of discrete trees}\label{settree}
	Let $\N$ be the set of nonnegative integers and $\NN$ the set of positive integers. We recall Neveu's formalism \cite{neveu_arbres_1986}  for ordered rooted trees. We denote by
	\[\mathcal{U}=\bigcup_{n\geq 0}(\NN)^n,\]
	 the set of finite sequences of positive integers with the convention $(\NN)^0=\{\emptyset\}$. For $n\geq 0$ and $u=(u_1,...,u_n)\in\mathcal{U}$, let $\left|  u \right| =n$ be the length of $u$ and $\left|  u \right|_{\infty} =\max\{\left|  u \right|,u_1, u_2,...,u_{\left|  u \right|}\}$ with the convention $\left|  \emptyset \right|=\left|  \emptyset \right|_{\infty}=0$. We call $\left|  u \right|_{\infty}$ the norm of $u$ although it is not a norm since $\mathcal{U}$ is not even a vector space. If $u$ and $v$ are two sequences of $\mathcal{U}$, we denote by $uv$ their concatenation,
	with the convention that $uv=u$ if $v=\emptyset$ and $uv=v$ if $u=\emptyset$. The set of ancestors of $\emptyset$ is $A_\emptyset=\{\emptyset\}$ and of $u\neq \emptyset$ is:
	\begin{equation}\label{ancestor}
		A_u=\{v\in\mathcal{U}; \text{ there exists }w\in \mathcal{U},w\neq \emptyset,\text{ such that } u=vw\}.
	\end{equation}
	The most recent common ancestor of a subset $s$ of $\mathcal{U}$, denoted by $MRCA(s)$, is the unique element $v$ of $\bigcap_{u\in s}A_u$ with maximal length $\left|  v \right|$. For $u,v\in\U$, we denote by $u<v$, the lexicographic order on $\U$, i.e. $u<v$ if either $u\in A_v$ or, if we set $w=MRCA(\{u,v\})$, then $u=wiu'$ and $v=wjv'$ for some $i,j\in\NN$ with $i<j$.
	\bigbreak
	A tree $\bt$ is a subset of $\U$ that satisfies:
	\begin{itemize}
		\item $\emptyset \in \bt$;
		\item if $u\in \bt$, then $A_u \subset \bt$;
		\item for every $u\in \bt$, there exists $k_u(\bt)\in\N\cup\{+\infty\}$ such that, for every positive integer $i$, $ui\in \bt \iff 1\leq i \leq k_u(\bt)$.
	\end{itemize}
	The integer $k_u(\bt)$ represents the number of offsprings of the vertex $u\in \bt$. If $k_u(\bt)=0$, the vertex $u\in \bt$ is called a leaf and if $k_u(\bt)=+\infty$, $u$ is said infinite. By convention, we shall set $k_u(\bt)=-1$ if $u\notin \bt$. The vertex $\emptyset$ is called the root of $\bt$.
	\\
	Its height and its {\it norm}, which can be infinite, are respectively defined by:
	\begin{align*}
		H(\bt)&=\sup\{\left|  u\right|,~u\in \bt\};\\
		H_\infty(\bt)&=\sup\{\left|  u\right|_\infty,~u\in \bt\}=\max(H(\bt),\sup\{k_u(\bt),u\in \bt\}).
	\end{align*}
	\medbreak
	We denote by: 
	\begin{itemize}
		\item $\T_{\infty}$, the set of trees;
		\item $\mathcal{L}_0(\bt):=\{u\in \bt, k_u(\bt)=0\}$, the set of leaves of $\bt\in \T_{\infty}$;
		\item $\T_0:=\{\bt\in \T_{\infty}, \left| \bt \right| < + \infty\}$,
		the subset of finite trees, where $\left|  \bt \right|$ is the cardinal of $\bt$;
		\item $\Th_{\infty}:=\{\bt\in \T_{\infty}, H_{\infty}(\bt)<h\}$,
		the subset of finite trees with norm less than $h\in \N$, $H_{\infty}(\bt)=\sup\{\left| u \right|_{\infty}, u \in \bt\}$;
		\item $\T:=\{\bt\in\T_{\infty}; k_u(\bt)<+{\infty}~ \forall u \in \bt\}$, 
		the subset of trees with no infinite vertex;
		\item $\T_1:=\{\bt\in\T_{\infty}; \underset{n\to +\infty}{\lim} |MRCA(\{u\in\bt;|u|=n\})|=+\infty\}$, the subset of trees with a unique infinite spine;
		\item $\T_2$, the subset of trees with no infinite spine and with exactly one infinite vertex;
		\item $\bt\circledast_x \bt':=\bt\bigcup\{x\zeta(v,k_x(\bt)),~v\in \bt' \setminus \{\emptyset \}\}$,
		the tree obtained by grafting the tree ${\bt'}\in\T_{\infty}$ at $x\in \bt$ on ``the right" of $\bt\in \T_\infty$ , with $\zeta(v,k):=\left(v_1+k, v_2,...,v_n\right)$ if $v=\left(v_1,v_2,...,v_n\right)\in \U$ with $k\in\NN$ and $n>0$;
		\item $\T(\bt,x):=\{\bt\circledast_x \bt', \bt' \in\T_{\infty}\}$
		the set of trees obtained by grafting a tree at $x\in \bt$ on ``the right" of $\bt\in \T_0$;
		\item $\T_{+}(\bt,x,k):=\{s\in \T(\bt,x), k_x(s)\geq k\}$
		the subset of $\T(\bt,x)$ such that the number of offspring of $x\in s$ is $k$ or more.
		
	\end{itemize}

Let $\bt$ be a tree, for $u\in \bt$, we define the sub-tree $\mathcal{S}_u(\bt)$ of $\bt$  ``above" $u$ as:\\
$\mathcal{S}_u(\bt): =\{v\in\mathcal{U}; ~uv\in \bt\}$.\\
For $u\in \bt\setminus\mathcal{L}_0(\bt)$, we also define the forest $\mathcal{F}_u(\bt)$ ``above" $u$ as:\\
$\mathcal{F}_u(\bt): =\left(S_{ui}(\bt);~ 1\leq i\leq k_u(\bt)\right)$.\\
For $u\in \bt\setminus\{\emptyset\}$, we also define the sub-tree $\mathcal{S}^u(\bt)$ of $\bt$ ``below" $u$ as:\\
$\mathcal{S}^u(\bt): =\{v\in \bt;~u\notin A_v\}$.
\smallbreak
	For $h\in\N$, the restriction function $r_{h,\infty}$ from $\T_\infty$ to $\T_\infty$ is defined by : $$r_{h,\infty}(\bt):=\{u\in \bt,\left|  u \right|_\infty \leq h\},$$ and the restriction function $r_h$ from $\T$ to $\T$ is defined by : $$r_{h}(\bt):=\{u\in \bt,\left|  u \right| \leq h\}.$$
	We endow the set $\T_{\infty}$ (resp. $\T)$, with the ultra-metric distance 
	\[d_\infty(\bt,\bt'):=2^{-\max\{h\in\N,r_{h,\infty}(\bt)=r_{h,\infty}(\bt')\}},\]\[~(\text{resp. } d(\bt,\bt'):=2^{-\max\{h\in\N,r_{h}(\bt)=r_{h}(\bt')\}}).\]
	A sequence $(\bt_n)_{n\in\N}$ of trees converges to a tree $\bt$ with respect to the distance $d_\infty$ (resp. $d$) if and only if, for every $h\in\N$,
	\[r_{h,\infty}(\bt_n)=r_{h,\infty}(\bt)~~(\text{ resp. } r_{h}(\bt_n)=r_{h}(\bt))~\text{for } n \text{ large enough.}\]
	The Borel $\sigma$-field associated with the distance $d_\infty$ (resp. $d$) is the smallest $\sigma$-field containing the singletons for which the restrictions functions $(r_{h,\infty})_{h\in\N}$ (resp. $(r_{h})_{h\in\N}$) are measurable. With this distance, the restriction function are contractant. According to \cite{jonsson_condensation_2011}, $\T_0$ is dense in $\T_{\infty}$ (resp. $\T_0\cap\T \in \T)$ and since,  $(\T_{\infty},d_\infty)$ (resp. $(\T,d)$) is complete, we get that $(\T_{\infty},d_\infty)$ (resp. $(\T,d)$) is a Polish metric space. Moreover, according to \cite{jonsson_condensation_2011}, $(\T_{\infty},d_\infty)$ is compact.
	
	\subsection{Galton-Watson trees}
	Let $\bp=(\bp(n))_{n\in\N}$ be a probability distribution on the set of the non-negative integers 
	satisfying \eqref{condp}.
	Let $g(z):=\E[z^X]$ be the generating function of $X$, where $X$ is a random variable with distribution $\bp$. We denote by $\rho(\bp)$ the radius of convergence of $g$ and we write $\rho$ when it is clear from the context.
	A $\T$-valued random variable $\tau(\bp)$ (noted $\tau$ if it is clear) is a Galton-Watson tree (GW) with offspring distribution $\bp$, if the distribution of $k_\emptyset(\tau)$ is $\bp$ and for $n\in\NN$, conditionally on $\{k_\emptyset(\tau)=n\}$, the sub-trees $(\mathcal{S}_1(\tau),\cdots,\mathcal{S}_n(\tau))$ are independent and distributed as the original tree $\tau$.
	The distribution of a GW tree is characterized by:
	\begin{equation}\label{probaTrh}
		\forall h \ge 1, \forall \bt\in\Th, ~	\p(r_{h}(\tau) = \bt)= \prod_{u\in\bt, |u|<h} { \bp(k_u(\bt))}.
	\end{equation}
 In particular, 
 \begin{equation}\label{probaT}
 			\forall \bt \in \T_0, ~\p(\tau = \bt)= \prod_{u\in \bt} { \bp(k_u(\bt))}.
 		\end{equation}

	For a tree $\bt$, we have:
	\begin{equation}\label{sumku}
		\underset{u\in \bt}{\sum} k_u(\bt) = \left|  \bt \right|-1.
	\end{equation}
	We recall that, if:
	\begin{itemize}
		\item $\mu(\bp)<1$ we are in the sub-critical case;
		\item $\mu(\bp)=1$ we are in the critical case;
		\item $\mu(\bp)>1$  we are in the super-critical case.
	\end{itemize}
	
	We say that $\bp$ is aperiodic if $\{k\in\N;~\bp(k)>0\}\subset d\N$ implies $d=1$.
	\smallbreak
	Let $\p_k$ be the distribution of the forest $\tau^{(k)}=(\tau_1,\cdots,\tau_k)$ of i.i.d GW with offspring distribution $\bp$. We set:
	\begin{equation}\label{cardforest}
		\left|  \tau^{(k)} \right| = \sum_{j=1}^k\left|  \tau_j \right|.
	\end{equation}
	\subsection{The set of marked discrete trees}\label{markproc}
	A marked tree $\bt^*$ is defined by a tree $\bt\in \T_\infty$ and a mark $\eta_u\in\{0,1\}$ for every node $u\in\bt$, that is
	\[
	\bt^*=\bigl(\bt,(\eta_u)_{u\in\bt}\bigr).
	\]
	A node $u\in\bt$ is said to be marked if $\eta_u=1$ and unmarked if $\eta_u=0$. Throughout the paper, the notations defined to $\bt$ and used for $\bt^{*}$ are the same.
	For instance $\T_0^{*}:=\{(\bt,(\eta_u(\bt))_{ u\in \bt}), \bt\in \T_0\}$.
	We denote by $\T_\infty^*$ the set of marked trees.\\
	We also denote by $M(\bt^*):=\sum_{u\in \bt}\eta_u$ the number of marked vertices. 
	\smallbreak
	For every $h\in \N$, we define the restriction functions
	\[
	r_h^*:\T^*\longrightarrow\T^*,\qquad\text{and}\qquad r_{h,\infty}^*:\T_\infty^*\longrightarrow\T_\infty^*
	\]
	by, for $\bt^*=(\bt, (\eta_u)_{u\in\bt})\in\T^*$,
	\begin{equation*}
		r_h^*(\bt^*)=\bigl(r_h(\bt), (\eta_u^h)_{u\in r_h(\bt)}\bigr),\qquad
		r_{h,\infty}^*(\bt^*)=\bigl(r_{h,\infty}(\bt), (\eta_u^h)_{u\in r_{h,\infty}(\bt)}\bigr)
	\end{equation*}
	with 
	\[
	\eta_u^h=\begin{cases}
		\eta_u & \text{if } \left|  u \right|\le h-1,\\
		0 & \text{if }\left|  u \right|=h.
	\end{cases}
	\]	
	
	We can endow the set $\T_\infty^*$ (resp.  $\T^*$) with the $\sigma$-field $\F$ generated by the family of sets $\left(\left\{\bt^*\in\T_\infty^*,\ u\in\bt\right\}\right)_{u\in\U}$ $\left(\text{resp. }\left(\left\{\bt^*\in\T^*,\ u\in\bt\right\}\right)_{u\in\U}\right)$ and hence define probability measures on $\left(\T_\infty^*,\F\right)$ (resp.  $\left(\T^*,\F\right)$).
	
	We also endow $\T_\infty^*$  (resp.  $\T^*$) with the filtration $(\F_n)_{n\ge 0}$ where $\F_n$ is the $\sigma$-field generated by the restiction function $r_{n,\infty}^*$ (resp. $r_{n}^*$ ). Notice that $\F=\bigvee_{n\ge 0} \F_n$ as, for every $u\in\U$,
	\[
	\left\{\bt^*\in\T_\infty^*,\ u\in\bt\right\}\in\F_{|u|}~\left(\text{resp. }\left\{\bt^*\in\T^*,\ u\in\bt\right\}\in\F_{|u|}\right).
	\]
%
	Let $\bt^{*}$, $\bt'^{*}$ be two marked trees and $x\in \bt$. 
	If $x$ is marked in $\bt^{*}$, $x$ is marked in $\bt^{*}\circledast_x \bt'^{*}$, we forget the mark on the root of $\bt'^{*}$. For $x\in \bt\setminus\{\emptyset\}$, $x$ is not marked on $\mathcal{S}^x(\bt^{*})$:
	\begin{equation*}
		\mathcal{S}^x(\bt^{*})=\{(u,\eta_u(\bt)), u\in \mathcal{S}^x(\bt)\setminus \{x\}\}\bigcup \{(x,0)\}.
	\end{equation*} 

	\subsubsection{Marked Galton-Watson trees}
	
	Let $\bp_0=(\bp_0(k,\eta))_{(k,\eta)\in\N\times \{0,1\}}$ be a probability distribution on $\N\times\{0,1\}$. There exists a unique probability measure $\p_{\bp _0}$ on $(\T^*,\F)$ such that, for all $h\in\NN$ and all $\bt^*=(\bt,(\eta_u)_{u\in\bt})\in\T_\infty^*$ (resp. $\T^*$), if $\tau^*$ denotes the canonical random variable on $\T_\infty^*$  (resp. $\T^*$),
\begin{align*}
	\p_{\bp_0}(r_h^*(\tau^*)=r_h^*(\bt^*))&=\prod_{u\in r_{h-1}(\bt)} { \bp_0(k_u(\bt),\eta_u)},
%
\end{align*}
	We say that the r.w. $\tau^*$ under $\p_{\bp_0}$ is a marked Galton-Watson tree (MGW) with reproduction-marking law $\bp_0$.
	As $\tau^{*}\in \T_0^{*}$, we have for all marked tree $\bt^*\in\T_0^{*}$,
	\begin{align}\label{probaTmark}
		\p(\tau^{*}=\bt^{*})&=\prod_{u\in \bt}\bp_0(k_u(\bt),\eta_u)\\
		&=\p(\tau=t)\prod_{u\in \bt,\eta_u=1}\bq(k_u(\bt))\prod_{u\in \bt,\eta_u=0}\left(1-\bq(k_u(\bt))\right)\nonumber
		.
	\end{align}
	Equivalently, the distribution of a marked Galton-Watson tree with re\-pro\-duc\-tion-marking law $\bp_0$ is also characterized by an offspring distribution $\bp$ and a mark function $\bq:\N\longrightarrow [0,1]$ with
	\[
	\bp(k)=\bp_0(k,0)+\bp_0(k,1),\quad \bq(k)=
	\frac{\bp_0(k,1)}{\bp(k)} \text{ if }\bp(k)\ne 0
	\]
	(the value of $\bq(k)$ has no importance if $\bp(k)=0$), or equivalently
	\begin{equation}\label{def_p0}
		\bp_0(k,1)=\bp(k)\bq(k),\qquad \bp_0(k,0)=\bp(k)(1-\bq(k)).
	\end{equation}
	This approach (giving $\bp$ and $\bq$) consists in first picking a Galton-Watson tree with offspring distribution $\bp$ then conditionally on the tree, adding marks on every node, independently of each others, with probabily $\bq(k)$ where $k$ is the out-degree of the node.
	
	We will then write $\p_{\bp_0}$ or $\p_{\bp,\bq}$ depending on the adopted point of view, or simply $\p$ when the context is clear.

	\medskip
	Under $\p_{\bp,\bq}$, the random tree $\tau^*$ satisfies the so-called branching property that is
	\begin{itemize}
		\item The random variable $k_\emptyset(\tau^*)$ is distributed according to the probability distribution $\bp$;
		\item Conditionally on $\{k_\emptyset(\tau^*)=j\}$, the root is marked with probability $\bq(j)$;
		\item Conditionally on $\{k_\emptyset(\tau^*)=j\}$, the $j$ sub-trees attached to the root are i.i.d. random marked trees distributed according to the probability $\p_{\bp,\bq}$.
	\end{itemize}

	\subsection{Kesten's tree and condensation tree}\label{kestcondtree}
	
	Let $\tau_l(\bp)$ denote the random tree defined by:
	\begin{enumerate}
		\item there are two types of nodes: normal and special;
		\item the root is special;
		\item normal nodes have offspring distribution $\bp$;
		\item special nodes have for offspring distribution the biased distribution $\check{\bp}$ on $\N \bigcup \{+\infty\}$ defined by:
		$$\check{\bp}(k) :=
		\left\{
		\begin{array}{ll}
			k\bp(k)& \text{if}~k\in \N,\\
			1-\mu & \text{if} ~k=+\infty.
		\end{array}
		\right. $$
		\item The offsprings of all the nodes are independent of each others;
		\item all the children of a normal node are normal;
		\item when a special node gets a finite number of children, one of them is selected uniformly at random and is special while the others are normal;
		\item When a special node gets an infinite number of children, all of them are normal.
	\end{enumerate}
	$\tau_l(\bp)$ represents a limit tree for the rest of the document.\smallbreak
	We can notice that:
	\begin{itemize}
		\item if $\bp$ is critical, $\tau_l(\bp)=\tau_K(\bp)$;
		\item if $\bp$ is sub-critical, $\tau_l(\bp)=\tau_C(\bp)$.
	\end{itemize}
	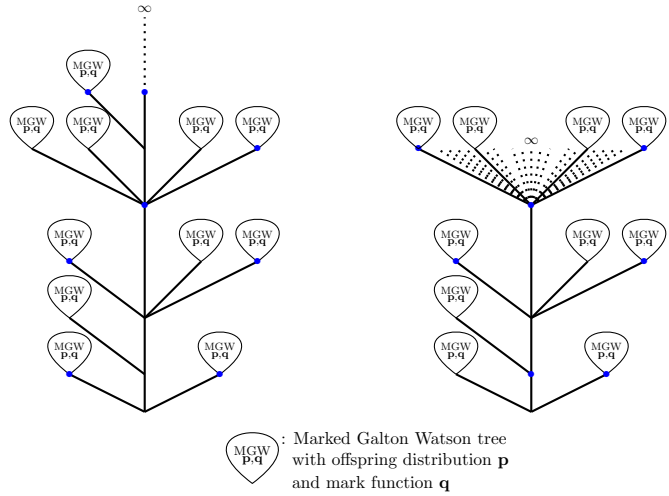
\begin{figure}[!h]
		\begin{tikzpicture}[scale=0.5,every node/.style={scale=0.5}]
		\coordinate (0) at (0,0);
		\coordinate (1) at (-2,1);
		\coordinate (3) at (2,1);
		\coordinate (2) at (0,1);
		\coordinate (21) at (-2,2.5);
		\coordinate (22) at (0,2.5);
		\coordinate (223) at (1.5,4);
		\coordinate (224) at (3,4);
		\coordinate (221) at (-2,4);
		\coordinate (222) at (0,4);
		\coordinate (2222) at (0,5.5);
		\coordinate (22223) at (0,7);
		\coordinate (22221) at (-3,7);
		\coordinate (22222) at (-1.5,7);
		\coordinate (22224) at (1.5,7);
		\coordinate (22225) at (3,7);
		\coordinate (222231) at (-1.5,8.5);
		\coordinate (222232) at (0,8.5);
		\coordinate (2222321) at (0,10.5);
		\draw [thick](0) -- (2)--(22)--(222)--(2222)--(22223)--(222232);
		\draw [thick](0) -- (1);
		\draw [thick](0) -- (3);
		\draw [thick](2) -- (21);
		\draw [thick](22) -- (221);
		\draw [thick](22) -- (223);
		\draw [thick](22) -- (224);
		\draw [thick](2222) -- (22221);
		\draw [thick](2222) -- (22222);
		\draw [thick](2222) -- (22224);
		\draw [thick](2222) -- (22225);
		\draw [thick](22223) -- (222231);
		\draw [thick,dotted](222232) -- (2222321);
		\draw (2222)  node{\textcolor{blue}{\large{$\bullet$}}};
		\draw (222232) node{\textcolor{blue}{\large{$\bullet$}}};
		\draw (1)..controls +(-2,1.5) and +(2,1.5)..(1);
		\draw (-2,1.5) node[above]{\scriptsize{MGW}};
		\draw (-2,1.5) node{\scriptsize{$\bp$,$\bq$}};
		\draw (1) node{\textcolor{blue}{\large{$\bullet$}}};
		\draw (3)..controls +(-2,1.5) and +(2,1.5)..(3);
		\draw (2,1.5) node[above]{\scriptsize{MGW}};
		\draw (2,1.5) node{\scriptsize{$\bp$,$\bq$}};
		\draw (3) node{\textcolor{blue}{\large{$\bullet$}}};
		\draw (21)..controls +(-2,1.5) and +(2,1.5)..(21);
		\draw (-2,3) node[above]{\scriptsize{MGW}};
		\draw (-2,3) node{\scriptsize{$\bp$,$\bq$}};
		\draw (221)..controls +(-2,1.5) and +(2,1.5)..(221);
		\draw (-2,4.5) node[above]{\scriptsize{MGW}};
		\draw (-2,4.5) node{\scriptsize{$\bp$,$\bq$}};
		\draw (221) node{\textcolor{blue}{\large{$\bullet$}}};
		\draw (223)..controls +(-2,1.5) and +(2,1.5)..(223);
		\draw (1.5,4.5) node[above]{\scriptsize{MGW}};
		\draw (1.5,4.5) node{\scriptsize{$\bp$,$\bq$}};
		\draw (224)..controls +(-2,1.5) and +(2,1.5)..(224);
		\draw (3,4.5) node[above]{\scriptsize{MGW}};
		\draw (3,4.5) node{\scriptsize{$\bp$,$\bq$}};
		\draw (224) node{\textcolor{blue}{\large{$\bullet$}}};
		\draw (22221)..controls +(-2,1.5) and +(2,1.5)..(22221);
		\draw (-3,7.5) node[above]{\scriptsize{MGW}};
		\draw (-3,7.5) node{\scriptsize{$\bp$,$\bq$}};
		\draw (22222)..controls +(-2,1.5) and +(2,1.5)..(22222);
		\draw (-1.5,7.5) node[above]{\scriptsize{MGW}};
		\draw (-1.5,7.5) node{\scriptsize{$\bp$,$\bq$}};
		\draw (22224)..controls +(-2,1.5) and +(2,1.5)..(22224);
		\draw (1.5,7.5) node[above]{\scriptsize{MGW}};
		\draw (1.5,7.5) node{\scriptsize{$\bp$,$\bq$}};
		\draw (22225)..controls +(-2,1.5) and +(2,1.5)..(22225);
		\draw (3,7.5) node[above]{\scriptsize{MGW}};
		\draw (3,7.5) node{\scriptsize{$\bp$,$\bq$}};
		\draw (22225) node{\textcolor{blue}{\large{$\bullet$}}};
		\draw (222231)..controls +(-2,1.5) and +(2,1.5)..(222231);
		\draw (-1.5,9) node[above]{\scriptsize{MGW}};
		\draw (-1.5,9) node{\scriptsize{$\bp$,$\bq$}};
		\draw (222231) node{\textcolor{blue}{\large{$\bullet$}}};
		\draw (2222321) node[above]{$\infty$};
		\end{tikzpicture}
			\begin{tikzpicture}[scale=0.5,every node/.style={scale=0.5}]
			\coordinate (0) at (0,0);
			\coordinate (1) at (-2,1);
			\coordinate (3) at (2,1);
			\coordinate (2) at (0,1);
			\coordinate (21) at (-2,2.5);
			\coordinate (22) at (0,2.5);
			\coordinate (223) at (1.5,4);
			\coordinate (224) at (3,4);
			\coordinate (221) at (-2,4);
			\coordinate (222) at (0,4);
			\coordinate (2222) at (0,5.5);
			\coordinate (22223) at (0,7);
			\coordinate (22221) at (-3,7);
			\coordinate (22222) at (-1.5,7);
			\coordinate (22224) at (1.5,7);
			\coordinate (22225) at (3,7);
			\draw [thick](0) -- (2)--(22)--(222)--(2222);
			\draw [thick](0) -- (1);
			\draw [thick](0) -- (3);
			\draw [thick](2) -- (21);
			\draw [thick](22) -- (221);
			\draw [thick](22) -- (223);
			\draw [thick](22) -- (224);
			\draw [thick](2222) -- (22221);
			\draw [thick](2222) -- (22222);
			\draw [thick](2222) -- (22224);
			\draw [thick](2222) -- (22225);	
			\draw [thick, dotted](2222) -- (22223);
			\draw [thick, dotted ](2222) -- (-2,7);
			\draw [thick, dotted ](2222) -- (-2.5,7);
			\draw [thick, dotted ](2222) -- (-2.25,7);
			\draw [thick, dotted ](2222) -- (-1.75,7);
			\draw [thick, dotted ](2222) -- (-1,7);
			\draw [thick, dotted ](2222) -- (-1.25,7);
			\draw [thick, dotted ](2222) -- (-0.5,7);
			\draw [thick, dotted ](2222) -- (-0.25,7);
			\draw [thick, dotted ](2222) -- (2,7);
			\draw [thick, dotted ](2222) -- (2.5,7);
			\draw [thick, dotted ](2222) -- (2.25,7);
			\draw [thick, dotted ](2222) -- (1.75,7);
			\draw [thick, dotted ](2222) -- (1,7);
			\draw [thick, dotted ](2222) -- (1.25,7);
			\draw [thick, dotted ](2222) -- (0.5,7);
			\draw [thick, dotted ](2222) -- (0.25,7);
			\draw (2) node{\textcolor{blue}{\large{$\bullet$}}};
			\draw (2222) node{\textcolor{blue}{\large{$\bullet$}}};
			\draw (22221) node{\textcolor{blue}{\large{$\bullet$}}};
			\draw (1)..controls +(-2,1.5) and +(2,1.5)..(1);
			\draw (-2,1.5) node[above]{\scriptsize{MGW}} node{\scriptsize{$\bp$,$\bq$}};
			\draw (3)..controls +(-2,1.5) and +(2,1.5)..(3);
			\draw (2,1.5) node[above]{\scriptsize{MGW}} node{\scriptsize{$\bp$,$\bq$}};
			\draw (3) node{\textcolor{blue}{\large{$\bullet$}}};
			\draw (21)..controls +(-2,1.5) and +(2,1.5)..(21);
			\draw (-2,3) node[above]{\scriptsize{MGW}} node{\scriptsize{$\bp$,$\bq$}};
			\draw (221)..controls +(-2,1.5) and +(2,1.5)..(221);
			\draw (-2,4.5) node[above]{\scriptsize{MGW}} node{\scriptsize{$\bp$,$\bq$}};
			\draw (221) node{\textcolor{blue}{\large{$\bullet$}}};
			\draw (223)..controls +(-2,1.5) and +(2,1.5)..(223);
			\draw (1.5,4.5) node[above]{\scriptsize{MGW}} node{\scriptsize{$\bp$,$\bq$}};
			\draw (224)..controls +(-2,1.5) and +(2,1.5)..(224);
			\draw (3,4.5) node[above]{\scriptsize{MGW}} node{\scriptsize{$\bp$,$\bq$}};
			\draw (224) node{\textcolor{blue}{\large{$\bullet$}}};
			\draw (22221)..controls +(-2,1.5) and +(2,1.5)..(22221);
			\draw (-3,7.5) node[above]{\scriptsize{MGW}} node{\scriptsize{$\bp$,$\bq$}};
			\draw (22222)..controls +(-2,1.5) and +(2,1.5)..(22222);
			\draw (-1.5,7.5) node[above]{\scriptsize{MGW}} node{\scriptsize{$\bp$,$\bq$}};
			\draw (22224)..controls +(-2,1.5) and +(2,1.5)..(22224);
			\draw (1.5,7.5) node[above]{\scriptsize{MGW}} node{\scriptsize{$\bp$,$\bq$}};
			\draw (22225)..controls +(-2,1.5) and +(2,1.5)..(22225);
			\draw (3,7.5) node[above]{\scriptsize{MGW}}node{\scriptsize{$\bp$,$\bq$}};
			\draw (22225) node{\textcolor{blue}{\large{$\bullet$}}};
			\draw (22223) node[above]{$\infty$};
		\end{tikzpicture}

		\centering
		\begin{tikzpicture}[scale=0.6,every node/.style={scale=0.6}]
			\coordinate (0) at (0,0);
			\draw (0)..controls +(-2,1.5) and +(2,1.5)..(0);
			\draw (0,0.5) node[above]{\scriptsize{MGW}} node{\scriptsize{$\bp$,$\bq$}};
			\draw (0.5,1) node[right]{: Marked Galton Watson tree};
			\draw (0.75,0.5) node[right]{with offspring distribution $\bp$};
			\draw (0.75,0) node[right]{and mark function $\bq$};
		\end{tikzpicture}
		
		\caption{Marked Kesten'tree and condensation tree}
	\end{figure}
		We denote by $\bp^{*}:=(\bp^{*}(n)=\nicefrac{n\bp(n)}{\mu})_{ n\in \N}$ the corresponding size-biased distribution of the Kesten's tree. We define an infinite random tree $\tau_K(\bp)$ (the Kesten's tree) whose distribution is described in \cite{abraham_local_2017} Sub-Section 3.2. This tree has one infinite spine and all its nodes have finite degrees. 
	We define an infinite random tree $\tau_C(\bp)$ (the condensation tree) whose described in \cite{abraham_local_2014} Sub-Section 3.2. This tree has exactly one node of infinite degree and no infinite spine. This tree has been considered in \cite{janson_simply_2011} and \cite{jonsson_condensation_2011}. 

	Now we consider the marked limit trees $\tau_K^{*}(\bp,\bq)$ and $\tau_C^{*}(\bp,\bq)$. In the both cases, the mark function stay the same as the original MGW with offspring distribution $\bp$ and mark function $\bq$.
	\subsubsection{Kesten's tree}
	In this subsection, we write $\tau_K^{*}$ for $\tau_K^{*}(\bp,\bq)$ and $\tau_K$ for $\tau_K(\bp)$.
		\begin{prop}\label{mfk}
Let $\tau_K^{*}$ be a marked Kesten's tree associated with a critical offspring distribution $\bp$ and a mark function $\bq$ satisfying \eqref{condp} and  \eqref{condq} respectively. Then we have, for all  marked tree $\bt^{*}\in\T_0^{*}$ and $x\in\mathcal{L}_0(\bt)$ :
			\begin{equation}\label{exptau^*K}
				\p(\tau_K^{*}\in \T(\bt^{*},x))=\dfrac{\p(\tau^{*} = \mathcal{S}^x(\bt^{*}))}{\bp(0)(1-\bq(0))}\E[X\alpha_{X,x}].
			\end{equation}
			Where $X$ is a random variable with distribution $\bp$, and for all $j\in\N$\\ $\alpha_{j,x}:=\bq(j)\eta_x(\bt)+(1-\bq(j))(1-\eta_x(\bt))$.
		\end{prop}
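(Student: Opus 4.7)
The plan is to exploit the rigidity of the grafting operation together with the recursive construction of Kesten's tree recalled in Subsection \ref{kestcondtree}. The key observation is that, since $\bt$ is finite and $\tau_K^*$ has an infinite spine, on the event $\{\tau_K^*\in\T(\bt^*,x)\}$ the spine of $\tau_K^*$ must pass through $x$. Indeed, for every $v\in\bt\setminus\{x\}$, being in $\T(\bt^*,x)$ forces $k_v(\tau_K^*)=k_v(\bt)$, so all children of $v$ in $\tau_K^*$ already lie in $\bt$; thus $x$ is the only vertex of $\bt$ through which the spine can leave the finite set $\bt$.

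I then write the ancestral path of $x$ as $\emptyset=u_0,u_1,\ldots,u_k=x$ and compute the probability by descending along this spine path and invoking the branching property. For each $i<k$, the special vertex $u_i$ must have exactly $k_{u_i}(\bt)$ offspring (size-biased, with $\check{\bp}(j)=j\bp(j)$ in the critical case), with $u_{i+1}$ being the uniformly selected special child, and must carry the mark $\eta_{u_i}$; this contributes
\[
\check{\bp}(k_{u_i}(\bt))\cdot\tfrac{1}{k_{u_i}(\bt)}\cdot\alpha_{k_{u_i}(\bt),u_i}=\bp(k_{u_i}(\bt))\,\alpha_{k_{u_i}(\bt),u_i}=\bp_0(k_{u_i}(\bt),\eta_{u_i}).
\]
Each non-special sibling $v$ of some $u_{i+1}$ carries an independent MGW subtree which, by the branching property and \eqref{probaTmark}, must coincide with $\mathcal{S}_v(\bt^*)$ with probability $\prod_{w\in\mathcal{S}_v(\bt)}\bp_0(k_w(\bt),\eta_w)$. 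At the terminal spine vertex $x$, only the mark is prescribed while the offspring is unrestricted; summing over the possible special-offspring sizes yields
\[
\sum_{j\ge 1}\check{\bp}(j)\,\alpha_{j,x}=\sum_{j\ge 1}j\bp(j)\,\alpha_{j,x}=\E[X\alpha_{X,x}].
\]

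Collecting all these contributions, the spine factors at $u_0,\ldots,u_{k-1}$ and the MGW-subtree factors hanging off them together exhaust the nodes of $\bt\setminus\{x\}$, giving $\prod_{u\in\bt\setminus\{x\}}\bp_0(k_u(\bt),\eta_u)\cdot\E[X\alpha_{X,x}]$. Since $x$ is a leaf of $\bt$, we have $\mathcal{S}^x(\bt)=\bt$ with $x$ unmarked in $\mathcal{S}^x(\bt^*)$, so \eqref{probaTmark} yields $\p(\tau^*=\mathcal{S}^x(\bt^*))=\bp(0)(1-\bq(0))\prod_{u\in\bt\setminus\{x\}}\bp_0(k_u(\bt),\eta_u)$, and dividing out this isolated factor produces \eqref{exptau^*K}. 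The main obstacle, beyond bookkeeping, is the first observation: ruling out configurations where the spine branches off before $x$ is what collapses the decomposition to the single ``spine-through-$x$'' scenario, and it is the only delicate point of the argument.
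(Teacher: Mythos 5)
Your proof is correct. The only real divergence from the paper's argument is that you rederive the unmarked Kesten-tree identity $\p(\tau_K\in\T(\bt,x))=\p(\tau=\bt)/\bp(0)$ from the explicit spine decomposition, whereas the paper imports it directly as \eqref{exptauK} from \cite{abraham_local_2013} and then inserts the marking factors. Mechanically the two computations coincide: on $\{\tau_K^*\in\T(\bt^*,x)\}$ the special path must exit the finite tree $\bt$ through $x$; each spine vertex $u_i$ strictly below $x$ contributes $\check{\bp}(k_{u_i}(\bt))/k_{u_i}(\bt)\cdot\alpha_{k_{u_i}(\bt),u_i}=\bp_0(k_{u_i}(\bt),\eta_{u_i})$; each normal off-spine subtree contributes its MGW product of $\bp_0$-factors; and $x$ contributes $\sum_{j\ge1}\check{\bp}(j)\alpha_{j,x}=\E[X\alpha_{X,x}]$. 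Collecting spine and off-spine factors gives $\prod_{u\in\bt\setminus\{x\}}\bp_0(k_u(\bt),\eta_u)\,\E[X\alpha_{X,x}]$, which both proofs then rewrite as $\p(\tau^*=\mathcal{S}^x(\bt^*))\,\E[X\alpha_{X,x}]/\bigl(\bp(0)(1-\bq(0))\bigr)$ by noting that $\mathcal{S}^x(\bt^*)$ is $\bt^*$ with the mark of $x$ suppressed. Your version is more self-contained, the paper's is shorter by outsourcing the spine bookkeeping to the cited formula; both buy the same thing. One cosmetic slip: for an off-spine sibling $v$ the product should run over the $\bt$-vertices $\{vw:w\in\mathcal{S}_v(\bt)\}$ (the descendants of $v$ addressed inside $\bt$), not over $\mathcal{S}_v(\bt)$ itself, but this is only notation and does not affect the argument.
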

	\begin{proof}
	First, according to Sub-section 2.4 of \cite{abraham_local_2013}, we have  for $t\in \T_0$, $x\in \mathcal{L}_0(\bt)$: 
	\begin{equation}\label{exptauK}
		\p(\tau_K\in\T(\bt,x))=\dfrac{\p(\tau=\bt)}{\bp(0)},
	\end{equation}
		which implies:
		\begin{align*}
			\p(\tau_K^{*}\in \T(\bt^{*},x))	&=\dfrac{\p(\tau=\bt)}{\bp(0)}\prod_{\underset{\eta_u=1}{u\in \bt,~u\neq x}}{\bq(k_u(\bt))}\prod_{\underset{\eta_u=0}{u\in \bt,~u\neq x}}{\left(1-\bq\left(k_u(\bt)\right)\right)}\\
			&\times\sum_{j\ge0}j\bp(j)\alpha_{j,x}\\\\
			&=\dfrac{\p(\tau^{*}=\bt^{*})}{\bp(0)}\left(\dfrac{\eta_x}{\bq(0)}+\dfrac{1-\eta_x}{1-\bq(0)}\right)\E[X\alpha_{X,x}].
		\end{align*}
		And we easily conclude as for $x\in\mathcal{L}_0(\bt)$, we have:
		\[\p(\tau^{*} = \mathcal{S}^x(\bt^{*})) =\p(\tau^{*}=\bt^{*})\left(1-\eta_x+\dfrac{1-\bq(0)}{\bq(0)}\eta_x\right).\]
		
	\end{proof}
	\subsubsection{Condensation tree}
		In this subsection, we write $\tau_C^{*}$ for $\tau_C^{*}(\bp,\bq)$.
	
	If $\mu(\bp)<1$. For all marked tree $\bt^{*}\in \T_0^{*}$, $x\in \bt$, we set:
	\begin{align}\label{dfC}
		&C(\bt^{*},x):=\dfrac{\p(\tau^{*} = \mathcal{S}^x(\bt^{*}))}{\bp(0)(1-\bq(0))}\p_{k_x(\bt)}(\tau^{*}=\F_x(\bt^{*})).
	\end{align}

	\begin{prop}\label{mfc}
Let $\tau_C^{*}$ be a marked condensation tree associated with a sub-critical offspring distribution $\bp$ and a mark function $\bq$ satisfying \eqref{condp} and \eqref{condq} respectively. Then, we have for all $\bt^{*}\in \T_0^{*}$, $x\in \bt$, $k \in \N$:
			\begin{multline}\label{exptau^*C}
				\p\left(\tau^{*}_C\in \T_{+}(\bt^{*},x,k)\right)=C(\bt^{*},x)\left(\left(1-\mu(\bp)\right)\alpha_{\infty,x}\right.\\
				\left.+\E\left[\left(X-k_x(\bt)\right)_+ \alpha_{X,x} \indic_{\{X\geq k\}}\right]\right).
		\end{multline}
		with $C(\bt^*,x)$ defined in \eqref{dfC} and $z_{+}=\max(z,0)$ for $z\in \mathbb{R}$.
	\end{prop}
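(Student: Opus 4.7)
My plan is to follow the template of the proof of Proposition~\ref{mfk}, now allowing $x$ to be any vertex of $\bt$ and decomposing the event according to $d := k_x(\tau_C) \in \N \cup \{\infty\}$. The key structural observation is that on the event $\{\tau_C^* \in \T_+(\bt^*, x, k)\}$, the unique infinite vertex of $\tau_C$ is either $x$ itself (when $d = \infty$) or a strict descendant of $x$ reached through one of its children at a position $> k_x(\bt)$. Indeed $\bt$ is finite, and grafting at $x$ only adds vertices above $x$ at positions $> k_x(\bt)$, so every vertex of $\bt\setminus\{x\}$ has the same finite out-degree in $\tau_C$ as in $\bt$. In either scenario, every proper ancestor of $x$ lies on the spine of special vertices, and when $d$ is finite one must have $d \ge k_x(\bt) + 1$, since the first $k_x(\bt)$ children of $x$ are forced to be roots of normal and hence finite MGW subtrees.

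Next, I will compute the spine-and-siblings contribution down to but not including $x$. Each proper ancestor $u$ of $x$ is special with $\check{\bp}(k_u(\bt)) = k_u(\bt)\bp(k_u(\bt))$ offspring, the correct spine-child is picked with probability $1/k_u(\bt)$, and the mark constraint contributes $\alpha_{k_u(\bt), u}$, so these factors collapse to $\bp_0(k_u(\bt), \eta_u)$. Each off-spine sibling subtree independently contributes $\p(\tau^* = \mathcal{S}_v(\bt^*))$, and the first $k_x(\bt)$ subtrees above $x$ contribute $\p_{k_x(\bt)}(\tau^* = \mathcal{F}_x(\bt^*))$. Using that $x$ appears as an unmarked leaf in $\mathcal{S}^x(\bt^*)$, so that $\p(\tau^* = \mathcal{S}^x(\bt^*)) = \bp(0)(1-\bq(0)) \prod_{u \in \mathcal{S}^x(\bt)\setminus\{x\}}\bp_0(k_u(\bt),\eta_u)$, these pieces assemble exactly into the prefactor $C(\bt^*, x)$ of \eqref{dfC}, just as in the Kesten computation.

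The $d$-dependent factor at $x$ is then handled case by case. If $d = \infty$, $x$ is terminal with probability $1 - \mu(\bp)$, its mark contributes $\alpha_{\infty, x}$, and the infinitely many extra normal subtrees at $x$ sum to $1$; this produces the term $(1-\mu(\bp))\alpha_{\infty, x}$. If $k_x(\bt)+1 \le d < \infty$, $x$ is special with $d$ children (probability $d\bp(d)$), the spine continuation sits at one of the $d - k_x(\bt)$ admissible positions (probability $(d-k_x(\bt))/d$), and its mark contributes $\alpha_{d,x}$; the $d$ cancels, and the remaining normal subtrees together with the sub-condensation attached at the spine continuation are unconstrained and sum to $1$. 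Summing over $d \ge k$ and noting that $(X - k_x(\bt))_+$ vanishes for $X \le k_x(\bt)$, so that the constraint $d \ge k_x(\bt)+1$ is absorbed automatically, the finite-$d$ contributions collect into $\E[(X-k_x(\bt))_+ \alpha_{X,x} \indic_{\{X \ge k\}}]$; multiplying by $C(\bt^*, x)$ yields \eqref{exptau^*C}. The main difficulty will be this case analysis at $x$: ruling out the infinite vertex being inside any fixed subtree of $\bt$ (which uses finiteness of $\bt$), and justifying that the sub-condensation structure attached at the spine continuation contributes an aggregate factor $1$ without double counting.
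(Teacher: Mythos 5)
Your proof is correct and follows essentially the same approach as the paper's: decompose on the out-degree $d$ of $x$ in $\tau_C$, collapse each spine ancestor's $\check{\bp}(k_u(\bt))\cdot\frac{1}{k_u(\bt)}\cdot\alpha_{k_u(\bt),u}$ into $\bp_0(k_u(\bt),\eta_u)$, and compute the $x$-factor as $(1-\mu)\alpha_{\infty,x}$ plus the finite-degree sum $\sum_{d\ge k}\bp(d)\alpha_{d,x}(d-k_x(\bt))_+$. You merely make explicit the structural points the paper leaves implicit (uniqueness of the infinite vertex forcing it to be $x$ or a strict descendant at position $>k_x(\bt)$, hence the spine passing through every proper ancestor of $x$ and exiting at a new child of $x$), so the two arguments are the same in substance.
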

	\begin{proof}
		\begin{align*}
			&\p\left(\tau^{*}_C\in \T_{+}(\bt^{*},x,k)\right)=\prod_{\underset{u\neq x}{u\in \bt}}\bp(k_u(\bt))\prod_{\underset{\eta_u=1}{u\in \bt,~u\neq x}}{\bq(k_u(\bt))}\prod_{\underset{\eta_u=0}{u\in \bt,~u\neq x}}{\left(1-\bq\left(k_u(\bt)\right)\right)}\\
			&\times \left((1-\mu)\left(\bq(\infty)\eta_x+(1-\bq(\infty))(1-\eta_x)\right)\right.\\
			&\left.+\sum_{j\ge0}\alpha_{k+j,x}\check{\bp}(k+j)\frac{k+j-k_x(\bt)}{k+j}\right).
		\end{align*}
		Indeed, $x$ is a special node. Thereby, if $x$ has an infinity of children we obtain the term at the second line, otherwise we have the probability $\bp(k+j)(k+j)$ to have $k+j$ children, and we must to choose the special node knowing that it cannot belong to the $k_x(\bt)$ children from $\bt$, which give us the term $\dfrac{k+j-k_x(\bt)}{k+j}$. Thus we have:
		\begin{align*}
			\frac{\p\left(\tau^{*}_C\in \T_{+}(\bt^{*},x,k)\right)}{C(\bt^{*},x) }&=(1-\mu)\alpha_{\infty,x}+\sum_{j\ge0}\bp(k+j)\alpha_{k+j,x}(k+j-k_x(\bt))\\
			&=\left(1-\mu(\bp)\right)\alpha_{\infty,x}+\E\left[\left(X-k_x(\bt)\right)_+ \alpha_{X,x} \indic_{\{X\geq k\}}\right].
		\end{align*}
	\end{proof}
	\subsection{Convergence criterions}
	We adapt the convergence criterions seen in \cite{abraham_local_2013} and \cite{abraham_local_2014}.\smallbreak
	Let $(T_n)_{n\in\NN}$ and $T$ be $\T_{\infty}$-valued (resp. $\T$-valued) random variables. We denote by $\textrm{dist}(T)$ the distribution of the random variable $T$ (which is uniquely determined by the sequence of distributions of $r_{h,\infty}(T)$ (resp. $r_{h}(T)$) for every $h\geq 0$), and we denote :
	\[\textrm{dist}(T_n)\underset{n\to+\infty}{\longrightarrow}\textrm{dist}(T),\]
	for the convergence in distribution of the sequence $(T_n)_{n\in\NN}$ to $T$. Notice that this convergence in distribution is equivalent to the finite dimensional convergence in distribution of $(k_u(T_n))_{u\in \U}$ to $(k_u(T))_{u\in \U}$ when $n$ goes to infinity.
	\\
	We deduce from the portmanteau theorem that the sequence $(T_n)_{n\in\NN}$ converges in distribution to $T$ if and only if  for all $h\in \N$, $\bt\in \T_{\infty}^{(h)}$:
	$$\underset{n\to+\infty}{\lim}\p(r_{h,\infty}(T_n)=\bt)=\p(r_{h,\infty}(T)=\bt),$$
	$$\text{ (resp.} \underset{n\to+\infty}{\lim}\p(r_{h}(T_n)=\bt)=\p(r_{h}(T)=\bt)).$$
	From now on, we endow the sets of marked trees, with the ultra-metric distance, for  $\bt^{*},\bt'^{*}\in\T_{\infty}^{*}$ (resp. $\T^{*}$):
	$$d_\infty^{*}(\bt^{*},\bt'^{*}):=2^{-\max\{h\in\N,r_{h,\infty}^{*}(\bt^{*})=r_{h,\infty}^{*}(\bt'^{*})\}}$$
	$$\left(\text{ resp. } d^{*}(\bt^{*},\bt'^{*}):=2^{-\max\{h\in\N,r_{h}^{*}(\bt^{*})=r_{h}^{*}(\bt'^{*})\}} \right).$$
Consider the closed ball $B_\infty^{*}(\bt^{*},2^{-h})=\{\bt'^{*}\in\T_{\infty}^{*},d_\infty^{*}(\bt^{*},\bt'^{*})\leq 2^{-h}\}$ for some $\bt^{*}\in\T_{\infty}^{*}$ and $h\in\N$, we have $B_\infty^{*}(\bt^{*},2^{-h})={r_{h,\infty}^{*}}^{-1}(r_{h,\infty}^{*}(\bt^{*}))$. Since the distance is ultra-metric, the closed balls are open and the open balls are closed, and the intersection of two balls is either empty or one of them.
	We deduce that the family $(r_{h,\infty}^{*}(\bt^{*}),\bt^{*}\in {\T_{\infty}^{(h)}}^{*}, h \in\N)$ is a $\pi$-system, and Theorem 2.3 in \cite{billingsley_convergence_1999} implies that this family is convergence determining for the convergence in distribution.
	\begin{prop}
		For all $\bt^{*}\in\T_\infty^{*}$, $x\in \bt$ and $k\in\N$ we have $\T_{+}(\bt^{*},x,k)$ is closed and open.
	\end{prop}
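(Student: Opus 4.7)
The plan is to exploit the ultra-metric nature of $d_\infty^{*}$, where closed balls coincide with open balls. I would show that for $h$ large enough, membership in $\T_{+}(\bt^{*},x,k)$ depends only on the restriction $r_{h,\infty}^{*}(\bt'^{*})$, which takes only finitely many values on $\T_\infty^{*}$. This will exhibit $\T_{+}(\bt^{*},x,k)$ as a finite union of clopen balls, hence as clopen.

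First, I would pick $h:=\max(H_\infty(\bt)+1,k)$ and derive an explicit combinatorial description: a marked tree $\bt'^{*}\in\T_\infty^{*}$ belongs to $\T_{+}(\bt^{*},x,k)$ if and only if $\bt\subseteq \bt'$ with matching marks on $\bt$, $k_u(\bt')=k_u(\bt)$ for every $u\in \bt\setminus\{x\}$, and $k_x(\bt')\geq k$. Each of these conditions only refers to vertices $u$ with $|u|_\infty\leq h$ and indices $i\leq h$, because $k_u(\bt)\leq H_\infty(\bt)<h$ and $k\leq h$; so each one is determined by $r_{h,\infty}^{*}(\bt'^{*})$. This produces a set $A$ of admissible restrictions such that $\T_{+}(\bt^{*},x,k)=(r_{h,\infty}^{*})^{-1}(A)$.

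Next, the set of marked trees of norm at most $h$ is finite (finitely many vertices $u$ satisfy $|u|_\infty\leq h$, and each carries only one of two possible marks), so $A$ is finite. Therefore
$$\T_{+}(\bt^{*},x,k)=\bigcup_{\bt''^{*}\in A}(r_{h,\infty}^{*})^{-1}(\{\bt''^{*}\})=\bigcup_{\bt''^{*}\in A}B_\infty^{*}(\bt''^{*},2^{-h})$$
is a finite union of closed balls, each of which is also open by the ultra-metric property noted in the text. A finite union of clopen sets is clopen, which concludes the argument.

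The main subtlety lies in justifying the combinatorial characterization of $\T(\bt^{*},x)$ used in the first step: one must verify that if $\bt\subseteq \bt'$ and $k_u(\bt')=k_u(\bt)$ for all $u\in \bt\setminus\{x\}$, then every vertex of $\bt'\setminus \bt$ is necessarily of the form $xiw$ with $i>k_x(\bt)$, so that $\bt'^{*}=\bt^{*}\circledast_x \bt''^{*}$ for a suitable $\bt''^{*}$. This follows from a walk-up argument: the most recent ancestor in $\bt$ of any new vertex of $\bt'$ must be a node at which $\bt'$ has strictly more children than $\bt$, and by hypothesis such a node can only be $x$. Once this characterization is established, the rest of the proof is purely a finiteness-plus-topology argument.
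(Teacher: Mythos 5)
Your proof is correct, and it takes a slightly different — and arguably cleaner — route than the paper's. The paper treats the two properties separately: it proves closedness by a sequential argument (taking a convergent sequence $(\bt^{*}\circledast_x s_n^{*})_n$ and showing the limit still has the form $\bt^{*}\circledast_x s^{*}$), and proves openness by exhibiting, around each point $s^{*}$ of the set, a ball $B_\infty^{*}(s^{*},2^{-c_k})$ contained in the set. You instead make the single structural observation that membership in $\T_{+}(\bt^{*},x,k)$ is entirely determined by the restriction $r_{h,\infty}^{*}$ at a fixed level $h$, so that $\T_{+}(\bt^{*},x,k)=(r_{h,\infty}^{*})^{-1}(A)$ for a finite set $A$ of restrictions; clopenness of balls in an ultra-metric then gives both conclusions at once. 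This unifies the two halves of the paper's proof and also sidesteps a small wrinkle in the paper's sequential argument, namely the need to justify that the sequence $(s_n^{*})$ of grafted parts itself converges. One further point in your favour: your cutoff $h=\max(H_\infty(\bt)+1,k)$ is chosen with enough room to guarantee that the vertex $xk$ satisfies $|xk|_\infty\le h$, which is exactly what is needed to read off the condition $k_x(\bt')\ge k$ from the restriction; the paper's choice $c_k=\max(k,H_\infty(\bt)-1)$ for its openness ball appears marginal on this point. Your walk-up argument justifying the combinatorial characterization of $\T(\bt^{*},x)$ — the most recent ancestor in $\bt$ of a new vertex must be a node of strictly larger out-degree in $\bt'$, forcing it to be $x$ — is the key step and is carried out correctly.
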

	\begin{proof}
		
		\begin{itemize}
			\item Let $(\bt^{*}\circledast_xs_n^{*},~s_n^{*}\in\T_{\infty}^{*},~k_x(\bt\circledast_xs_n) \geq k)_{n\in\NN}$ be a sequence of $\T_{+}(\bt^{*},x,k)$ that converges to $z\in(\T_{\infty}^{*},d_\infty^{*})$.\\
			For all $n\in\NN$, $\mathcal{S}^x(\bt^{*}\circledast_xs_n^{*})=\mathcal{S}^x(\bt^{*})$, the forest that contains only the $k_x(\bt)$ first trees above $x$ on $\bt^{*}\circledast_xs_n^{*}$ equals $\F_x(\bt^{*})$ and the marking on $x$ is the same. Thereby, $z=\bt^{*}\circledast_xs^{*}$, with $s^{*}$ the limit of the sequence $(s_n^{*},n\in\NN)$. Thus $z\in\T_{+}(\bt^{*},x,k)$.\\
			$\T_{+}(\bt^{*},x,k)$ is closed.
			
			\item Let $s^{*}\in\T_{+}(\bt^{*},x,k)$ and $c_k=\max(k,H_\infty(\bt)-1)$. We have, according to the definition of $s^{*}$:
			\begin{align*}
				B_\infty^{*}(s^{*},2^{-c_k})&={r_{c_k,\infty}^{*}}^{-1}(r_{c_k,\infty}^{*}(s^{*}))\\
				&={r_{c_k,\infty}^{*}}^{-1}\bigl(r_{c_k,\infty}(s), (\eta_u^{c_k})_{u\in r_{c_k,\infty}(s)}\bigr)\subset \T_{+}(\bt^{*},x,k).
			\end{align*}
			 Thus, $\T_{+}(\bt^{*},x,k)$ is open.
		\end{itemize}
	\end{proof}
	We have the following two lemmas.\\
	According to lemma 2.1 in \cite{abraham_local_2013}:
	\begin{lem}\label{critconvT(t,x)}
		Let $(T_n^{*})_{n\in\NN}$ and $T^{*}$ be $\T^{*}$-valued random variables which belong a.s. to $\T_0^{*}\cup\T_1^{*}$. The sequence $(T_n^{*})_{n\in\NN}$ converges in distribution to $T^{*}$ if and only if for every $\bt^{*}\in \T_0^{*}\cap\T^{*}$ and every $x\in\mathcal{L}_0(\bt)$, we have:
		\begin{align}\label{critconvT}
			\underset{n\to+\infty}{\lim}& \p(T_n^{*}\in \T(\bt^{*},x))=\p(T^{*}\in\T(\bt^{*},x))\nonumber\\
			& \text{ and } \underset{n\to+\infty}{\lim} \p(T_n^{*}=\bt^{*})=\p(T^{*}=\bt^{*}).
		\end{align}
	\end{lem}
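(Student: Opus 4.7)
The plan is to adapt the proof of Lemma~2.1 of \cite{abraham_local_2013} from the unmarked to the marked setting, treating the two implications separately.

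For the \emph{only if} direction, assuming $T_n^* \to T^*$ in distribution, I would verify that both $\{\bt^*\}$ and $\T(\bt^*, x)$ are clopen subsets of $(\T^*, d^*)$ and then apply the portmanteau theorem. For $\{\bt^*\}$ with $\bt^*\in\T_0^*\cap\T^*$: take $h>H(\bt)$. Any $\bt'^*\in\T^*$ with $r_h^*(\bt'^*)=\bt^*$ has no nodes at height $h$ (since $\bt^*$ has none), hence no nodes at any height $\ge h$ either, so $\bt'^*=\bt^*$; the $2^{-h}$-ball around $\bt^*$ is thus $\{\bt^*\}$, making the singleton open (and closed, as $(\T^*,d^*)$ is Hausdorff). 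For $\T(\bt^*,x)$ with $x\in\mathcal{L}_0(\bt)$: take $h=H(\bt)+1$, so that every out-degree and mark of a node of $\bt$ is fully recorded by $r_h^*$; then $T'^*\in\T(\bt^*,x)$ is equivalent to $r_h^*(T'^*)$ lying in a countable family $F_h$ of truncated trees (those matching $\bt^*$ outside the subtree at $x$ and at the mark of $x$). Both $\T(\bt^*,x)$ and its complement are then countable unions of cylinders, hence open, so $\T(\bt^*,x)$ is clopen. Portmanteau delivers the two convergences.

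For the \emph{if} direction, since the cylinders $\{r_h^*(T^*)=\bt^*\}$ form a convergence-determining class (recalled just above), it suffices to show $\p(r_h^*(T_n^*)=\bt^*)\to\p(r_h^*(T^*)=\bt^*)$ for every truncated marked tree $\bt^*$ of height $\leq h$. I would decompose each such cylinder via the a.s.\ assumption $T^*\in\T_0^*\cup\T_1^*$ and the fact that $r_h^*$ zeros the marks at height $h$. With $L_h:=\{u\in\bt:|u|=h\}$ and a summation over mark configurations $\eta\in\{0,1\}^{L_h}$ at these leaves, the event splits into a \emph{finite part}---a disjoint union over finite supra-trees $\tilde{\bt}^*\in\T_0^*\cap\T^*$ restricting to $\bt^*$, contributing a countable sum $\sum\p(T^*=\tilde{\bt}^*)$---and a \emph{spine part}, indexed by the spine vertex $x\in L_h$, the marks $\eta$, and the finite grafts $(\tilde{\bt}_y)_{y\in L_h\setminus\{x\}}$ at the non-spine leaves. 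Writing $\bt^*_{\eta,x,(\tilde{\bt}_y)}$ for the finite marked tree built from $\bt^*$ with marks $\eta$ and the specified grafts at $y\ne x$, each spine contribution takes the form
\[
\p\bigl(T^*\in\T(\bt^*_{\eta,x,(\tilde{\bt}_y)},x)\bigr)\;-\;\sum_{\tilde{\bt}_x\text{ finite}}\p\bigl(T^*=\bt^*_{\eta,x,(\tilde{\bt}_y),\tilde{\bt}_x}\bigr),
\]
the subtraction removing the purely finite trees captured by $\T(\cdot,x)$. Term-by-term convergence of every piece holds by hypothesis, and I would then upgrade this to convergence of the total countable sum by a Scheff\'e-type argument, exploiting $\p(T_n^*\in\T_0^*\cup\T_1^*)=1$ to pin the total mass. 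The main obstacle is this spine bookkeeping: because $\T(\bt^*_{\eta},x)$ forces every $L_h$-leaf other than $x$ to be a true leaf of $T^*$, reconstructing the full cylinder requires summing $\T(\cdot,x)$-probabilities over all finite extensions at the non-spine leaves and then carefully subtracting the finite-tree parts to isolate the infinite-spine contribution; controlling the countable interchange of limits and sums is exactly where the a.s.\ restriction to $\T_0^*\cup\T_1^*$---which rules out the condensation case $\T_2$ of infinite out-degrees---does the work.
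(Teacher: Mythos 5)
The paper itself does not reproduce a proof of this lemma (it only writes ``The proof is similar as the one of Abraham and Delmas in \cite{abraham_local_2013}''), so your proposal should be assessed on its own merits. Your \emph{only if} direction is fine: $\{\bt^*\}$ and $\T(\bt^*,x)$ are indeed clopen in $(\T^*,d^*)$ (the paper even proves the analogous statement for $\T_+(\bt^*,x,k)$), and the portmanteau theorem gives the two convergences. Your decomposition of the cylinder event in the \emph{if} direction into a finite part and spine parts is also the right picture, and it is here that the a.s.\ restriction to $\T_0^*\cup\T_1^*$ is used.

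The gap is in the last step, where you ``upgrade this to convergence of the total countable sum by a Scheff\'e-type argument.'' Each spine contribution is a difference
\[
\p\bigl(T^*\in\T(\bt^*_{\eta,x,(\tilde{\bt}_y)},x)\bigr)-\sum_{\tilde{\bt}_x}\p\bigl(T^*=\bt^*_{\eta,x,(\tilde{\bt}_y),\tilde{\bt}_x}\bigr),
\]
and the subtracted term is itself an infinite series. Fatou gives $\liminf_n\sum_{\tilde\bt_x}\p(T_n^*=\cdot)\ge\sum_{\tilde\bt_x}\p(T^*=\cdot)$, which after the minus sign is an inequality in the \emph{wrong} direction; so termwise convergence of the brackets is not a consequence of the hypothesis, and a Scheff\'e argument on the singletons would require knowing $\p(T_n^*\in\T_0^*)\to\p(T^*\in\T_0^*)$, which is precisely not available ($\T_0^*$ is open but not closed, so the portmanteau theorem only gives a $\liminf$). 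As stated, then, the interchange of limit and sum is not justified. The standard way to close this is to avoid the subtraction entirely: write
\[
\{r_h^*(T^*)=\bt^*\}\cap(\T_0^*\cup\T_1^*)=\bigcup_{x\in L_h}\bigcup_{\eta,(\tilde\bt_y)_{y\ne x}}\T\bigl(\bt^*_{\eta,(\tilde\bt_y)},x\bigr)
\]
as a countable (non-disjoint) union of clopen sets. For any finite sub-union, its probability under $T_n^*$ is a finite inclusion--exclusion sum whose terms are probabilities of sets that are again of the form $\T(\cdot,\cdot)$ or singletons (or empty), hence converge by hypothesis; this gives $\liminf_n\p(r_h^*(T_n^*)=\bt^*)\ge\p(r_h^*(T^*)=\bt^*)$ for every truncated $\bt^*$ at level $h$. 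Since $\sum_{\bt^*}\p(r_h^*(T_n^*)=\bt^*)=1=\sum_{\bt^*}\p(r_h^*(T^*)=\bt^*)$, a standard Fatou/mass argument then forces equality in the limit, and the cylinders being convergence-determining concludes. So the architecture of your proof is correct, but the passage from termwise to total convergence needs to be rerouted through finite sub-unions and inclusion--exclusion rather than through Scheff\'e applied to a signed series.
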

The proof is similar as the one of Abraham and Delmas in \cite{abraham_local_2013}.
	According to Lemma 2.2 in \cite{abraham_local_2014}:
	\begin{lem}\label{critconvT(t,x,k)}
		Let $(T_n^{*})_{n\in\NN}$ and $T^{*}$ be $\T_\infty^{*}$-valued random variables which belong a.s. to $\T_0^{*}\cup\T_2^{*}$. The sequence $(T_n^{*})_{n\in\NN}$ converges in distribution to $T^{*}$ if and only if for every $\bt^{*}\in \T_0^{*}$, $x\in \bt$ and $k\in\N$, we have:
		\begin{align}\label{critconvTinf}
			\underset{n\to+\infty}{\lim}& \p(T_n^{*}\in \T_+(\bt^{*},x,k))=\p(T^{*}\in\T_+(\bt^{*},x,k))\nonumber\\
			 &\text{ and } \underset{n\to+\infty}{\lim} \p(T_n^{*}=\bt^{*})=\p(T^{*}=\bt^{*}).
		\end{align}
	\end{lem}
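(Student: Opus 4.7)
The lemma is the marked counterpart of Lemma 2.2 in \cite{abraham_local_2014}, and the proof follows the same template: we reduce to the convergence-determining $\pi$-system of balls $B_\infty^{*}(\bt_0^{*},2^{-h}) = (r_{h,\infty}^{*})^{-1}(r_{h,\infty}^{*}(\bt_0^{*}))$ highlighted just above the lemma, and relate ball probabilities to the hypothesized quantities through the a.s.\ dichotomy $T^{*}, T_n^{*} \in \T_0^{*} \cup \T_2^{*}$.

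Necessity is immediate from portmanteau: both $\T_+(\bt^{*},x,k)$ and the singleton $\{\bt^{*}\}$ (with $\bt^{*} \in \T_0^{*}$) are clopen subsets of $\T_\infty^{*}$. The former is clopen by the preceding proposition. For the latter, the ball $B_\infty^{*}(\bt^{*},2^{-h})$ collapses to $\{\bt^{*}\}$ as soon as $h \geq H_\infty(\bt)+1$: any $s$ with $r_{h,\infty}^{*}(s) = \bt^{*}$ must satisfy $k_u(s) = k_u(\bt)$ for every $u \in \bt$, because the next potential child $u(k_u(\bt)+1)$ has norm at most $h$ and would be visible in the truncation, while the tree-ordering axiom forbids inserting a single invisible child at an index $>h$ without also filling in the visible intermediate ones. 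The two convergences in~\eqref{critconvTinf} then follow.

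For sufficiency, fix a ball $B = B_\infty^{*}(\bt_0^{*},2^{-h})$; the task is to prove $\p(T_n^{*} \in B) \to \p(T^{*} \in B)$. Structurally, $s \in B$ iff $s \supseteq \bt_0^{*}$ and $s \setminus \bt_0^{*}$ sits entirely in the \emph{invisible} region at norm $>h$, so extra nodes may only hang from boundary nodes of $\bt_0^{*}$ at indices $>h$ and from their invisible descendants. The dichotomy gives $B = (B \cap \T_0^{*}) \sqcup (B \cap \T_2^{*})$ a.s. The $\T_0^{*}$-part is a countable disjoint union of singletons, each governed by the hypothesis $\p(T_n^{*} = s^{*}) \to \p(T^{*} = s^{*})$. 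The $\T_2^{*}$-part decomposes, according to the position $x$ of the unique infinite vertex and the finite marked tree $\bt^{\dagger} \in \T_0^{*}$ comprising $\bt_0^{*}$, the finite descendants added in the invisible region, and $x$ as a leaf, into disjoint events $\{T^{*} \in \T(\bt^{\dagger},x),\ k_x(T^{*}) = \infty\} = \bigcap_{k \geq 1} \T_+(\bt^{\dagger},x,k)$. By monotone convergence, the probability of each such event equals $\lim_{k \to \infty} \p(T^{*} \in \T_+(\bt^{\dagger},x,k))$, and the hypothesis transmits this limit to $T_n^{*}$.

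The main technical obstacle is that the decomposition of $B$ involves countably many singletons and countably many nested $\T_+$-pieces, so the $n$-limit does not commute with the infinite sum directly. A standard three-$\epsilon$ argument handles this: given $\epsilon>0$, select a finite subcollection (finitely many singletons plus a large cutoff $k_0$ for the $\T_+$-pieces) whose union $\widehat B \subset B$ captures $\p(T^{*} \in B)$ up to $\epsilon$; the hypothesis yields $\p(T_n^{*} \in \widehat B) \to \p(T^{*} \in \widehat B)$ since $\widehat B$ is a finite combination of clopen sets, and a matching upper bound on $\p(T_n^{*} \in B \setminus \widehat B)$ follows by applying the hypothesis to a slightly larger clopen set of small $T^{*}$-mass. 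Letting first $n \to \infty$ and then $\epsilon \to 0$ gives the ball convergence, and hence convergence in distribution by the $\pi$-system argument.
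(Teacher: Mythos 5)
The paper itself does not prove this lemma; it simply points to Lemma~2.2 of Abraham--Delmas~(2014) and says the proof is similar. So your proposal has to be judged on its own merits, and your general template (reduce to the convergence-determining family of balls, split each ball by the a.s.\ dichotomy $\T_0^{*}\cup\T_2^{*}$, express the $\T_2^{*}$-part as a countable disjoint union of nested $\T_+$-intersections, then approximate by a finite sub-union) is indeed the expected one. Your necessity argument is correct: $\T_+(\bt^{*},x,k)$ is clopen by the preceding proposition, and your observation that the singleton $\{\bt^{*}\}$ coincides with the ball $B_\infty^{*}(\bt^{*},2^{-h})$ for $h\ge H_\infty(\bt)+1$ is right, so both families of events are continuity sets.

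There are, however, two problems in the sufficiency direction. First, a minor one: the infinite vertex $x$ of a tree in $B\cap\T_2^{*}$ need not be a leaf of the finite tree $\bt^{\dagger}=\mathcal{S}^{x}(\cdot)$ you want to graft on; when $x\in\bt_0$ with $k_x(\bt_0)=h$ it already has $h$ \emph{visible} children, and the decomposition must then use a $\bt^{\dagger}$ in which $x$ has out-degree $k_x(\bt^{\dagger})\ge h$. This is fixable (the $\T_+$-events are defined for arbitrary $x\in\bt$), but your statement that ``$x$ as a leaf'' is part of the parametrization is not quite right.

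The second problem is a genuine gap in the concluding three-$\epsilon$ step. You write that ``a matching upper bound on $\p(T_n^{*}\in B\setminus\widehat B)$ follows by applying the hypothesis to a slightly larger clopen set of small $T^{*}$-mass.'' There is no such set available: the hypotheses give convergence only for singletons $\{\bt^{*}\}$ with $\bt^{*}\in\T_0^{*}$ and for the events $\T_+(\bt^{*},x,k)$, and no finite Boolean combination of these (the only ones on which your hypothesis can act) simultaneously contains the uncovered tail $\bigcup_{i\notin I}A_i$ and has small $T^{*}$-probability. (Intersecting a complement of $\widehat B$ with $B$ would require the very ball-convergence you are trying to establish.) The standard and correct finish is to prove only the lower bound $\liminf_n \p(T_n^{*}\in B)\ge\p(T^{*}\in B)$ from $\widehat B$ — which your argument does deliver, once one checks that for a fixed finite subcollection the cutoff $k_0$ can be chosen large enough to make the chosen $\T_+$-pieces pairwise disjoint and disjoint from the chosen singletons — and then to obtain the matching upper bound automatically from the fact that, at each level $h$, the balls $(r_{h,\infty}^{*})^{-1}(r_{h,\infty}^{*}(\bt_0^{*}))$ form a countable partition of $\T_\infty^{*}$: since $\sum_{\bt_0}\p(T_n^{*}\in B(\bt_0))=1=\sum_{\bt_0}\p(T^{*}\in B(\bt_0))$, Fatou's lemma applied across this partition converts the family of $\liminf$-inequalities into the family of $\limsup$-inequalities, hence into convergence for every ball. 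This last step is the piece your proposal is missing.
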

The proof is similar as the one of Abraham and Delmas in \cite{abraham_local_2014}, see the survey.

	\section{Conditioning on the marks}\label{markcond}
	\subsection{Change of offspring distribution and conditioning}\label{resultcond}
	Let $\bp$ be a distribution on $\N$ satisfying \eqref{condp}, $X$ be a random variable with distribution $\bp$ and $\bq$ let be a mark function satisfying \eqref{condq}. Let $\tau^{*}$ be a MGW with offspring distribution $\bp$ and mark function $\bq$.
	\medbreak
	For every $\theta >0$,
	\begin{equation}
		\forall k \geq 0, ~\bp_\theta(k):=\theta^{k-1} \bp(k) \left(c_\theta \bq(k)+1-\bq(k)\right),
	\end{equation}
	where the normalizing constant $c_\theta$ is given by:
	\begin{equation}
		c_\theta:=\dfrac{1-\E\left[\theta^{X-1}(1-\bq(X))\right]}{\E\left[\theta^{X-1}\bq(X)\right]}.
	\end{equation}
	We denote by $I$ the set of $\theta$ such that $\bp_\theta$ defines a probability distribution on $\N$.
	We have $\theta\in I$ if and only if $\theta >0$ and:
	\begin{align}\label{condtheta}
			g(\theta)=\E[\theta^X]<+\infty\text{ and }  l(\theta)=\E[\theta^X(1-\bq(X))] < \theta.
	\end{align}
	Moreover, as $\bp$ satisfies \eqref{condp}, we can prove that for all $\theta\in I$, $\bp_\theta$ is an offspring distribution satisfying the first two conditions of \eqref{condp}. Indeed:
		\begin{itemize}
			\item [$\bullet$] as $c_\theta>0$, we have $ \bp_\theta(0)=\frac{1}{\theta}\bp(0)\left(c_\theta \bq(0)+1-\bq(0)\right) >0$;
			\item [$\bullet$] 
			there exists $k>1$ such that $\bp(k)>0$. Thereby, $\bp_\theta(k)>0$ and $\bp_\theta(0)+\bp_\theta(1)<1$.
	\end{itemize}
	The generating function $g_\theta$ of $\bp_\theta$ is given by:
	\begin{equation}\label{gtheta}
		g_\theta(z):=\frac{1}{\theta}\E\left[(z\theta)^X\left(c_\theta \bq(X)+1-\bq(X)\right)\right].
	\end{equation}
		We define also a modified mark function $\bq_{\theta}$ on $\mathbb N$ by:
	\begin{equation}\label{condqtheta}
			\bq_\theta(k)= \dfrac{c_\theta \bq(k)}{c_\theta \bq(k)+1-\bq(k)},
	\end{equation}
	and note that $\bq_\theta$ satisfies \eqref{condq}, and for all $k\in\mathbb N$:
	\begin{align}
		&	\bp_\theta(k)\bq_\theta(k)= c_\theta \theta^{k-1}\bp(k)\bq(k),\label{condqthetabis}\\
	&\bp_\theta(k)(1-\bq_\theta(k)) =\theta^{k-1}\bp(k)(1-\bq(k)).\label{condqthetater}
	\end{align}
Moreover, we have, for all $k\in\N$:
\[\bp_\theta(k)\bq_{\theta}(k)>0\Leftrightarrow\bp(k)\bq(k)>0.\]
Then, throughout the rest of the article, $\tau^*(\bp_\theta,\bq_\theta)$ (noted $\tau_\theta^*$) denotes a MGW with offspring distribution $\bp_\theta$ and mark function $\bq_\theta$.\\	
The introduction of these modified distribution is motivated by the following proposition:
	\begin{prop}\label{loicondid}
		Let $\tau^{*}$ a sub-critical MGW with offspring distribution $\bp$ satisfying \eqref{condp} and $\bq$ its mark function satisfying \eqref{condq}. For every $\theta\in I$, 
		the conditional distributions of $\tau^{*}$ given $\{M(\tau^*)=n\}$ and of $\tau_\theta^*$ given $\{ M(\tau_\theta^*)=n\}$ are the same.
	\end{prop}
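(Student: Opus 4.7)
The plan is to establish the result by showing that $\p(\tau_\theta^*=\bt^*)$ and $\p(\tau^*=\bt^*)$ differ, on the event $\{M(\bt^*)=n\}$, by a multiplicative constant depending only on $n$, so that after conditioning on $\{M=n\}$ the two distributions coincide.

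First I would apply the explicit formula \eqref{probaTmark} for both MGW distributions:
\begin{equation*}
\p(\tau^*=\bt^*)=\prod_{u\in\bt}\bp(k_u(\bt))\prod_{\substack{u\in\bt\\ \eta_u=1}}\bq(k_u(\bt))\prod_{\substack{u\in\bt\\ \eta_u=0}}(1-\bq(k_u(\bt))),
\end{equation*}
and similarly for $\tau_\theta^*$ with $(\bp_\theta,\bq_\theta)$ in place of $(\bp,\bq)$. The key inputs are the two identities \eqref{condqthetabis} and \eqref{condqthetater}, which rewrite each vertex factor as
\[
\bp_\theta(k)\bq_\theta(k)=c_\theta\,\theta^{k-1}\bp(k)\bq(k),\qquad \bp_\theta(k)(1-\bq_\theta(k))=\theta^{k-1}\bp(k)(1-\bq(k)).
\]

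Next I would take the product of these identities over $u\in\bt$, grouping marked vertices (each bringing an extra $c_\theta$) and unmarked ones. This gives
\[
\p(\tau_\theta^*=\bt^*)=c_\theta^{M(\bt^*)}\,\theta^{\sum_{u\in\bt}(k_u(\bt)-1)}\,\p(\tau^*=\bt^*).
\]
The exponent of $\theta$ equals $\sum_{u\in\bt}k_u(\bt)-|\bt|$, which by the elementary tree identity \eqref{sumku} is exactly $-1$. So the Radon--Nikodym density reduces to
\[
\p(\tau_\theta^*=\bt^*)=\frac{c_\theta^{M(\bt^*)}}{\theta}\,\p(\tau^*=\bt^*),
\]
a density that depends on $\bt^*$ only through $M(\bt^*)$.

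Finally, summing this identity over all $\bt^*\in\T_0^*$ with $M(\bt^*)=n$ gives $\p(M(\tau_\theta^*)=n)=(c_\theta^n/\theta)\,\p(M(\tau^*)=n)$, and dividing the two equalities yields
\[
\p(\tau_\theta^*=\bt^*\mid M(\tau_\theta^*)=n)=\p(\tau^*=\bt^*\mid M(\tau^*)=n),
\]
which is the claim. There is essentially no serious obstacle: the proof is bookkeeping, and the only substantive point is that thanks to \eqref{sumku} the $\theta$-exponent collapses to a constant, so that the tilt by $\theta$ is absorbed in the normalization while the tilt by $c_\theta$ only depends on $n$. One should also note in passing that $\p(M(\tau^*)=n)>0$ for $n$ large enough under assumption \eqref{condq}, which makes the conditioning well-defined.
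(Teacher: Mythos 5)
Your proposal is correct and is essentially identical to the paper's own proof: both apply \eqref{probaTmark} together with \eqref{condqthetabis}, \eqref{condqthetater}, and \eqref{sumku} to derive $\p(\tau_\theta^*=\bt^*)=\theta^{-1}c_\theta^{M(\bt^*)}\p(\tau^*=\bt^*)$, then sum over $\{M(\bt^*)=n\}$ and divide.
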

	\begin{proof}
		For all $\bt^{*}\in\T_0^{*}$, using \eqref{condqthetabis}, \eqref{condqthetater} and \eqref{sumku}:
			\begin{align*}
			\p(\tau_\theta^*=\bt^{*})&=\prod_{\underset{\eta_u=1}{u\in \bt,}} \bp_\theta (k_u(\bt))\bq_\theta(k_u(\bt))\prod_{\underset{\eta_u=0}{u\in \bt,}} \bp_\theta (k_u(\bt))(1-\bq_\theta(k_u(\bt)))\\
			&=\prod_{{u\in \bt}} \theta^{k_u(\bt)-1}\bp(k_u(\bt))\prod_{\underset{\eta_u=1}{u\in \bt,}}c_\theta \bq(k_u(\bt))\prod_{\underset{\eta_u=0}{u\in \bt,}}(1-\bq(k_u(\bt)))\\
			&=\theta^{-1}{c_\theta}^{M(\bt^*)}\p(\tau^{*}=\bt^{*}).
		\end{align*}
		
		We deduce that
		\begin{align*}
			\p( M(\tau_\theta^*)=n)&=\sum_{\underset{M(\bt^*)=n}{\bt^{*}\in \T_0^{*}}} \p(\tau_\theta^*=\bt^{*})=\sum_{\underset{M(\bt^*)=n}{\bt^{*}\in \T_0^{*}}}  \theta^{-1}{c_\theta}^{M(\bt^*)}\p(\tau^{*}=\bt^{*})
			\\&=\theta^{-1}{c_\theta}^{n}\sum_{\underset{M(\bt^*)=n}{\bt^{*}\in \T_0^{*}}}  \p(\tau^{*}=\bt^{*})
			=\theta^{-1}{c_\theta}^{n}\p(M(\tau^*)=n).
		\end{align*}
		
		Finally, for every marked tree $\bt^{*}\in\T_0^{*}$, such that $M(\bt^*)=n$, we have:
		\begin{align*}
			\p(\tau_\theta^*=\bt^{*}| M(\tau_\theta^*)=n)&=\dfrac{\p(\tau_\theta^*=\bt^{*})}{\p( M(\tau_\theta^*)=n)}\\
			&=\dfrac{\p(\tau^{*}=\bt^{*})}{\p(M(\tau^*)=n)} = \p(\tau^{*}=\bt^{*}|M(\tau^*)=n).
		\end{align*}
	\end{proof}

	\subsection{Generic and non-generic distributions}\label{gennongencase}
	Here, we consider sub-critical or  critical $\bp$ and note that the mean of $\bp_\theta$ is equal to: 
	\begin{equation}\label{muptheta}
		\mu(\bp_\theta)=\E\left[X\theta^{X-1}\left(c_\theta \bq(X)+1-\bq(X)\right)\right].
	\end{equation}
	 If there exists $\theta\in I$ such that:
		\begin{equation}\label{mupthetaeq1}
			\mu(\bp_\theta)=1,
		\end{equation}
		$\bp$ is said generic and non-generic otherwise. \\
		The aim of this section is to find criteria for determining whether $\bp$ is generic or not.
			\begin{rqqq}
		The results of this section have been obtained in the particular case $\bq(k)=\indic_{k\in A}$ with $A\subset\N$ in \cite{abraham_local_2014}.
	\end{rqqq}	
	\begin{prop}
		The following composition rule holds:
		for all $\theta \in I$ and for all $\ell$ such that $\theta\ell \in I$, we  have :
		\begin{equation}\label{comprul}
			\bp_{\theta \ell} = (\bp_\theta)_\ell.
		\end{equation}
	\end{prop}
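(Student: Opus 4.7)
The plan is to unfold both sides by the definition \eqref{ptheta} and reduce the composition to an algebraic identity. Let $Y$ be a random variable with distribution $\bp_\theta$. Applying \eqref{ptheta} to the pair $(\bp_\theta,\bq_\theta)$, with a normalizing constant we shall denote $c'_\ell$, gives
\[
(\bp_\theta)_\ell(k)=\ell^{k-1}\bp_\theta(k)\bigl(c'_\ell \bq_\theta(k)+1-\bq_\theta(k)\bigr),
\qquad
c'_\ell=\frac{1-\E[\ell^{Y-1}(1-\bq_\theta(Y))]}{\E[\ell^{Y-1}\bq_\theta(Y)]}.
\]
So the first step is to identify $c'_\ell$ in terms of quantities related to $\bp$ and $\bq$.

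The key algebraic identities are the two relations \eqref{condqthetabis} and \eqref{condqthetater}, which say that $\bp_\theta(k)\bq_\theta(k)=c_\theta\theta^{k-1}\bp(k)\bq(k)$ and $\bp_\theta(k)(1-\bq_\theta(k))=\theta^{k-1}\bp(k)(1-\bq(k))$. Injecting these into the expectations appearing in $c'_\ell$ collapses them to expectations against $\bp$ at parameter $\theta\ell$:
\[
\E[\ell^{Y-1}\bq_\theta(Y)]=c_\theta\,\E[(\theta\ell)^{X-1}\bq(X)],
\qquad
\E[\ell^{Y-1}(1-\bq_\theta(Y))]=\E[(\theta\ell)^{X-1}(1-\bq(X))].
\]
Recognising the numerator and denominator of \eqref{ctheta} evaluated at $\theta\ell$, this immediately yields $c'_\ell=c_{\theta\ell}/c_\theta$.

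For the second step, I substitute this value into the definition of $(\bp_\theta)_\ell(k)$ and use once more \eqref{condqthetabis}--\eqref{condqthetater} (equivalently, the pointwise identities $\bq_\theta(k)(c_\theta\bq(k)+1-\bq(k))=c_\theta\bq(k)$ and $(1-\bq_\theta(k))(c_\theta\bq(k)+1-\bq(k))=1-\bq(k)$), which telescope the product $(c_\theta\bq(k)+1-\bq(k))\bigl(\tfrac{c_{\theta\ell}}{c_\theta}\bq_\theta(k)+1-\bq_\theta(k)\bigr)$ down to $c_{\theta\ell}\bq(k)+1-\bq(k)$. Combined with $\theta^{k-1}\ell^{k-1}=(\theta\ell)^{k-1}$, this yields exactly $\bp_{\theta\ell}(k)$ from \eqref{ptheta}.

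The only subtle point is checking that $\theta\ell\in I$ guarantees $c'_\ell$ is well-defined and that the expectations converge — but this is exactly the content of the assumption $\theta\ell\in I$ via \eqref{condtheta}, so no integrability obstacle arises. The ``hard part,'' if any, is just avoiding bookkeeping errors in the two algebraic simplifications; both reduce to the linear identity $\bq_\theta\cdot(c_\theta\bq+1-\bq)=c_\theta\bq$, which is the defining property of $\bq_\theta$ in \eqref{qtheta}.
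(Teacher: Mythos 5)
Your proposal is correct and follows essentially the same route as the paper: both unfold $(\bp_\theta)_\ell$ via \eqref{ptheta}, use the identities \eqref{condqthetabis}--\eqref{condqthetater} to rewrite the two expectations in the intermediate normalizing constant as expectations against $\bp$ at parameter $\theta\ell$, and conclude that the product of constants equals $c_{\theta\ell}$. The only cosmetic difference is that you isolate $c'_\ell=c_{\theta\ell}/c_\theta$ before substituting, whereas the paper first rewrites $(\bp_\theta)_\ell(k)$ in the form $(\theta\ell)^{k-1}\bp(k)(\tilde c_\ell c_\theta\bq(k)+1-\bq(k))$ and then checks $\tilde c_\ell c_\theta=c_{\theta\ell}$.
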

	\begin{proof}
		Let $\theta \in I$ and $\ell\in\mathbb{R}$ such that $\theta \ell \in I$. We consider $X$ a random variable with distribution $\bp$ and its mark function $\bq$, and $\bar{X}$ a random variable with distribution $\bp_\theta$ and its mark function $\bq_\theta$.
		\smallbreak
		For all $k\in\N$ we have:
		\[\bp_{\theta \ell}(k)=(\theta \ell)^{k-1}\bp(k)\left(c_{\theta \ell}\bq(k)+1-\bq(k)\right),~~c_{\theta \ell}=\dfrac{1-\E\left[(\theta \ell)^{X-1}(1-\bq(X))\right]}{\E\left[(\theta \ell)^{X-1}\bq(X)\right]},\]
		and 
		\[(\bp_\theta)_\ell(k)=\ell^{k-1}\bp_\theta (k) \left(\tilde{c}_\ell \bq_\theta(k)+1-\bq_\theta(k)\right),~~ \tilde{c}_{ \ell}=\dfrac{1-\E\left[ \ell^{\bar{X}-1}(1-\bq_\theta(\bar{X}))\right]}{\E\left[ \ell^{\bar{X}-1}\bq_\theta(\bar{X})\right]}.\]
		Using again \eqref{condqthetabis} and  \eqref{condqthetabis}, we have:
		\begin{align*}
			(\bp_\theta)_\ell(k)
			&=(\theta \ell)^{k-1}\bp(k)\left(\tilde{c}_\ell c_{\theta}\bq(k)+1-\bq(k)\right).
		\end{align*}
		We have our result if $\tilde{c}_\ell c_{\theta}=c_{\theta \ell}$, which is true as:
		\begin{align*}
			\E\left[ \ell^{\bar{X}-1}(1-\bq_\theta(\bar{X}))\right]
			&=\overset{+\infty}{\underset{k=0}{\sum}}(\theta \ell)^{k-1}\bp(k)(1-\bq(k))=\E\left[(\theta \ell)^{X-1}(1-\bq(X))\right],\\
			\E\left[ \ell^{\bar{X}-1}\bq_\theta(\bar{X})\right]
			& =c_\theta \overset{+\infty}{\underset{k=0}{\sum}}(\theta \ell)^{k-1}\bp(k)\bq(k)=c_\theta\E\left[(\theta \ell)^{X-1}\bq(X)\right].
		\end{align*}
	\end{proof}
		Notice that $I$ is an interval of $(0,+\infty)$ which contains 1 and let:
	\begin{equation}\label{theta*}
		\theta_s:=\sup{I} \in [1,\rho(\bp)].
	\end{equation}
	with $\rho(\bp)$ the radius of convergence of $g$, the generating function of \eqref{condtheta} and we define the function $l$ by:
	\begin{equation}\label{functionl}
		l(\theta)=\E\left[\theta^X(1-\bq(X))\right],
	\end{equation}
	and note that its radius of convergence $\rho_l(\bp,\bq)$ satisfies $\rho(\bp)\leq\rho_l(\bp,\bq)$.
	\begin{lem}\label{derivmuptpos}
		Let $\bp$ be a distribution on $\N$ satisfying the two first conditions of \eqref{condp} and $\bq$ satisfying \eqref{condq}. For $\theta \in \mathring I$, if $\mu(\bp_\theta)\leq 1$, we have:
		\begin{equation*}
			\dfrac{\mathrm{d}}{\mathrm{d}\theta}\mu(\bp_\theta)>0 .
		\end{equation*}
	\end{lem}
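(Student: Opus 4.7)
The plan is to avoid computing $c_\theta'$ by a quotient rule and instead differentiate the normalization identity $\sum_k \bp_\theta(k)=1$. Writing
\[
a(\theta)=\E\bigl[\theta^{X-1}\bq(X)\bigr],\qquad b(\theta)=\E\bigl[\theta^{X-1}(1-\bq(X))\bigr],
\]
the normalization reads $c_\theta a(\theta)+b(\theta)=1$. I differentiate directly the relation $1=\sum_k\theta^{k-1}\bp(k)(c_\theta\bq(k)+1-\bq(k))$ with respect to $\theta$, rewrite the resulting sum in terms of $\bp_\theta$, and read off
\[
c_\theta'\,a(\theta)\;=\;-\theta^{-1}\bigl(\mu(\bp_\theta)-1\bigr),
\qquad\text{i.e.}\qquad c_\theta'=\frac{1-\mu(\bp_\theta)}{\theta\,a(\theta)}.
\]
This is the main computational point, and it is where the hypothesis $\mu(\bp_\theta)\le 1$ will eventually enter, via the sign of $c_\theta'$.

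Next I differentiate $\mu(\bp_\theta)=\sum_k k\theta^{k-1}\bp(k)(c_\theta\bq(k)+1-\bq(k))$ by the same device, recognizing once again that $\sum_k k(k-1)\theta^{k-2}\bp(k)(c_\theta\bq(k)+1-\bq(k))=\theta^{-1}\E[\bar X(\bar X-1)]$ with $\bar X\sim\bp_\theta$. Substituting the expression for $c_\theta'$ obtained in the first step, and setting $A(\theta)=\E[X\theta^{X-1}\bq(X)]$, I expect to land on the clean identity
\[
\frac{\mathrm d}{\mathrm d\theta}\mu(\bp_\theta)
\;=\;\frac{1}{\theta}\,\E[\bar X(\bar X-1)]\;+\;\frac{(1-\mu(\bp_\theta))\,A(\theta)}{\theta\,a(\theta)}.
\]

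To conclude, I check positivity of each term. Since $\theta>0$, the first term has the sign of $\E[\bar X(\bar X-1)]$; because $\bp_\theta$ satisfies the first two conditions of \eqref{condp} (this was established earlier in Subsection \ref{resultcond}), there exists $k\ge 2$ with $\bp_\theta(k)>0$, so $\E[\bar X(\bar X-1)]>0$. For the second term, $a(\theta)>0$ and $A(\theta)>0$ follow from assumption \eqref{condq} (some $k\ge 1$ with $\bp(k)\bq(k)>0$), while $1-\mu(\bp_\theta)\ge 0$ is precisely the hypothesis. Hence the derivative is strictly positive.

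The only subtle point is checking that all the differentiations are legitimate on $\mathring I$: since $\theta\in\mathring I$, both series $g(\theta)=\E[\theta^X]$ and $l(\theta)=\E[\theta^X(1-\bq(X))]$ lie strictly inside their radius of convergence (recall $\rho(\bp)\le\rho_l(\bp,\bq)$), so termwise differentiation of $a(\theta)$, $b(\theta)$, $A(\theta)$ and the sums defining $\mu(\bp_\theta)$ is justified. Everything else is a bookkeeping calculation, and no further analytic difficulty is expected.
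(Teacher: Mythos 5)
Your proof is correct, and it takes a genuinely different route from the paper. The paper first invokes the composition rule $\bp_{\theta\ell}=(\bp_\theta)_\ell$ to reduce the statement to $\theta=1$, then rewrites $\mu(\bp_\theta)-\E[X]$ as a quotient $f(\theta)/\E[\theta^X\bq(X)]$ with $f(1)=0$ and differentiates $f$ at $1$ by hand. You instead differentiate the normalization identity $\sum_k\bp_\theta(k)=1$ to read off
\[
c_\theta'=\frac{1-\mu(\bp_\theta)}{\theta\,a(\theta)},\qquad a(\theta)=\E\bigl[\theta^{X-1}\bq(X)\bigr],
\]
and then plug this into $\frac{\mathrm d}{\mathrm d\theta}\mu(\bp_\theta)$ to obtain the closed form
\[
\frac{\mathrm d}{\mathrm d\theta}\mu(\bp_\theta)
=\frac{1}{\theta}\,\E\bigl[\bar X(\bar X-1)\bigr]+\frac{\bigl(1-\mu(\bp_\theta)\bigr)\E\bigl[X\theta^{X-1}\bq(X)\bigr]}{\theta\,a(\theta)},\qquad \bar X\sim\bp_\theta,
\]
which is manifestly positive under the hypotheses (the first summand is positive because $\bp_\theta$ inherits $\bp_\theta(0)+\bp_\theta(1)<1$, the second is nonnegative because $\mu(\bp_\theta)\le 1$ and $a,A>0$ by \eqref{condq}). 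Both arguments compute the same quantity -- specializing your identity to $\theta=1$, where $\bp_1=\bp$ and $c_1=1$, gives back exactly the paper's $\frac{\E[X\bq(X)]}{\E[\bq(X)]}(1-\E[X])+\E[X(X-1)]$ -- but yours is cleaner: it avoids the reduction step and the quotient-rule bookkeeping, and gives a formula valid uniformly on $\mathring I$ rather than one that must be transported by the composition rule. Your check that all termwise differentiations are legitimate for $\theta\in\mathring I$ (so $\theta<\rho(\bp)$, hence $g$, $l$, and the second-moment series converge absolutely with room to spare) is the right thing to verify and closes the argument.
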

	\begin{proof}
		Since $\bp$ satisfies the two first conditions of \eqref{condp}, then $\bp_\theta$ satisfies the two first conditions of \eqref{condp} for all $\theta \in I$ such that $\theta<\theta_s$ (see Sub-section \ref{resultcond}). Thanks to the composition rule \eqref{comprul}, it is enough to prove that $\dfrac{\text{d}}{\text{d}\theta}\mu(\bp_\theta)>0$ at $\theta=1$ if $\mu(\bp)\leq 1$, with $\bp$ satisfying \eqref{condp} and $\rho(\bp)>1$. 
		\medbreak
	
		For $\kappa\in I$, we have:
		\begin{align*}
			\dfrac{\text{d}}{\text{d}\kappa}\mu(\bp_\kappa)&=\lim_{h\rightarrow0}\dfrac{\mu(\bp_{\kappa+h})-\mu(\bp_\kappa)}{h}=\lim_{h\rightarrow0}\dfrac{\mu(\bp_{\kappa(1+\frac{h}{\kappa})})-\mu(\bp_\kappa)}{h}\\
			&=\lim_{h\rightarrow0}\dfrac{\mu((\bp_\kappa)_{(1+\frac{h}{\kappa})})-\mu(\bp_\kappa)}{h}=\kappa^{-1}\left(\dfrac{\text{d}}{\text{d}\eta}\mu\left((\bp_\kappa)_\eta\right)\right)_{\mid_{\eta=1}}.
		\end{align*}	
	Therefore, we assume that if $\mu(\bp)\leq 1$, then:
	\begin{equation}\label{comprul1}
	\left(\dfrac{\text{d}}{\text{d}\theta}\mu(\bp_\theta)\right)_{\mid_{\theta=1}}>0. 
\end{equation}
		We apply the result in \eqref{comprul1}, with $\theta=\eta$ and $\bp=\bp_\kappa$.
		\smallbreak
		We recall that, $c_\theta=\dfrac{1-\E\left[\theta^{X-1}(1-\bq(X))\right]}{\E\left[\theta^{X-1}\bq(X)\right]}$.
		\smallbreak
		Let $\theta \in I$. To make the calculations easier, we write the following formula:
		\begin{align*}
		\mu(\bp_\theta)-\E\left[X\right]
		&=:\frac{\E\left[X\theta^X\bq(X)\right]d(\theta)+\E\left[\theta^X\bq(X)\right]h(\theta)}{\E\left[\theta^X \bq(X)\right]}=:\dfrac{f(\theta)}{\E\left[\theta^X \bq(X)\right]},
		\end{align*}
		where $d(\theta)=1-\E\left[\theta^{X-1}\right]$ and $h(\theta)=\E\left[X\theta^{X-1}\right]-\E\left[X\right]$. \\
		Moreover, we have $f(1)=d(1)=h(1)=0$ and the functions $f,d$ and $h$ are of class $\mathcal{C}^{\infty}$ on $[0,\rho(\bp))$ (we recall that $\rho(\bp)<\rho_l(\bp,\bq)$). Notice that 
		\begin{align*}
		\left(\dfrac{\text{d}}{\text{d}\theta}\mu(\bp_\theta)\right)_{\mid_{\theta=1}}&=\dfrac{f'(1)}{\E\left[\bq(X)\right]}=\frac{\E\left[X\bq(X)\right]d'(1)+\E\left[\bq(X)\right]h'(1)}{\E\left[\bq(X)\right]}\\
		&=\frac{\E\left[X\bq(X)\right]}{\E\left[\bq(X)\right]}(1-\E\left[X\right])+\E\left[X(X-1)\right].
		\end{align*}
		Since  $\E\left[X\right]\leq 1$ and $\E[X(X-1)]>0$ by assumption \eqref{condp}, we deduce that $f'(1)>0$, which concludes our demonstration.
	\end{proof}
	\begin{lem}\label{exthetacrit}
		Let $\bp$ be a sub-critical distribution on $\N$ satisfying \eqref{condp} and $\bq$ the associated mark function satisfying \eqref{condq}. Equation \eqref{mupthetaeq1}
		\begin{equation*}
			\mu(\bp_\theta)=1
		\end{equation*}
		has at most one solution in $I$, denoted by $\theta_c$ in case of existence. If there is no solution to \eqref{mupthetaeq1}, then we have $\theta_s$ belongs to $I$ and $\mu(\bp_{\theta_s})<1$. 
	\end{lem}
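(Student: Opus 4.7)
The plan pivots on Lemma~\ref{derivmuptpos}, which says that $\theta\mapsto\mu(\bp_\theta)$ has a strictly positive derivative at every $\theta\in\mathring I$ where $\mu(\bp_\theta)\le 1$. This monotonicity constraint wherever the function lies below the critical value $1$ is the only ingredient I need, and both halves of the statement follow by suitable uses of it combined with continuity/IVT.

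For the uniqueness assertion I argue by contradiction: suppose $\theta_1<\theta_2$ are two solutions of $\mu(\bp_\theta)=1$ in $I$. At $\theta_1\in\mathring I$, Lemma~\ref{derivmuptpos} forces $\frac{d}{d\theta}\mu(\bp_\theta)\big|_{\theta_1}>0$, so $\mu(\bp_\theta)>1$ immediately to the right of $\theta_1$. Setting $\theta^{*}:=\inf\{\theta\in(\theta_1,\theta_2]:\mu(\bp_\theta)=1\}$, by continuity $\mu(\bp_{\theta^{*}})=1$ and $\mu(\bp_\theta)>1$ on $(\theta_1,\theta^{*})$, so the left derivative of $\mu(\bp_\theta)$ at $\theta^{*}$ is $\le 0$, contradicting Lemma~\ref{derivmuptpos} directly when $\theta^{*}\in\mathring I$ and, at the boundary value $\theta^{*}=\theta_s$, by a one-sided limiting version obtained by approaching $\theta_s$ from the left.

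For the dichotomy, the function $\theta\mapsto\mu(\bp_\theta)$ is continuous on the interval $I$ with $\mu(\bp_1)=\mu(\bp)<1$, so the intermediate value theorem immediately furnishes a solution in $I$ as soon as $\mu(\bp_\theta)\ge 1$ somewhere on $I$. Assuming no solution exists, $\mu(\bp_\theta)<1$ throughout $I$, and it remains only to prove $\theta_s\in I$; from this $\mu(\bp_{\theta_s})<1$ follows, because $\mu(\bp_{\theta_s})=1$ would itself be a solution.

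To prove $\theta_s\in I$, I argue by contradiction using the decomposition $\mu(\bp_\theta)=l'(\theta)+c_\theta\,\E[X\theta^{X-1}\bq(X)]$ with $c_\theta=(1-\E[\theta^{X-1}(1-\bq(X))])/\E[\theta^{X-1}\bq(X)]$. There are two ways $\theta_s$ can fail to belong to $I$. If $l(\theta_s)=\theta_s$ with $g(\theta_s)<\infty$, then $c_\theta\to 0$ while $\E[X\theta^{X-1}\bq(X)]$ stays bounded, so $\mu(\bp_\theta)\to l'(\theta_s^{-})$; convexity of the power series $l$ together with $l(1)=1-\E[\bq(X)]<1\le \theta_s$ and $l(\theta_s)=\theta_s$ forces the chord slope from $1$ to $\theta_s$, and hence $l'(\theta_s^{-})$, to be strictly greater than $1$, yielding a solution inside $I$ by the IVT. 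If instead $g(\theta_s)=+\infty$, then $\E[\theta^{X-1}\bq(X)]\to+\infty$ while $\E[\theta^{X-1}(1-\bq(X))]$ stays bounded below $1$, and a weighted-mean argument shows that $\E[X\theta^{X-1}\bq(X)]/\E[\theta^{X-1}\bq(X)]\to+\infty$, so $\mu(\bp_\theta)\to+\infty$, again a contradiction. The main obstacle is this second subcase: one has to pin down the divergence of the biased mean near the radius of convergence, which ultimately rests on the observation that when $g(\theta_s)=+\infty$ the mark function $\bq$ cannot have bounded support (otherwise the constraint $l(\theta)<\theta$ would already fail before $\rho(\bp)$, so $\theta_s$ would be placed by $l$ rather than by $g$).
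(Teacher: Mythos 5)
Your uniqueness argument matches the paper's: both rest entirely on Lemma~\ref{derivmuptpos}, which forbids $\mu(\bp_\cdot)$ from crossing $1$ downward. For the harder claim that $\theta_s\in I$ when no solution exists, however, you take a genuinely different route. The paper argues globally: under the no-solution hypothesis $\mu(\bp_\theta)<1$ on all of $I$, and since $c_\theta>1$ on $J:=\{\theta:g(\theta)<\theta\}$ this forces $g'(\theta)<1$ on $J\cap I$, hence $\theta\mapsto g(\theta)-\theta$ is decreasing there, so $I\cap(1,+\infty)\subset\bar J$, $g(\theta_s)\le\theta_s$, and then $l(\theta_s)<g(\theta_s)\le\theta_s$ gives $\theta_s\in I$. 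You instead perform a local analysis of $\mu(\bp_\theta)$ as $\theta\to\theta_s^-$ through the decomposition $\mu(\bp_\theta)=l'(\theta)+c_\theta\,\E[X\theta^{X-1}\bq(X)]$, deriving a contradiction via the intermediate value theorem. That is a legitimate alternative strategy, but two steps are not yet airtight.

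In case (a) you assert that $\E[X\theta^{X-1}\bq(X)]$ stays bounded; this can fail when $\theta_s=\rho(\bp)$ and $g'(\rho)=+\infty$ even though $g(\rho)<+\infty$, so the claimed limit $\mu(\bp_\theta)\to l'(\theta_s^-)$ is unjustified as stated. In case (b), writing $c_\theta\,\E[X\theta^{X-1}\bq(X)]=\bigl(1-l(\theta)/\theta\bigr)\cdot\E[X\theta^{X-1}\bq(X)]/\E[\theta^{X-1}\bq(X)]$ shows the blow-up you want requires the first factor to stay away from $0$, and you have not excluded the subcase $l(\theta_s^-)=\theta_s$, which is compatible with $g(\theta_s)=+\infty$. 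Both holes close with one observation: since $c_\theta>0$, one always has $\mu(\bp_\theta)\ge l'(\theta)$. If $l'(\theta_s^-)>1$ this already gives $\mu(\bp_\theta)>1$ near $\theta_s$, contradiction. Otherwise $l'(\theta_s^-)\le 1$, and convexity of $l$ gives $l(\theta_s^-)\le l(1)+(\theta_s-1)=\theta_s-\E[\bq(X)]<\theta_s$, which rules out case (a) entirely and, in case (b), keeps $1-l(\theta)/\theta$ bounded away from $0$ so that your weighted-mean divergence genuinely forces $\mu(\bp_\theta)\to+\infty$. With that repair the argument is sound, and it is a valid, more hands-on alternative to the paper's stability-of-$J$ reasoning.
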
 
	
	\begin{proof} We mimic the proof of Lemma 5.2 in \cite{abraham_local_2014}.
		
	Notice that $\gamma$ is continuous over $I$, as $X\in L^1$, and $\mathcal{C}^1$ on $\mathring I$.
	\\
		If $\mu(\bp)\leq1$, we satisfy Lemma \ref{derivmuptpos} and thus \eqref{mupthetaeq1} has at most one solution. Indeed, if it exists another $\theta$ satisfying \eqref{mupthetaeq1}. Then for all $\varepsilon>0$ and $\nu\in ]\theta-\varepsilon,\theta+\varepsilon[$, $\dfrac{\mathrm{d}}{\mathrm{d}\nu}\mu(\bp_\nu)<0$ and it is a contradiction with Lemma \ref{derivmuptpos}.\\
		First, if there is no solution to \eqref{mupthetaeq1}, this implies that $\mu(\bp)<1$ and thus:
		\begin{equation}\label{condmupthetaeq1nosol}
			\mu(\bp_\theta)<1\text{ for all }\theta \in I.
		\end{equation}
		Now we want to prove that $\theta_s\in I$ and  we only need to consider the case $\theta_s>1$. 
Since $\theta_s\leq \rho(\bp)$, we have $\rho(\bp)>1$. Since $\mu(\bp)<1$, the interval $J=\{\theta;~g(\theta)<\theta\}$ is non-empty, $1\in J$. On $J\cap I$, we deduce from \eqref{ctheta} that $c_\theta >1$ and then from \eqref{muptheta} that $\mu(\bp_\theta)>g'(\theta)$.  Thus we have $g'(\theta)<1$ on $J\cap I$, that implies $\theta \mapsto l(\theta)-\theta$ is decreasing and that $I\cap (1,+\infty) $ is a subset of $\bar{J}$, since for all $\theta\ge1$, $l(\theta)-\theta \le l(1)-1<0$. The properties of $g$ imply that $\bar{J}=\{\theta;~g(\theta)\leq\theta\}$.
\\
This clearly implies that $g(\theta_s)\leq\theta_s$, since $0<l(\theta_s)<g(\theta_s)$, then $l(\theta_s)<\theta_s$. 
		 Thereby, \eqref{condtheta} holds for $\theta_s$ that is $\theta_s \in I$. Then conclude using \eqref{condmupthetaeq1nosol}.
	\end{proof}
	\begin{df}\label{dfgen}
		Let $\bp$ be a sub-critical distribution on $\N$ satisfying \eqref{condp} and $\bq$ a mark function satisfying \eqref{condq}. If \eqref{mupthetaeq1} has a (unique) solution in $I$, then $\bp$ is called generic. If \eqref{mupthetaeq1} has no solution, then $\bp$ is called non-generic.
	\end{df}
	In the next lemma we write $\rho$ for $\rho(\bp)$.
	\begin{lem}\label{proprnongencase}
		Let $\bp$ be a distribution on $\N$ satisfying \eqref{condp}, such that $\mu(\bp)<1$ and $\bq$ a mark function satisfying \eqref{condq}.
		\begin{enumerate}
			\item  If $\rho=1$, then $\bp$ is non-generic.
			\item  If $\rho=+\infty$ or ($1<\rho < +\infty$ and $l'(\rho)\geq 1$), then $\bp$ is generic.
			\item  If $1<\rho<+\infty$ and $l'(\rho)<1$, then,
			\\$\bp$ is non-generic if, and only if:
			\begin{equation}\label{condnongensup}
				G(\theta_s)<\dfrac{1- l'(\theta_s)}{\theta_s - l(\theta_s)},~ \text{with}~G(s):=\dfrac{\E[Xs^{X-1}\bq(X)]}{\E[s^{X}\bq(X)]}.
			\end{equation}
		Moreover, if $ g'(\rho)<1$, we have  $\theta_s=\rho$.
		\end{enumerate}
	\end{lem}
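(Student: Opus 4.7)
Expanding \eqref{ptheta}--\eqref{ctheta} inside \eqref{muptheta} and setting $\phi_{\bq}(\theta):=\E[\theta^X\bq(X)]$, a short computation yields $c_\theta=(\theta-l(\theta))/\phi_{\bq}(\theta)$ and hence the central identity
\[
  \mu(\bp_\theta)=G(\theta)\,\bigl(\theta-l(\theta)\bigr)+l'(\theta),\qquad G=\phi_{\bq}'/\phi_{\bq}.
\]
By Lemma~\ref{derivmuptpos}, $\theta\mapsto\mu(\bp_\theta)$ is strictly increasing on $I$ as long as it stays $\leq 1$, so $\bp$ is generic if and only if $\lim_{\theta\to\theta_s^-}\mu(\bp_\theta)>1$; in that event the intermediate value theorem, starting from $\mu(\bp_1)=\mu(\bp)<1$, produces the (unique) $\theta_c$. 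The three items of the lemma thus reduce to controlling this boundary limit.

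\textbf{For (1).} $g(\theta)<+\infty$ forces $I\subset(0,1]$, and $1\in I$ thanks to \eqref{condq} (which gives $l(1)<1$), so $\theta_s=1$ and $\mu(\bp_{\theta_s})=\mu(\bp)<1$: non-generic.

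\textbf{For (2).} If $\rho=+\infty$, sub-criticality and \eqref{condp} give $g(\theta)-\theta<0$ immediately to the right of $1$ and $g(\theta)/\theta\to+\infty$, so by strict convexity there is $\theta_0>1$ with $g(\theta_0)=\theta_0$ and $g'(\theta_0)>1$. Since $\phi_{\bq}(\theta_0)=g(\theta_0)-l(\theta_0)=\theta_0-l(\theta_0)>0$, $\theta_0\in I$ and the identity collapses to $\mu(\bp_{\theta_0})=g'(\theta_0)>1$. If instead $1<\rho<+\infty$ with $l'(\rho)\geq 1$, I split on whether $l(\rho)\geq\rho$ or not. When $l(\rho)\geq\rho$, the first crossing $\theta^*\in(1,\rho]$ of $l$ with the diagonal satisfies $l'(\theta^*)\geq(\theta^*-l(1))/(\theta^*-1)>1$ by the mean value theorem and convexity of $l$; then $\theta_s=\theta^*$ and $\lim_{\theta\to\theta_s^-}\mu(\bp_\theta)=l'(\theta^*)>1$. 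When $l(\rho)<\rho$, either $g(\rho)<+\infty$ so $\rho\in I$ and $\mu(\bp_\rho)=G(\rho)(\rho-l(\rho))+l'(\rho)>1$, or $g(\rho)=+\infty$ so $\phi_{\bq}(\rho)=+\infty$; the elementary bound $\phi_{\bq}'(\theta)\geq(\phi_{\bq}(\theta)-\bp(0)\bq(0))/\theta$ then yields $\liminf_{\theta\to\rho^-}G(\theta)\geq 1/\rho$ and hence $\liminf_{\theta\to\rho^-}\mu(\bp_\theta)\geq(\rho-l(\rho))/\rho+l'(\rho)>1$. The case $l'(\rho)=+\infty$ is immediate from $\mu(\bp_\theta)\geq l'(\theta)$. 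In every sub-case, $\bp$ is generic.

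\textbf{For (3) and the moreover.} Convexity of $l$ with $l'(\rho)<1$ gives $l'<1$ and hence $l<\theta$ on $[1,\rho]$, so $\theta_s=\rho$ (attained iff $g(\rho)<+\infty$). Passing to the limit $\theta\to\rho^-$ in the central identity, $\bp$ is non-generic iff this limit is $<1$, which, using $\theta_s-l(\theta_s)>0$ and $1-l'(\theta_s)>0$, rearranges exactly to \eqref{condnongensup}. Finally, if $g'(\rho)<1$, the series bound $g(\theta)\leq\theta g'(\theta)+\bp(0)$ gives $g(\rho)\leq\rho g'(\rho)+\bp(0)<+\infty$, so $\rho\in I$ and $\theta_s=\rho$ is attained. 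The most subtle ingredient is the sub-case of (2) with $g(\rho)=+\infty$: one must exclude the degenerate scenario $\lim\mu(\bp_\theta)=1$, which is precisely what the explicit lower bound on $G(\theta)$ is for; organising all the sub-cases so that they cover the statement cleanly, in the spirit of Lemma 5.3 of \cite{abraham_local_2014}, is the main technical burden.
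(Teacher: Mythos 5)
Your proof is correct and shares the same central identity $\mu(\bp_\theta)=G(\theta)(\theta-l(\theta))+l'(\theta)$ (the paper's \eqref{expmuptheta-1}) and the same treatment of parts (1) and (3), but part (2) takes a genuinely different route. The paper locates a point $m\in(1,\rho)$ with $l'(m)=1$, uses convexity of $l$ to get $l(m)<m$ hence $m\in I$, and deduces $\mu(\bp_m)>1$; this forces a separate treatment of the edge case $\rho=+\infty$ with $l$ affine (i.e.\ $\bp(k)(1-\bq(k))=0$ for all $k\ge 2$), where no such $m$ exists. You instead, for $\rho=+\infty$, locate the supercritical crossing $\theta_0>1$ with $g(\theta_0)=\theta_0$: there $c_{\theta_0}=(\theta_0-l(\theta_0))/\phi_{\bq}(\theta_0)=1$, so the marked tilt collapses to the ordinary tilt and the identity gives $\mu(\bp_{\theta_0})=g'(\theta_0)>1$ — no affine edge case needed. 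For $1<\rho<+\infty$ you split on $l(\rho)\gtrless\rho$ and on whether $g(\rho)$ is finite, using the bound $G(\theta)\ge(1-\bp(0)\bq(0)/\phi_{\bq}(\theta))/\theta$ to control the limit when $\phi_\bq(\rho)=\infty$; this is more case-heavy than the paper but treats the $g(\rho)=\infty$ possibility explicitly, which the paper's $l'(m)=1$ device leaves implicit. For the \emph{moreover}, your bound $g(\theta)\le\theta g'(\theta)+\bp(0)$ replaces the paper's convexity argument that $g'(\rho)<1\Rightarrow g(\rho)<\rho$; both suffice. One small imprecision in the framing: your criterion ``generic iff $\lim_{\theta\to\theta_s^-}\mu(\bp_\theta)>1$'' should read $\ge 1$ to allow the boundary case $\mu(\bp_{\theta_s})=1$, but this does not affect any of the three items.
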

	\begin{proof}
		Let $\theta \in I$, notice that:
		\begin{equation}\label{expmuptheta-1}
			\mu(\bp_\theta)-1 = G(\theta)(\theta-l(\theta)) -\left(1-l'(\theta)\right).
		\end{equation}
		\begin{enumerate}
			\item If $\rho=1$ , we have $\theta_s=1\in I$, and $\mu(\bp_1)=\mu(\bp)<1$. Thus, $\bp$ is non-generic.
			\item If $(1<\rho<+\infty~\text{and}~l'(\rho)\geq 1)$ or ($\rho = +\infty$ and there exists $k\in\NN, k\neq 1$, such that {$\bp(k)(1-\bq(k))>0$} ) , since $l$ is convex, $l'$ is non decreasing and then there exists $m\in (1,\rho)$ such that $l'(m)=1$. \\
			Thus, we have $l(1)\ge l(m)+l'(m)(1-m)$, since $l(1)=\E\left[1-\bq(X)\right]$ and $\E\left[\bq(X)\right]>0$ \eqref{condq}, we have $l(m)<m$. Since $1<m<\rho$, $g(m)<+\infty$ which implies that $m$ satisfies \eqref{condtheta}. \\
			This implies, thanks to \eqref{expmuptheta-1}, that $\mu(\bp_m)>1$.
			\\	
			If $\rho = +\infty$ and for all $k\in\NN, k\neq 1$, $\bp(k)(1-\bq(k))=0$. For all $\theta\in I$, 
			\[\mu(\bp_\theta)\ge1\Leftrightarrow (G(\theta)\theta-1)(1-l'(\theta))+G(\theta)(\bp(0)(1-\bq(0)))\ge0,\]
			with $1-l'(\theta)=1-\bp(1)(1-\bq(1))>0$ and $G(\theta)>0$.
			Furthermore, 
			\begin{align*}
				G(\theta)\theta \ge \dfrac{\underset{k>1}{\sum}\theta^k\bp(k)+\theta\bp(1)(1-\bq(1))}{\bp(0)(1-\bq(0))+\underset{k>1}{\sum}\theta^k\bp(k)+\theta\bp(1)(1-\bq(1))},
			\end{align*}
			since $\rho =\infty$, the right term tends to $1$ when $\theta$ tends to $+\infty$.
			Then there exists $m\le \theta_s$ such that $\bp_m$ is critical. \\
			Therefore, $\bp$ is generic, in any cases. 
			\item If $1<\rho<+\infty$ and $l'(\rho)<1$, 
			 $\bp$ is non-generic if $\mu(\bp_{\theta_s})<1$, that is:
			$$G(\theta_s)<\dfrac{1- l'(\theta_s)}{\theta_s - l(\theta_s)}.$$
			If  $g'(\rho)<1$, we have $l(\rho)<g(\rho)<\rho$. Thereby, $\rho$ satisfies \eqref{condtheta} and we have $\theta_s=\rho\in I$.
		\end{enumerate}
	\end{proof}
	\subsection{Marked Tree Decomposition}\label{Model}
	To prove our results, we need to construct a model adapted to the one introduced by Minami \cite{minami_number_2005}, Rizzolo \cite{rizzolo_scaling_2015} or Abraham and Delmas \cite{abraham_local_2014}. 
	\smallbreak
	In other words, let a marked tree $\bt^{*}\in \T_0^{*}$. We define a map $\phi$ from $ \{u\in \bt,\eta_u=1\}$ into the set $\underset{n\geq 1}{\bigcup}\T_0^n$  of forests of finite trees as follows.\\
	To begin with, for $u\in \bt$ we define $\mathcal{S}^\bq_u(\bt^*)$ the sub-tree rooted at $u$ with no marked 
	descendants by 
	$$\mathcal{S}^\bq_u(\bt^*):=\{w\in u\mathcal{S}_u(\bt),~A_w\cap A_u^c\cap\{v\in \bt,\eta_v=1\}=\emptyset\}.$$
	We also define $\mathcal{C}_u^\bq(\bt^*)$, the set of leaves of $\mathcal{S}^\bq_u(\bt^*)$ that belong to $\{u\in \bt,\eta_u=1\}$, in other words 
$$\mathcal{C}_u^\bq(\bt^*):=\{v\in\mathcal{S}^\bq_u(\bt),~k_v(\bt)=0,\eta_v=1\}.$$
	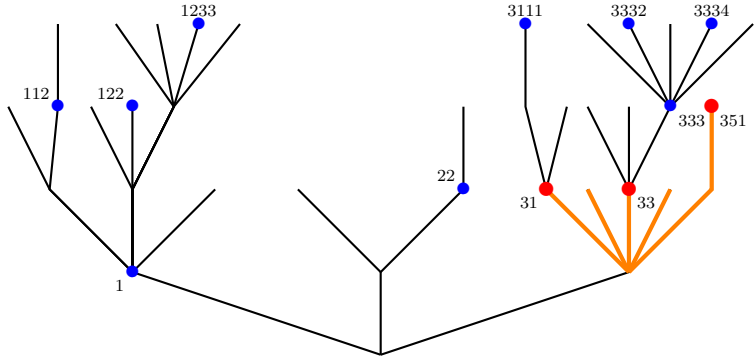
\begin{figure}[h!]
		\centering
		\begin{tikzpicture}[scale=1.1,every node/.style={scale=0.8}]
			\coordinate (0) at (0,0);
			\coordinate (1) at (-3,1);
			\coordinate (2) at (0,1);
			\coordinate (3) at (3,1);
			\coordinate (11) at (-4,2);
			\coordinate (12) at (-3,2);
			\coordinate (13) at (-2,2);
			\coordinate (111) at (-4.5,3);
			\coordinate (112) at (-3.9,3);
			\coordinate (121) at (-3.5,3);
			\coordinate (122) at (-3,3);
			\coordinate (123) at (-2.5,3);
			\coordinate (1121) at (-3.9,4);
			\coordinate (1231) at (-3.2,4);
			\coordinate (1232) at (-2.7,4);
			\coordinate (1233) at (-2.2,4);
			\coordinate (1234) at (-1.7,4);
			\coordinate (21) at (-1,2);
			\coordinate (22) at (1,2);
			\coordinate (221) at (1,3);
			\coordinate (31) at (2,2);
			\coordinate (32) at (2.5,2);
			\coordinate (33) at (3,2);
			\coordinate (34) at (3.5,2);
			\coordinate (35) at (4,2);
			\coordinate (311) at (1.75,3);
			\coordinate (312) at (2.25,3);
			\coordinate (3111) at (1.75,4);
			\coordinate (331) at (2.5,3);
			\coordinate (332) at (3,3);
			\coordinate (333) at (3.5,3);
			\coordinate (351) at (4,3);
			\coordinate (3331) at (2.5,4);
			\coordinate (3332) at (3,4);
			\coordinate (3333) at (3.5,4);
			\coordinate (3334) at (4,4);
			\coordinate (3335) at (4.5,4);
			\draw [thick](0) -- (2) -- (22)--(221);
			\draw [thick](0)-- (3) -- (33)--(333) --(3333);
			\draw [thick](0) -- (1)-- (11)--(112)--(1121);
			\draw [thick](2) -- (21);
			\draw [thick](1) -- (13);
			\draw [thick](1) -- (11)-- (111);
			\draw [thick](1) -- (12)-- (121);
			\draw [thick](1) -- (12)-- (122);
			\draw [thick](1) -- (12)-- (123)--(1231);
			\draw [thick](1) -- (12)-- (123)--(1232);
			\draw [thick](1) -- (12)-- (123)--(1233);
			\draw [thick](1) -- (12)-- (123)--(1234);
			\draw [thick](3) -- (32);
			\draw [thick](3) -- (31)-- (312);
			\draw [thick](3) -- (31)-- (311)--(3111);
			\draw [thick](3) -- (33)-- (331);
			\draw [thick](33)-- (332);
			\draw [thick](333)--(3331);
			\draw [thick](333)--(3332);
			\draw [thick](333)--(3334);
			\draw [thick](333)--(3335);
			\draw [thick](3) -- (34);
			\draw [ultra thick, orange](3) -- (35)-- (351);
			\draw [ultra thick, orange](3)-- (31);
			\draw [ultra thick, orange](3)-- (32);
			\draw [ultra thick, orange](3)-- (33);
			\draw [ultra thick, orange](3)-- (34);
			\draw (1)node{\textcolor{blue}{\Large{$\bullet$}}}node[below left]{\scriptsize{$1$}} ;
			\draw (112)node{\textcolor{blue}{\Large{$\bullet$}}}node[above left]{\scriptsize{$112$}};
			\draw (122)node{\textcolor{blue}{\Large{$\bullet$}}}node[above left]{\scriptsize{$122$}};
			\draw (1233)node{\textcolor{blue}{\Large{$\bullet$}}}node[ above]{\scriptsize{$1233$}};
			\draw (22)node{\textcolor{blue}{\Large{$\bullet$}}}node[above left]{\scriptsize{$22$}} ;
			\draw (31)node{\textcolor{red}{\LARGE{$\bullet$}}}node[below left]{\scriptsize{$31$}};
			\draw (3111)node{\textcolor{blue}{\Large{$\bullet$}}}node[above ]{\scriptsize{$3111$}};
			\draw (33)node{\textcolor{red}{\LARGE{$\bullet$}}}node[ below right]{\scriptsize{$33$}};
			\draw (333)node{\textcolor{blue}{\Large{$\bullet$}}}node[below right]{\scriptsize{$333$}} ;
			\draw (3332)node{\textcolor{blue}{\Large{$\bullet$}}}node[above]{\scriptsize{$3332$}};
			\draw (3334)node{\textcolor{blue}{\Large{$\bullet$}}}node[above ]{\scriptsize{$3334$}};
			\draw (351)node{\textcolor{red}{\LARGE{$\bullet$}}}node[ below right]{\scriptsize{$351$}};
		\end{tikzpicture}
		\caption{The sub-tree $\mathcal{S}^\bq_3(\bt^*)$ in bold and orange, and the elements of $\mathcal{C}_3^\bq(\bt^*)$ in red.}
	\end{figure}
	\\
	We set 
	$$\tilde{\mathcal{S}}^\bq_\emptyset(\bt^*) :=
	\left\{
	\begin{array}{ll}
		\mathcal{S}^\bq_\emptyset(\bt^*),& \text{if}~\eta_\emptyset=0\\
		\{\emptyset\}, & \text{if} ~\eta_\emptyset=1\\
	\end{array}
	\right. $$
	and we set  $\tilde{\mathcal{C}}_\emptyset^\bq(\bt^*)$, the set of leaves of $\tilde{\mathcal{S}}^\bq_\emptyset(\bt^*)$ that belong to $\{u\in \bt,\eta_u=1\}$.
	\smallbreak
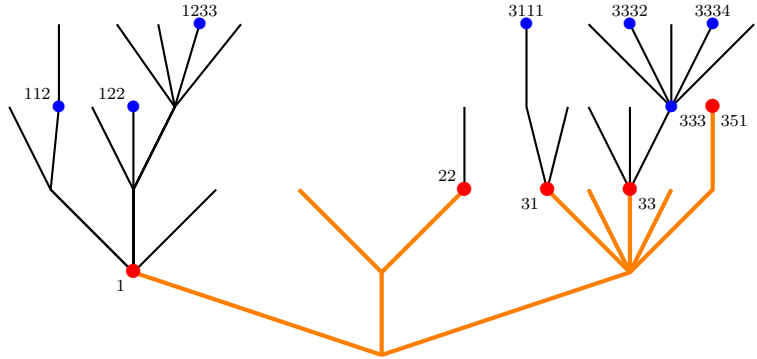
\begin{figure}[h!]
	\centering
	\begin{tikzpicture}[scale=1.1,every node/.style={scale=0.8}]
		\coordinate (0) at (0,0);
		\coordinate (1) at (-3,1);
		\coordinate (2) at (0,1);
		\coordinate (3) at (3,1);
		\coordinate (11) at (-4,2);
		\coordinate (12) at (-3,2);
		\coordinate (13) at (-2,2);
		\coordinate (111) at (-4.5,3);
		\coordinate (112) at (-3.9,3);
		\coordinate (121) at (-3.5,3);
		\coordinate (122) at (-3,3);
		\coordinate (123) at (-2.5,3);
		\coordinate (1121) at (-3.9,4);
		\coordinate (1231) at (-3.2,4);
		\coordinate (1232) at (-2.7,4);
		\coordinate (1233) at (-2.2,4);
		\coordinate (1234) at (-1.7,4);
		\coordinate (21) at (-1,2);
		\coordinate (22) at (1,2);
		\coordinate (221) at (1,3);
		\coordinate (31) at (2,2);
		\coordinate (32) at (2.5,2);
		\coordinate (33) at (3,2);
		\coordinate (34) at (3.5,2);
		\coordinate (35) at (4,2);
		\coordinate (311) at (1.75,3);
		\coordinate (312) at (2.25,3);
		\coordinate (3111) at (1.75,4);
		\coordinate (331) at (2.5,3);
		\coordinate (332) at (3,3);
		\coordinate (333) at (3.5,3);
		\coordinate (351) at (4,3);
		\coordinate (3331) at (2.5,4);
		\coordinate (3332) at (3,4);
		\coordinate (3333) at (3.5,4);
		\coordinate (3334) at (4,4);
		\coordinate (3335) at (4.5,4);
		\draw [thick](0) -- (2) -- (22)--(221);
		\draw [thick](0)-- (3) -- (33)--(333) --(3333);
		\draw [thick](0) -- (1)-- (11)--(112)--(1121);
		\draw [ultra thick, orange](2) -- (21);
		\draw [thick](1) -- (13);
		\draw [thick](1) -- (11)-- (111);
		\draw [thick](1) -- (12)-- (121);
		\draw [thick](1) -- (12)-- (122);
		\draw [thick](1) -- (12)-- (123)--(1231);
		\draw [thick](1) -- (12)-- (123)--(1232);
		\draw [thick](1) -- (12)-- (123)--(1233);
		\draw [thick](1) -- (12)-- (123)--(1234);
		\draw [thick](3) -- (32);
		\draw [thick](3) -- (31)-- (312);
		\draw [thick](3) -- (31)-- (311)--(3111);
		\draw [thick](3) -- (33)-- (331);
		\draw [thick](33)-- (332);
		\draw [thick](333)--(3331);
		\draw [thick](333)--(3332);
		\draw [thick](333)--(3334);
		\draw [thick](333)--(3335);
		\draw [thick](3) -- (34);
		\draw [ultra thick, orange](0)-- (1);
		\draw [ultra thick, orange](0)-- (2)--(22);
		\draw [ultra thick, orange](0)--(3) -- (35)-- (351);
		\draw [ultra thick, orange](3)-- (31);
		\draw [ultra thick, orange](3)-- (32);
		\draw [ultra thick, orange](3)-- (33);
		\draw [ultra thick, orange](3)-- (34);
		\draw (1)node{\textcolor{red}{\LARGE{$\bullet$}}}node[below left]{\scriptsize{$1$}} ;
		\draw (112)node{\textcolor{blue}{\Large{$\bullet$}}}node[above left]{\scriptsize{$112$}};
		\draw (122)node{\textcolor{blue}{\Large{$\bullet$}}}node[above left]{\scriptsize{$122$}};
		\draw (1233)node{\textcolor{blue}{\Large{$\bullet$}}}node[ above]{\scriptsize{$1233$}};
		\draw (22)node{\textcolor{red}{\LARGE{$\bullet$}}}node[above left]{\scriptsize{$22$}} ;
		\draw (31)node{\textcolor{red}{\LARGE{$\bullet$}}}node[below left]{\scriptsize{$31$}};
		\draw (3111)node{\textcolor{blue}{\Large{$\bullet$}}}node[above ]{\scriptsize{$3111$}};
		\draw (33)node{\textcolor{red}{\LARGE{$\bullet$}}}node[ below right]{\scriptsize{$33$}};
		\draw (333)node{\textcolor{blue}{\Large{$\bullet$}}}node[below right]{{\scriptsize{$333$}}} ;
		\draw (3332)node{\textcolor{blue}{\Large{$\bullet$}}}node[above]{\scriptsize{$3332$}};
		\draw (3334)node{\textcolor{blue}{\Large{$\bullet$}}}node[above ]{\scriptsize{$3334$}};
		\draw (351)node{\textcolor{red}{\LARGE{$\bullet$}}}node[ below right]{\scriptsize{$351$}};
	\end{tikzpicture}
	\caption{The sub-tree $\tilde{\mathcal{S}}^\bq_\emptyset(\bt^*)$ in bold and orange, and the elements of $\tilde{\mathcal{C}}_\emptyset^\bq(\bt^*)$ in red.}
	\label{exsubtreeSempty}
\end{figure}
	
	\smallbreak
	Let $\tilde{N}_\emptyset(\bt^*):=|\tilde{\mathcal{C}}_\emptyset^\bq(\bt^*)|$. Then the range of $\phi$ belongs to $\T_0^{\tilde{N}_\emptyset(\bt^*)}$. Moreover if $u_1<u_2<\cdots <u_{\tilde{N}_\emptyset(\bt^*)}$ are the elements of $\tilde{\mathcal{C}}_\emptyset^\bq(\bt^*)$ ranked in lexicographic order, we set for every $1\leq i \leq \tilde{N}_\emptyset(\bt^*)$, 
	$\phi(u_i):=\emptyset^{(i)}$, where $\emptyset^{(i)}$ denotes the root of the $i$-th tree in $\T_0^{\tilde{N}_\emptyset(\bt^*)}$.
	\smallbreak
	We then construct $\phi$ recursively: if $\eta_u=1$ 
	and $\phi(u)=v^{(i)}$ (which is an element of the $i$-th tree), we denote by $u_1<u_2<\cdots <u_k$ the elements of $\mathcal{C}_u^\bq(\bt^*)$ ranked in lexicographic order and we set for $1\leq j\leq k$, $\phi(u_j)=vj^{(i)}$. 
	\smallbreak
	Finally, we set $\F_\bq(\bt)=\phi(\bt)$.
\begin{figure}[h!]
	\begin{tikzpicture}[scale=0.9,every node/.style={scale=0.8}]
		\coordinate (0) at (0,0);
		\coordinate (1) at (-3,1);
		\coordinate (2) at (0,1);
		\coordinate (3) at (3,1);
		\coordinate (11) at (-4,2);
		\coordinate (12) at (-3,2);
		\coordinate (13) at (-2,2);
		\coordinate (111) at (-4.5,3);
		\coordinate (112) at (-3.9,3);
		\coordinate (121) at (-3.5,3);
		\coordinate (122) at (-3,3);
		\coordinate (123) at (-2.5,3);
		\coordinate (1121) at (-3.9,4);
		\coordinate (1231) at (-3.2,4);
		\coordinate (1232) at (-2.7,4);
		\coordinate (1233) at (-2.2,4);
		\coordinate (1234) at (-1.7,4);
		\coordinate (21) at (-1,2);
		\coordinate (22) at (1,2);
		\coordinate (221) at (1,3);
		\coordinate (31) at (2,2);
		\coordinate (32) at (2.5,2);
		\coordinate (33) at (3,2);
		\coordinate (34) at (3.5,2);
		\coordinate (35) at (4,2);
		\coordinate (311) at (1.75,3);
		\coordinate (312) at (2.25,3);
		\coordinate (3111) at (1.75,4);
		\coordinate (331) at (2.5,3);
		\coordinate (332) at (3,3);
		\coordinate (333) at (3.5,3);
		\coordinate (351) at (4,3);
		\coordinate (3331) at (2.5,4);
		\coordinate (3332) at (3,4);
		\coordinate (3333) at (3.5,4);
		\coordinate (3334) at (4,4);
		\coordinate (3335) at (4.5,4);
		\draw [thick](0) -- (2) -- (22)--(221);
		\draw [thick](0)-- (3) -- (33)--(333) --(3333);
		\draw [thick](0) -- (1)-- (11)--(112)--(1121);
		\draw [thick](2) -- (21);
		\draw [thick](1) -- (13);
		\draw [thick](1) -- (11)-- (111);
		\draw [thick](1) -- (12)-- (121);
		\draw [thick](1) -- (12)-- (122);
		\draw [thick](1) -- (12)-- (123)--(1231);
		\draw [thick](1) -- (12)-- (123)--(1232);
		\draw [thick](1) -- (12)-- (123)--(1233);
		\draw [thick](1) -- (12)-- (123)--(1234);
		\draw [thick](3) -- (32);
		\draw [thick](3) -- (31)-- (312);
		\draw [thick](3) -- (31)-- (311)--(3111);
		\draw [thick](3) -- (33)-- (331);
		\draw [thick](33)-- (332);
		\draw [thick](333)--(3331);
		\draw [thick](333)--(3332);
		\draw [thick](333)--(3334);
		\draw [thick](333)--(3335);
		\draw [thick](3) -- (34);
		\draw [thick](0)--(3) -- (35)-- (351);
		\draw (1)node{\textcolor{blue}{\Large{$\bullet$}}}node[below left]{\scriptsize{$1$}} ;
		\draw (112)node{\textcolor{blue}{\Large{$\bullet$}}}node[above left]{\scriptsize{$112$}};
		\draw (122)node{\textcolor{blue}{\Large{$\bullet$}}}node[above left]{\scriptsize{$122$}};
		\draw (1233)node{\textcolor{blue}{\Large{$\bullet$}}}node[ above]{\scriptsize{$1233$}};
		\draw (22)node{\textcolor{blue}{\Large{$\bullet$}}}node[above left]{\scriptsize{$22$}} ;
		\draw (31)node{\textcolor{blue}{\Large{$\bullet$}}}node[below left]{\scriptsize{$31$}};
		\draw (3111)node{\textcolor{blue}{\Large{$\bullet$}}}node[above ]{\scriptsize{$3111$}};
		\draw (33)node{\textcolor{blue}{\Large{$\bullet$}}}node[ below right]{\scriptsize{$33$}};
		\draw (333)node{\textcolor{blue}{\Large{$\bullet$}}}node[below right]{\scriptsize{$333$}} ;
		\draw (3332)node{\textcolor{blue}{\Large{$\bullet$}}}node[above]{\scriptsize{$3332$}};
		\draw (3334)node{\textcolor{blue}{\Large{$\bullet$}}}node[above ]{\scriptsize{$3334$}};
		\draw (351)node{\textcolor{blue}{\Large{$\bullet$}}}node[ below right]{\scriptsize{$351$}};
	\end{tikzpicture}
	\medbreak
	\begin{tikzpicture}[scale=1.4]
		\coordinate (1) at (-2,0);
		\coordinate (112) at (-2.5,1);
		\coordinate (122) at (-2,1);
		\coordinate (1233) at (-1.5,1);
		\coordinate (22) at (-1,0);
		\coordinate (31) at (0,0);
		\coordinate (33) at (1,0);
		\coordinate (3111) at (0,1);
		\coordinate (333) at (1,1);
		\coordinate (351) at (2,0);
		\coordinate (3332) at (0.5,2);
		\coordinate (3334) at (1.5,2);
		\draw [thick](-2.5,0)--(2.5,0);
		\draw [thick](1)--(112);
		\draw [thick](1)--(122);
		\draw [thick](1)--(112);
		\draw [thick](1)--(1233);
		\draw [thick](31)--(3111);
		\draw [thick](33)--(333)--(3332);
		\draw [thick](333)--(3334);
		\draw (1)node[below]{\footnotesize{$1$}} ;
		\draw (112)node[above left]{\footnotesize{$112$}};
		\draw (122)node[above ]{\footnotesize{$122$}};
		\draw (1233)node[ above right]{\footnotesize{$1233$}};
		\draw (22)node[below]{\footnotesize{$22$}} ;
		\draw (31)node[below ]{\footnotesize{$31$}};
		\draw (3111)node[above ]{\footnotesize{$3111$}};
		\draw (33)node[ below]{\footnotesize{$33$}};
		\draw (333)node[below right]{\footnotesize{$333$}} ;
		\draw (3332)node[above]{\footnotesize{$3332$}};
		\draw (3334)node[above ]{\footnotesize{$3334$}};
		\draw (351)node[ below ]{\footnotesize{$351$}};
	\end{tikzpicture}
	\caption{A marked tree $\bt^{*}$ and the forest $\F_\bq(\bt)$ with $5$ trees.}
	\label{exforest}
\end{figure}
	\newpage
	Let $\tau^{*}$ be a MGW with offspring distribution $\bp$ and mark function $\bq$. Let us describe the distribution of $\F_\bq(\tau)$.
	\smallbreak
	We define the offspring distribution $\tilde{\bp}$ by: 
	\begin{equation}\label{probtilde}
		\tilde{\bp}(k) :=
		\left\{
		\begin{array}{ll}
			\bp(k)(1-\bq(k)),& \text{if}~k \neq 0,\\
			\bp(0)+\overset{+\infty}{\underset{j=1}{\sum}}\bp(j)\bq(j),& \text{otherwise}.
		\end{array}
		\right. 
	\end{equation}
	Note that $\tilde{\bp}(0)>0$ and its mean satisfies $\mu(\tilde{\bp})<+\infty$ as $\bp(0)>0$ and $\mu(\bp)<+\infty$.
	\smallbreak
	By construction, $\tilde{\mathcal{S}}^\bq_\emptyset(\tau^*)$ is distributed as a sub-critical MGW with offspring distribution $\tilde{\bp}$. On $\tilde{\mathcal{S}}^\bq_\emptyset(\tau^*)$, only leaves can be marked, and any unmarked leaf in this tree was already unmarked in the original tree $\tau$. We denote by $L$ the number of leaves of $\tilde{\mathcal{S}}^\bq_\emptyset(\tau^*)$ and we denote by $N$ the number of marked leaves. Let $X$ be a random variable with distribution $\bp$.
	\begin{prop}\label{propL} Let  $\bp$ satisfying \eqref{condp} and mark function $\bq$ satisfying \eqref{condq}. If $\mu(\bp)<1$ assume that $\bq$ satisfies \eqref{limitq} and \eqref{defqlim1}. Then, $L$ admits a moment of order 2 and:
\[\E[L]= \dfrac{\tilde{\bp}(0)}{1-\E[X(1-\bq(X))]}.\]
	\end{prop}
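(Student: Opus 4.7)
The plan relies on the observation---stated just before the proposition---that $\tilde{\mathcal{S}}^\bq_\emptyset(\tau^*)$ is distributed as a sub-critical GW tree with offspring distribution $\tilde{\bp}$ defined in \eqref{probtilde}. This identification is a recursive consequence of the branching property of the MGW: at each node $u$ belonging to $\tilde{\mathcal{S}}^\bq_\emptyset(\tau^*)$, given that its out-degree in $\tau$ equals $k$, $u$ is marked with probability $\bq(k)$ and becomes a leaf of $\tilde{\mathcal{S}}^\bq_\emptyset(\tau^*)$, while otherwise it keeps its $k$ children which launch independent MGW copies; aggregating over $k$ yields the offspring law $\tilde{\bp}$. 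Moreover, $\tilde{\mu} := \mu(\tilde{\bp}) = \E[X(1-\bq(X))]$ satisfies $\tilde{\mu} < 1$: in the critical case $\tilde{\mu} = 1 - \E[X\bq(X)] < 1$ by \eqref{condq}, while in the sub-critical case $\tilde{\mu} \le \mu(\bp) < 1$ trivially.

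For the expectation, I decompose $L = \sum_{u \in \U} \indic_{\{u \in \tilde{\mathcal{S}}^\bq_\emptyset(\tau^*)\}} \indic_{\{k_u(\tilde{\mathcal{S}}^\bq_\emptyset(\tau^*)) = 0\}}$ and apply the branching property: conditionally on $\{u \in \tilde{\mathcal{S}}^\bq_\emptyset(\tau^*)\}$, the offspring count $k_u(\tilde{\mathcal{S}}^\bq_\emptyset(\tau^*))$ is $\tilde{\bp}$-distributed and independent of the past, so the two indicators factor in expectation. This yields $\E[L] = \tilde{\bp}(0)\, \E[|\tilde{\mathcal{S}}^\bq_\emptyset(\tau^*)|]$, and the standard formula $\E[|\tilde{\mathcal{S}}^\bq_\emptyset(\tau^*)|] = \sum_{n \ge 0} \tilde{\mu}^n = 1/(1-\tilde{\mu})$ for a sub-critical GW total progeny delivers the announced expression for $\E[L]$.

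For the second moment, I use $L \le |\tilde{\mathcal{S}}^\bq_\emptyset(\tau^*)|$ together with the classical fact that the total progeny of a sub-critical GW tree has finite second moment as soon as its offspring distribution does; this can be read off by squaring the recursion $|\tilde{\mathcal{S}}^\bq_\emptyset(\tau^*)| = 1 + \sum_{i=1}^{k_\emptyset} |\mathcal{T}_i|$ (with $\mathcal{T}_i$ iid copies independent of $k_\emptyset$) and solving the resulting linear equation for $\E[|\tilde{\mathcal{S}}^\bq_\emptyset(\tau^*)|^2]$. The required finite second moment of $\tilde{\bp}$ follows immediately from $\sum_k k^2 \tilde{\bp}(k) \le \sum_k k^2 \bp(k) = \E[X^2] < \infty$ by \eqref{condp}. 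No step poses a real obstacle once the GW identification is accepted; the remaining computations are standard sub-critical GW manipulations. The additional assumptions \eqref{limitq} and \eqref{defqlim1} imposed in the sub-critical case do not seem to be required for this particular proposition and are presumably inherited from the ambient framework of Theorem \ref{theocondconvloc}.
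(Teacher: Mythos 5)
Your argument is correct, and it follows a route different from the paper's. The paper works directly with the generating function of $L$: using the branching property of $\tilde{\mathcal{S}}^\bq_\emptyset(\tau^*)$ it derives the functional equation $\E[s^L]=\tilde{\bp}(0)s+\sum_{i\ge1}\bp(i)(1-\bq(i))\E[s^L]^i$, differentiates once and lets $s\uparrow1$ to extract $\E[L]$, then differentiates a second time to bound the second moment. You instead take the GW identification as the central object: you observe that $\tilde{\mathcal{S}}^\bq_\emptyset(\tau^*)$ is a sub-critical GW tree with offspring law $\tilde{\bp}$ and mean $\mu(\tilde{\bp})=\E[X(1-\bq(X))]<1$, so that the expected number of leaves factors as $\tilde{\bp}(0)\,\E\bigl[|\tilde{\mathcal{S}}^\bq_\emptyset(\tau^*)|\bigr]=\tilde{\bp}(0)/(1-\mu(\tilde{\bp}))$, and the second moment of $L$ is dominated by that of the total progeny, which is finite because $\sum_k k^2\tilde{\bp}(k)\le \E[X^2]<\infty$. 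The two proofs are equivalent in substance---both rest on the GW structure of $\tilde{\mathcal{S}}^\bq_\emptyset(\tau^*)$ and the sub-criticality of $\tilde{\bp}$---but yours trades the explicit generating-function manipulation for appeals to classical total-progeny identities, which is a bit shorter once those are granted. Your observation that \eqref{limitq} and \eqref{defqlim1} are not actually used is also accurate: all that is needed is $\mu(\tilde{\bp})<1$, which follows automatically from $\mu(\bp)\le1$ together with \eqref{condq}, and the second-moment part of \eqref{condp}; the paper's invocation of \eqref{defqlim1} when passing to the limit $s\uparrow1$ appears to be superfluous for the same reason.
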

	\begin{proof}
		Let $Y$, given $X$, be a Bernoulli random variable with parameter $\bq(X)$.
		We determine $\E[L^i]$ thanks to the generating function of $L$.\\
		Let $s\in [0,1)$, and thanks to the branching property:
		\begin{align}\label{eqgenL}
			\E[s^L]&=\tilde{\bp}(0)s+\E\Big[s^{\overset{X}{\underset{i=1}{\sum}}L_i}\indic_{\{X>0,Y=0\}}\Big]=\tilde{\bp}(0)s+\E\left[\E[s^L]^X(1-\bq(X))\indic_{\{X>0\}}\right]\nonumber\\
			&=\tilde{\bp}(0)s+\overset{+\infty}{\underset{i=1}{\sum}}\bp(i)(1-\bq(i))\E[s^L]^i,
		\end{align}
		where $(L_i)_{i\geq1}$ are independent copies of $L$ and independent of $(X,Y)$.\\ 
		Differentiating the equation \eqref{eqgenL}, we obtain, for every $s\in[0,1)$:
		\begin{equation}\label{firstdergenL}
			\E[Ls^{L-1}]=\frac{ \tilde{\bp}(0)}{1-\sum_{i\ge 1}\bp(i)(1-\bq(i))i\E[s^L]^{i-1}} 
		\end{equation}
		Since we have $\E[X]=1$ or $\E[X]<1$ and $\bq$ satisfies \eqref{defqlim1}, the right member admits a limit when $s$ tends to 1, and consequently:
		\[\E[L]= \dfrac{\tilde{\bp}(0)}{1-\E[X(1-\bq(X))]}.\]
		The reasoning to prove that $L$ admits a moment of order 2 is very similar: we differentiate  \eqref{eqgenL} a second time and we have our result as $X$ admits a moment of order 2. 
	\end{proof}
%
	\begin{prop}\label{propN}
		Conditionally on $L$, the random variable $N:=\tilde{N}_\emptyset(\tau^*)$ has a binomial distribution with parameter $\left(L,\nicefrac{\E[\bq(X)]}{\tilde{\bp}(0)}\right)$, that we denote by $p_N$.
	\end{prop}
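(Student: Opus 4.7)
The plan is to compute, for any finite tree $\bt\in\T_0$, the joint distribution of the shape $\tilde{\mathcal{S}}^\bq_\emptyset(\tau^*)$ together with the marks borne by its leaves, and to read off from it the conditional law of $N$ given $L$. Since the shape of $\tilde{\mathcal{S}}^\bq_\emptyset(\tau^*)$ is itself determined in part by the marks of its leaves, one cannot directly invoke an independence argument and must work with the joint distribution.

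First, by the branching property of $\tau^*$, the tree $\tilde{\mathcal{S}}^\bq_\emptyset(\tau^*)$ can be built by an exploration process in which, at each explored node $u$, one draws an independent pair $(k_u(\tau),\eta_u)$ with the reproduction-marking law $\bp_0$ from \eqref{def_p0}; the node $u$ is declared a leaf of $\tilde{\mathcal{S}}^\bq_\emptyset(\tau^*)$ iff $k_u(\tau)=0$ or $\eta_u=1$, and otherwise the exploration continues through its $k_u(\tau)$ children. Fix $\bt\in\T_0$ with $L:=|\mathcal{L}_0(\bt)|$ leaves and $\epsilon=(\epsilon_v)_{v\in\mathcal{L}_0(\bt)}\in\{0,1\}^L$. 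The event $A_{\bt,\epsilon}:=\{\tilde{\mathcal{S}}^\bq_\emptyset(\tau^*)=\bt,\ \eta_v=\epsilon_v\ \forall v\in\mathcal{L}_0(\bt)\}$ forces: (i) for each $u\in\bt$ with $k_u(\bt)\ge 1$, $(k_u(\tau),\eta_u)=(k_u(\bt),0)$, contributing $\bp(k_u(\bt))(1-\bq(k_u(\bt)))$; (ii) for each leaf $v$ with $\epsilon_v=0$, $(k_v(\tau),\eta_v)=(0,0)$, contributing $\bp(0)(1-\bq(0))$; (iii) for each leaf $v$ with $\epsilon_v=1$, $\eta_v=1$ with arbitrary $k_v(\tau)\ge 0$, contributing $\sum_k\bp(k)\bq(k)=\E[\bq(X)]$. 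By independence of the draws,
\[
\p(A_{\bt,\epsilon})=\Bigl(\prod_{\substack{u\in\bt\\ k_u(\bt)\ge 1}}\bp(k_u(\bt))(1-\bq(k_u(\bt)))\Bigr)\,\E[\bq(X)]^{s}\,\bigl(\bp(0)(1-\bq(0))\bigr)^{L-s},
\]
where $s=\sum_v\epsilon_v$.

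Since $\E[\bq(X)]+\bp(0)(1-\bq(0))=\tilde{\bp}(0)$, summing over $\epsilon$ recovers $\p(\tilde{\mathcal{S}}^\bq_\emptyset(\tau^*)=\bt)=\prod_{u\in\bt}\tilde{\bp}(k_u(\bt))$; taking the ratio yields
\[
\p\bigl(\eta_v=\epsilon_v\ \forall v\in\mathcal{L}_0(\bt)\,\bigm|\,\tilde{\mathcal{S}}^\bq_\emptyset(\tau^*)=\bt\bigr)=p^s(1-p)^{L-s},\qquad p:=\frac{\E[\bq(X)]}{\tilde{\bp}(0)}.
\]
Thus, conditional on the shape, the leaf-marks are i.i.d.\ Bernoulli$(p)$, so $N$ is Binomial$(L,p)$ conditionally on the shape; as this law only depends on the shape through $L$, further conditioning on $L$ alone gives the same Binomial$(L,p)$, which is the claim.

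The only delicate bookkeeping point is the degenerate root case $\eta_\emptyset=1$: then $\tilde{\mathcal{S}}^\bq_\emptyset(\tau^*)=\{\emptyset\}$ regardless of $k_\emptyset(\tau)$, but the formula above applies verbatim to $\bt=\{\emptyset\}$ (with $L=1$) since the $\E[\bq(X)]$ contribution for a marked-leaf term precisely gathers both the genuine-leaf term $\bp(0)\bq(0)$ and the marked-internal-node term $\sum_{k\ge 1}\bp(k)\bq(k)$; I do not foresee any other obstacle.
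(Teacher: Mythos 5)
Your proof is correct and fills in what the paper leaves implicit. The paper's own argument simply asserts, from the branching structure, that conditionally on $L$ the count $N$ is a sum of $L$ independent Bernoulli variables, and identifies the parameter via the single computation $\p(N=1\mid L=1)=\E[\bq(X)]/\tilde{\bp}(0)$. You instead compute the full joint law $\p\bigl(\tilde{\mathcal{S}}^\bq_\emptyset(\tau^*)=\bt,\ \eta_v=\epsilon_v\ \forall v\in\mathcal{L}_0(\bt)\bigr)$ via the exploration/branching property and factor it as a product over internal nodes of $\tilde\bp(k_u(\bt))$ times $\E[\bq(X)]^s\bigl(\bp(0)(1-\bq(0))\bigr)^{L-s}$, so that the conditional independence of the leaf marks and the Bernoulli parameter $\E[\bq(X)]/\tilde{\bp}(0)$ both drop out at once from the identity $\E[\bq(X)]+\bp(0)(1-\bq(0))=\tilde\bp(0)$. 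Since whether a node is a leaf of $\tilde{\mathcal{S}}^\bq_\emptyset(\tau^*)$ depends on that node's own mark, the conditional independence is not completely automatic, and your explicit computation is a cleaner and more self-contained way to establish it; as a byproduct it re-derives the $\mathrm{GW}(\tilde\bp)$ law of $\tilde{\mathcal{S}}^\bq_\emptyset(\tau^*)$ in the same stroke.
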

	\begin{proof}
	So, we have a sum of $L$ independent Bernoulli variables with unknown parameter and we can conclude if we show that this parameter is equal to $\nicefrac{\E[\bq(X)]}{\tilde{\bp}(0)}$:
		\begin{equation*}
		\p(N=1|L=1)=\dfrac{\p(N=1,L=1)}{\p(L=1)}=\dfrac{\displaystyle\sum_{k\in\N} \bp(k)\bq(k)}{\tilde{\bp}(0)}=\dfrac{\E[\bq(X)]}{\tilde{\bp}(0)}.
	\end{equation*}
	
	\end{proof}
	Let us define $Z^{(1)}$ by:
	\begin{equation}\label{hatX}
		Z^{(1)}:=\sum_{k=1}^{X^{(1)}}N_k,
	\end{equation}
	where $X^{(1)}$ is distributed as $X$ conditionally on $Y=1$ and $(N_i)_{i\in\N}$ is a sequence of independent copies of $N$, independent of $X^{(1)}$. Note that:
	\begin{equation}\label{meanX1}
	\p(X^{(1)}=k)=\frac{\bp(k)\bq(k)}{\E[\bq(X)]}\mbox{ and }\E[X^{(1)}]=\frac{\E[X\bq(X)]}{\E[\bq(X)]},
	\end{equation}
	and for typographical simplicity, we denote by $\bp^{(1)}$ the distribution of $Z^{(1)}$, and by $\mu^{(1)}$ its mean.
	\begin{lem}\label{prophatp}
		Assume that $\bp$ satisfies \eqref{condp} and $\bq$ satisfies \eqref{condq}. We have the three following properties: 
		\begin{enumerate}
		\item if $\mu(\bp)<1$, or $\mu(\bp)=1$, then $\mu^{(1)}<1$, or $\mu^{(1)}=1$, respectively;
		\item there exists $k\in\mathbb N^*$ such that $\bp^{(1)}(0)\bp^{(1)}(k)>0$;
		\item  if $\rho_l(\bp,\bq)=1$ then $\rho(\bp^{(1)})=1$.
		\end{enumerate}
	\end{lem}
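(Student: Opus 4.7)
The plan is to address the three claims in turn, in all cases via direct computations with means and probability generating functions, leaning on Propositions \ref{propL} and \ref{propN}.

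For item (1), I apply Wald's identity to $Z^{(1)}=\sum_{k=1}^{X^{(1)}} N_k$, obtaining $\mu^{(1)}=\E[X^{(1)}]\E[N]$. The first factor is given by \eqref{meanX1}; for the second, Proposition \ref{propN} yields $\E[N]=\E[L]\,\E[\bq(X)]/\tilde{\bp}(0)$, and substituting $\E[L]=\tilde{\bp}(0)/(1-\E[X(1-\bq(X))])$ from Proposition \ref{propL} gives
$$\mu^{(1)}=\frac{\E[X\bq(X)]}{1-\E[X]+\E[X\bq(X)]}.$$
One then reads off directly that $\mu^{(1)}=1$ iff $\E[X]=1$ and $\mu^{(1)}<1$ iff $\E[X]<1$ (both using $\E[X\bq(X)]>0$ from \eqref{condq}).

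For item (2), I split the proof that $\bp^{(1)}(0)>0$ into two cases. If $\bq(0)<1$, the computation $\tilde{\bp}(0)-\E[\bq(X)]=\bp(0)(1-\bq(0))>0$ shows that the binomial parameter $p_N=\E[\bq(X)]/\tilde{\bp}(0)$ is strictly less than $1$, so $\p(N=0)>0$, whence $Z^{(1)}=0$ when every $N_k$ vanishes. If instead $\bq(0)=1$, then $\bp(0)\bq(0)>0$ forces $\p(X^{(1)}=0)>0$, and the empty sum gives $Z^{(1)}=0$. The existence of some $k\ge 1$ with $\bp^{(1)}(k)>0$ is immediate: condition \eqref{condq} provides simultaneously $\p(X^{(1)}\ge 1)>0$ and $\p(N\ge 1)>0$, hence $\p(Z^{(1)}\ge 1)>0$.

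For item (3), the strategy is first to prove $\rho(G_L)=1$ (where $G_L$ is the generating function of $L$), then transfer it. Rewriting \eqref{eqgenL} in terms of $l$ gives the implicit equation
$$G_L(s)-l(G_L(s))=\tilde{\bp}(0)\,s-\bp(0)(1-\bq(0)).$$
Since $\E[L]>0$, $G_L$ is strictly increasing with $G_L(1)=1$; for any $s>1$, the identity would force the finite quantity $G_L(s)$ to exceed $1=\rho_l(\bp,\bq)$, making the series $l(G_L(s))$ divergent, a contradiction. Hence $\rho(G_L)=1$. Writing now $g_{Z^{(1)}}(s)=\E[G_N(s)^{X^{(1)}}]$ with $G_N(s)=G_L(1-p_N+p_N s)$ and $p_N\in(0,1]$ as above, for $s>1$ the argument $1-p_N+p_N s$ is strictly greater than $1=\rho(G_L)$, so $G_N(s)=+\infty$; combined with $\p(X^{(1)}\ge 1)>0$ this forces $g_{Z^{(1)}}(s)=+\infty$. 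Since $g_{Z^{(1)}}(1)=1$, we conclude $\rho(\bp^{(1)})=1$.

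The main subtlety will be justifying $\rho(G_L)=1$ carefully, ruling out the possibility that $G_L$ extends in a neighborhood of $1$ through some route other than the implicit equation; but since $G_L$ is a power series determined by its values on $[0,1]$, the contradiction above suffices.
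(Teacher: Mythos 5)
Your proof is correct. Item (1) coincides with the paper's argument: compute $\mu^{(1)}=\E[X^{(1)}]\E[N]$ by the compound-sum structure, then substitute from Propositions \ref{propL} and \ref{propN}. For item (2) your strategy is the paper's (pick $k\in\N^*$ with $\bp(k)\bq(k)>0$ and force $\sum N_i$ to be $0$ or positive), but you explicitly handle an edge case the paper glosses over: when $\bq(0)=1$ one has $\tilde{\bp}(0)=\E[\bq(X)]$, so $p_N=1$ and $\p(N=0)=0$ (since $L\ge1$ a.s.), and the paper's lower bound $\bp^{(1)}(0)\ge\p(X^{(1)}=k)\p(N=0)^k$ degenerates; your observation that $\bp(0)\bq(0)>0$ then gives $\p(X^{(1)}=0)>0$, hence $\bp^{(1)}(0)>0$ via the empty sum, is exactly the needed repair. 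For item (3) you take a genuinely different route. The paper uses the pathwise bound $L\ge X\indic_{\{Y=0\}}$ to obtain $g_N(z)=\E[\varphi(z)^L]\ge l(\varphi(z))$ directly, and since $\varphi(z)>1=\rho_l(\bp,\bq)$ for $z>1$ this forces $g^{(1)}(z)=+\infty$ in one step. You instead first prove $\rho(G_L)=1$ from the fixed-point identity \eqref{eqgenL}: if $G_L(s)<\infty$ for some $s>1$ then $G_L(s)\ge s>1=\rho_l(\bp,\bq)$, so $l(G_L(s))=+\infty$, contradicting the finiteness of the right-hand side; you then propagate through the binomial thinning ($G_N(s)=G_L(1-p_N+p_Ns)$ with $p_N>0$) and the compound sum. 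This is in fact the very argument the paper deploys later in the proof of Proposition \ref{momexpo} to show $\rho(G_L)=1$, so your route is a reorganization rather than an essentially new idea: the paper's stochastic-domination shortcut is shorter inside Lemma \ref{prophatp}, while your version stays entirely within the generating-function framework and yields the intermediate fact $\rho(G_L)=1$ as a byproduct.
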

	\begin{proof}
		\begin{enumerate}
		\item Using Propositions \ref{propL} and \ref{propN}, and \eqref{meanX1}, we have:
		\begin{align*}
			\E[Z^{(1)}]&
			=\E[X^{(1)}]\E[L]\dfrac{\E[\bq(X)]}{\tilde{\bp}(0)}
			=\dfrac{\E[X\bq(X)]}{1-\E[X(1-\bq(X))]}.
		\end{align*}
		Thus: 
		\begin{align*}
		&\E[Z^{(1)}]<1\Longleftrightarrow \E[X\bq(X)]<1-\E[X(1-\bq(X))]\Longleftrightarrow \E[X]<1,\\
		&\E[Z^{(1)}]=1\Longleftrightarrow \E[X\bq(X)]=1-\E[X(1-\bq(X))]\Longleftrightarrow \E[X]=1.
		\end{align*}
		\item \eqref{condq} implies that there exists $k\in \NN$ such that $\bp(k)\bq(k)>0$ and thus,  according to \eqref{meanX1}, $ \p(X^{(1)}=k)>0$. Consequently:
		\begin{align*}
			&\bp^{(1)}(0)=\p\Big(\sum_{i=1}^{X^{(1)}}N_i =0\Big)\geq \p(X^{(1)}=k)\p(N=0)^k>0,\\
			&\bp^{(1)}(k)=\p\Big(\sum_{i=1}^{X^{(1)}}N_i =k\Big) \geq \p(X^{(1)}=k)\p(N=1)^k>0
		\end{align*}
		\item Let $g^{(1)}$ and $g_N$ be the generating functions of $Z^{(1)}$ and $N$, respectively. For all $z\ge 0$, as $N\sim\mathscr B(L,\nicefrac{\E[\bq(X)]}{\tilde\bp(0)})$, conditionally on $L$:
		\begin{align*}
			g^{(1)}(z)=\E\left[g_N(z)^{X^{(1)}}\right]
			=\dfrac{\E\left[g_N(z)^X\bq(X)\right]}{\E[\bq(X)]}=\dfrac{\E\left[\E\left[\varphi(z)^L\right]^X\bq(X)\right]}{\E[\bq(X)]},
		\end{align*}
		where $\varphi(z):=\nicefrac{(\bp(0)\left(1-\bq(0)\right)+\E[\bq(X)]z)}{\tilde{\bp}(0)}$ and note that if $z>1$ then $\varphi(z)>1$.
		As $L$ is stochastically larger than $X\indic_{\{Y=0\}}$, we obtain: 
		\[g^{(1)}(z)\ge  \dfrac{\E\left[\E\left[\varphi(z)^X(1-\bq(X))\right]^X\bq(X)\right]}{\E[\bq(X)]}= \dfrac{\E\left[l(\varphi(z))^X\bq(X)\right]}{\E[\bq(X)]}\]
		Since $\rho_l(\bp,\bq)=1$, if $z>1$, we have that $l(\varphi(z))=+\infty$ implying that $g^{(1)}(z)=+\infty$. Consequently, $\rho(\bp^{(1)})=1$.
%
		\end{enumerate}
	\end{proof}
	Note that the forest $\F_\bq(\tau)$ is distributed as $N$ independent GW with offspring distribution $\bp^{(1)}$. 
	Let $(Z^{(1)}_i)_{i\in\NN}$ be a sequence of independent copies of $Z^{(1)}$, independent of $(N_i)_{i\in\NN}$. Then, we define $(W_n)_{n\in\mathbb N^*}$ and $(S_n)_{n\in\mathbb N^*}$ by 
$W_0=S_0=0$ a.s, and for $n\in\NN$, $W_n=\displaystyle\sum_{i=1}^nN_i$ and $S_n=\displaystyle\sum_{i=1}^n{Z^{(1)}_i}$.\\
	We use the decomposition of the GW with respect to the descendants of $\emptyset$ in $\F_\bq(\tau)$ and Dwass formula (see \cite{dwass_total_1969}): for every $j\in\NN$ and every $n\geq j$, we have 
	\begin{equation}\label{dwassformula}
		\p_j\left(\left|  \tau^{\F_\bq} \right|=n\right)=\dfrac{j}{n}\p(S_n=n-j).
	\end{equation}
	Thereby, for $n\in\NN$:
	\begin{align}\label{probamark}
		\p(M(\tau)=n)&=\underset{j\in\N}{\sum}\bp_N(j)\p_j\left(\left|  \tau^{\F_\bq} \right|=n\right)=\underset{j\in\N}{\sum}\bp_N(j)\dfrac{j}{n}\p(S_n=n-j)\nonumber\\
		&=\dfrac{1}{n}\E[N\indic_{\{S_n+N=n\}}].
	\end{align}
	More generally, using exchangeability, we have for $j,n\in\NN$:
	\begin{align}\label{probajmark}
		\p_j(M(\tau)=n)&=\sum_{i\in\N}\p(W_j=i)\p_i(\left|  \tau^{\F_\bq} \right|=n)=\dfrac{1}{n}\E[W_j\indic_{\{S_n+W_j=n\}}]\nonumber\\
		&=\dfrac{j}{n}\E[N\indic_{\{S_n+W_{j-1}+N=n\}}],
	\end{align}
	with $N$ independent of $S_n$ and $W_{j-1}$.
	\smallbreak
	We explain the difference between $\p(M(\bt^*)=n)$ and 	$\p_j(M(\bt^*)=n)$, with an example, for a marked tree $\bt^{*}$, $n=6$ and $j=4$.
\smallbreak
\begin{figure}[h!]
	\centering
	\begin{tikzpicture}[scale=0.8]
		\coordinate (0) at (0,0);
		\coordinate (1) at (-1,1);
		\coordinate (2) at (1,1);
		\coordinate (11) at (-1.5,2);
		\coordinate (12) at (-1,2);
		\coordinate (13) at (-0.5,2);
		\coordinate (111) at (-1.5,3);
		\coordinate (121) at (-1.25,3);
		\coordinate (122) at (-0.75,3);
		\coordinate (131) at (-0.5,3);
		\coordinate (1311) at (-1,4);
		\coordinate (1312) at (0,4);
		\coordinate (21) at (1,2);
		\coordinate (211) at (0.25,3);
		\coordinate (212) at (1.75,3);
		\coordinate (2111) at (0.25,4);
		\coordinate (2121) at (1.25,4);
		\coordinate (2122) at (1.75,4);
		\coordinate (2123) at (2.25,4);
		\draw [thick](0) -- (2) -- (21)--(211)--(2111);
		\draw [thick](0) -- (1)-- (13)--(131)--(1312);
		\draw [thick](21) -- (212)--(2121);
		\draw [thick](1) -- (12)--(121);
		\draw [thick](1) -- (11)-- (111);
		\draw [thick](1) -- (12)-- (122);
		\draw [thick](1) -- (13)-- (131)--(1311);
		\draw [thick](212) -- (2123);
		\draw [thick](212) -- (2122);
		\draw (12)node{\textcolor{blue}{\Large{$\bullet$}}} ;
		\draw (122)node{\textcolor{blue}{\Large{$\bullet$}}};
		\draw (1311)node{\textcolor{blue}{\Large{$\bullet$}}};
		\draw (211)node{\textcolor{blue}{\Large{$\bullet$}}} ;
		\draw (212)node{\textcolor{blue}{\Large{$\bullet$}}};
		\draw (2121)node{\textcolor{blue}{\Large{$\bullet$}}};
	\end{tikzpicture}
	\caption{A marked tree $\bt^{*}$, with 6 marks.}
	\label{exp(M(t)=n)}
\end{figure}
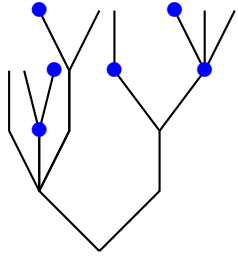
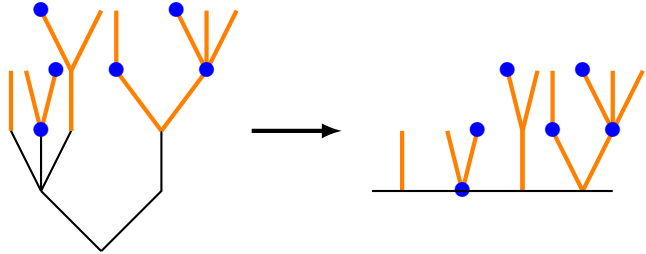
\begin{figure}[h!]
	\centering
	\begin{tikzpicture}[scale=0.8]
		\coordinate (0) at (0,0);
		\coordinate (1) at (-1,1);
		\coordinate (2) at (1,1);
		\coordinate (11) at (-1.5,2);
		\coordinate (12) at (-1,2);
		\coordinate (13) at (-0.5,2);
		\coordinate (111) at (-1.5,3);
		\coordinate (121) at (-1.25,3);
		\coordinate (122) at (-0.75,3);
		\coordinate (131) at (-0.5,3);
		\coordinate (1311) at (-1,4);
		\coordinate (1312) at (0,4);
		\coordinate (21) at (1,2);
		\coordinate (211) at (0.25,3);
		\coordinate (212) at (1.75,3);
		\coordinate (2111) at (0.25,4);
		\coordinate (2121) at (1.25,4);
		\coordinate (2122) at (1.75,4);
		\coordinate (2123) at (2.25,4);
		\coordinate (5) at (5,1);
		\coordinate (6) at (6,1);
		\coordinate (7) at (7,1);
		\coordinate (51) at (5,2);
		\coordinate (61) at (5.75,2);
		\coordinate (62) at (6.25,2);
		\coordinate (71) at (7,2);
		\coordinate (711) at (6.75,3);
		\coordinate (712) at (7.25,3);
		\coordinate (81) at (8,1);
		\coordinate (811) at (7.5,2);
		\coordinate (812) at (8.5,2);
		\coordinate (8111) at (7.5,3);
		\coordinate (8121) at (8,3);
		\coordinate (8122) at (8.5,3);
		\coordinate (8123) at (9,3);
		\draw [thick](0) -- (2)-- (21);
		\draw [ultra thick, orange](21)--(211)--(2111);
		\draw [thick](0) -- (1);
		\draw [ultra thick, orange](13)--(131)--(1312);
		\draw [ultra thick, orange](21) -- (212)--(2121);
		\draw [ultra thick, orange](11)--(111);
		\draw [thick](1) -- (12);
		\draw [ultra thick, orange](12)--(121);
		\draw [thick](1) -- (11);
		\draw [ultra thick, orange](12)-- (122);
		\draw [thick](1) -- (13);
		\draw [ultra thick, orange](13)-- (131)--(1311);
		\draw [ultra thick, orange](212) -- (2123);
		\draw [ultra thick, orange](212) -- (2122);
		\draw [ultra thick, orange](81)--(811)--(8111);
		\draw [ultra thick, orange](7)--(71)--(712);
		\draw [ultra thick, orange](81) -- (812)--(8121);
		\draw [ultra thick, orange](6)--(61);
		\draw [ultra thick, orange](5)--(51);
		\draw [ultra thick, orange](6)-- (62);
		\draw [ultra thick, orange](7)-- (71)--(711);
		\draw [ultra thick, orange](812) -- (8123);
		\draw [ultra thick, orange](812) -- (8122);
		\draw (12)node{\textcolor{blue}{\Large{$\bullet$}}} ;
		\draw (122)node{\textcolor{blue}{\Large{$\bullet$}}};
		\draw (1311)node{\textcolor{blue}{\Large{$\bullet$}}};
		\draw (211)node{\textcolor{blue}{\Large{$\bullet$}}} ;
		\draw (212)node{\textcolor{blue}{\Large{$\bullet$}}};
		\draw (2121)node{\textcolor{blue}{\Large{$\bullet$}}};
		\draw (6)node{\textcolor{blue}{\Large{$\bullet$}}} ;
		\draw (62)node{\textcolor{blue}{\Large{$\bullet$}}};
		\draw (711)node{\textcolor{blue}{\Large{$\bullet$}}};
		\draw (811)node{\textcolor{blue}{\Large{$\bullet$}}} ;
		\draw (812)node{\textcolor{blue}{\Large{$\bullet$}}};
		\draw (8121)node{\textcolor{blue}{\Large{$\bullet$}}};
		\draw [thick] (4.5,1)--(8.5,1);
		\draw [>=latex,->, ultra thick] (2.5,2)--(4,2);
	\end{tikzpicture}
	\caption{A marked tree $\bt^{*}$, with 6 marks and 4 sub-trees.}
	\label{exp4(M(t)=n)}
\end{figure}
When determining $\p(M(\bt^*)=6)$, we consider the entire tree as shown in Figure \ref{exp(M(t)=n)}. To find $\p_4(M(\bt^*)=6)$, we look at all possible decompositions of our tree $\bt^{*}$ into 4 subtrees such that the sum of the marks on these 4 subtrees equals 6. This can be illustrated in Figure \ref{exp4(M(t)=n)}.
	\section{Results with the marked conditioning}\label{condconvresult}
		We want to demonstrate that a MGW $\tau^{*}$ with offspring $\bp$ and mark function $\bq$ satisfies that, the conditional distribution of $\tau^{*}$ given $\{M(\tau)=n\}$ converges in distribution to a Kesten's tree $\tau_K^{*}=\tau_K^{*}(\bp,\bq)$, in the critical case. In the generic and sub-critical case, the conditional distribution of $\tau^{*}$ given $\{M(\tau^{*})=n\}$ converges in distribution to a Kesten's tree $\tau_K^{*}(\bp_{\theta_c}, \bq_{\theta_c})$,  and converges in distribution to a condensation tree $\tau_C^{*}(\bp,\bq)$, in the non-generic and sub-critical case.
	\smallbreak
	\subsection{Critical case}\label{critcase}
	In this section, we consider trees that belong to $\T^{*}$.
	\begin{theo}\label{cvgkesten}
		Let $\bp$ be a critical offspring distribution that satisfies \eqref{condp}. Let $\tau^{*}$ be a MGW with offspring distribution $\bp$ and mark function $\bq$ satisfying \eqref{condq}. 
		Then we have:
		\begin{equation}\label{limdis1}
		\mathrm{dist}(\tau^{*}|M(\tau^*)=n)\underset{n\to+\infty}{\longrightarrow} \mathrm{dist}(\tau_K^{*}(\bp,\bq)),
		\end{equation}
		where the limit has to be understood along a sub-sequence for which $\p(M(\tau)=n)>0$.
	\end{theo}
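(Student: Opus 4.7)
The plan is to verify the criterion of Lemma~\ref{critconvT(t,x)} with $T_n^*$ the conditional law of $\tau^*$ given $\{M(\tau^*)=n\}$ and $T^*=\tau_K^*(\bp,\bq)$. Since the marked Kesten's tree a.s.\ has an infinite spine, $\p(\tau_K^*=\bt^*)=0$ for every $\bt^*\in\T_0^*$, and on the other hand $\p(\tau^*=\bt^*\mid M(\tau^*)=n)=0$ as soon as $n\ne M(\bt^*)$. The equality part of \eqref{critconvT} is therefore automatic, and everything reduces to showing, for every $\bt^*\in\T_0^*\cap\T^*$ and every $x\in\mathcal{L}_0(\bt)$,
\[
\lim_{n\to+\infty}\p\bigl(\tau^*\in\T(\bt^*,x)\mid M(\tau^*)=n\bigr)=\p\bigl(\tau_K^*\in\T(\bt^*,x)\bigr),
\]
the limit being taken along the sub-sequence of $n$ for which the denominator is positive.

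First I would decompose along the subtree grafted at $x$. Every tree in $\T(\bt^*,x)$ is of the form $\bt^*\circledast_x s^*$, with $x$ keeping the mark $\eta_x$ from $\bt^*$ and the root-mark of $s^*$ being forgotten. Expanding \eqref{probaTmark} and summing over $s^*$ with prescribed total mark count, conditioning on the root degree $k_\emptyset(s)=j$, yields
\[
\p\bigl(\tau^*\in\T(\bt^*,x),M(\tau^*)=n\bigr)=\dfrac{\p(\tau^*=\bt^*)}{\bp(0)\alpha_{0,x}}\sum_{j\ge 1}\bp(j)\alpha_{j,x}\,\p_j\!\bigl(M(\tau^*)=n-M(\bt^*)\bigr).
\]
The prefactor already matches the one in \eqref{exptau^*K}, since $\mathcal{S}^x(\bt^*)$ differs from $\bt^*$ only by setting $\eta_x=0$, so \eqref{probaTmark} gives $\p(\tau^*=\mathcal{S}^x(\bt^*))=\p(\tau^*=\bt^*)(1-\bq(0))/\alpha_{0,x}$ and hence $\p(\tau^*=\bt^*)/(\bp(0)\alpha_{0,x})=\p(\tau^*=\mathcal{S}^x(\bt^*))/(\bp(0)(1-\bq(0)))$.

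The second step is to evaluate, as $n\to+\infty$ with $m:=n-M(\bt^*)$, the ratio
\[
\dfrac{\p_j(M(\tau^*)=m)}{\p(M(\tau^*)=n)}=\dfrac{jn}{m}\cdot\dfrac{\E\bigl[N\indic_{\{S_m+W_{j-1}+N=m\}}\bigr]}{\E\bigl[N\indic_{\{S_n+N=n\}}\bigr]},
\]
obtained by combining \eqref{probajmark} and \eqref{probamark}. In the critical case, Lemma~\ref{prophatp}(1) gives $\E[Z^{(1)}]=1$, while the finite second moment of $\bp$, the finiteness of $\E[L^2]$ from Proposition~\ref{propL} and the conditional binomial structure in Proposition~\ref{propN} together imply $\mathrm{Var}(Z^{(1)})<+\infty$. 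Applying the local central limit theorem to the mean-one random walk $(S_m)$ (on its lattice, of span $d$) gives $\p(S_m=k)\sim d/(\sigma\sqrt{2\pi m})$ uniformly for $k-m$ in any bounded set and in the right residue class. Summing against the laws of $N$ and $W_{j-1}$, which have finite means, a dominated-convergence argument shows both expectations above are equivalent to $\E[N]\,d/(\sigma\sqrt{2\pi m})$, so their ratio tends to $1$; combined with $jn/m\to j$ this gives the desired limit
\[
\dfrac{\p(\tau^*=\bt^*)}{\bp(0)\alpha_{0,x}}\sum_{j\ge 1}j\bp(j)\alpha_{j,x}=\dfrac{\p(\tau^*=\bt^*)}{\bp(0)\alpha_{0,x}}\E[X\alpha_{X,x}],
\]
which coincides with the right-hand side of \eqref{exptau^*K} by the identification noted above.

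The main obstacle is the uniform control in the local CLT step: one must justify exchanging the limit in $n$ with the $\ell$-sum coming from $N$ (and, in the numerator, the $k$-sum coming from $W_{j-1}$). This requires the finite second moment of $Z^{(1)}$ for the classical $1/\sqrt{m}$ rate in the LCLT, and a tail bound on the law of $N$ strong enough to dominate the sum uniformly in $m$. In the periodic case, one must moreover check that the events $\{S_m+W_{j-1}+N=m\}$ and $\{S_n+N=n\}$ lie in compatible residue classes modulo $d$ along the chosen sub-sequence, so that both local probabilities are simultaneously positive and the asymptotic ratio is well-defined.
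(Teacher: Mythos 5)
Your overall decomposition is the same as the paper's: reduce via Lemma~\ref{critconvT(t,x)}, split off the subtree at $x$, identify the prefactor with $D(\bt^*,x)=\p(\tau^*=\mathcal{S}^x(\bt^*))/(\bp(0)(1-\bq(0)))$, and rewrite $\p_j(M(\tau^*)=\cdot)/\p(M(\tau^*)=\cdot)$ via Dwass' formula \eqref{probajmark} and \eqref{probamark}. Where you diverge is in how the ratio of expectations is driven to its limit. The paper does not invoke the local CLT directly for the dominant ratio: it relies on the strong ratio limit property \eqref{strongratio} (Kemeny--Neveu) through Lemma~\ref{limSnTR2} to get $\delta_{n,m}\to 1$, and then applies Lemma~\ref{limBn,lneg} with $l=0$, whose proof routes through Lemmas~\ref{limSnN2} and~\ref{limSnN3} and, in the critical case $\mu^{(1)}=1$, through a Gaussian LCLT estimate \eqref{eqvcentradmwalk} with uniform error plus dominated convergence. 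You propose instead to run an LCLT argument directly on $\E[N\indic_{\{S_m+W_{j-1}+N=m\}}]$ and $\E[N\indic_{\{S_n+N=n\}}]$, using $\mu^{(1)}=1$ and $\mathrm{Var}(Z^{(1)})<\infty$ (both of which you correctly deduce from \eqref{condp}, Proposition~\ref{propL} and Lemma~\ref{prophatp}). This is a valid, somewhat more self-contained route in the critical case, provided you supply the uniform bound $\sup_k\p(S_m=k)\le C/\sqrt{m}$ (a consequence of the uniform LCLT for aperiodic finite-variance lattice walks on the correct residue class) to dominate the $\ell$-sum over $N$ and the $k$-sum over $W_{j-1}$, exactly the "obstacle" you flag. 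What the paper's strong-ratio-limit route buys, and what makes it the authors' choice, is uniformity of method: $\ref{limSnTR2}$ covers both $\mu^{(1)}=1$ and the sub-critical non-generic case $\mu^{(1)}<1$, $\rho(\bp^{(1)})=1$ with no exponential moments, so the same lemmas carry the condensation case that the LCLT normalisation by $1/\sqrt{m}$ alone cannot reach. For Theorem~\ref{cvgkesten} in isolation, though, your argument is correct and essentially equivalent.
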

	\begin{proof}
		According to \eqref{critconvT}, \eqref{limdis1} is equivalent to  prove that for every finite marked tree $\bt^{*}\in\T_0^{*}$ and every leaf $x\in\mathcal{L}_0(\bt)$:
		\[\p(\tau^{*}\in \T(\bt^{*},x)|M(\tau^*)=n )\underset{n\to+\infty}{\longrightarrow}\p(\tau_K^{*}(\bp,\bq)\in \T(\bt^{*},x)).\]
		In fact, we have also to prove that $\lim_{n\rightarrow+\infty}\p(\tau^{*}=\bt^{*}|M(\tau)=n )=0$, 
		but it is direct, since if $n\geq |\bt|$, we have $\p(\tau^{*}=\bt^{*}|M(\tau^*)=n )=0.$
		\smallbreak
		We assume that $\bp^{(1)}$ is aperiodic. If $\bp^{(1)}$ is periodic, the results stay true if we consider a sub-sequence $(S_{\psi(n)})_{n\in\NN}$, with $\psi$ a growing, positive function, such that $\E[N\indic_{\{S_{\psi(n)}+N={\psi(n)}\}}]$  is positive, to apply lemmas obtained in the appendix.
		\smallbreak
		The event \textgravedbl$\tau^{*}\in \T_+(\bt^{*},x),M(\tau^*)=n$\textacutedbl ~is explained as following. 
		We use an example for $n=8$ : 
\begin{figure}[h!]
	\centering
	\begin{tikzpicture}[scale=0.8]
		\coordinate (0) at (0,0);
		\coordinate (1) at (-1,1);
		\coordinate (2) at (1,1);
		\coordinate (11) at (-1.5,2);
		\coordinate (12) at (-1,2);
		\coordinate (13) at (-0.5,2);
		\coordinate (111) at (-1.5,3);
		\coordinate (121) at (-1.25,3);
		\coordinate (122) at (-0.75,3);
		\coordinate (131) at (-0.5,3);
		\coordinate (1311) at (-1,4);
		\coordinate (1312) at (0,4);
		\coordinate (21) at (1,2);
		\coordinate (211) at (0.25,3);
		\coordinate (212) at (1.75,3);
		\coordinate (2111) at (0.25,4);
		\coordinate (2121) at (1.25,4);
		\coordinate (2122) at (1.75,4);
		\coordinate (2123) at (2.25,4);
		\draw [thick](0) -- (2) -- (21)--(211)--(2111);
		\draw [thick](0) -- (1)-- (13);
		\draw [thick](21) -- (212)--(2121);
		\draw [thick](1) -- (12)--(121);
		\draw [thick](1) -- (11)-- (111);
		\draw [thick](1) -- (12)-- (122);
		\draw [thick](1) -- (13);
		\draw [thick](212) -- (2123);
		\draw [thick](212) -- (2122);
		\draw (0) node[below left]{{$1$}};
		\draw (2)node{\textcolor{blue}{\Large{$\bullet$}}} ;
		\draw (12)node{\textcolor{blue}{\Large{$\bullet$}}} ;
		\draw (122)node{\textcolor{blue}{\Large{$\bullet$}}};
		\draw (13)node{\textcolor{blue}{\Large{$\bullet$}}} node[below right]{{$x$}};
		\draw (211)node{\textcolor{blue}{\Large{$\bullet$}}} ;
		\draw (212)node{\textcolor{blue}{\Large{$\bullet$}}};
		\draw (2121)node{\textcolor{blue}{\Large{$\bullet$}}};
	\end{tikzpicture}
	~~
	\begin{tikzpicture}[scale=0.8]
		\coordinate (0) at (0,0);
		\coordinate (1) at (-1,1);
		\coordinate (2) at (1,1);
		\coordinate (11) at (-1.5,2);
		\coordinate (12) at (-1,2);
		\coordinate (13) at (-0.5,2);
		\coordinate (111) at (-1.5,3);
		\coordinate (121) at (-1.25,3);
		\coordinate (122) at (-0.75,3);
		\coordinate (131) at (-0.5,3);
		\coordinate (1311) at (-1,4);
		\coordinate (1312) at (0,4);
		\coordinate (21) at (1,2);
		\coordinate (211) at (0.25,3);
		\coordinate (212) at (1.75,3);
		\coordinate (2111) at (0.25,4);
		\coordinate (2121) at (1.25,4);
		\coordinate (2122) at (1.75,4);
		\coordinate (2123) at (2.25,4);
		\draw [thick](0) -- (2) -- (21)--(211)--(2111);
		\draw [thick](0) -- (1)--(13);
		\draw [thick](21) -- (212)--(2121);
		\draw [thick](1) -- (12)--(121);
		\draw [thick](1) -- (11)-- (111);
		\draw [thick](1) -- (12)-- (122);
		\draw [thick](212) -- (2123);
		\draw [thick](212) -- (2122);
		\draw (0) node[below left]{{$2$}};
		\draw (2)node{\textcolor{blue}{\Large{$\bullet$}}} ;
		\draw (12)node{\textcolor{blue}{\Large{$\bullet$}}} ;
		\draw (122)node{\textcolor{blue}{\Large{$\bullet$}}};
		\draw (13)node[below right]{{$x$}};
		\draw (211)node{\textcolor{blue}{\Large{$\bullet$}}} ;
		\draw (212)node{\textcolor{blue}{\Large{$\bullet$}}};
		\draw (2121)node{\textcolor{blue}{\Large{$\bullet$}}};
	\end{tikzpicture}
	~~
	\begin{tikzpicture}[scale=0.8]
		\coordinate (0) at (0,0);
		\coordinate (1) at (-1,1);
		\coordinate (2) at (1,1);
		\coordinate (11) at (-1.5,2);
		\coordinate (12) at (-1,2);
		\coordinate (13) at (-0.5,2);
		\coordinate (111) at (-1.5,3);
		\coordinate (121) at (-1.25,3);
		\coordinate (122) at (-0.75,3);
		\coordinate (131) at (-0.5,3);
		\coordinate (1311) at (-1,4);
		\coordinate (1312) at (0,4);
		\coordinate (21) at (1,2);
		\coordinate (211) at (0.25,3);
		\coordinate (212) at (1.75,3);
		\coordinate (2111) at (0.25,4);
		\coordinate (2121) at (1.25,4);
		\coordinate (2122) at (1.75,4);
		\coordinate (2123) at (2.25,4);
		\draw [thick](0) -- (2) -- (21)--(211)--(2111);
		\draw [thick](0) -- (1)-- (13);
		\draw [ultra thick, orange](13)--(131)--(1312);
		\draw [thick](21) -- (212)--(2121);
		\draw [thick](1) -- (12)--(121);
		\draw [thick](1) -- (11)-- (111);
		\draw [thick](1) -- (12)-- (122);
		\draw [ultra thick, orange](13)-- (131)--(1311);
		\draw [thick](212) -- (2123);
		\draw [thick](212) -- (2122);
		\draw (0) node[below left]{{$3$}};
		\draw (1311)node{\textcolor{blue}{\Large{$\bullet$}}} ;
		\draw (2)node{\textcolor{blue}{\Large{$\bullet$}}} ;
		\draw (12)node{\textcolor{blue}{\Large{$\bullet$}}} ;
		\draw (122)node{\textcolor{blue}{\Large{$\bullet$}}};
		\draw (13)node{\textcolor{blue}{\Large{$\bullet$}}} node[below right]{{$x$}};
		\draw (211)node{\textcolor{blue}{\Large{$\bullet$}}} ;
		\draw (212)node{\textcolor{blue}{\Large{$\bullet$}}};
		\draw (2121)node{\textcolor{blue}{\Large{$\bullet$}}};
	\end{tikzpicture}
	\caption{A marked tree $\bt^{*}$ ($1$), the sub-tree $\mathcal{S}^x(\bt^{*})$ ($2$) and the tree obtained by grafting a tree at $x$ on $\bt^{*}$, such that $M(\tau)=8$ ($3$).}
	\label{exgreftleaf}
\end{figure}
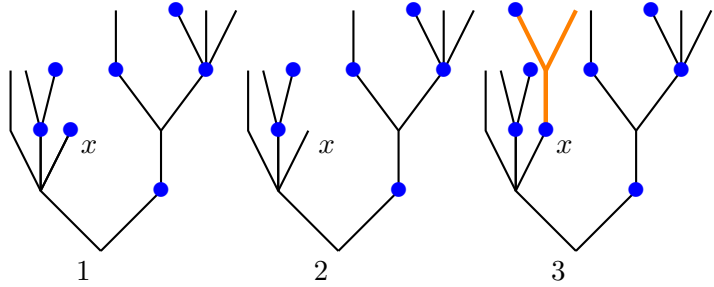
		\smallbreak
		Let $m:=M(\bt^*)$ and $D(\bt^{*},x):=\dfrac{\p(\tau^{*} = \mathcal{S}^x(\bt^{*}))}{\bp(0)(1-\bq(0))}$, thanks to \eqref{probajmark}, we have:
		{\small\begin{align*}
			&\p(\tau^{*}\in\T(\bt^{*},x),M(\tau^*)=n)=\dfrac{\p(\tau^{*} = \mathcal{S}^x(\bt^{*}))}{\bp(0)(1-\bq(0))}\underset{j\in\N}{\sum}\bp(j)\alpha_{j,x} \p_j(M(\tau^*)=n-m)\\
			&=D(\bt^{*},x)\underset{j\in\N}{\sum}\bp(j)\alpha_{j,x}\dfrac{j}{n-m}\E[N\indic_{\{S_{n-m}+W_{j-1}+N=n-m\}}],
		\end{align*}}
		with for all $j$, $\alpha_{j,x}:= \bq(j)\eta_x(\bt) +(1-\bq(j))(1-\eta_x(\bt))$.\\
		In particular, according to \eqref{probamark}, we obtain:
		\begin{align*}
			&\p(\tau^{*}\in\T(\bt^{*},x)|M(\tau^*)=n)\\
			&=D(\bt^{*},x)\dfrac{n}{n-m} \underset{j\in\N}{\sum}\bp(j)\alpha_{j,x} j \dfrac{\E[N\indic_{\{S_{n-m}+W_{j-1}+N=n-m\}}]}{\E\left[N\indic_{\{S_n+N=n\}}\right]}\\
			&=D(\bt^{*},x)\delta_{n,m}\underset{j\in\N}{\sum}\bp(j)\alpha_{j,x} j \dfrac{\E[N\indic_{\{S_{n-m}+W_{j-1}+N=n-m\}}]}{\E\left[N\indic_{\{S_{n-m}+N=n-m\}}\right]}\\
			&=D(\bt^{*},x)\delta_{n,m}B_{n-m,0},
		\end{align*}
		with $\delta_{n,m}:=\dfrac{n}{n-m}\dfrac{\E[N\indic_{\{S_{n-m}+N=n-m\}}]}{\E[N\indic_{\{S_n+N=n\}}]}$.\\
		Notice that \eqref{limTRSn2}, in Lemma \ref{limSnTR2} for $T=N$, $R=0$ and $k=0$, implies that $$\underset{n\to+\infty}{\lim}\delta_{n,m}=1.$$
		Moreover, according to Lemma \ref{limBn,lneg}~, for $l=0$, we have 
		\begin{align*}
			\underset{n\to+\infty}{\lim}B_{n,0}
			&=\E[X\bq(X)]\eta_x(\bt)+\E[X(1-\bq(X))](1-\eta_x(\bt))=\E[X\alpha(X)].
		\end{align*}
		Thereby, according to \eqref{exptau^*K}, we get:
		\begin{align*}
			\underset{n\to+\infty}{\lim}\p(\tau^{*}\in\T(\bt^{*},x)|M(\tau^*)=n)&=D(\bt^{*},x)\E[X\alpha_{X,x}]\\
			&=\p(\tau_K^{*}(\bp,\bq)\in \T(\bt^{*},x)).
		\end{align*}
	\end{proof}
	\subsection{Sub-critical case}\label{subcritcase}

	\subsubsection{Generic case}
	According to Definition \ref{dfgen}, if $\bp$ is generic, it exists $\theta_c\in I$ such that $\bp_{\theta_c}$ is critical, and recall that $\theta_c$ is unique. Thanks to the Proposition \ref{loicondid} and Theorem \ref{cvgkesten}, we have:
	\begin{cor}\label{corconvdist}
		Let $\tau^{*}$ be a sub-critical MGW with offspring distribution $\bp$ satisfying \eqref{condp} and mark function $\bq$ satisfying \eqref{condq}.
		If $\bp$ is generic and $\bp_{\theta_c}$ admits a moment of order $2$ (always true for $\theta_c<\rho$), then: 
		\[ \mathrm{dist}(\tau^{*} | M(\tau^*)=n)\underset{n\to+\infty}{\longrightarrow} \mathrm{dist}(\tau_{K}^{*}(\bp_{\theta_c},\bq_{\theta_c})).\]
	\end{cor}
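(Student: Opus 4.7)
The plan is to reduce this to the critical case Theorem \ref{cvgkesten} by changing offspring distribution via Proposition \ref{loicondid}. Since $\bp$ is generic, Lemma \ref{exthetacrit} provides a unique $\theta_c\in I$ with $\mu(\bp_{\theta_c})=1$, so the associated marked law $(\bp_{\theta_c},\bq_{\theta_c})$ is critical. The pair $(\tau^*,n)$ and $(\tau_{\theta_c}^*,n)$ should have the same conditional law, so the long-time limit can be read off from the $\theta_c$-tree.

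First, I would invoke Proposition \ref{loicondid} with $\theta=\theta_c$: for every finite marked tree $\bt^*\in \T_0^*$ with $M(\bt^*)=n$,
\[
\p(\tau^*=\bt^*\mid M(\tau^*)=n)=\p(\tau_{\theta_c}^*=\bt^*\mid M(\tau_{\theta_c}^*)=n).
\]
Hence $\mathrm{dist}(\tau^*\mid M(\tau^*)=n)=\mathrm{dist}(\tau_{\theta_c}^*\mid M(\tau_{\theta_c}^*)=n)$, which reduces the problem to identifying the limit on the right-hand side.

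Second, I would check that $(\bp_{\theta_c},\bq_{\theta_c})$ meets all the hypotheses of Theorem \ref{cvgkesten}. By construction $\bp_{\theta_c}$ is critical. From the discussion following \eqref{condtheta}, $\bp_{\theta_c}(0)>0$ and $\bp_{\theta_c}(0)+\bp_{\theta_c}(1)<1$; the second-moment hypothesis is assumed in the statement (and, as noted, is automatic when $\theta_c<\rho(\bp)$ because $g_{\theta_c}$ is then analytic in a neighbourhood of~$1$, equivalently, any derivative of $g_{\theta_c}$ at $1$ exists). Thus $\bp_{\theta_c}$ satisfies \eqref{condp}. For the mark function, the equivalence recorded right after \eqref{condqthetater}, namely $\bp_{\theta_c}(k)\bq_{\theta_c}(k)>0\iff \bp(k)\bq(k)>0$, together with \eqref{condq} for $(\bp,\bq)$, yields \eqref{condq} for $(\bp_{\theta_c},\bq_{\theta_c})$.

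Finally, applying Theorem \ref{cvgkesten} to the critical MGW $\tau_{\theta_c}^*$ with marking-reproduction law $(\bp_{\theta_c},\bq_{\theta_c})$ gives
\[
\mathrm{dist}(\tau_{\theta_c}^*\mid M(\tau_{\theta_c}^*)=n)\underset{n\to+\infty}{\longrightarrow}\mathrm{dist}(\tau_K^*(\bp_{\theta_c},\bq_{\theta_c})),
\]
and combining with the identity from the first step concludes the proof. The argument is essentially mechanical once one has Proposition \ref{loicondid} and Theorem \ref{cvgkesten}; the only mildly delicate step is the verification that all the structural hypotheses transfer from $(\bp,\bq)$ to $(\bp_{\theta_c},\bq_{\theta_c})$, and this is precisely handled by the bookkeeping performed in Subsection \ref{resultcond}.
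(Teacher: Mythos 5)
Your proof is correct and follows exactly the paper's own argument: reduce to the critical case via Proposition \ref{loicondid}, verify that $(\bp_{\theta_c},\bq_{\theta_c})$ inherits \eqref{condp} (including the second-moment condition, automatic when $\theta_c<\rho(\bp)$) and \eqref{condq}, then apply Theorem \ref{cvgkesten}. Nothing is missing.
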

	\begin{proof}
	As $\mathrm{dist}(\tau^{*} | M(\tau^*)=n)=\mathrm{dist}(\tau_{\theta_c}^{*} | M(\tau^*_{\theta_c})=n)$, according to Proposition \ref{propL}, we have to prove that $\bp_{\theta_c}$ and $\bq_{\theta_c}$ satisfy respectively \eqref{condp} and  \eqref{condq}. 
	As $\theta_c\in I$, see the beginning of subsection \ref{resultcond}, it remains to prove that 
	$\sum_{k\ge 0}k^2\bp_{\theta_c}(k)<+\infty$, in other words:
\[\sum_{k\ge0}k^2\theta_c^{k-1}\bp(k)(c_{\theta_c}\bq(k)+1-\bq(k))\le\left(c_{\theta_c}+1\right)\E[X^2\theta_c^{X-1}]<+\infty.\]Since $\theta_c\in I$, $\theta_c < \rho$ the convergence radius of $g$ which is $\mathcal{C}^{\infty}(]-\rho,\rho[)$, thereby $\E[X^2\theta_c^{X-1}]<+\infty$.
	\end{proof}
	\begin{cor}\label{corcondconvloc}
			Let $\tau$ be a sub-critical and generic Galton-Watson tree with offspring distribution $\bp$ satisfying \eqref{condp} and its associated mark function $\bq$ satisfying \eqref{condq}. \\		
			If $\bp_{\theta_c}$ admits a moment of order $2$ (always true for $\theta_c< \rho(\bp)$) we have 
			\[ \mathrm{dist}(\tau| M(\tau)=n)\underset{n\to +\infty}{\to}\mathrm{dist}(\tau_K(\bp_{\theta_c})).\]
		\end{cor}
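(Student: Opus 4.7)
The plan is to deduce this corollary from Corollary \ref{corconvdist} by applying the continuous projection that forgets the marks. Define $\pi:\T^{*}\to\T$ by $\pi(\bt,(\eta_u)_{u\in\bt})=\bt$. The restriction functions satisfy $\pi\circ r_h^{*}=r_h\circ\pi$ for every $h\in\N$, so whenever $r_h^{*}(\bt^{*})=r_h^{*}(\bt'^{*})$ we have $r_h(\bt)=r_h(\bt')$. Therefore $\pi$ is $1$-Lipschitz (hence continuous) with respect to the ultra-metric distances $d^{*}$ on $\T^{*}$ and $d$ on $\T$.

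Next, let $\tau^{*}$ denote the MGW obtained from $\tau$ and $\bq$. By Corollary \ref{corconvdist}, under the hypotheses of the statement (in particular the moment of order $2$ for $\bp_{\theta_c}$ used to invoke Corollary \ref{corconvdist}),
\[
\mathrm{dist}(\tau^{*}\mid M(\tau^{*})=n)\underset{n\to+\infty}{\longrightarrow}\mathrm{dist}(\tau_K^{*}(\bp_{\theta_c},\bq_{\theta_c})).
\]
Applying the continuous mapping theorem with $\pi$ gives
\[
\mathrm{dist}(\pi(\tau^{*})\mid M(\tau^{*})=n)\underset{n\to+\infty}{\longrightarrow}\mathrm{dist}(\pi(\tau_K^{*}(\bp_{\theta_c},\bq_{\theta_c}))).
\]

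It remains to identify both sides. On the left, $\pi(\tau^{*})=\tau$ by construction of $\tau^{*}$ from $\tau$, and the event $\{M(\tau^{*})=n\}$ is exactly the conditioning event $\{M(\tau)=n\}$ in the abusive notation of the statement. On the right, the marked Kesten's tree $\tau_K^{*}(\bp_{\theta_c},\bq_{\theta_c})$ is, by the construction recalled in Subsection \ref{kestcondtree}, obtained by first sampling $\tau_K(\bp_{\theta_c})$ and then, conditionally on the underlying tree, marking each vertex independently with probability $\bq_{\theta_c}(k)$ depending on its out-degree $k$; consequently $\pi(\tau_K^{*}(\bp_{\theta_c},\bq_{\theta_c}))=\tau_K(\bp_{\theta_c})$ in distribution. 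Combining these two identifications yields the desired convergence.

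The argument is essentially routine once Corollary \ref{corconvdist} is available; the only point requiring a brief verification is the definitional identity $\pi(\tau_K^{*}(\bp,\bq))=\tau_K(\bp)$, and of course the periodicity caveat inherited from Theorem \ref{cvgkesten} (if $\bp^{(1)}_{\theta_c}$ is periodic, the limit is understood along the appropriate subsequence, as in Corollary \ref{corconvdist}).
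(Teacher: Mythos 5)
Your proof is correct and uses essentially the same idea as the paper: push the marked convergence of Corollary \ref{corconvdist} through the mark-forgetting map $\pi$ (called $\psi$ in the paper). The paper does this by hand, verifying the criterion of Lemma \ref{critconvT(t,x)} via the finite sum over the $2^{|\bt|}$ markings of $\bt$, whereas you invoke the continuous mapping theorem; these routes are equivalent once one notes that $\pi$ is $1$-Lipschitz for $d^{*}$ and $d$, and that $\pi\bigl(\tau_K^{*}(\bp_{\theta_c},\bq_{\theta_c})\bigr)$ and $\tau_K(\bp_{\theta_c})$ have the same law.
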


	\begin{proof}
		This result is a natural consequence of Corollary \ref{corconvdist}.
			Indeed, using again \eqref{critconvT}, as in the proof of Theorem \ref{cvgkesten}, we just have to show that for every finite tree $\bt\in\T_0$ and every leaf $x\in\mathcal{L}_0(\bt)$:
		\[\p(\tau\in \T(\bt,x)|M(\tau)=n )\underset{n\to+\infty}{\longrightarrow}\p(\tau_K(\bp_{\theta_c}))\in \T(\bt,x)).\] 
			
First, if $\psi:\T^{*}\mapsto \T$, is defined by $\psi(\bt^*)=\bt$, Proposition \ref{loicondid} implies:
		\begin{align*}
			\p(\tau\in \T(\bt,x)|M(\tau)=n)&=\sum_{\bt^{*}\in\T_0^{*},\psi(\bt^{*})=\bt} \p(\tau^{*}\in\T(\bt^{*},x)|M(\tau)=n)\\
			&=\sum_{\bt^{*}\in\T_0^{*},\psi(\bt^{*})=\bt} \p(\tau_{\theta_c}^{*}\in\T(\bt^{*},x)|M(\tau_{\theta_c})=n).
		\end{align*}
		As the previous sum is finite ($|\{\bt^{*}\in\T_0^{*},\psi(\bt^{*})=\bt\}|=2^{| t|}$), Theorem \ref{cvgkesten} implies that:
		{\small\begin{align*}
			&\lim_{n\to+\infty}\p(\tau\in\T(\bt,x)|M(\tau)=n)=\sum_{\bt^{*}\in\T_0^{*},\psi(\bt^{*})=\bt}\lim_{n\to+\infty}\p(\tau^{*}\in\T(\bt^{*},x)|M(\tau)=n)\\
			&=\sum_{\bt^{*}\in\T_0^{*},\psi(\bt^{*})=\bt}\p(\tau_{K}^{*}(\bp_{\theta_c},\bq_{\theta_c})\in \T(\bt^{*},x))=\p(\tau_K(\bp_{\theta_c})\in \T(\bt,x)).
		\end{align*}}
	\end{proof}
%
	\subsubsection{Non-generic case}\label{Nongenericcase}
	We recall that if $\rho_l(\bp,\bq)=1$ then we have $\rho(\bp)=1$, since, $\rho(\bp)\leq \rho_l(\bp,\bq)$ and $g(1)=\mu<+\infty$.
	\begin{theo}\label{cvgcondtree}
		Let $\tau^{*}$ be a sub-critical MGW with offspring distribution $\bp$ satisfying \eqref{condp}, \eqref{probpalpha} and $\rho_l(\bp,\bq)=1$, and mark function $\bq$ satisfying \eqref{condq}, \eqref{limitq} and \eqref{defqlim1}. 
		We have that:
		\begin{equation}\label{convcondtreeeq}
			\mathrm{dist}(\tau^{*}|M(\tau^*)=n)\underset{n\to+\infty}{\longrightarrow} \mathrm{dist}(\tau_C^{*}(\bp,\bq)).
		\end{equation}
	\end{theo}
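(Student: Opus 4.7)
The plan is to apply Lemma \ref{critconvT(t,x,k)}: under the conditioning $\tau^*$ belongs a.s. to $\T_0^*$ while $\tau_C^*(\bp,\bq)$ belongs a.s. to $\T_2^*$, so the convergence \eqref{convcondtreeeq} reduces to showing, for every $\bt^* \in \T_0^*$, every $x \in \bt$ and every $k \in \N$, that
\[\p(\tau^* = \bt^* \mid M(\tau^*) = n) \longrightarrow 0 \quad\text{and}\quad \p(\tau^* \in \T_+(\bt^*,x,k) \mid M(\tau^*) = n) \longrightarrow \p(\tau_C^*(\bp,\bq) \in \T_+(\bt^*,x,k)).\]
The first convergence is immediate since $M(\bt^*)$ is a fixed integer while $n \to +\infty$.

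To handle the second convergence, I would mimic the decomposition used in the proof of Theorem \ref{cvgkesten}, splitting according to the out-degree $j = k_x(\tau^*) \ge \max(k,k_x(\bt))$. Writing $m := M(\bt^*)$ and $i := j - k_x(\bt)$ for the number of extra sub-trees grafted at $x$, the branching property together with the definition \eqref{dfC} of $C(\bt^*,x)$ yields
\[\p\bigl(\tau^* \in \T_+(\bt^*,x,k),\ M(\tau^*) = n\bigr) = C(\bt^*,x) \sum_{i \ge \max((k-k_x(\bt))_+,1)} \bp(i+k_x(\bt))\, \alpha_{i+k_x(\bt),x}\, \p_i\bigl(M(\tau^*) = n-m\bigr).\]
Dividing by $\p(M(\tau^*) = n)$ and using the Dwass-type identities \eqref{probajmark} and \eqref{probamark}, the conditional probability rewrites as
\[C(\bt^*,x)\, \frac{n}{n-m} \sum_i i\, \bp(i+k_x(\bt))\, \alpha_{i+k_x(\bt),x}\, \frac{\E\bigl[N\indic_{\{S_{n-m} + W_{i-1} + N = n-m\}}\bigr]}{\E\bigl[N\indic_{\{S_n + N = n\}}\bigr]}.\]

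The target limit \eqref{exptau^*C} is itself a sum of two terms, and these should arise from two regimes of the above sum. In the \emph{bulk} regime (each fixed $i$), I would expect estimates in the spirit of Lemmas \ref{limSnTR2} and \ref{limBn,lneg}, but adapted to the heavy-tailed sub-critical walk $(S_n)$, to show that the ratio of expectations tends to $1$; by dominated convergence, the bulk contribution would then equal $\sum_i i\,\bp(i+k_x(\bt))\alpha_{i+k_x(\bt),x} = \E[(X-k_x(\bt))_+ \alpha_{X,x}\indic_{\{X\ge k\}}]$, matching the second summand of \eqref{exptau^*C}. In the \emph{condensation} regime ($i$ growing with $n$), the heavy-tail assumption \eqref{probpalpha} combined with Lemma \ref{prophatp}, which transfers the regular variation of $\bp$ to $\bp^{(1)}$ and gives $\rho(\bp^{(1)}) = 1$, places the walk $(S_n)$ in the one-big-jump regime; the large-$i$ term then corresponds exactly to the vertex $x$ itself carrying the atypically large out-degree, and a careful tail expansion—using \eqref{limitq} and, when $\ell_\bq = 1$, \eqref{defqlim1}—should isolate the residual mass $(1-\mu(\bp))\alpha_{\infty,x}$, matching the first summand of \eqref{exptau^*C}.

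The delicate step—and the main obstacle—is precisely this tail analysis: obtaining a uniform control over $i$ (growing with $n$) of the ratio $\E[N\indic_{\{S_{n-m} + W_{i-1} + N = n-m\}}]/\E[N\indic_{\{S_n + N = n\}}]$, and identifying the exact mass that the large-$i$ regime contributes after renormalization by $\p(M(\tau^*) = n)$. This is where the regular-variation hypothesis on $\bp$ and the asymptotic regularity of $\bq$ at infinity are genuinely needed, via big-jump / large-deviation asymptotics for sub-critical heavy-tailed random walks; in particular, the borderline case $\ell_\bq = 1$ forces the refined decay rate \eqref{defqlim1} precisely in order to keep the tail of $\bp^{(1)}$ driven by that of $\bp$ and to preserve the second-moment control of $L$ supplied by Proposition \ref{propL}.
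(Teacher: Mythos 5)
Your overall strategy coincides exactly with the paper's: reduce \eqref{convcondtreeeq} via Lemma~\ref{critconvT(t,x,k)} to the two limits for $\bt^*\in\T_0^*$, $x\in\bt$, $k\in\N$; dispatch the first limit trivially; rewrite the conditional probability via the branching property, the definition~\eqref{dfC}, and Dwass's formula~\eqref{probajmark}--\eqref{probamark}; and then identify a bulk contribution $\E[(X-k_x(\bt))_+\alpha_{X,x}\indic_{\{X\geq k\}}]$ together with a one-big-jump contribution $(1-\mu(\bp))\alpha_{\infty,x}$. Your formula after dividing by $\p(M(\tau^*)=n)$ is, modulo indexing, precisely the paper's expression $C(\bt^*,x)\,\delta_{n,m}\bigl(B_{n-m,l}(x)-\sum_{j=l+1}^{k-1}\bp(j)\alpha_{j,x}(j-l)a_{n-m,j}\bigr)$ with $l=k_x(\bt)$.

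However, there is a genuine gap: the entire analytic content of the theorem, namely
$\lim_n B_{n,l}(x)=\E[\alpha_{X,x}(X-l)_+]+(1-\mu(\bp))\bigl(\ell_\bq\eta_x+(1-\ell_\bq)(1-\eta_x)\bigr)$,
is left as an acknowledged obstacle rather than proved. You gesture at ``estimates in the spirit of Lemmas~\ref{limSnTR2} and~\ref{limBn,lneg}, but adapted to the heavy-tailed sub-critical walk,'' but in fact Lemma~\ref{limBn,lneg} is \emph{already} stated and proved in the sub-critical regime ($\mu^{(1)}<1$, $\rho(\bp^{(1)})=1$); what the proof of Theorem~\ref{cvgcondtree} actually cites is its extension to positive $l$, Lemma~\ref{limBn,lpos}, since $x$ need not be a leaf here. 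These in turn rest on Lemmas~\ref{limSnT1} and~\ref{limSnN3}--\ref{limSnN5}, which do the uniform-in-$j$ control of $\E[N\indic_{\{S_n+W_{j-1-l}+N=n\}}]/\E[N\indic_{\{S_n+N=n\}}]$ via the Doney local limit theorem~\eqref{expS_nequiv2} split over the intervals $J_1,\dots,J_4$; none of this is reconstructed in your proposal. A second, smaller, imprecision: you attribute the transfer of regular variation from $\bp$ to $\bp^{(1)}$ to Lemma~\ref{prophatp}, but that lemma only gives $\rho(\bp^{(1)})=1$; the actual regular-variation statements for $\bp_N$, $\bp^{(1)}$, $\bp^{(0)}$ are~\eqref{probNalpha}, \eqref{probXhatalpha}--\eqref{probXhatalphacase0}, \eqref{probXbar}--\eqref{probXbarcase0}, established in the appendix via the Bloznelis local estimates for randomly stopped sums, with the case $\ell_\bq=1$ handled separately under~\eqref{defqlim1}. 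Your proposal correctly pinpoints what must be shown and why each hypothesis enters, but it stops exactly at the step where the paper's Appendix does the work.
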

	\begin{proof}
		According to \eqref{critconvTinf}, it is enough to prove for all marked tree $\bt^{*}\in \T_0^{*}$, $x\in \bt$ and $k\in\N$:
		\[\lim_{n\to+\infty} \p(\tau^{*}\in \T_+(\bt^{*},x,k)|M(\tau^*)=n)=\p(\tau_C^{*}(\bp,\bq)\in\T_+(\bt^{*},x,k))\]
		Let $m:=M(\bt^*)$ and $l:=k_x(\bt)$. \smallbreak
		We assume that $\bp^{(1)}$ is aperiodic. If $\bp^{(1)}$ is periodic, the results stay true if we consider a sub-sequence $(S_{\psi(n)})_{n\in\NN}$, with $\psi$ a growing, positive function, such that $\E[N\indic_{\{S_{\psi(n)}+N={\psi(n)}\}}]$, $C_{\psi(n),0}(0)$ and $C_{\psi(n),l}(-1)$ (see \eqref{equalidad}) are positive, to apply lemmas obtained in the appendix.
		\smallbreak
		Using Dwass Formula \eqref{probajmark}, we have 
		{\small\begin{align*}
			&\p(\tau^{*}\in \T_+(\bt^{*},x,k),M(\tau^*)=n)
			= \underset{j\geq k-l}{\sum}C(\bt^{*},x)\alpha_{j+l,x}\bp(j+l)\p_j(M(\tau^*)=n-m)\\
			&= C(\bt^{*},x)\underset{j\geq \max(l+1,k)}{\sum}\alpha_{j,x}\bp(j)\p_{j-l}(M(\tau^*)=n-m)\\
			&=C(\bt^{*},x)\underset{j\geq\max(l+1,k)}{\sum}\bp(j)\alpha_{j,x}\dfrac{j-l}{n-m}\E[N\indic_{\{S_{n-m}+W_{j-1-l}+N=n-m\}}],
		\end{align*}}
		with for all $j\in\N$, $\alpha_{j,x}:=\bq(j)\eta_x(\bt)+(1-\bq(j))(1-\eta_x(\bt))$.
		We recall \eqref{dfC} that $$C(\bt^{*},x):=\dfrac{\p(\tau^{*} = \mathcal{S}^x(\bt^{*}))}{\bp(0)(1-\bq(0))}\p_{k_x(\bt)}(\tau^{*}=\F_x(\bt^{*})).$$
		The event \textgravedbl$\tau^{*}\in \T_+(\bt^{*},x,k),M(\tau^*)=n$\textacutedbl ~is explained as following. In the same way of Figure \ref{exgreftleaf}, we use an example for $n=9$ and $k=3$ :
		\smallbreak
		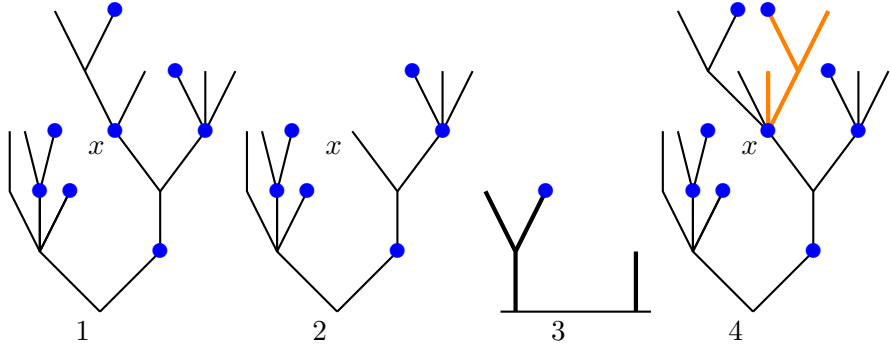
\begin{figure}[h!]
		\centering
		\begin{tikzpicture}[scale=0.8]
			\coordinate (0) at (0,0);
			\coordinate (1) at (-1,1);
			\coordinate (2) at (1,1);
			\coordinate (11) at (-1.5,2);
			\coordinate (12) at (-1,2);
			\coordinate (13) at (-0.5,2);
			\coordinate (111) at (-1.5,3);
			\coordinate (121) at (-1.25,3);
			\coordinate (122) at (-0.75,3);
			\coordinate (131) at (-0.5,3);
			\coordinate (1311) at (-1,4);
			\coordinate (1312) at (0,4);
			\coordinate (21) at (1,2);
			\coordinate (211) at (0.25,3);
			\coordinate (212) at (1.75,3);
			\coordinate (2111) at (-0.25,4);
			\coordinate (2112) at (0.75,4);
			\coordinate (2121) at (1.25,4);
			\coordinate (2122) at (1.75,4);
			\coordinate (2123) at (2.25,4);
			\coordinate (21111) at (-0.75,5);
			\coordinate (21112) at (0.25,5);
			\draw [thick](0) -- (2) -- (21)--(211)--(2112);
			\draw [thick](0) -- (1)-- (13);
			\draw [thick](21) -- (212)--(2121);
			\draw [thick](1) -- (12)--(121);
			\draw [thick](1) -- (11)-- (111);
			\draw [thick](1) -- (12)-- (122);
			\draw [thick](1) -- (13);
			\draw [thick](212) -- (2123);
			\draw [thick](212) -- (2122);
			\draw [thick](211) -- (2111)--(21111);
			\draw [thick](2111) -- (21112);
			\draw (0) node[below left]{{$1$}};
			\draw (2)node{\textcolor{blue}{\Large{$\bullet$}}} ;
			\draw (12)node{\textcolor{blue}{\Large{$\bullet$}}} ;
			\draw (122)node{\textcolor{blue}{\Large{$\bullet$}}};
			\draw (13)node{\textcolor{blue}{\Large{$\bullet$}}} ;
			\draw (211)node{\textcolor{blue}{\Large{$\bullet$}}} node[below left]{{$x$}} ;
			\draw (212)node{\textcolor{blue}{\Large{$\bullet$}}};
			\draw (2121)node{\textcolor{blue}{\Large{$\bullet$}}};
			\draw (21112)node{\textcolor{blue}{\Large{$\bullet$}}};
		\end{tikzpicture}
		~~
		\begin{tikzpicture}[scale=0.8]
			\coordinate (0) at (0,0);
			\coordinate (1) at (-1,1);
			\coordinate (2) at (1,1);
			\coordinate (11) at (-1.5,2);
			\coordinate (12) at (-1,2);
			\coordinate (13) at (-0.5,2);
			\coordinate (111) at (-1.5,3);
			\coordinate (121) at (-1.25,3);
			\coordinate (122) at (-0.75,3);
			\coordinate (131) at (-0.5,3);
			\coordinate (1311) at (-1,4);
			\coordinate (1312) at (0,4);
			\coordinate (21) at (1,2);
			\coordinate (211) at (0.25,3);
			\coordinate (212) at (1.75,3);
			\coordinate (2111) at (0.25,4);
			\coordinate (2121) at (1.25,4);
			\coordinate (2122) at (1.75,4);
			\coordinate (2123) at (2.25,4);
			\draw [thick](0) -- (2) -- (21)--(211);
			\draw [thick](0) -- (1)--(13);
			\draw [thick](21) -- (212)--(2121);
			\draw [thick](1) -- (12)--(121);
			\draw [thick](1) -- (11)-- (111);
			\draw [thick](1) -- (12)-- (122);
			\draw [thick](212) -- (2123);
			\draw [thick](212) -- (2122);
			\draw (0) node[below left]{{$2$}};
			\draw (2)node{\textcolor{blue}{\Large{$\bullet$}}} ;
			\draw (12)node{\textcolor{blue}{\Large{$\bullet$}}} ;
			\draw (122)node{\textcolor{blue}{\Large{$\bullet$}}};
			\draw (211)node[below left]{{$x$}};
			\draw (13)node{\textcolor{blue}{\Large{$\bullet$}}} ;
			\draw (212)node{\textcolor{blue}{\Large{$\bullet$}}};
			\draw (2121)node{\textcolor{blue}{\Large{$\bullet$}}};
		\end{tikzpicture}
		~~
		\begin{tikzpicture}[scale=0.8]
			\coordinate (0) at (0,0);
			\draw [ultra thick] (1,0)--(1,1);
			\draw [ultra thick] (-1,0)--(-1,1)--(-1.5,2);
			\draw [ultra thick] (-1,1)--(-0.5,2);
			\draw [thick] (-1.25,0)--(1.25,0);
			\draw (0) node[below left]{{$3$}};
			\draw (-0.5,2)node{\textcolor{blue}{\Large{$\bullet$}}};
		\end{tikzpicture}
		~~
		\begin{tikzpicture}[scale=0.8]
			\coordinate (0) at (0,0);
			\coordinate (1) at (-1,1);
			\coordinate (2) at (1,1);
			\coordinate (11) at (-1.5,2);
			\coordinate (12) at (-1,2);
			\coordinate (13) at (-0.5,2);
			\coordinate (111) at (-1.5,3);
			\coordinate (121) at (-1.25,3);
			\coordinate (122) at (-0.75,3);
			\coordinate (21) at (1,2);
			\coordinate (211) at (0.25,3);
			\coordinate (212) at (1.75,3);
			\coordinate (2111) at (-0.75,4);
			\coordinate (2112) at (-0.25,4);
			\coordinate (2113) at (0.25,4);
			\coordinate (2114) at (0.75,4);
			\coordinate (2121) at (1.25,4);
			\coordinate (2122) at (1.75,4);
			\coordinate (2123) at (2.25,4);
			\coordinate (21111) at (-1.25,5);
			\coordinate (21112) at (-0.25,5);
			\coordinate (21141) at (0.25,5);
			\coordinate (21142) at (1.25,5);
			\draw [thick](0) -- (2) -- (21)--(211)--(2112);
			\draw [thick](0) -- (1)-- (13);
			\draw [thick](21) -- (212)--(2121);
			\draw [thick](1) -- (12)--(121);
			\draw [thick](1) -- (11)-- (111);
			\draw [thick](1) -- (12)-- (122);
			\draw [thick](1) -- (13);
			\draw [thick](212) -- (2123);
			\draw [thick](212) -- (2122);
			\draw [thick](211) -- (2111)--(21111);
			\draw [thick](2111) -- (21112);
			\draw [ultra thick, orange](2114)--(21142);
			\draw [ultra thick, orange](211)-- (2114)--(21141);
			\draw [ultra thick, orange](211)-- (2113);
			\draw (0) node[below left]{{$4$}};
			\draw (21141)node{\textcolor{blue}{\Large{$\bullet$}}} ;
			\draw (2)node{\textcolor{blue}{\Large{$\bullet$}}} ;
			\draw (12)node{\textcolor{blue}{\Large{$\bullet$}}} ;
			\draw (122)node{\textcolor{blue}{\Large{$\bullet$}}};
			\draw (13)node{\textcolor{blue}{\Large{$\bullet$}}} ;
			\draw (211)node{\textcolor{blue}{\Large{$\bullet$}}} node[below left]{{$x$}};
			\draw (212)node{\textcolor{blue}{\Large{$\bullet$}}};
			\draw (2121)node{\textcolor{blue}{\Large{$\bullet$}}};
			\draw (21112)node{\textcolor{blue}{\Large{$\bullet$}}};
		\end{tikzpicture}
		\caption{A marked tree $\bt^{*}$ ($1$), the sub-tree $\mathcal{S}^x(\bt^{*})$ ($2$), the forest $\F_x(\bt^{*})$ ($3$) and the tree obtained by grafting a tree at $x$ on the right of $\bt^{*}$, such that $M(\tau^*)=9$ and $k_x(t)\geq 3$ ($4$).}
		\label{exgreft}
	\end{figure}
		\smallbreak
		Henceforth, we have:
		\begin{multline}
			\p(\tau^{*}\in \T_+(\bt^{*},x,k)|M(\tau^*)=n)= C(\bt^{*},x)\delta_{n,m}\\
			\times\left(B_{n-m,l}(x)-\overset{k-1}{\underset{j=l+1}{\sum}}\bp(j)\alpha_{j,x}(j-l)a_{n-m,j}\right),
		\end{multline}
		with for $i\in\Z$ and $n\in\NN$:
		\begin{equation}\label{Bni}
			B_{n,i}(x):=\underset{j>i}{\sum}\bp(j)\alpha_{j,x}(j-i)a_{n,j},~a_{n,j}:=\dfrac{\E[N\indic_{\{S_{n}+W_{j-1-i}+N=n\}}]}{\E[N\indic_{\{S_n+N=n\}}]},
		\end{equation}
		and $\delta_{n,m}:=\dfrac{n}{n-m}\dfrac{\E[N\indic_{\{S_{n+m}+N=n-m\}}]}{\E[N\indic_{\{S_n+N=n\}}]}$.
		
		Notice that Lemma \ref{limSnTR2}, more precisely \eqref{limTRSn2} for $T=N$ and $R=0$, implies that:
		\[\lim_{n\to+\infty}\delta_{n,m}=1,\] 
		and Lemma \ref{limSnN2} implies that 
		\[\lim_{n\to+\infty}a_{n,j}=1.\]
		Then, using Lemma \ref{limBn,lpos}:
		\begin{align*}
			\underset{n\to+\infty}{\lim}	&\p(\tau^{*}\in \T_+(\bt^{*},x,k)|M(\tau^*)=n)\\
			&=C(\bt^{*},x)\left((1-\mu(\bp))\left(\ell_\bq\eta_x(\bt)+\left(1-\ell_\bq\right)\left(1-\eta_x(\bt)\right)\right)\right.\\
			&+\left.\E[\alpha_{X,x}(X-l)_+\indic_{\{X\geq k\}}]\right).
		\end{align*}
	\end{proof}
Note that, mimicking the proof of   the first point of Corollary \ref{corcondconvloc}, we can easily obtain its second corollary saying:
			\begin{cor}
			Let $\tau$ be a sub-critical and non-generic Galton-Watson tree with offspring distribution $\bp$ satisfying \eqref{condp} and its associated mark function $\bq$ satisfying \eqref{condq}. \\		
				If $\bq$ satisfied \eqref{limitq} and, if $\ell_\bq=1$, \eqref{defqlim1} we have 
				\[ \mathrm{dist}(\tau| M(\tau)=n)\underset{n\to +\infty}{\to}\mathrm{dist}(\tau_C(\bp)).\]
		\end{cor}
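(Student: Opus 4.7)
The plan is to mimic the proof of the first point of Corollary~\ref{corcondconvloc}, replacing the convergence criterion and limiting object by those appropriate to the condensation regime. Since $\tau_C(\bp)$ belongs almost surely to $\T_0\cup\T_2$ (it has exactly one node of infinite degree and no infinite spine), I would first invoke the natural unmarked analogue of Lemma~\ref{critconvT(t,x,k)}: convergence in distribution then reduces to verifying, for every $\bt\in\T_0$, every $x\in\bt$ and every $k\in\N$,
\begin{equation*}
\lim_{n\to+\infty}\p(\tau\in\T_+(\bt,x,k)\mid M(\tau)=n)=\p(\tau_C(\bp)\in\T_+(\bt,x,k)),
\end{equation*}
together with $\lim_{n\to+\infty}\p(\tau=\bt\mid M(\tau)=n)=0$, the latter being immediate since that probability is zero as soon as $n>|\bt|$.

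To handle the first display, I would introduce the projection $\psi:\T^{*}\to\T$ defined by $\psi(\bt^{*})=\bt$, and partition the event $\{\tau\in\T_{+}(\bt,x,k)\}$ according to the $2^{|\bt|}$ possible markings of $\bt$:
\begin{equation*}
\p(\tau\in\T_{+}(\bt,x,k)\mid M(\tau)=n)
=\sum_{\bt^{*}\in\T_0^{*},\ \psi(\bt^{*})=\bt}\p(\tau^{*}\in\T_{+}(\bt^{*},x,k)\mid M(\tau^{*})=n).
\end{equation*}
This identity uses that $M$ depends only on the marked tree and that $\tau$ is identified with the underlying tree of the MGW $\tau^{*}$.

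Because the sum is finite, Theorem~\ref{cvgcondtree} permits passing to the limit term by term, yielding
\begin{equation*}
\lim_{n\to+\infty}\p(\tau\in\T_{+}(\bt,x,k)\mid M(\tau)=n)
=\sum_{\bt^{*}\in\T_0^{*},\ \psi(\bt^{*})=\bt}\p(\tau_C^{*}(\bp,\bq)\in\T_{+}(\bt^{*},x,k)).
\end{equation*}
The final step is to recognise this sum as $\p(\tau_C(\bp)\in\T_{+}(\bt,x,k))$. This comes from the fact that $\psi(\tau_C^{*}(\bp,\bq))$ is distributed as $\tau_C(\bp)$: the marked condensation tree is built by first sampling a condensation tree with law $\tau_C(\bp)$ and then marking each node independently with probability $\bq(k)$ given out-degree $k$, so marginalising over all possible markings on the nodes of $\bt$ recovers the distribution of the unmarked tree on $\T_{+}(\bt,x,k)$.

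The main (and only) obstacle is really a bookkeeping one: one must be sure the hypotheses of Theorem~\ref{cvgcondtree} are in force under the assumptions of the corollary (in particular \eqref{probpalpha} and the behaviour of $\rho_l(\bp,\bq)$ that accompanies the non-generic sub-critical regime), and one must justify the passage from marked to unmarked convergence via the unmarked version of Lemma~\ref{critconvT(t,x,k)}, which is proved exactly as in \cite{abraham_local_2014} by forgetting the mark coordinates in the $\pi$-system argument. Once these points are noted, no additional calculation is required.
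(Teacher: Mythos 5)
Your proposal is correct and is essentially the argument the paper has in mind: the paper gives no written proof beyond the remark that one should ``mimic the proof of the first point of Corollary~\ref{corcondconvloc},'' and your proof spells out exactly that mimicry (decomposing over the $2^{|\bt|}$ markings of $\bt$, passing to the limit term by term via Theorem~\ref{cvgcondtree}, and identifying the resulting sum with $\p(\tau_C(\bp)\in\T_+(\bt,x,k))$ through the unmarked version of Lemma~\ref{critconvT(t,x,k)}). Your closing caveat about checking that the standing hypotheses \eqref{probpalpha} and $\rho_l(\bp,\bq)=1$ of Theorem~\ref{cvgcondtree} are in force is well taken, since the corollary's statement only repeats \eqref{limitq} and \eqref{defqlim1} and silently inherits the rest.
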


	\section{Appendix}\label{appendix}
	\subsection{Preliminaries and Key Lemmas}
	 Recall the definition of $N$ in Section \ref{Model} (see \ref{propN}), $N=\mathscr{B}\left(L,\frac{\E[\bq(X)]}{\tilde{\bp}(0)}\right)$, where $L$ is the number of leaves of $\tilde{\mathcal{S}}^\bq_\emptyset(\tau^*)$ with offspring distribution $\tilde{\bp}$ \eqref{probtilde}. $X$ be a random variable with distribution $\bp$, $Y$, given $X$, be a Bernoulli random variable with parameter $\bq(X)$, and introduce $Z^{(0)}$ whose definition is very similar to $Z^{(1)}$ \eqref{hatX}, $	Z^{(1)}=\displaystyle\overset{X^{(1)}}{\underset{k=1}{\sum}}N_k,$ where $X^{(1)}$ is distributed as $X$ conditionally to be marked, and $(N_i)_{i\in\N}$ is a sequence of independent copies of $N$, independent of $X^{(1)}$.\\ In other words:
	\begin{equation}\label{tildeX}
		Z^{(0)}:=\sum_{k=1}^{X^{(0)}}N_k,
	\end{equation}
	where $X^{(0)}$ is distributed as $X$ conditionally on $Y=0$ and $(N_i)_{i\in\N}$ is a sequence of independent copies of $N$, independent of $X^{(0)}$. Moreover, throughout this section, for $j\in\lbrace 0,1\rbrace$, $\bp^{(j)}$ denotes the distribution of $Z^{(j)}$ and $\mu^{(j)}$, its mean.\\
Note that the existence of $k\in\mathbb N^*$ such that $0<\bp(k)\bq(k)<1$, ensures that $X^{(0)}$ is well defined and one have:
\[\p(X^{(0)}=k)=\frac{\bp(k)(1-\bq(k))}{1-\E[\bq(X)]}.\]
\begin{lem}\label{espXbar}
Assume that $\bp$ and $\bq$ satisfy \eqref{condp} and \eqref{condq}. If $\mu(\bp)\le1$, $Z^{(0)}\in L^2$ and moreover:
\[\E[Z^{(0)}]=\dfrac{\E[X(1-\bq(X))]}{1-\E[X(1-\bq(X))]}\dfrac{\E[\bq(X)]}{1-\E[\bq(X)]}.\]
\end{lem}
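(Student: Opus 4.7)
The plan is to apply Wald's identity directly. Since $X^{(0)}$ is independent of the i.i.d.\ sequence $(N_k)_{k\ge 1}$, we have
\[\E[Z^{(0)}]=\E[X^{(0)}]\,\E[N].\]
The first factor is read off from the distribution of $X^{(0)}$:
\[\E[X^{(0)}]=\sum_k k\,\frac{\bp(k)(1-\bq(k))}{1-\E[\bq(X)]}=\frac{\E[X(1-\bq(X))]}{1-\E[\bq(X)]}.\]
For the second factor, Proposition \ref{propN} says that, conditionally on $L$, $N\sim\mathscr{B}(L,\E[\bq(X)]/\tilde\bp(0))$, so $\E[N]=\E[L]\,\E[\bq(X)]/\tilde\bp(0)$. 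Combining with $\E[L]=\tilde\bp(0)/(1-\E[X(1-\bq(X))])$ from Proposition \ref{propL} gives $\E[N]=\E[\bq(X)]/(1-\E[X(1-\bq(X))])$. Multiplying the two factors yields exactly the announced formula.

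Two positivity checks justify the above manipulations. The denominator $1-\E[\bq(X)]$ is positive because the existence of $k$ with $0<\bp(k)\bq(k)<1$ (the standing condition under which $X^{(0)}$ is defined) guarantees $\p(Y=0)>0$. The denominator $1-\E[X(1-\bq(X))]$ is positive because $\E[X(1-\bq(X))]=\mu(\bp)-\E[X\bq(X)]$, and condition \eqref{condq} forces $\E[X\bq(X)]>0$; thus under $\mu(\bp)\le 1$ we have $\E[X(1-\bq(X))]<1$.

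For the second-moment claim, I would condition on $X^{(0)}$ and apply the conditional variance formula to write
\[\E[(Z^{(0)})^2]=\E[X^{(0)}]\,\mathrm{Var}(N)+\E[(X^{(0)})^2]\,(\E[N])^2.\]
The factor $\E[(X^{(0)})^2]$ is bounded above by $\E[X^2]/(1-\E[\bq(X)])<+\infty$ thanks to the second-moment assumption in \eqref{condp}. For $\E[N^2]$, a further conditioning on $L$ gives $\E[N^2]=\E[L]\,p(1-p)+\E[L^2]\,p^2$ with $p=\E[\bq(X)]/\tilde\bp(0)$, which is finite because Proposition \ref{propL} also ensures $L\in L^2$. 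No step here is genuinely difficult: the argument is a chain of Wald-type identities and the moment bounds already established in Propositions \ref{propL} and \ref{propN}, so I expect the only ``obstacle'' to be the careful bookkeeping of the positivity of denominators.
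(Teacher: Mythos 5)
Your proposal is correct and takes essentially the same route as the paper: Wald's identity $\E[Z^{(0)}]=\E[X^{(0)}]\E[N]$ combined with $\E[X^{(0)}]=\E[X(1-\bq(X))]/(1-\E[\bq(X)])$ and $\E[N]=\E[L]\E[\bq(X)]/\tilde\bp(0)$ from Propositions \ref{propL} and \ref{propN}. The paper's version is terser (it dispatches the $L^2$ claim with a one-line remark about generating functions and gives no positivity discussion), so your extra bookkeeping is a welcome expansion of the same argument rather than a different approach.
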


\begin{proof}
According to Lemma \ref{prophatp}, $N\in L^2$ and \eqref{condp} implies that $X^{(0)}$ is in $L^2$ too. Consequently, using for instance the generating function, $Z^{(0)}\in L^2$ and we have: 
\begin{align*}
\E[Z^{(0)}]=\E[X^{(0)}]\E[N]
&=\dfrac{\E[X(1-\bq(X))]}{1-\E[\bq(X)]}\dfrac{\E[\bq(X)]}{1-\E[X(1-\bq(X))]}.
\end{align*} 
\end{proof}
Let $(Z^{(1)}_i)_{i\in\NN}$ be a sequence of independent copies of $Z^{(1)}$, independent of $(N_i)_{i\in\NN}$. Then, we define $(W_n)_{n\in\mathbb N^*}$ and $(S_n)_{n\in\mathbb N^*}$ by 
$W_0=S_0=0$ a.s, and for $n\in\NN$, $W_n=;\displaystyle\sum_{i=1}^nN_i$ and $S_n=\displaystyle\sum_{i=1}^n{Z^{(1)}_i}$.\\
	We assume that:
	\begin{equation}\label{condhatp}
		\mu^{(1)}=1\text{ or }(\mu^{(1)}<1 \text{ and, for all }\theta>0,\E[e^{\theta Z^{(1)}}]=+\infty),
	\end{equation}
	and that $\bp^{(1)}$ is aperiodic (that is $\p(S_n=n)>0$ for all $n$ large enough). According to \cite{kemeny_probability_1959} or \cite{neveu_sur_1963}, we have the following strong ratio limit property for all $m,u\in\Z$:
	\begin{equation}\label{strongratio}
		\underset{n\to+\infty}{\lim}\dfrac{\p(S_{n+m}=n-u)}{\p(S_n=n)}=1.
	\end{equation}
	\begin{prop}\label{momexpo}
		If $\rho(\bp^{(1)})=1$, then, for $R$ a random variable equals to $Z^{(1)}$, $X$, $L$ or $N$, we have for all $\theta>0$, $\E[e^{\theta R}]=+\infty$ .
	\end{prop}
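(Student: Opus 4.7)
The plan is to show that, under the paper's standing assumptions on $\bq$ (\eqref{limitq} and, when $\ell_\bq=1$, also \eqref{defqlim1}), the four conditions ``$R$ admits some exponential moment'' for $R\in\{X,L,N,Z^{(1)}\}$ are equivalent. Since $\rho(\bp^{(1)})=1$ is exactly the failure of the condition for $Z^{(1)}$, these equivalences force the failure of the condition for $X$, $L$ and $N$ as well, which is the claim (all four variables being $\N$-valued, ``no exponential moment of any order'' is the same as ``generating function has radius of convergence $1$'').

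The bulk of the work lies in the link $X\leftrightarrow L$. I would use the functional equation
\[
h(s) = \tilde\bp(0)s - \tilde\bp(0) + g_{\tilde\bp}(h(s)),\qquad h(s):=\E[s^L],
\]
which is exactly \eqref{eqgenL}. If $h(s_0)<+\infty$ for some $s_0>1$, then $g_{\tilde\bp}(h(s_0))<+\infty$ and $h(s_0)>h(1)=1$, so $\rho(\tilde\bp)>1$. Conversely, $\mu(\tilde\bp)=\E[X(1-\bq(X))]<1$ gives $1-g_{\tilde\bp}'(1)>0$, and the implicit function theorem applied to $F(s,h):=h-\tilde\bp(0)s+\tilde\bp(0)-g_{\tilde\bp}(h)$ at $(1,1)$ extends $h$ analytically past $s=1$ as soon as $\rho(\tilde\bp)>1$. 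Since $\tilde\bp(k)\le\bp(k)$ for $k\ge 1$, one always has $\rho(\tilde\bp)\ge\rho(\bp)$; the reverse uses \eqref{limitq}, distinguishing $\ell_\bq<1$ (where $1-\bq(k)\ge c>0$ eventually, so $\bp(k)\le c^{-1}\tilde\bp(k)$ for large $k$) from $\ell_\bq=1$ (where \eqref{defqlim1} makes $1-\bq(k)=k^{-\beta}\Lq(k)$ only a polynomial factor).

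For $L\leftrightarrow N$, conditioning on $L$ gives $\E[z^N]=\E[(1-p+pz)^L]$ with $p:=\E[\bq(X)]/\tilde\bp(0)$; for $z>1$ one has $1-p+pz>1$, so $N$ and $L$ share the property of admitting exponential moments, up to a linear change of parameter. For $X\leftrightarrow Z^{(1)}$, the forward direction uses $g^{(1)}(z)=\E[g_N(z)^{X^{(1)}}]$: the generating function of $X^{(1)}$ has radius at least $\rho(\bp)$, since $\p(X^{(1)}=k)\le\bp(k)/\E[\bq(X)]$, and $g_N$ is continuous with $g_N(1)=1$; if $\rho(\bp)>1$, the previous steps give $g_N(z)<+\infty$ for $z$ in a right neighbourhood of $1$, so one may pick $z>1$ with $g_N(z)<\rho(\bp)$, giving $g^{(1)}(z)<+\infty$. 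The reverse direction is essentially already contained in the inequality $g^{(1)}(z)\ge \E[l(\varphi(z))^X\bq(X)]/\E[\bq(X)]$ used in the proof of Lemma~\ref{prophatp}(3), chained with the previously established equivalences (the assumption $\rho(\bp^{(1)})=1$ prevents $l(\varphi(z))$ from staying finite for some $z>1$).

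The main obstacle I anticipate is the transfer $\rho(\tilde\bp)=\rho(\bp)$ in the case $\ell_\bq=1$: from the summability of $\sum z^k\bp(k)k^{-\beta}\Lq(k)$ one must extract a pointwise bound $\bp(k)\le Cz^{-k}k^\beta/\Lq(k)$ and reinject it into a power series evaluated at slightly smaller $z'<z$, handling the slowly varying $\Lq$ with care, to conclude $g(z')<+\infty$ and hence $\rho(\bp)\ge z'>1$.
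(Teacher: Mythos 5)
Your proof is correct in substance and, once stripped to what the proposition actually needs, it reduces to the paper's argument; but you take a considerably longer route, and a few of the intermediate tools you invoke are unnecessary.

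The paper's own proof is a short one-directional chain. From the standing hypothesis $\rho_l(\bp,\bq)=1$ (which is the operative assumption in the non-generic sub-critical case, and is treated as interchangeable with $\rho(\bp^{(1)})=1$ via Lemma~\ref{prophatp}(3)), one gets $\rho(\bp)=1$ for free because $\rho(\bp)\le \rho_l(\bp,\bq)$ and $g(1)<\infty$; this settles $X$ and $Z^{(1)}$. For $L$, the paper uses the functional equation \eqref{eqgenL} only in one direction: if $\E[s^L]<\infty$ for some $s>1$, then, since $L\ge 1$ a.s. so $\E[s^L]\ge s>1$, the equation would force $l(\E[s^L])<\infty$ at an argument strictly larger than $1$, contradicting $\rho_l(\bp,\bq)=1$. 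For $N$, conditioning on $L$ gives $\E[z^N]=\E[(1-p+pz)^L]$ with $1-p+pz>1$ for $z>1$, so $N$ inherits the failure from $L$. No implicit function theorem, no analytic continuation, no Pringsheim argument, and crucially no need to compare $\rho(\tilde\bp)$ with $\rho(\bp)$.

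What you do differently, and what it costs: you set out to prove full two-way equivalences among the four exponential-moment conditions, and to derive everything purely from $\rho(\bp^{(1)})=1$. That forces you to establish the forward implication ``$\rho(\bp)>1\Rightarrow\rho(\bp^{(1)})>1$'' (via $X\to L\to N\to Z^{(1)}$), whose first link $X\to L$ is exactly the IFT/analytic-continuation step you describe. This step is legitimate (with Pringsheim to upgrade local analyticity at $s=1$ to radius $>1$, as you implicitly use), but it is genuinely extra machinery that the proposition does not require. Likewise, your worry about transferring $\rho(\tilde\bp)=\rho(\bp)$ in the case $\ell_\bq=1$ is a non-issue in the paper's framework: first, $\rho_l(\bp,\bq)$ and $\rho(\tilde\bp)$ are literally the same radius, since $l(\theta)=g_{\tilde\bp}(\theta)-\tilde\bp(0)+\bp(0)(1-\bq(0))$; second, under the standing polynomial-tail assumption \eqref{probpalpha} one has $\rho(\bp)=1$ outright, and under \eqref{limitq}/\eqref{defqlim1} the weight $1-\bq(k)$ is at most polynomial, so no ``careful reinjection'' of the slowly varying factor is needed. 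So your approach buys you a cleaner equivalence statement (and would remain valid if one wanted the converse implications), at the price of analytic overhead and of working through $\tilde\bp$ and $\rho(\bp)$ instead of using $\rho_l(\bp,\bq)=1$ directly. One small point to be honest about in both proofs: the implication $\rho(\bp^{(1)})=1\Rightarrow\rho_l(\bp,\bq)=1$ is not a consequence of Lemma~\ref{prophatp}(3) alone (that lemma gives the reverse direction), and without the standing tail assumptions \eqref{limitq}/\eqref{defqlim1} the conclusion about $L$ and $N$ can actually fail (take $1-\bq(k)=e^{-k}$, $\bp(k)\asymp k^{-4}$: then $\tilde\bp$ has exponential tails, so $L$ and $N$ have exponential moments, while $\rho(\bp^{(1)})=1$). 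You correctly flag that you rely on \eqref{limitq} and \eqref{defqlim1}; the paper relies on them implicitly through the hypothesis $\rho_l(\bp,\bq)=1$ stated in Theorem~\ref{cvgcondtree}.
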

	\begin{proof}
		Since $\rho(\bp^{(1)})=1$, then $\rho_l(\bp,\bq)=1$ and $\rho(\bp)=1$ according to  \eqref{functionl} and Lemma \ref{prophatp}~. Thus the proposition is true for $Z^{(1)}$ and $X$. As $N$, conditionally on $L$, has a binomial distribution with parameters $\left(L,\nicefrac{\E[\bq(X)]}{\tilde{\bp}(0)}\right)$, if the proposition is true for $L$, it is also true for $N$.\\
		Recalling \eqref{eqgenL}, the generating function of $L$ satisfies for all $s\in\mathbb{R}$:
		\[\E[s^L]=\tilde{\bp}(0)s+\sum_{i=1}^{+\infty}\bp(i)(1-\bq(i))\E[s^L]^i.\]
		Thereby, $\E[s^L]$ converges if $\sum_{i=1}^{+\infty}\bp(i)(1-\bq(i))\E[s^L]^i=l(\E[s^L])$ converges. Since, $\rho_l(\bp,\bq)=1$ and $s\mapsto\E[s^L]$ is increasing, we need to have $\E[s^L]\leq1$, so $s\leq1$. Thus, the convergence radius of the generating function of $L$ is 1, the condition is satisfied.
	\end{proof}
	\begin{lem}\label{limSnTR2}
		Assume that $\bp^{(1)}$ is aperiodic and satisfies \eqref{condhatp}. 
Let $T$ and $R$ two random variables, independent of $(Z_i^{(1)})_{i\in\mathbb N}$,  such that $T\in L^1$ and is positive a.s.
		For all $m,k\in\Z$, we have:
		\begin{equation}\label{limTRSn2}
			\underset{n\to+\infty}{\lim}\dfrac{\E[T\indic_{\{S_{n+m}+R+T=n-k\}}]}{\E[T\indic_{\{S_{n}+R+T=n\}}]}=1,
		\end{equation}
		If $\bp^{(1)}$ is periodic, then \eqref{limTRSn2} still holds along the sub-sequence for which the denominator is positive.
	\end{lem}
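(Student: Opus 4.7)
The natural approach is to condition on the independent pair $(T,R)$, reduce the quotient to the strong ratio limit property \eqref{strongratio} applied pointwise in the conditioning variables, and then upgrade the pointwise convergence into convergence in expectation.

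Concretely, since $(T,R)$ is independent of $(Z_i^{(1)})_{i\in\mathbb N}$ (hence of $(S_n)_{n\geq 0}$), the numerator and denominator rewrite as
\begin{align*}
\E[T\indic_{\{S_{n+m}+R+T=n-k\}}] &= \E\bigl[T\cdot \p(S_{n+m}=n-k-T-R\mid T,R)\bigr],\\
\E[T\indic_{\{S_{n}+R+T=n\}}] &= \E\bigl[T\cdot \p(S_{n}=n-T-R\mid T,R)\bigr].
\end{align*}
Dividing both sides by $\p(S_n=n)$, the ratio in \eqref{limTRSn2} becomes $\E[T\,\Psi_n^{(1)}(T,R)]/\E[T\,\Psi_n^{(2)}(T,R)]$, where
$\Psi_n^{(1)}(t,r):=\p(S_{n+m}=n-k-t-r)/\p(S_n=n)$ and $\Psi_n^{(2)}(t,r):=\p(S_n=n-t-r)/\p(S_n=n)$. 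By the strong ratio limit property \eqref{strongratio} (valid since $\bp^{(1)}$ is aperiodic and satisfies \eqref{condhatp}), for every fixed $(t,r)\in\Z^2$ one has $\Psi_n^{(1)}(t,r)\to 1$ and $\Psi_n^{(2)}(t,r)\to 1$ as $n\to+\infty$.

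To transfer this convergence inside the expectation, I would truncate at level $M$: for $A_M:=\{T+|R|\leq M\}$, split $\E[T\,\Psi_n^{(i)}(T,R)] = \E[T\,\Psi_n^{(i)}(T,R)\indic_{A_M}] + \E[T\,\Psi_n^{(i)}(T,R)\indic_{A_M^c}]$. On $A_M$ only finitely many integer pairs $(t,r)$ contribute, so \eqref{strongratio} applied termwise gives $\E[T\,\Psi_n^{(i)}(T,R)\indic_{A_M}]\to\E[T\indic_{A_M}]$ as $n\to+\infty$. Letting $M\to+\infty$ afterwards and using $T\in L^1$ (so that $\E[T\indic_{A_M}]\uparrow\E[T]$ by monotone convergence), both numerator and denominator will tend to $\E[T]$, and their ratio to $1$.

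The main obstacle is controlling the tail $\E[T\,\Psi_n^{(i)}(T,R)\indic_{A_M^c}]$ uniformly in $n$, as one needs a dominating function $\Phi(t,r)$ independent of $n$ with $\E[T\,\Phi(T,R)]<+\infty$. Such a majorant rests on sharp local asymptotics of $\p(S_n=n-u)$ uniform in $u$: in the critical case ($\mu^{(1)}=1$) a local central limit theorem for $S_n$ yields $\p(S_n=u)\leq C\,\p(S_n=n)$ uniformly, so $\Psi_n^{(i)}$ is uniformly bounded and $T\in L^1$ is enough; in the sub-critical non-generic case ($\mu^{(1)}<1$, $\rho(\bp^{(1)})=1$) the subexponential "one big jump" heuristic gives the analogous uniform comparison, with the tail contribution $\p(A_M^c)\to 0$ absorbing the slack. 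Finally, the periodic case is identical: the argument runs along the arithmetic subsequence on which $\p(S_n=n)>0$, which is precisely the subsequence where the denominator does not vanish, and \eqref{strongratio} continues to apply in that restricted sense.
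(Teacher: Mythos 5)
Your strategy — condition on $(T,R)$, invoke the strong ratio limit property pointwise, then transfer to expectations by truncation plus uniform domination — is not the one the paper uses, and it contains a genuine gap in the step you yourself flag as ``the main obstacle.'' You need a majorant $\Phi(t,r)$ independent of $n$ with $\sup_n \Psi_n^{(i)}(t,r)\le\Phi(t,r)$ and $\E[T\,\Phi(T,R)]<\infty$. In the critical case you appeal to a local CLT to get $\p(S_n=u)\le C\,\p(S_n=n)$ uniformly in $u$; but \eqref{condhatp} imposes only $\mu^{(1)}=1$ and aperiodicity, not $\mathrm{Var}[Z^{(1)}]<\infty$, so the Gaussian local CLT is not available under the hypotheses of this lemma. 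In the sub-critical branch \eqref{condhatp} gives only $\mu^{(1)}<1$ and $\E[e^{\theta Z^{(1)}}]=+\infty$ for all $\theta>0$, which does not imply regular variation or subexponentiality, so the ``one big jump'' comparison you invoke has no basis here. In short, the dominated-convergence upgrade is asserted rather than proved, and cannot be proved under the stated assumptions.

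The paper's proof (mimicking Lemma~8.3 of Abraham--Delmas) sidesteps domination entirely. It fixes a $k$ with $\bp^{(1)}(k)>0$, introduces the empirical frequency $\bp^{(1)}_n(k)=\frac{1}{n}\sum_{i=1}^n\indic_{\{Z^{(1)}_i=k\}}$, and uses exchangeability to write $\E[T\,\bp^{(1)}_n(k)\indic_{\{S_n+R+T=n\}}]=\bp^{(1)}(k)\E[T\indic_{\{S_{n-1}+R+T=n-k\}}]$. This turns the ratio minus one into an error term $\E[T\,|\bp^{(1)}_n(k)-\bp^{(1)}(k)|\indic_{\{S_n+R+T=n\}}]/(\bp^{(1)}(k)\E[T\indic_{\{S_n+R+T=n\}}])$, bounded by $\varepsilon/\bp^{(1)}(k)$ plus a remainder controlled by two facts: Neveu's estimate $\p(|\bp^{(1)}_n(k)-\bp^{(1)}(k)|>\varepsilon)=o(\p(S_n=n))$, and a one-sided Fatou bound $\liminf_n \E[T\indic_{\{S_n+R+T=n\}}]/\p(S_n=n)\ge\E[T]$ that follows directly from the strong ratio limit property without any upper domination. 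This is precisely why the lemma can be stated under the weak hypothesis \eqref{condhatp}: the argument never needs a uniform upper bound on $\p(S_n=n-u)/\p(S_n=n)$. If you want to rescue your route, you would have to either add the missing hypotheses (finite variance, or regularly varying tails for $\bp^{(1)}$) — which the paper does assume, but only in the later Lemmas~\ref{limSnT1}, \ref{limSnN3}, \ref{limSnN4} — or replace the domination step by the Fatou/Neveu mechanism above.
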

	\begin{proof}
		We shall mimic the proof of Lemma 8.3 of \cite{abraham_local_2014}. 
		Since $\bp^{(1)}$ is aperiodic, the denominator of \eqref{limTRSn2} is positive for $n$ large enough and it is enough to prove the result for $m=1$ and $k$ such that $\bp^{(1)}(k)>0$.\\
		With $ \bp^{(1)}_n(k):=\frac{1}{n}\sum_{i=1}^n{\indic_{\{Z^{(1)}_i =k\}}}$, using exchangeability:
		\begin{align}\label{limTRSn2bis}
			\E[T\bp^{(1)}_n(k)\indic_{\{S_{n}+R+T=n\}}]
			&=\bp^{(1)}(k)\E\left[T \indic_{\{S_{n-1}+R+T=n-k\}}\right].
		\end{align}
		Thus, for $\varepsilon>0$:
		\begin{align*}
			\left|  \dfrac{\E[T\indic_{\{S_{n+1}+R+T=n-k\}}]}{\E[T\indic_{\{S_{n}+R+T=n\}}]}-1 \right| 
			\leq\dfrac{\E\left[T\left|  \bp^{(1)}_n(k)-\bp^{(1)}(k)\right| \indic_{\{S_{n}+R+T=n\}} \right]}{\bp^{(1)}(k)\E[T\indic_{\{S_{n}+R+T=n\}}]}\\
			\leq\dfrac{\varepsilon}{\bp^{(1)}(k)}+\dfrac{\E\left[T\right]\p(|  \bp^{(1)}_n(k)-\bp^{(1)}(k)|>\varepsilon)}{\bp^{(1)}(k)\E[T\indic_{\{S_{n}+R+T=n\}}]}=:\dfrac{\varepsilon}{\bp^{(1)}(k)}+\frac{\E[T]}{\bp^{(1)}(k)}J_{n,\varepsilon}.
		\end{align*}
		 We have our result if we prove that $J_{n,\varepsilon}$ tends to 0 when $n$ goes to infinity. Writing:
		\begin{align*}
			J_{n,\varepsilon}=
			\dfrac{\p(S_n=n)}{\E[T\indic_{\{S_{n}+R+T=n\}}]}\dfrac{\p(|  \bp^{(1)}_n(k)-\bp^{(1)}(k)|>\varepsilon)}{\p(S_n=n)}=:\alpha_n\beta_{n,\varepsilon},
		\end{align*}
		we already have that $\lim_{n\rightarrow+\infty}\beta_{n,\varepsilon}=0$, according to \cite{neveu_sur_1963}, pp.2954 and we conclude applying Fatou's lemma and the strong ratio limit property \eqref{strongratio} to $\alpha_n$:
		\[\lim_{n\rightarrow+\infty}\alpha_n\le \left(\sum_{k\in\mathbb N}\E[T\mathds{1}_{\lbrace R+T=k\rbrace}]\liminf_{n\rightarrow+\infty}\frac{\p(S_n=n-k)}{\p(S_n=n)}\right)^{-1}=\E[T]^{-1}.\]
	\end{proof}
	\begin{lem}\label{limSnN2}
		Assume that $\bp^{(1)}$ is aperiodic , with $\mu^{(1)}=1$ or ($\mu^{(1)}<1$ and $\rho(\bp^{(1)})=1$). 
		For all $m\in\N,$ $k\in\Z$, we have:
		\begin{equation}\label{limNSn2}
			\lim_{n\to+\infty}\dfrac{\E[N\indic_{\{S_{n}+W_m+N=n-k\}}]}{\E[N\indic_{\{S_n+N=n\}}]}=1=\lim_{n\to+\infty}\dfrac{\E\left[\indic_{\{S_{n+m}+N=n-k\}}\right]}{\p(S_n=n)}.
		\end{equation}
		If $\bp^{(1)}$ is periodic, then \eqref{limNSn2} still holds along the sub-sequence for which the denominator is positive.
	\end{lem}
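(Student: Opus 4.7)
My plan is to reduce both limits to Lemma \ref{limSnTR2} and the strong ratio limit property \eqref{strongratio}, via an exchangeability identity and a dominated-convergence argument on an infinite series.

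For the first equality, I factor
\begin{equation*}
\frac{\E[N\indic_{\{S_n+W_m+N=n-k\}}]}{\E[N\indic_{\{S_n+N=n\}}]}=\underbrace{\frac{\E[N\indic_{\{S_n+W_m+N=n-k\}}]}{\E[N\indic_{\{S_n+W_m+N=n\}}]}}_{A_n}\cdot\underbrace{\frac{\E[N\indic_{\{S_n+W_m+N=n\}}]}{\E[N\indic_{\{S_n+N=n\}}]}}_{B_n}.
\end{equation*}
I apply Lemma \ref{limSnTR2} with $T$ the law of $N$ conditioned on $\{N>0\}$ (positive and integrable, the factor $\p(N>0)$ cancelling from both numerator and denominator), $R:=W_m$, and parameters $(0,k)$; this gives $A_n\to 1$. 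For $B_n$, set $N_0:=N$ and $U_m:=N_0+N_1+\cdots+N_m$; since $(N_i)_{0\le i\le m}$ are iid and independent of $(Z_i^{(1)})$, the exchangeability identity
\begin{equation*}
(m+1)\E[N\indic_{\{S_n+U_m=n\}}]=\E[U_m\indic_{\{S_n+U_m=n\}}]
\end{equation*}
gives $B_n=\E[U_m\indic_{\{S_n+U_m=n\}}]/\bigl((m+1)\E[N\indic_{\{S_n+N=n\}}]\bigr)$.

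To finish, I establish the auxiliary claim: for every non-negative integer-valued random variable $V$ independent of $(Z_i^{(1)})_i$ with $V\in L^1$ and $\E[V]>0$,
\begin{equation*}
\frac{\E[V\indic_{\{S_n+V=n\}}]}{\p(S_n=n)}\xrightarrow[n\to\infty]{}\E[V].
\end{equation*}
Applied to $V=U_m$ (whose mean is $(m+1)\E[N]$) and to $V=N$, this yields $B_n\to 1$, concluding the first equality. The second equality, $\p(S_{n+m}+N=n-k)/\p(S_n=n)\to 1$, follows directly from the dominated-convergence argument underlying the claim: write $\p(S_{n+m}+N=n-k)=\sum_a\p(N=a)\p(S_{n+m}=n-k-a)$, apply \eqref{strongratio} termwise so that each quotient $\p(S_{n+m}=n-k-a)/\p(S_n=n)\to 1$, and interchange limit and series to obtain $\sum_a\p(N=a)=1$.

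The main obstacle is the dominated-convergence step, whether in the series $\sum_a a\,\p(V=a)\,\p(S_n=n-a)/\p(S_n=n)$ of the auxiliary claim or its simpler analogue without the factor $a$ for the second equality. The $\liminf$ inequality is immediate from Fatou and \eqref{strongratio}; for the matching $\limsup$, I mimic the technique in the proof of Lemma \ref{limSnTR2} by using the exchangeability identity $\bp^{(1)}(a)\E[V\indic_{\{S_{n-1}+V=n-a\}}]=\E[V\bp^{(1)}_n(a)\indic_{\{S_n+V=n\}}]$, approximating $\bp^{(1)}_n(a)$ by $\bp^{(1)}(a)$ with error $\varepsilon$, and controlling the residual via the Neveu-type estimate $\p(|\bp^{(1)}_n(a)-\bp^{(1)}(a)|>\varepsilon)/\p(S_n=n)\to 0$. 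The hypotheses $\mu^{(1)}=1$ or ($\mu^{(1)}<1$ and $\rho(\bp^{(1)})=1$) provide the local asymptotics of $\p(S_n=n)$ required in each regime, and the periodic case is handled by restricting to the subsequence on which the denominator is positive, as indicated in the statement.
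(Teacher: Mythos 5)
The first equality reduction to a product $A_nB_n$ via Lemma \ref{limSnTR2} and the exchangeability identity for $U_m=N_0+\cdots+N_m$ is clean, and $A_n\to1$ is handled correctly. The gap is your auxiliary claim: under the hypotheses of the lemma it is \emph{false} in the sub-critical regime. Take $V$ to be a fresh independent copy of $Z^{(1)}$. Then $S_n+V\overset{d}{=}S_{n+1}$ and by exchangeability $\E\bigl[V\indic_{\{S_n+V=n\}}\bigr]=\tfrac{n}{n+1}\p(S_{n+1}=n)$, so by the strong ratio limit \eqref{strongratio}
\begin{equation*}
\frac{\E\bigl[V\indic_{\{S_n+V=n\}}\bigr]}{\p(S_n=n)}=\frac{n}{n+1}\cdot\frac{\p(S_{n+1}=n)}{\p(S_n=n)}\longrightarrow 1\neq\mu^{(1)}=\E[V]
\end{equation*}
whenever $\mu^{(1)}<1$. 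This is not an isolated pathology: the paper's own Lemma \ref{limSnT1} (and hence Lemma \ref{limSnN3}) shows that $\E[N\indic_{\{S_n+N=n\}}]/\p(S_n=n)$ converges to $\E[N]$ \emph{plus a strictly positive extra term} when $\mu^{(1)}<1$ and $\ell_\bq<1$, coming from the big-jump contribution of $V$ itself near size $n(1-\mu^{(1)})$. Your proposed limsup argument cannot repair this: the exchangeability identity $\bp^{(1)}(a)\E[V\indic_{\{S_{n-1}+V=n-a\}}]=\E[V\bp^{(1)}_n(a)\indic_{\{S_n+V=n\}}]$ and the concentration of $\bp^{(1)}_n(a)$ only compare two $V$-weighted quantities (this is precisely Lemma \ref{limSnTR2} again), and never relate a $V$-weighted sum to the unweighted $\p(S_n=n)$. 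That the conclusion $B_n\to1$ is nonetheless true is because the extra contributions from $U_m$ and $N$ scale by the same factor $m+1$, but your proof route does not see this cancellation.

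For comparison, the paper does not attempt any auxiliary statement of this kind: it simply cites Lemma 8.6 of Abraham--Delmas for the left equality, and proves the right equality by the generalized dominated-convergence (Pratt) argument with the explicit domination $\bp_N(j)\leq C^{-1}\bp^{(1)}(j)$ and the convolution identity $\sum_j \bp^{(1)}(j)\p(S_{n+m}=n-k-j)=\p(S_{n+m+1}=n-k)$, which gives convergence of the dominating sums. Your sketch of the right equality is morally the same idea, and would likely go through once the interchange of limit and series is justified this way; but the proof of the left equality rests on the false auxiliary claim and therefore does not stand as written.
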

	\begin{proof}The left equality follows from Lemma 8.6 of \cite{abraham_local_2014}, and to prove the right one, we mimic the proof of Lemma  8.4 of \cite{abraham_local_2014}.
		We define for all $j\in\N$:
		\begin{equation*}
			b_n(j)=\bp_N(j)\dfrac{\p(S_{n+m}=n-k-j)}{\p(S_n=n)},\,d_n(j)=\bp^{(1)}(j)\dfrac{\p(S_{n+m}=n-k-j)}{\p(S_n=n)}.
		\end{equation*}
		Thanks to the strong ratio limit \eqref{strongratio} we have $\lim_{n\to+\infty}b_n(j)=\bp_N(j)$ and $\lim_{n\to+\infty}d_n(j)=\bp^{(1)}(j)$.
		\smallbreak
		Moreover, using \ref{limTRSn2bis} with $T=k$ and $R=0$, we have:
		\begin{equation*}
			\sum_{j\in\mathbb N}d_n(j)=\dfrac{\p(S_{n+1+m}=n-k)}{\p(S_n=n)},
		\end{equation*}
		and $\lim_{n\to+\infty}\sum_{j\in \mathbb N}d_n(j)=1=	\sum_{j\in\mathbb N}\bp^{(1)}(j)=\sum_{j\in\mathbb N}\lim_{n\to+\infty}d_n(j)$.\\
		Then, since it exists $j_0\in\NN$ such that $\bp(j_0)\bq(j_0)>0$, we have:
		\begin{align*}
			\bp^{(1)}(j)=\p\left(\sum_{i=1}^{X^{(1)}}N_i =j\right)&\geq \p(X^{(1)}=j_0)\p\left(\sum_{i=1}^{j_0}N_i =j\right)\\
			&\geq \p(X^{(1)}=j_0){\bp_N(0)}^{j_0-1}\bp_N(j)=:C \bp_N(j) .
		\end{align*}
		Thereby, for all $j\in\N$, $b_n(j) \leq C^{-1}d_n(j)$, and the dominated convergence theorem in \cite{kallenberg_foundations_1997} (Theorem 1.21) implies that:
		\begin{align*}
			\lim_{n\to+\infty}\dfrac{\E\left[\indic_{\{S_{n+m}+N=n-k\}}\right]}{\p(S_n=n)}&=\lim_{n\to+\infty}\underset{j\in\N}{\sum}b_n(j)=\sum_{j\in\mathbb N}\lim_{n\to+\infty}b_n(j)\\
			&=\sum_{j\in\mathbb N}\bp_N(j)=1.
		\end{align*}
	\end{proof}
	\subsection{The sub-critical case: some properties}
	When $\mu^{(1)}<1$, let us recall that we assume in this case that:
	\begin{equation}
		\bp(k)=\Le(k)k^{-(1+\alpha)},
	\end{equation}
	where $\alpha>2$ and $\Le$ is a SV function, and that $\bq$ admits a finite limit in infinity:
	\begin{equation}
		\lim_{n\rightarrow+\infty}\bq(n)=:\ell_\bq\in[0,1].
	\end{equation}
A useful lemma concerning SV functions is the following, Lemma 2 in \cite{feller_introduction_1971}, pp.282:
\begin{lem}\label{SV}
If $\Le$ is a SV function, for every $0<a<b<+\infty$, uniformly in for $c\in[a,b]$:
\begin{equation}
\lim_{x\rightarrow+\infty}\frac{\Le(cx)}{\Le(x)}=1.
\end{equation}
\end{lem}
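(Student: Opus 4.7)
The plan is to prove the classical Karamata uniform convergence theorem for slowly varying functions. First I would pass to logarithmic variables by setting $g(u) := \log \Le(e^u)$, so the slow variation property $\Le(cx)/\Le(x) \to 1$ becomes: for every fixed $v\in\mathbb{R}$, $g(u+v) - g(u) \to 0$ as $u \to +\infty$. The goal is then to upgrade this pointwise convergence to uniform convergence in $v$ on any compact interval of the form $[\log a, \log b]$ with $0<a<b<+\infty$; this is equivalent to the statement of the lemma.

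The core of the argument is a Steinhaus-type measure-theoretic step. Fix $\varepsilon>0$ and, for each integer $k\ge 1$, define
\[
V_k := \bigl\{v \in \mathbb{R} : |g(u+v) - g(u)| \le \varepsilon \text{ for all } u \ge k\bigr\}.
\]
The sets $V_k$ are measurable (using that $\Le$ is measurable, which is part of Feller's standing hypothesis on SV functions), they are nondecreasing in $k$, and $\bigcup_k V_k = \mathbb{R}$ by the pointwise slow variation property. Hence some $V_{k_0}$ has positive Lebesgue measure, and the Steinhaus lemma produces an interval $(-\delta,\delta)$ contained in the difference set $V_{k_0} - V_{k_0}$.

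Now I would exploit additivity. If $v = v_1 - v_2$ with $v_1,v_2 \in V_{k_0}$ and $u$ is large enough that both $u \ge k_0$ and $u - v_2 \ge k_0$, the decomposition
\[
g(u+v) - g(u) = \bigl[g((u-v_2)+v_1) - g(u-v_2)\bigr] - \bigl[g(u) - g(u-v_2)\bigr]
\]
bounds each bracket by $\varepsilon$, so $|g(u+v) - g(u)| \le 2\varepsilon$ uniformly in $v\in(-\delta,\delta)$ for all sufficiently large $u$. To extend this to an arbitrary compact interval $[\log a,\log b]$ I would cover it by finitely many translates of $(-\delta,\delta)$ by fixed shifts $v^{(1)},\dots,v^{(M)}$, and for each such shift use pointwise slow variation at $v^{(j)}$ to control $g(u+v^{(j)}) - g(u)$. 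Chaining the resulting bounds a fixed finite number $M$ of times (depending only on $a,b,\delta$) yields $|g(u+v) - g(u)| \le C\varepsilon$ uniformly in $v \in [\log a,\log b]$ for $u$ large, and since $\varepsilon$ was arbitrary the lemma follows.

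The main obstacle is the regularity step: the measurability of $\Le$ (or some local boundedness property) is essential, since without it pointwise slow variation does not imply uniform slow variation — Hamel-basis type constructions provide non-measurable counterexamples. Under Feller's framework this is automatic, and the Steinhaus-plus-covering scheme above then closes the proof without further difficulty.
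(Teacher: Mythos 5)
Your proof is correct, but it takes a genuinely different route from the paper's. The paper simply invokes Karamata's representation theorem (Feller, Vol.~II, Lemma~2, p.~282), writing $\Le(x)=a(x)\exp\bigl(\int_1^x \nu(y)/y\,\mathrm{d}y\bigr)$ with $\nu\to 0$, $a\to\mathfrak a>0$; the uniform convergence then falls out in one line, since
\[
\frac{\Le(cx)}{\Le(x)}=\frac{a(cx)}{a(x)}\exp\left(\int_x^{cx}\frac{\nu(y)}{y}\,\mathrm{d}y\right),
\]
and the integral is bounded in absolute value by $\sup_{y\ge ax}|\nu(y)|\cdot\log(b/a)\to 0$ uniformly over $c\in[a,b]$, while $a(cx)/a(x)\to 1$ uniformly because $a$ has a finite positive limit. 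You instead prove the Uniform Convergence Theorem from scratch: pass to $g(u)=\log\Le(e^u)$, form the nondecreasing measurable sets $V_k$ of shifts with $|g(u+v)-g(u)|\le\varepsilon$ for all $u\ge k$, use the Steinhaus difference-set lemma to get a small interval $(-\delta,\delta)\subset V_{k_0}-V_{k_0}$ (taking a bounded subset of $V_{k_0}$ of positive measure so the decompositions $v=v_1-v_2$ have bounded $v_2$), and then cover $[\log a,\log b]$ by finitely many translates of $(-\delta,\delta)$, combining the uniform bound near $0$ with pointwise slow variation at the finitely many anchor shifts. This is exactly the classical Karamata/Steinhaus argument (Bingham--Goldie--Teugels, Theorem~1.2.1). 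What each approach buys: the paper's is a one-liner once the representation theorem is accepted as a black box; yours is self-contained modulo the Steinhaus lemma and needs no representation, and indeed in many treatments (e.g.~BGT) the representation theorem is \emph{deduced} from the uniform convergence theorem you are proving, so your route is the logically more primitive one. You are also right to flag measurability of $\Le$ as essential --- it is part of the standing definition of a slowly varying function in Feller, and the Steinhaus step collapses without it.
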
 
This result can be easily seen using the representation Theorem in \cite{feller_introduction_1971}, pp.282, saying that if $\Le$ is a SV function, it can be written: 
\begin{equation}
\Le(x)=a(x)\exp\left(\int_{1}^{x}\frac{\nu(y)}{y}\mathrm{d}y\right), 
\end{equation}
 where $\nu(x)\to 0$ and $a(x) \to \mathfrak{a}>0$ when $x\rightarrow+\infty$.\\
The following corollary is a direct consequence of the previous lemma in the case of regularly varying functions:
 \begin{cor}\label{RV}
 Assume that $f:\mathbb R^{+,*}\rightarrow\mathbb R^{+,*}$ is defined by:
 \[f(x)=\frac{\Le(x)}{x^{\alpha}},\]
 where $\alpha>0$ and $\Le$ is a SV function.\\
 Moreover, let us consider two positive functions $u,v$ satisfying:
 \[\lim_{x\rightarrow+\infty}\frac{u(x)}{x}=A\le  \lim_{x\rightarrow+\infty}\frac{v(x)}{x}=B.\]
Then, for any $\varepsilon>0$, for $x$ large enough and for all $y\in[u(x),v(x)]$:
 \begin{equation}\label{corequiv}
 \frac{(1-\varepsilon)}{B^{\alpha}}\le \frac{f(y)}{f(x)}\le  \frac{(1+\varepsilon)}{A^{\alpha}}
 \end{equation}   
 \end{cor}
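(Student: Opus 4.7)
The plan is to split the ratio multiplicatively,
\[
\frac{f(y)}{f(x)}=\frac{\Le(y)}{\Le(x)}\cdot\left(\frac{x}{y}\right)^{\alpha},
\]
and estimate each factor uniformly in $y\in[u(x),v(x)]$. The slowly varying factor will be handled by Lemma \ref{SV} on a well-chosen compact interval, and the power factor by an elementary bound on $y/x$.

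I would first treat the generic case $0<A\le B<+\infty$, leaving the degenerate cases $A=0$ (where the upper bound $(1+\varepsilon)/A^{\alpha}=+\infty$ is automatic) and $B=+\infty$ (where the lower bound $(1-\varepsilon)/B^{\alpha}=0$ is automatic since $f>0$) for a short concluding remark. Pick an auxiliary $\delta\in(0,A)$. The hypotheses $u(x)/x\to A$ and $v(x)/x\to B$ give, for $x$ large enough,
\[
y\in[u(x),v(x)]\Longrightarrow \frac{y}{x}\in [A-\delta,B+\delta]\subset (0,+\infty).
\]
Setting $c=y/x$ and applying Lemma \ref{SV} on the compact interval $[A-\delta,B+\delta]$ yields, for $x$ large enough and any preassigned $\varepsilon'>0$,
\[
\sup_{y\in[u(x),v(x)]}\left|\frac{\Le(y)}{\Le(x)}-1\right|<\varepsilon'.
\]
On the same interval, the power factor satisfies the elementary bound $(B+\delta)^{-\alpha}\le (x/y)^{\alpha}\le (A-\delta)^{-\alpha}$.

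Multiplying these two estimates yields, for $x$ large and all $y\in[u(x),v(x)]$,
\[
\frac{1-\varepsilon'}{(B+\delta)^{\alpha}}\le \frac{f(y)}{f(x)}\le \frac{1+\varepsilon'}{(A-\delta)^{\alpha}}.
\]
It only remains to pick $\delta$ and $\varepsilon'$ small enough, in terms of $\varepsilon$, $A$, $B$, $\alpha$, so that the right-hand side is at most $(1+\varepsilon)/A^{\alpha}$ and the left-hand side is at least $(1-\varepsilon)/B^{\alpha}$; this reduces to the continuity at $\delta=0$ of the maps $\delta\mapsto(A-\delta)^{-\alpha}$ and $\delta\mapsto(B+\delta)^{-\alpha}$.

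The only mild obstacle is this last piece of bookkeeping with $\delta$ and $\varepsilon'$; there is no conceptual content beyond Lemma \ref{SV}. In essence, the corollary just records the known fact that a regularly varying function of negative index $-\alpha$ behaves, up to an arbitrarily small multiplicative error, like $y\mapsto y^{-\alpha}$ on windows $[u(x),v(x)]$ whose multiplicative width with respect to $x$ is bounded away from $0$ and $+\infty$.
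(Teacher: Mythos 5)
Your proof is correct and takes essentially the approach the authors intend: the paper gives no written proof of Corollary~\ref{RV}, stating only that it is ``a direct consequence'' of Lemma~\ref{SV}, and your multiplicative decomposition $\frac{f(y)}{f(x)}=\frac{\Le(y)}{\Le(x)}\left(\frac{x}{y}\right)^{\alpha}$, applying the uniform convergence of Lemma~\ref{SV} on the compact window $[A-\delta,B+\delta]$ together with the elementary bound on $(x/y)^{\alpha}$, is exactly that deduction made explicit. One small caveat: in the degenerate case $A=0$ (or $B=+\infty$) only one of the two bounds is ``automatic,'' and the other still requires an argument since $y/x$ is then not confined to a compact subset of $(0,\infty)$ and Lemma~\ref{SV} does not apply directly; you flag these cases but do not close them. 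That said, all applications of the corollary in the paper have $0<A\le B<\infty$, so your treatment of the main case is what matters, and the final $\delta,\varepsilon'$-bookkeeping you defer is indeed routine.
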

 
 \begin{rqqq}\label{rqRV}
 Note that we can easily adapt the previous result if $\nicefrac{u(x)}{x}\sim A(x)$ when $x$ goes to infinity.
 In this case, for any $\varepsilon>0$, for $x$ large enough and for all $y\in[u(x),v(x)]$:
 \begin{equation}\label{rqquiv}
\frac{f(y)}{f(x)}\le  \frac{(1+\varepsilon)}{A(x)^{\alpha}}.
 \end{equation}   

 \end{rqqq}
	According to Section \ref{Model}, $L$ is the number of leaves of $\tilde{\mathcal{S}}^\bq_\emptyset(\tau^*)$ with reproduction law \eqref{probtilde}:
	\begin{equation*}
		\tilde{\bp}(k) :=
		\left\{
		\begin{array}{ll}
			\bp(k)(1-\bq(k)),& \text{if}~k \neq 0,\\
			\bp(0)+\overset{+\infty}{\underset{j=1}{\sum}}\bp(j)\bq(j),& \text{otherwise}.
		\end{array}
		\right. 
	\end{equation*}
	We denote by $\tilde{X}$ a random variable with distribution $\tilde{\bp}$. 
	\subsubsection{Case $\ell_\bq\in[0,1[$}\label{caselneq1}
	According to Remark 4.8 of \cite{abraham_local_2013}, $L$ is also distributed as the total size of
	a GW tree with offspring distribution given by:
	\begin{equation}\label{tildeYdef}
		\tilde{Y}=\overset{\G-1}{\underset{k=1}{\sum}}\left(\tilde{X}_{k,+}-1\right),
	\end{equation}
	where $\G$ is a geometric law with parameter $\tilde{\bp}(0)$ independent of $(\tilde{X}_{k,+})_{k\in\NN}$, a sequence of i.i.d. random variables, distributed as $\tilde X$ given $\lbrace \tilde X>0\rbrace$. Thus, for all $n\in\NN$:
	\begin{equation}\label{Xpos}
	\p(\tilde{X}_{+}=n)=\dfrac{\Le(n)(1-\bq(n))}{n^{1+\alpha}(1-\tilde{\bp}(0))},
	\end{equation}
	implying that $\p(\G=n )=o(\p(\tilde{X}_{+}=n))$, when $n$ goes to infinity. Thus, applying Theorem 2 of \cite{bloznelis_local_2019} and Lemma \ref{SV}:
	\begin{equation*}
		\p(\tilde{Y}=n)\sim\E\left[\G-1\right]\p(\tilde{X}_{+}-1=n)
		\sim\dfrac{\Le(n)(1-\ell_\bq)}{n^{1+\alpha}\tilde{\bp}(0)}.
	\end{equation*}
	And one can write that for all $n\in\NN$:
	\begin{equation}\label{probYalpha}
	\p(\tilde{Y}=n)=\dfrac{b(n)\Le(n)}{n^{1+\alpha}}
	\end{equation}
	where $b$ is a positive function satisfying $\lim_{x\rightarrow+\infty}b(x)=\nicefrac{(1-\ell_\bq)}{\tilde\bp(0)}$.\\ 
	Using Dwass formula \cite{dwass_total_1969}, we have that,
	\begin{equation*}
		n\p(L=n)=\p\left(\tilde{S}_n=n-1\right),
	\end{equation*}
	with $\tilde{S}_n=\sum_{k=1}^n\tilde{Y}_k$, and $(\tilde{Y}_k)_{k\in\NN}$ independent copies of $\tilde{Y}$.\\
	As $\alpha>2$ in \eqref{probYalpha}, we can apply Theorem 1 of \cite{doney_large_1989};  as $\E[\tilde{Y}]<1$, there exists $\varepsilon>0$ small enough such that $\tilde{\varepsilon}:=1-\E[\tilde{Y}]-\varepsilon>0$, and as  $n-1 \ge n \E[\tilde{Y}]+n\tilde{\varepsilon}$, when $n$ goes to infinity:
	\begin{equation*}
		\p\left(\tilde{S}_n=n-1\right)=n\p\left(\tilde{Y}=\left\lfloor n(1-\E[\tilde{Y}])\right\rfloor\right)(1+o(1)).
	\end{equation*}
	Moreover, \eqref{tildeYdef} ensures that:
	\begin{equation*}
		\E[\tilde{Y}]=\E[\G-1]\E[\tilde{X}_{+}-1]
		=1-\E[L]^{-1}.
	\end{equation*}
	Finally, when $n$ goes to infinity, using Lemma \ref{SV}:
	\begin{equation*}
		\p(L=n)\sim\p\left(\tilde{Y}=\left\lfloor n(1-\E[\tilde{Y}])\right\rfloor\right)\sim \p(\tilde{Y}=n)\E[L]^{1+\alpha}.
	\end{equation*}
%
	So, we can write:
	\begin{equation}\label{probLalpha}
	\p(L=n)=\dfrac{c(n)\Le(n)}{n^{1+\alpha}}
	\end{equation}
	where $c$ is a positive function satisfying $\lim_{x\rightarrow+\infty}c(x)=\nicefrac{(1-\ell_\bq)\E[L]^{1+\alpha}}{\tilde\bp(0)}$.
	We recall, that the random variable $N$ defines the number of marked leaves among the $L$ leaves, thus $N$, conditionally on $L$, has a binomial distribution with parameter $\left(L,\tilde c\right)$ where $\tilde{c}=\nicefrac{\E[\bq(X)]}{\tilde{\bp}(0)}$. We have the following result: 
		\begin{lem}\label{lemprobNalpha}
		For all $n\in\NN$:
		\begin{equation}\label{probNalpha}
			\bp_N(n)=\p(N=n)=\dfrac{d(n)\Le(n)}{n^{1+\alpha}},
		\end{equation}
	where $d$ is a positive function satisfying $\displaystyle\lim_{x\rightarrow+\infty} d(x)=(1-\ell_\bq)\frac{\E[N]^{\alpha}}{1-\E[X(1-\bq(X))]}=:\mathfrak{c}_N.$
	\end{lem}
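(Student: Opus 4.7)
The starting point is the conditional structure: given $L$, the variable $N$ is $\mathrm{Bin}(L,\tilde c)$ with $\tilde c=\E[\bq(X)]/\tilde{\bp}(0)$. Writing $\phi(m,n):=\binom{m}{n}\tilde c^n(1-\tilde c)^{m-n}$ and using the negative-binomial identity $\sum_{m\ge n}\phi(m,n)=1/\tilde c$, one has
\[
\p(N=n)=\sum_{m\ge n}\p(L=m)\,\phi(m,n).
\]
The idea is that $\phi(\cdot,n)$ concentrates on a window $I_n:=[m_n-n^{1/2+\varepsilon},m_n+n^{1/2+\varepsilon}]$ around $m_n:=\lfloor n/\tilde c\rfloor$, while $\p(L=\cdot)$ varies slowly on that window (since $\alpha>2$ and the prefactor in \eqref{probLalpha} is of the form $c(m)\Le(m)/m^{1+\alpha}$ with $c$ convergent and $\Le$ slowly varying). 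Therefore I expect $\p(N=n)\sim\tilde c^{-1}\p(L=m_n)$, from which the announced asymptotic follows by explicit substitution.

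The plan is carried out in three steps. \emph{First}, handle the tail. By a standard Chernoff bound, $\sum_{m\notin I_n}\phi(m,n)$ decays faster than any polynomial in $n$; bounding $\p(L=m)\le 1$ therefore gives $\sum_{m\notin I_n}\p(L=m)\phi(m,n)=o(\Le(n)/n^{1+\alpha})$, which is negligible compared to the target. \emph{Second}, on $I_n$ apply Corollary \ref{RV} (together with Lemma \ref{SV}) to \eqref{probLalpha}: for $m\in I_n$ one has $m/m_n=1+O(n^{-1/2+\varepsilon})$, so $\Le(m)/\Le(m_n)\to 1$ uniformly in $m\in I_n$ by Lemma \ref{SV}, $(m_n/m)^{1+\alpha}\to 1$ uniformly, and $c(m)/c(m_n)\to 1$ because $c$ admits a positive finite limit at infinity. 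Consequently
\[
\sup_{m\in I_n}\left|\frac{\p(L=m)}{\p(L=m_n)}-1\right|\xrightarrow[n\to+\infty]{}0,
\]
which yields $\sum_{m\in I_n}\p(L=m)\phi(m,n)\sim \p(L=m_n)\sum_{m\in I_n}\phi(m,n)\sim (1/\tilde c)\p(L=m_n)$.

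\emph{Third}, plug \eqref{probLalpha} in at $m_n$. Using $\Le(m_n)\sim\Le(n)$ (Lemma \ref{SV}), $m_n^{-(1+\alpha)}\sim\tilde c^{1+\alpha}n^{-(1+\alpha)}$, and the limit of $c$,
\[
\p(N=n)\sim\frac{1}{\tilde c}\cdot\frac{(1-\ell_\bq)\E[L]^{1+\alpha}}{\tilde{\bp}(0)}\cdot\tilde c^{1+\alpha}\cdot\frac{\Le(n)}{n^{1+\alpha}}=\frac{(1-\ell_\bq)\tilde c^{\alpha}\E[L]^{1+\alpha}}{\tilde{\bp}(0)}\cdot\frac{\Le(n)}{n^{1+\alpha}}.
\]
Finally, using $\E[N]=\tilde c\,\E[L]$ (i.e.\ $\tilde c^{\alpha}\E[L]^{1+\alpha}=\E[N]^{\alpha}\E[L]$) together with $\E[L]=\tilde{\bp}(0)/(1-\E[X(1-\bq(X))])$ from Proposition \ref{propL}, the constant rewrites as $\mathfrak{c}_N=(1-\ell_\bq)\E[N]^{\alpha}/(1-\E[X(1-\bq(X))])$, which is the claimed limit of $d(n):=n^{1+\alpha}\p(N=n)/\Le(n)$. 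The delicate point, and the main obstacle, is the uniform control of the ratio $\p(L=m)/\p(L=m_n)$ on the $n^{1/2+\varepsilon}$-wide window $I_n$: it is the interaction between the slowly varying $\Le$ and the power $m^{-(1+\alpha)}$ that must be tamed uniformly, which is made possible precisely because $\alpha>2$ leaves a comfortable margin over the binomial fluctuations of order $\sqrt{n}$.
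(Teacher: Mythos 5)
Your proof is correct and follows essentially the same approach as the paper: condition on $L$, isolate a concentration window for the binomial around $n/\tilde c$ via a Hoeffding/Chernoff bound, use slow variation of $\p(L=\cdot)$ on that window, and evaluate the bulk via the negative-binomial identity $\sum_{m\ge n}\phi(m,n)=1/\tilde c$. The only difference is cosmetic and lies in the window width: you take a narrow $n^{1/2+\varepsilon}$-window on which the ratio $\p(L=m)/\p(L=m_n)$ tends to $1$ uniformly, whereas the paper takes a macroscopic $\varepsilon n$-window, bounds the ratio $\p(L=k)/\p(L=n)$ between $(1\pm\varepsilon)$-factors of $\tilde c^{1+\alpha}$ via Corollary~\ref{RV}, and lets $\varepsilon\to 0$ at the end.
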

	\begin{proof} 
	If $(B_i)_{i\in\mathbb N}$ is a family of iid random variables with Bernoulli distribution $\mathscr B(\tilde c)$, and assuming that this sequence is independent of $L$,  $\sum^{L}_{i=1}B_i$ equals $N$ in law and we define for all $k\in \mathbb N, \,\tilde{W}_k=\sum_{i=1}^kB_i$.\\ 
	Thus, for $\varepsilon>0$, we have:
		\begin{align}\label{decomppN}
		\bp_N(n)
			&=	\sum_{k=n}^{\left\lfloor\frac{n}{\tilde{c}}(1-\varepsilon)\right\rfloor}\displaystyle+\sum^{\left\lfloor\frac{n}{\tilde{c}}(1+\varepsilon)\right\rfloor}_{k=\left\lfloor\frac{n}{\tilde{c}}(1-\varepsilon)\right\rfloor+1}
			+\sum_{k>\left\lfloor\frac{n}{\tilde{c}}(1+\varepsilon)\right\rfloor}\p(L=k)\p(\tilde{W}_k=n)\nonumber\\
			&=:I_1+I_2+I_3.
		\end{align}
		Using Hoeffding's inequality:
		\begin{align*}
			I_1&\le  \p(\tilde{W}_{\left\lfloor\frac{n}{\tilde{c}}(1-\varepsilon)\right\rfloor}\ge n)\le \p(\tilde{W}_{\left\lfloor\frac{n}{\tilde{c}}(1-\varepsilon)\right\rfloor}-\left\lfloor\frac{n}{\tilde{c}}(1-\varepsilon)\right\rfloor \tilde c\ge n\varepsilon)\le e^{-2\tilde cn\varepsilon^2},
		\end{align*}
		and with a similar reasoning, we can show the existence of a positive constant $C$ such that for $n$ large enough:
		\begin{equation*}
			I_3\le Ce^{-2\tilde cn\varepsilon^2}.
		\end{equation*}
		In view of what we want to prove, $I_1$ and $I_3$ are negligible.\\
		Corollary \ref{RV} gives that for $n$ large enough and for all $k\in \llbracket \left\lfloor\frac{n}{\tilde{c}}(1-\varepsilon)\right\rfloor+1 ,\left\lfloor\frac{n}{\tilde{c}}(1+\varepsilon)\right\rfloor \rrbracket$: 
		\begin{align*}
		(1-\varepsilon)\left(\frac{\tilde c}{1+\varepsilon}\right)^{1+\alpha}
		\le\frac{\p(L=k)}{\p(L=n)}
		\le(1+\varepsilon)\left(\frac{\tilde c}{1-\varepsilon}\right)^{1+\alpha},
		\end{align*}
		and consequently, to study the asymptotic behavior of $I_2$, we just have to look after:
		\[\p(L=n)\tilde c^{\alpha+1}\sum^{\left\lfloor\frac{n}{\tilde{c}}(1+\varepsilon)\right\rfloor}_{k=\left\lfloor\frac{n}{\tilde{c}}(1-\varepsilon)\right\rfloor+1}\p(\tilde{W}_k=n)
		=:\p(L=n)\tilde c^{\alpha+1}\tilde I_2.\]
		The proof of the negligibility of $I_1$ and $I_3$ implies that:
		{\small\begin{align*}
		\lim_{n\rightarrow+\infty}\tilde I_2&= \lim_{n\rightarrow+\infty}\sum_{k\ge n}\p(\tilde W_k=n)- \sum^{\left\lfloor\frac{n}{\tilde{c}}(1-\varepsilon)\right\rfloor}_{k=n}\p(\tilde{W}_k=n)-\sum_{k>\left\lfloor\frac{n}{\tilde{c}}(1+\varepsilon)\right\rfloor}\p(\tilde{W}_k=n)\\
		&= \lim_{n\rightarrow+\infty}\sum_{k\ge n}\p(\tilde W_k=n)=\lim_{n\rightarrow+\infty}\sum_{k\ge n}\binom{k}{n}\tilde{c}^n\left(1-\tilde{c}\right)^{k-n} =\tilde{c}^{-1}.
		\end{align*}}
	%
		Finally, when $n$ goes to infinity
		\begin{equation*}
			\bp_N(n)
			\sim \tilde{c}^{\alpha}\p(L=n),
		\end{equation*}
		and we have:
		\begin{equation}
		\bp_N(n)=\dfrac{g(n)c(n)\Le(n)}{n^{1+\alpha}}
	\end{equation}
	where $c$ is defined in \eqref{probLalpha} and $g$ is a positive function satisfying $\displaystyle\lim_{x\rightarrow+\infty}g(x)=c^{\alpha}$. And we obtain our claim result as:
	\[\lim_{x\rightarrow+\infty}g(x)c(x)=\frac{(1-\ell_\bq)\E[L]^{1+\alpha}\tilde{c}^{\alpha}}{\tilde\bp(0)}=\frac{(1-\ell_\bq)\E[N]^{\alpha}}{1-\E[X(1-\bq(X))]}.\]
	\end{proof}

	Now, we can determine the asymptotic behavior of $\bp^{(1)}(n)=\p(Z^{(1)}=n)$ where $Z^{(1)}=\sum_{k=1}^{X^{(1)}}N_k$. Recalling that:
	\begin{equation*}
	\p(X^{(1)}=n)=\dfrac{ \Le(n)\bq(n)}{\E[\bq(X)]n^{1+\alpha}},
	\end{equation*}
	according to Theorem 1, (iii) of \cite{bloznelis_local_2019}, Lemmata \ref{SV} and \ref{lemprobNalpha}:
	\begin{align}\label{aleph}
		\p(Z^{(1)}=n)&\sim\E\left[X^{(1)}\right]\p(N=n)+\E\left[N\right]^{-1}\p(X^{(1)}=\left\lfloor \nicefrac{n}{\E\left[N\right]}  \right\rfloor )\nonumber\\
		&\sim \frac{\Le(n)\E[N]^{\alpha}}{n^{1+\alpha}\E[\bq[X]]}\left(\frac{\E[X\bq(X)]}{1-\E[X(1-\bq(X))]}(1-\ell_\bq)+\ell_\bq\right)\nonumber\\
		&\sim \frac{\Le(n)\E[N]^{\alpha}}{n^{1+\alpha}\E[\bq[X]]}\frac{\E[X\bq(X)]+\ell_\bq(1-\E[X])}{1-\E[X(1-\bq(X))]}=:\frac{\Le(n)}{n^{1+\alpha}}\aleph
	\end{align}
	and we have:
	\begin{equation}\label{probXhatalpha}
		\bp^{(1)}(n)=\p(Z^{(1)}=n)=\frac{h(n)\Le(n)}{n^{1+\alpha}}
	\end{equation}
	where $h$ is a positive function satisfying $\lim_{x\rightarrow+\infty}h(x)=\aleph$.\\
	Similarly, recalling that $Z^{(0)}=\sum_{k=1}^{X^{(0)}}N_k$ and that for $n\in\NN$,
	\[\p(X^{(0)}=n)=\dfrac{ \Le(n)(1-\bq(n))}{(1-\E[\bq(X)])n^{1+\alpha}},\] 
	when $n$ goes to infinity:
	{\small\begin{align}\label{cz0}
		\p(Z^{(0)}=n)&\sim\E[X^{(0)}] \p(N=n)+\E\left[N\right]^{-1}\p(X^{(0)}=\left\lfloor \nicefrac{n}{\E\left[N\right]} \right\rfloor )\nonumber\\
		&\sim \frac{\Le(n)}{n^{1+\alpha}}\frac{(1-\ell_\bq)\E[N]^{\alpha}}{(1-\E[\bq(X)])(1-\E[X(1-\bq(X))])}=: \frac{\Le(n)}{n^{1+\alpha}}(1-\ell_\bq)\mathfrak{c}_{Z^{(0)}}.
			\end{align}}
	and we have:
	\begin{equation}\label{probXbar}
		\bp^{(0)}(n)=\p(Z^{(0)}=n)=\frac{\tilde h(n)\Le(n)}{n^{1+\alpha}}
	\end{equation}
	where $\tilde h$ is a positive function satisfying $\lim_{x\rightarrow+\infty}\tilde h(x)=(1-\ell_\bq)\mathfrak{c}_{Z^{(0)}}$.
	
	\subsubsection{Case $\ell_\bq=1$}
	In that case, we suppose that, the mark function $\bq$ satisfies \eqref{defqlim1} for $k\in\NN$ , 
	\begin{equation}
		1-\bq(k)=k^{-\beta}\Lq(k) 
	\end{equation}
	with $\beta\ge 2$ and $\Lq$ is a SV function.\\
	In this case, we obtain:
	\[\p(\tilde{X}_{+}=n)=\dfrac{\Le(n)\Lq(n)}{n^{1+\alpha+\beta}(1-\tilde{\bp}(0))},\]
	and as the product of SV functions is a SV function, all the results obtained in subsection 5.2.1 can be adapted taking respectively 0 for $\ell_\bq$ and $\alpha+\beta$ for $\alpha$. Consequently, for all $n\in\NN$:
		\begin{equation}\label{probYalphazero}
			\bp_N(n)=\p(N=n)=\dfrac{\tilde d(n)\Le(n)\Lq(n)}{n^{1+\alpha+\beta}},
		\end{equation}
	where $d$ is a positive function satisfying $\lim_{x\rightarrow+\infty}\tilde d(x)=\frac{\E[N]^{\alpha+\beta}}{1-\E[X(1-\bq(X))]}.$

	To determine the asymptotic behavior of  $\bp^{(1)}(n)$, when $n$ goes to infinity, note that:
	\[\p(X^{(1)}=n)\sim\frac{\Le(n)}{n^{1+\alpha}\E[\bq(X)]},\]
	implying that $\p(N=n )\underset{n \to \infty}{=}o\left(\p(X^{(1)}=n)\right)$ and according to Theorem 6 of \cite{bloznelis_local_2019}:
	\begin{equation*}
		\p(Z^{(1)}=n)\sim\E\left[N\right]^{-1}\p(X^{(1)}=\left\lfloor \nicefrac{n}{\E\left[N\right]} \right\rfloor )
		\sim\dfrac{\Le(n)}{n^{1+\alpha}}\frac{\E[N]^{\alpha}}{\E[\bq(X)]}.
	\end{equation*}
	As a result:
	\begin{equation}\label{probXhatalphacase0}
		\bp^{(1)}(n)=\p(Z^{(1)}=n)=\frac{f(n)\Le(n)}{n^{1+\alpha}},
	\end{equation}
	where $f$ is positive function satisfying $\lim_{x\rightarrow+\infty}f(x)=\nicefrac{\E[N]^{\alpha}}{\E[\bq(X)]}$.\\
	For $Z^{(0)}$, note that 
	\[\p(X^{(0)}=n)=\dfrac{ \Le(n)\Lq(n)}{n^{1+\alpha+\beta}(1-\E[\bq(X)])},\]
	and the results is the same as the one in Sub-subsection \ref{caselneq1}, taking respectively 0 for $\ell_\bq$ and $\alpha+\beta$ for $\alpha$, in other words:
	\begin{equation}\label{probXbarcase0}
		\bp^{(0)}(n)=\p(Z^{(0)}=n)=\frac{ \tilde f(n)\Le(n)\Lq(n)}{n^{1+\alpha+\beta}}
	\end{equation}
	where $\tilde f$ is a positive function such that {\small$\displaystyle\lim_{x\rightarrow+\infty}f(x)$ $ =\frac{\E[N]^{\alpha+\beta}}{(1-\E[\bq(X)])(1-\E[X(1-\bq(X))])}$}. 
	
	\subsubsection{General results}
	For $S_n=\sum_{i=1}^n{Z^{(1)}_i}$, according to Theorem 2 in \cite{doney_local_2001}, since $\mu^{(1)}<+\infty$, $\sigma^2:=\mathrm{Var }[Z^{(1)}]<+\infty$ and $\bp^{(1)}$ being regularly varying at infinity with index $\kappa<-3$, uniformly in $k$ such that $\nicefrac{(k-n\mu^{(1)})}{\sqrt{n}}\to +\infty$, when $n$ goes to infinity, we have:
	\begin{equation}\label{expS_nequiv2}
		\p(S_n=k)=\dfrac{e^{-\frac{(k-n\mu^{(1)})^2}{2n\sigma^2}}}{\sigma\sqrt{2\pi n}}\left(1+o(1)\right)+n\bp^{(1)}(\lfloor k-n\mu^{(1)}\rfloor)(1+o(1)).
	\end{equation}
	
	\begin{lem}\label{limSnT1}
		Assume that $\bp^{(1)}$ is aperiodic , with $\mu^{(1)}<1$, $\rho(\bp^{(1)})=1$, $\mathrm{Var}[Z^{(1)}]<+\infty$ and $\bq$ satisfies \eqref{defqlim1} if $\ell_\bq=1$. If $T$ is a nondegenerate positive random variable in $L^1$, independent of $(S_n)_{n\in\mathbb N}$, such that:
		\[\forall k\in\mathbb N,\,\p(T=k)=\frac{\Le(k)}{k^{1+\alpha}}(1-\bq(k))\gamma(k),\] 
		where $\gamma$ is a positive function such $\lim_{x\rightarrow +\infty}\gamma(x)=\mathfrak{c}>0$, we have
		\begin{equation}\label{limTSn1}
			\underset{n\to+\infty}{\lim} \dfrac{\E\left[T\indic_{\{S_{n}+T=n\}}\right]}{\p(S_n=n)}= \E[T]+\left(1-\mu^{(1)}\right)\frac{(1-\ell_\bq)\mathfrak{c}}{\aleph},
		\end{equation}
		where $\aleph:=\frac{\E[N]^{\alpha}}{\E[\bq(X)]}\frac{\E[X\bq(X)]+\ell_\bq(1-\E[X])}{1-\E[X(1-\bq(X))]}$.
	\end{lem}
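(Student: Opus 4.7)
The plan is to write, by independence,
\[
\frac{\E[T\indic_{\{S_n+T=n\}}]}{\p(S_n=n)}=\sum_{k=0}^{n}k\,\p(T=k)\,\frac{\p(S_n=n-k)}{\p(S_n=n)},
\]
and to identify the two regimes of $k$ that contribute in the limit: a ``bulk'' regime where $k$ is small compared to $n$, which will produce the term $\E[T]$, and a ``condensation'' regime where $k$ is close to $n(1-\mu^{(1)})$, which will produce the second term. Since $\mu^{(1)}<1$, the law of large numbers places $S_n$ near $n\mu^{(1)}$, so the event $\{S_n+T=n\}$ is achieved either by the rare event $\{S_n\approx n\}$ (hence $T=k$ small), or by the moderate event $\{S_n\approx n\mu^{(1)}\}$ (hence $T$ large, of order $n(1-\mu^{(1)})$).

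For the small-$k$ contribution I will fix an integer $K$ and apply the strong ratio limit \eqref{strongratio} (one could also invoke Lemma \ref{limSnTR2} with $T\equiv 1$ and $R=0$): for every fixed $k$, $\p(S_n=n-k)/\p(S_n=n)\to 1$. Summing against $k\,\p(T=k)$, which is summable since $T\in L^1$, yields $\E[T]$ after letting $K\to\infty$; the uniformity required to exchange sum and limit will follow from a majorant built out of \eqref{expS_nequiv2} combined with Corollary \ref{RV} applied to the regularly varying tails of $\bp^{(1)}$ from \eqref{probXhatalpha} and \eqref{probXhatalphacase0}.

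For the condensation contribution I will localize $k$ in a window of width $A\sqrt{n}$ around $n(1-\mu^{(1)})$, writing $k=\lfloor n(1-\mu^{(1)})\rfloor+j$. On this window $n-k=n\mu^{(1)}-j+O(1)$ lies in the Gaussian regime of $S_n$, so by \eqref{expS_nequiv2} (Doney's local CLT with regularly varying tail),
\[
\p(S_n=n-k)=\frac{1}{\sigma\sqrt{2\pi n}}\,e^{-j^{2}/(2n\sigma^{2})}(1+o(1))+n\,\bp^{(1)}(-j+O(1))(1+o(1)),
\]
where the second term is negligible here. Using Lemma \ref{SV} (or Corollary \ref{RV}), both $k\,\p(T=k)$ and $\p(S_n=n)\sim n\,\bp^{(1)}(\lfloor n(1-\mu^{(1)})\rfloor)$ are essentially constant in $j$ up to $(1+o(1))$, so a Riemann-sum computation on $\sum_{|j|\le A\sqrt{n}}e^{-j^{2}/(2n\sigma^{2})}/(\sigma\sqrt{2\pi n})\to 1$ produces the limit
\[
\frac{n(1-\mu^{(1)})\,\p(T=\lfloor n(1-\mu^{(1)})\rfloor)}{\p(S_n=n)}\longrightarrow\frac{(1-\mu^{(1)})(1-\ell_\bq)\mathfrak{c}}{\aleph},
\]
thanks to the stated asymptotics of $\p(T=k)$ and of $\bp^{(1)}$ given by \eqref{aleph}. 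When $\ell_\bq=1$ the factor $1-\bq(k)=k^{-\beta}\Lq(k)$ in $\p(T=k)$ kills this contribution, consistent with the prefactor $1-\ell_\bq=0$ on the right-hand side.

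The remaining intermediate regions, namely $K<k<\varepsilon n$ and $n(1-\mu^{(1)})(1-\varepsilon)\gtrsim k$ or $k\gtrsim n(1-\mu^{(1)})(1+\varepsilon)$ up to $k\le n$, will be shown to contribute a quantity that vanishes when one first takes $n\to\infty$ and then $\varepsilon\to 0$; this is where I expect the main technical obstacle, since the pointwise convergence of the ratio $\p(S_n=n-k)/\p(S_n=n)$ is not uniform, and one must construct an integrable dominating sequence via the global two-regime bound \eqref{expS_nequiv2} for $\p(S_n=n-k)$ together with the regular variation of $\bp^{(1)}$ (which gives $\bp^{(1)}(m)\le C\Le(m)/m^{1+\alpha}$ for all $m$ large). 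After these bounds, adding the two surviving contributions yields the announced limit.
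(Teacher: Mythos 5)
Your high-level plan coincides with the paper's: decompose $\sum_k k\,\p(T=k)\,\p(S_n=n-k)/\p(S_n=n)$ into a bulk regime ($k$ small, producing $\E[T]$ via the strong ratio limit and dominated convergence), a condensation peak around $k\approx n(1-\mu^{(1)})$ (producing the second term via regular variation of $\p(T=k)$ against $\p(S_n=n)\sim n\,\bp^{(1)}(\lfloor n(1-\mu^{(1)})\rfloor)$), and intermediate regions that vanish. Your computation of the condensation limit $(1-\mu^{(1)})(1-\ell_\bq)\mathfrak{c}/\aleph$ is correct, and so is the observation that when $\ell_\bq=1$ the extra factor $1-\bq(k)$ kills this term.

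However there is a genuine gap, which you yourself flag: the intermediate regions are declared to be ``the main technical obstacle'' and left unproved. This is precisely where the non-trivial ideas in the paper's argument live. The paper takes the condensation window $J_3$ to have width $n^d$ with $\tfrac23<d<\tfrac34$ rather than $A\sqrt n$; this choice does two jobs at once. First, $n^d\gg\sqrt n$ makes $\sum_{k\in J_3}\p(S_n=n-k)=\p\bigl(\tfrac{S_n-n\mu^{(1)}}{\sqrt n}\in[-n^{d-1/2},n^{d-1/2}]\bigr)\to 1$ by the plain (integral) CLT, without needing a local CLT inside the Gaussian window and without a secondary limit $A\to\infty$. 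Second, and more importantly, on the region $J_2=\,]n(1-\mu^{(1)}-\varepsilon),n(1-\mu^{(1)})-n^d]$ the paper bounds the ratio $\p(S_n=n-k)/\p(S_n=n)$ by $C n^{(1-d)(\alpha+1)}$ via regular variation (Remark \ref{rqRV}) and then estimates the remaining sum of $k\,\p(T=k)\lesssim k^{\delta-\alpha}$ by $\tilde C n^{2+\delta-d(\alpha+1)}$, which tends to $0$ precisely because $\alpha>2$ and $d>\tfrac23$ (for $\delta$ small). With an $A\sqrt n$ window, the region between the bulk and the peak includes $k$ up to $n(1-\mu^{(1)})-A\sqrt n$, where $\p(S_n=n-k)$ carries a mixed Gaussian and power-law contribution, and the constant-in-$k$ bound you would need for dominated convergence is not available; you would effectively have to rediscover the $n^d$ split to control it.

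A secondary imprecision: you invoke Doney's formula \eqref{expS_nequiv2} inside the Gaussian window $|n-k-n\mu^{(1)}|\le A\sqrt n$, but the uniformity statement there is for $(k-n\mu^{(1)})/\sqrt n\to+\infty$, i.e. outside the Gaussian regime. In the Gaussian regime one should either use the classical Gnedenko local CLT or, as the paper does, avoid local estimates altogether by summing $\p(S_n=n-k)$ over the wide window and using the global CLT.
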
	
	\begin{rqqq}
	Note that if $\ell_\bq=1$, the ratio in \eqref{limTSn1} tends to $\E[T]$.
	\end{rqqq}
	\begin{proof}
		Let $0<\varepsilon<1-\mu^{(1)}$, $\nicefrac{2}{3}<d<\nicefrac{3}{4}$, and for $n$ large enough, 
		we introduce these four intervals:
		\begin{align*}
			J_{1}&=\left[0, n(1-\mu^{(1)}-\varepsilon)\right],~J_2=\left] n(1-\mu^{(1)}-\varepsilon), n\left(1-\mu^{(1)}\right)-n^d\right],\\
			J_{3}&= \left]n\left(1-\mu^{(1)}\right)-n^d,n\left(1-\mu^{(1)}\right)+n^d\right],
			J_{4}=\left]n\left(1-\mu^{(1)}\right)+n^d,n\right],
		\end{align*}
		and for typographical simplicity, we introduce for $i\in\llbracket1,4\rrbracket$: 
		\[I_i:=\dfrac{\E\left[T\indic_{\{S_{n}+T=n, T\in J_i\}}\right]}{\p(S_n=n)}=\sum_{k\in J_i}k\p(T=k)\frac{\p(S_n=n-k)}{\p(S_n=n)}.\]
		On $J_1$, note that the exponential term is negligible in \eqref{expS_nequiv2}, and it is also the case for $k=n$ as $\mu^{(1)}<1$, as a result for $n$ large enough for all $k\in I_1$:
		\begin{equation}\label{ratio1}
		\frac{\p(S_n=n-k)}{\p(S_n=n)}\le 2\frac{\bp^{(1)}(\lfloor n\left(1-\mu^{(1)}\right)-k\rfloor)}{\bp^{(1)}(\lfloor n\left(1-\mu^{(1)}\right)\rfloor)}.
		\end{equation}
		Thus, using formula \eqref{probXhatalpha} or \eqref{probXhatalphacase0} and Corollary \ref{RV}, for $n$ large enough and all $k\in J_1$:
		\begin{align}\label{borne1}
		\frac{\p(S_n=n-k)}{\p(S_n=n)}
		\le 4\left(\frac{1-\mu^{(1)}}{\varepsilon}\right)^{\alpha+1}.
		\end{align}
		The strong ratio limit Theorem gives that for fixed $k$:
		\[\lim_{n\rightarrow+\infty}k\p(T=k)\frac{\p(S_n=n-k)}{\p(S_n=n)}=k\p(T=k).\]
		Consequently, as $T\in L^1$, Lebesgue’s dominated convergence theorem implies:
		\[\lim_{n\rightarrow+\infty}I_1=\lim_{n\rightarrow+\infty}\sum_{k\in J_1}k\p(T=k)\frac{\p(S_n=n-k)}{\p(S_n=n)}=\sum_{k\in \mathbb N}k\p(T=k)=\E[T].\]
		Note that on $J_2$, \eqref{ratio1} remains valid and using Remark \ref{rqRV}, there exists $C>0$ such that for $n$ large enough and all $k\in J_2$: 
		 \begin{align}
		\frac{\p(S_n=n-k)}{\p(S_n=n)}
		\le Cn^{(1-d)(\alpha+1)}\label{borne2}, 
		\end{align}
		and as $\p(T=k)=k^{-1-\alpha}\tilde \Le(k)$ where $\tilde \Le$ is a SV function, for any $\delta>0$, for $k$ large enough $\p(T=k)\le k^{\delta-1-\alpha}$, consequently for $n$ large enough and $0<\delta<\alpha$:
		 \begin{align*}
		I_2
		 \le  Cn^{(1-d)(\alpha+1)}\sum_{k\in J_2}\frac{1}{k^{\alpha-\delta}}
		 \le \frac{\varepsilon Cn^{2+\delta-d(\alpha+1)}}{(1-\mu^{(1)}-\varepsilon)^{\alpha-\delta}}=:\tilde Cn^{2+\delta-d(\alpha+1)}.
		 \end{align*}
		As $\alpha>2$ and $d>\nicefrac{2}{3}$, we can take $\delta$ small enough such that $2+\delta-d(\alpha+1)<0$ and it implies that
		$\lim_{n\rightarrow+\infty}I_2=0$.\\
In order to determine the limit of $I_3$, we have to distinguish if $\ell_\bq=1$ or not. 
		 \underline{Case  $\ell_\bq=1$}: Recall that when $n$ goes to infinity, there exists $ C_0>0$ such that $\p(S_n=n)\sim \nicefrac{C_0\Le(n)}{n^\alpha}$. 
		As a result, there exists two positive constants $C_1$ and $C_2$, such that for $n$ large enough, using Lemma \ref{SV}, for all $k\in J_3$:
		\begin{align*}
		\frac{k\p(T=k)}{\p(S_n=n)}&\le \frac{C_1\Lq(k)}{k^\beta}\frac{n^\alpha\Le(k)}{\Le(n)k^\alpha}
		\le C_2\frac{\Lq(k)}{k^\beta}.
		\end{align*}
		Consequently, for $n$ large enough:
		 \begin{equation*}
		 I_3\le \sum_{k\in J_3}\frac{k\p(T=k)}{\p(S_n=n)}\le C_2\sum_{k\in J_3}\frac{\Lq(k)}{k^{\beta}}=:C_2R_n,
		 \end{equation*}
		 and  $\lim_{n\rightarrow+\infty}R_n=0$, $\nicefrac{\Lq(k)}{k^{\beta}}$ being the general  term of a convergent series as $\Lq$ is a SV function and $\beta\ge 2$.\\
		 \underline{Case $\ell_\bq\neq1$:}
		We postpone the proof that $I_3$ has the same limit in infinity as:
		\[\tilde I_3:=\frac{\E\left[n\left(1-\mu^{(1)}\right)\indic_{\{S_{n}+T=n,T\in J_3\}}\right]}{\p(S_n=n)}.\]
		Note that, according to Corollary  \ref{RV} and  \eqref{probXhatalpha},  for $n$ large enough and all $k\in J_3$:
		\begin{equation}\label{ratio38}
		 (1-\varepsilon)\frac{(1-\ell_\bq)\mathfrak{c}}{n\aleph}	\le \frac{\p(T=k)}{\p(S_n=n)}\le (1+\varepsilon)\frac{(1-\ell_\bq)\mathfrak{c}}{n\aleph}.
		\end{equation}
		Moreover, as $d>\nicefrac{2}{3}$, thanks to the central limit theorem, when $n$ goes to infinity:
		\begin{equation}\label{TCL38}
		\sum_{k\in J_3}\p(S_n=n-k)=\p\left(\dfrac{S_n-n\mu^{(1)}}{\sqrt{n}}\in \left[-n^{d-\nicefrac{1}{2}},n^{d-\nicefrac{1}{2}}\right]\right)\rightarrow 1.
		\end{equation}
		This gives an upper bound for $\lim_{n\rightarrow+\infty}\tilde I_3$. Indeed: 
		 \begin{align*}
		\limsup_{n\rightarrow+\infty} \tilde{I_3}&=\limsup_{n\rightarrow+\infty}n\left(1-\mu^{(1)}\right)\sum_{k\in J_3}\frac{{\p(T=k)}}{\p(S_n=n)}\p(S_n=n-k)\\
		& \le (1+\varepsilon)\left(1-\mu^{(1)}\right)\frac{(1-\ell_\bq)\mathfrak{c}}{\aleph}\lim_{n\rightarrow+\infty}\p\left(S_n-n\mu^{(1)}\in [-n^d,n^d]\right)\\
		&=(1+\varepsilon)\left(1-\mu^{(1)}\right)\frac{(1-\ell_\bq)\mathfrak{c}}{\aleph}
		 \end{align*}
		And we can obtain the lower bound with the same reasoning. As a result:
		\[\lim_{n\rightarrow+\infty} \tilde{I_3}=\left(1-\mu^{(1)}\right)\frac{(1-\ell_\bq)\mathfrak{c}}{\aleph}\]
		Now, to prove that $I_3$ and $\tilde I_3$ have the same limit, note that \eqref{ratio38} ensures that there exists $C>0$ such that for all $k\in J_3$ and $n$ large enough:
		\begin{equation}\label{truc}
		\frac{\p(T=k)}{\p(S_n=n)}\le \frac{ C}{n},
		\end{equation}
		and as a result, as $d<\nicefrac{3}{4}$ and using \eqref{TCL38}, when $n$ goes to infinity:
		\begin{align*}
		|I_3-\tilde I_3|&\le \dfrac{\E\left[\left|  T-n\left(1-\mu^{(1)}\right)\right| \indic_{\{T\in J_3,S_{n}+T=n\}}\right]}{\p(S_n=n)}\\
		&\le \frac{2 C}{n^{1-d}}\p\left(S_n-n\mu^{(1)}\in \left[-n^d,n^d\right]\right)\le \frac{2C}{n^{1-d}}\rightarrow0. 
		\end{align*}
 It remains to prove that $I_4$ tends to 0 to obtain our result. Note that for $n$ large enough and $k\in J_4$, \eqref{truc} still true and we have
\[\frac{k\p(T=k)}{\p(S_n=n)}\le C, \] 
and thus, using again the central limit theorem, when $n$ goes to infinity:
\begin{align*}
I_4\le C\p\left(S_n-n\mu^{(1)}\le-n^d\right)\rightarrow0.
\end{align*}
	\end{proof}
	
	\subsection{Principal lemmas}
	In this subsection, we consider lemmata used in Theorems \ref{cvgkesten} and \ref{cvgcondtree}. \\
	Recall that $\mathfrak{c}_N$ and $\aleph$ are described in Lemma \ref{lemprobNalpha} and \eqref{aleph}. 
	\begin{lem}\label{limSnN3}
		Assume that $\bp^{(1)}$ is aperiodic , with $\mu^{(1)}=1$ or ($\mu^{(1)}<1$ and $\rho(\bp^{(1)})=1$), $\mathrm{Var }[Z^{(1)}]<+\infty$.  
		For all $m\in\N,$ $k\in\Z$, we have:
		\begin{equation}\label{limNSn3}
			\lim_{n\to+\infty}\dfrac{\E[N\indic_{\{S_n+N=n\}}]}{\E[\indic_{\{S_{n}+W_m+N=n-k\}}]}=
				{\E[N]+\left(1-\mu^{(1)}\right)\frac{(1-\ell_\bq)\mathfrak{c}_N}{\aleph}}.
		\end{equation}
		If $\bp^{(1)}$ is periodic, then \eqref{limNSn3} still holds along the sub-sequence for which the denominator is positive.
	\end{lem}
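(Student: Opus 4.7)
The plan is to study the numerator and the denominator of the ratio separately, each compared to the reference quantity $\p(S_n=n)$, and then to combine the two limits. The numerator is handled by a direct application of Lemma \ref{limSnT1} with $T=N$; the denominator is reduced to $1$ by an adaptation of the ``second limit'' part of Lemma \ref{limSnN2}. Throughout, I assume aperiodicity of $\bp^{(1)}$, the periodic case being handled by restricting to the relevant sub-sequence exactly as in the cited lemmas.

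For the numerator, I would first check that $N$ fits the hypotheses of Lemma \ref{limSnT1}. The random variable $N$ is nonnegative, independent of $(Z^{(1)}_i)_{i\in\NN}$ by construction, and lies in $L^1$ since $N\le L$ and $L\in L^2$ by Proposition \ref{propL}. It remains to put $\bp_N(k)$ into the form $\Le(k)(1-\bq(k))\gamma(k)/k^{1+\alpha}$ with $\gamma$ positive and convergent. When $\ell_\bq<1$, Lemma \ref{lemprobNalpha} yields $\bp_N(k)=d(k)\Le(k)/k^{1+\alpha}$ with $d(k)\to\mathfrak{c}_N$, so $\gamma(k):=d(k)/(1-\bq(k))\to \mathfrak{c}_N/(1-\ell_\bq)=:\mathfrak{c}$. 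When $\ell_\bq=1$, one uses \eqref{probYalphazero}: $\bp_N(k)=\tilde d(k)\Le(k)\Lq(k)/k^{1+\alpha+\beta}$ and $1-\bq(k)=\Lq(k)/k^{\beta}$, hence $\gamma(k)=\tilde d(k)$ converges. Applying Lemma \ref{limSnT1} then gives
\[
\dfrac{\E\left[N\indic_{\{S_n+N=n\}}\right]}{\p(S_n=n)}\xrightarrow[n\to+\infty]{} \E[N]+\left(1-\mu^{(1)}\right)\frac{(1-\ell_\bq)\mathfrak{c}}{\aleph},
\]
which is the right-hand side of \eqref{limNSn3}.

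For the denominator, set $V:=W_m+N$, a positive integer-valued random variable in $L^1$ and independent of $(S_n)_{n\in\N}$, with regularly-varying tail dominated by that of $\bp^{(1)}$. Conditioning on $V$ gives
\[
\dfrac{\E\left[\indic_{\{S_n+V=n-k\}}\right]}{\p(S_n=n)}=\sum_{j\ge 0}\p(V=j)\,\dfrac{\p(S_n=n-k-j)}{\p(S_n=n)}=:\sum_{j\ge 0}b_n(j).
\]
The strong ratio limit property \eqref{strongratio} yields $b_n(j)\to\p(V=j)$ for each fixed $j$. To pass the limit inside the sum I would mimic the proof of Lemma \ref{limSnN2}: comparing the tail behaviour of $V$ (a sum of $m+1$ i.i.d.\ copies of $N$, hence $\p(V=j)\sim(m+1)\bp_N(j)$ by Bloznelis \cite{bloznelis_local_2019}) with the tail of $\bp^{(1)}$ obtained in \eqref{probXhatalpha} (or \eqref{probXhatalphacase0}), I get a constant $C>0$ such that $\p(V=j)\le C\bp^{(1)}(j)$ for all $j\in\N$. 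Setting $d_n(j):=\bp^{(1)}(j)\p(S_n=n-k-j)/\p(S_n=n)$, one has $b_n(j)\le C\,d_n(j)$, $d_n(j)\to\bp^{(1)}(j)$, and by an exchangeability computation analogous to \eqref{limTRSn2bis},
\[
\sum_{j\ge 0}d_n(j)=\dfrac{\p(S_{n+1}=n-k)}{\p(S_n=n)}\xrightarrow[n\to+\infty]{}1=\sum_{j\ge 0}\bp^{(1)}(j).
\]
Theorem 1.21 of \cite{kallenberg_foundations_1997} (generalized dominated convergence) therefore yields $\sum_{j}b_n(j)\to\sum_j\p(V=j)=1$.

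Combining the two steps, the ratio in \eqref{limNSn3} converges to the value obtained in the numerator divided by $1$, which is the claimed limit. The main technical obstacle is the uniform domination $\p(V=j)\le C\bp^{(1)}(j)$: because $V=W_m+N$ has a different law from $Z^{(1)}$, one really needs the precise regularly varying tail asymptotics developed in Subsection 5.2, together with the convolution result for heavy-tailed distributions, to compare their densities at every $j$ by a single constant. Once this is established, the generalized DCT from Kallenberg does the rest, and the periodic case follows verbatim along the sub-sequence on which $\p(S_n=n)$ (equivalently, the denominator) is positive.
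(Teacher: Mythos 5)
Your decomposition — numerator over $\p(S_n=n)$ and denominator over $\p(S_n=n)$ — is indeed the same frame as the paper's, and applying Lemma \ref{limSnT1} with $T=N$ to the numerator (after rewriting $\bp_N(k)=\Le(k)(1-\bq(k))\gamma(k)/k^{1+\alpha}$ via Lemma \ref{lemprobNalpha} or \eqref{probYalphazero}) is exactly what the paper does. But there is a real gap: Lemma \ref{limSnT1} has $\mu^{(1)}<1$ as a hypothesis, while the statement you are proving covers $\mu^{(1)}=1$ as well. You never address the critical case. In that case none of the heavy-tail asymptotics (Corollary \ref{RV}, \eqref{probXhatalpha}, Bloznelis's convolution results) are even assumed to hold; the paper treats it separately by introducing the centered walk $Z_n=S_n-n$, invoking a local CLT to get a uniform bound and a pointwise limit of $\p(Z_n=-k)/\p(Z_n=0)$, and concluding $R_n\to\E[N]$ by dominated convergence. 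Without this branch your argument is incomplete precisely on the set of hypotheses where the answer reduces to $\E[N]$.

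Your denominator treatment is also riskier than the paper's. You condition on $V=W_m+N=W_{m+1}$ and invoke the bound $\p(V=j)\le C\bp^{(1)}(j)$ for every $j$. Tail asymptotics only give comparability for $j$ large; for small $j$ one needs $\bp^{(1)}(j)>0$ whenever $\p(W_{m+1}=j)>0$. Since $\bp^{(1)}(j)=\sum_l\p(X^{(1)}=l)\p(W_l=j)$ and \eqref{condq} guarantees only one $j_0$ with $\p(X^{(1)}=j_0)>0$, when $j_0<m+1$ the support of $W_{j_0}$ is smaller than that of $W_{m+1}$, and the pointwise domination you want is not automatic. The paper sidesteps both issues by a different two-step reduction: first it passes from $S_{n+m}$ to $W_m$ by conditioning on the pair $(W_m,S_m)$ and using the term-by-term inequality $\p(S_m=j)\ge C\p(W_m=j)$, obtained by forcing all $X^{(1)}_i=j_0$ so that $S_m$ contains at least $m$ copies of $N$; then it invokes Lemma \ref{limSnN2} directly, which already gives $\E[\indic_{\{S_{n+m}+N=n-k\}}]/\p(S_n=n)\to 1$ in both the critical and sub-critical cases without any heavy-tail input. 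Replacing your $\bp^{(1)}$-based domination by that $S_m$-based one, and adding the local-CLT branch for $\mu^{(1)}=1$, would repair the proof.
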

	\begin{rqqq}\label{rqqq1}
	\begin{enumerate}
	\item If $\mu^{(1)}=1$ or $\ell_\bq=1$, the ratio in \eqref{limNSn3} tends to $\E[N]$.
	\item Note that the way the limit is written in formula \eqref{limNSn3} helps streamline the proof. Nevertheless, in the remainder of this section, it will be more convenient to simplify the expression further by using that:
	\begin{align*}
	\frac{\mathfrak{c}_N}{\aleph}&=\frac{\E[\bq(X)]}{\E[X\bq(X)]+\ell_\bq(1-\mu(\bp))]},\,
			\E[N]=
			\frac{\E[\bq(X)]}{1-\mu(\bp)+\E[X\bq(X)]},\\
			\mu^{(1)}&=
			\frac{\E[X\bq(X)]}{1-\mu(\bp)+\E[X\bq(X)]},
		\end{align*}
		and we obtain: 
		\begin{equation}\label{limNSn3bis}
			\lim_{n\to+\infty}\dfrac{\E[N\indic_{\{S_n+N=n\}}]}{\E[\indic_{\{S_{n}+W_m+N=n-k\}}]}=
				\dfrac{\E[\bq(X)]}{\E[X\bq(X)]+\ell_\bq(1-\mu(\bp))}.
		\end{equation}
	\end{enumerate}
	\end{rqqq}
	\begin{proof}
	First, assume that there exists $a>0$, such that for all $k,m$ in $\mathbb Z$:
	\begin{equation}\label{assumption}
	\lim_{n\rightarrow+\infty}\dfrac{\E\left[\indic_{\{S_{n+m}+N=n-k\}}\right]}{\E\left[N\indic_{\{S_{n}+N=n\}}\right]}=a.
	\end{equation}
	 To mimic the proof of Lemma \ref{limSnN2} (see Lemma 8.6 of \cite{abraham_local_2014}), we introduce for all $j\in\mathbb Z$:
		 \begin{equation}\label{expcnjlimnN3}
			c_{n,j}:=\dfrac{\E\left[\indic_{\{S_{n}+N=n-k-j\}}\right]}{\E\left[N\indic_{\{S_{n}+N=n\}}\right]},
		\end{equation}
		and denote by $t:=(t(j))_{j\in\N}$ and $r:=(r(j))_{j\in\N}$ respectively the distributions of $W_m$ and $S_m$. Thanks to \eqref{assumption}, $\lim_{n\to+\infty}c_{n,j}=a$ and consequently:
		\[\lim_{n\to+\infty}\sum_{j\in\N}r(j)c_{n,j}=\lim_{n\to+\infty}\dfrac{\E\left[\indic_{\{S_{n+m}+N=n-k\}}\right]}{\E\left[N\indic_{\{S_{n}+N=n\}}\right]}=a=\sum_{j\in\N}r(j)\lim_{n\to+\infty}c_{n,j}.\]
		According to \eqref{condq}, there exists $j_0\in\NN$ such that $\bp(j_0)\bq(j_0)>0$ implying that $\p(X^{(1)}=j_0)>0$. As $Z^{(1)}_1=\displaystyle\sum_{i=1}^{X^{(1)}_1}N_i$, for $j\in\N$:
		\begin{align*}
			r(j)=\p(S_m=j)
			&\geq \p\left(\sum_{i=1}^{m}X^{(1)}_i=mj_0,\sum_{i=1}^{m_{j_0}}N_i=j\right)\\
			&\geq \p\left(\sum_{i=1}^{m}X^{(1)}_i=mj_0,W_m=j,\sum_{i=m+1}^{m_{j_0}}N_i=0\right)\geq C t(j),
		\end{align*}
		with $C>0$, independent of $j$. Thus, as $t(j)\leq \nicefrac{r(j)}{C}$ for all $j\in\N$, by dominated convergence theorem in \cite{kallenberg_foundations_1997} (Theorem 1.21), we deduce that 
		\[\lim_{n\to+\infty}\dfrac{\E\left[\indic_{\{S_{n}+W_m+N=n-k\}}\right]}{\E\left[N\indic_{\{S_{n}+N=n\}}\right]}=\lim_{n\to+\infty}\sum_{j\in\N}t(j)c_{n,j}=\sum_{j\in\N}t(j)\lim_{n\to+\infty}c_{n,j}=a.\]
	As a result, in order to prove  \eqref{limNSn3}, we have to obtain \eqref{assumption} but,  according to Lemma \ref{limSnN2}:
	\begin{align*}
			\lim_{n\rightarrow+\infty}\dfrac{\E\left[N\indic_{\{S_{n}+N=n\}}\right]}{\E\left[\indic_{\{S_{n+m}+N=n-k\}}\right]}&=\lim_{n\rightarrow+\infty}\dfrac{\E\left[N\indic_{\{S_{n}+N=n\}}\right]}{\p(S_n=n)}\dfrac{\p(S_n=n)}{\E\left[\indic_{\{S_{n+m}+N=n-k\}}\right]}\\
			&=\lim_{n\rightarrow+\infty}\dfrac{\E\left[N\indic_{\{S_{n}+N=n\}}\right]}{\p(S_n=n)}=:\lim_{n\rightarrow+\infty}R_n,
	\end{align*}
	and it suffices to study the behavior at infinity of $R_n$.\\
	The case $\mu^{(1)}<1$ is straightforward by applying Lemma \ref{limSnT1} with $T=N$.\\
		To obtain the case $\mu^{(1)}=1$, 
		we denote by $Z_n:= \displaystyle S_n-n =\sum_{i=1}^n{(Z^{(1)}_i-1)}$, a centered random walk such that $\E[Z_n]=0$. We have:
		\begin{equation*}
			R_n=\dfrac{\E\left[N\indic_{\{Z_n=-N\}}\right]}{\p(Z_n=0)}= \sum_{k\ge0}k\bp_N(k)\dfrac{\p(Z_n=-k)}{\p(Z_n=0)}.
		\end{equation*}
		According to 
		\cite{durrett_probability_1996} (Theorem 5.2 p.132),
		\[\sup_{k\in \mathbb Z}\left|  \sqrt{n}\p(Z_n=k)-\frac{e^{-\frac{k^2}{2n\sigma^2}}}{\sigma\sqrt{2\pi}}\right|\] 
		tends to zero when $n$ goes to infinity.
		Thus for all $k\in\mathbb Z$ we have:
		\begin{equation}\label{eqvcentradmwalk}
			\p(Z_n=k)=\dfrac{e^{-\frac{k^2}{2n\sigma^2}}}{\sigma\sqrt{2\pi n}}+o\left(\dfrac{1}{\sqrt{n}}\right),
		\end{equation}
		with a uniform error term for all $k\in \mathbb Z$. Consequently, there exists a positive constant $C$ such that: 
		\begin{align}\label{bornecritique}
		\sup_{k\in\mathbb Z}\frac{\p(Z_n=k)}{\p(Z_n=0)}\le C\mbox{ and } \lim_{n\rightarrow+\infty}\frac{\p(Z_n=k)}{\p(Z_n=0)}=1
		\end{align}
		for all $k\in\mathbb Z$, and with Lebesgue’s dominated convergence theorem:
		\begin{equation*}\label{limSnN31}
			\lim_{n\rightarrow+\infty}R_n=\sum_{k\ge 0}k\bp_N(k)\lim_{n\rightarrow+\infty}\dfrac{\p(Z_n=-k)}{\p(Z_n=0)}=\E[N].
		\end{equation*}
	\end{proof}
	
	\begin{lem}\label{limSnN4}
		Assume that $\bp^{(1)}$ is aperiodic , with $\mu^{(1)}=1$ or ($\mu^{(1)}<1$ and $\rho(\bp^{(1)})=1$), $\mathrm{Var}[Z^{(1)}]<+\infty$. 
		For all $m\in\N,$ $k\in\Z$, we have:
		\begin{equation}\label{limNSn4}
			\underset{n\to+\infty}{\lim}\dfrac{\E[N\indic_{\{S_{n}+W_{m}+Z^{(0)}+N=n-k\}}]}{\E[N\indic_{\{S_{n}+N=n\}}]}=1
		\end{equation}
		If $\bp^{(1)}$ is periodic, then \eqref{limNSn4} still holds along the sub-sequence for which the denominator is positive.
	\end{lem}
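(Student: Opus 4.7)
The plan is to extend Lemma \ref{limSnN3} by conditioning on the independent random shift $W_m+Z^{(0)}$. Setting $t_0(j):=\p(W_m+Z^{(0)}=j)$, we have
\begin{equation*}
\frac{\E[N\indic_{\{S_n+W_m+Z^{(0)}+N=n-k\}}]}{\E[N\indic_{\{S_n+N=n\}}]}=\sum_{j\ge 0}t_0(j)\,c_{n,j+k},\qquad c_{n,\ell}:=\frac{\E[N\indic_{\{S_n+N=n-\ell\}}]}{\E[N\indic_{\{S_n+N=n\}}]},
\end{equation*}
so it suffices to prove the pointwise convergence $c_{n,\ell}\to 1$ for each fixed $\ell$ and then justify the interchange of limit and sum. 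The pointwise convergence follows, in the critical case $\mu^{(1)}=1$, from the local CLT estimate \eqref{eqvcentradmwalk}, the uniform bound \eqref{bornecritique}, and Lebesgue's theorem applied to the decomposition $\E[N\indic_{\{S_n+N=n-\ell\}}]=\sum_r r\bp_N(r)\p(S_n=n-\ell-r)$; in the sub-critical case $\mu^{(1)}<1$, it follows from Lemma \ref{limSnT1} with $T=N$, since the argument given there is insensitive to a fixed shift of the target $n$ by $\ell$, yielding the same constant limit for $\E[N\indic_{\{S_n+N=n-\ell\}}]/\p(S_n=n)$ as for $\E[N\indic_{\{S_n+N=n\}}]/\p(S_n=n)$.

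For the interchange of limit and sum, the critical case is easy: \eqref{bornecritique} gives $c_{n,\ell}\le C$ uniformly in $\ell$ and in $n$ large, and dominated convergence combined with $\sum_j t_0(j)=1$ concludes. \emph{The main obstacle} is the sub-critical case, where this uniform bound breaks down: by \eqref{expS_nequiv2}, the ratio $\p(S_n=n-\ell)/\p(S_n=n)$ can be as large as $n^{\alpha-\nicefrac{1}{2}}$ when $\ell$ lies near $n(1-\mu^{(1)})$, the Gaussian term in the local limit theorem dominating while $\p(S_n=n)$ is polynomially small of order $\Le(n)n^{-\alpha}$. I would address this by importing the four-interval decomposition $J_1,J_2,J_3,J_4$ from the proof of Lemma \ref{limSnT1}, now applied to the summation over $j$: on $J_1$, Corollary \ref{RV} provides a uniform bound on $c_{n,j+k}$ and dominated convergence yields the full contribution $\sum_j t_0(j)=1$; on $J_2$ and $J_4$, the tail estimates \eqref{probNalpha}--\eqref{probXbar} (or \eqref{probYalphazero}--\eqref{probXbarcase0} when $\ell_\bq=1$) for $W_m+Z^{(0)}$, combined with power-counting in $n$, make the contribution vanish; and on the delicate range $J_3$, the Gaussian bound $\p(S_n=n-\ell)\le C/\sqrt n$ combined with the regularly-varying tail estimate $\sum_{j\in J_3}t_0(j)=O(\Le(n)\,n^{d-\alpha})$ (up to extra $\Lq(n)$ and $n^{-\beta}$ factors when $\ell_\bq=1$) gives a contribution of order $n^{d-\nicefrac{1}{2}-\alpha}$ times slowly varying factors, which vanishes for the choice $\nicefrac{2}{3}<d<\nicefrac{3}{4}$ and $\alpha>2$.
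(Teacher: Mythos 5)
Your decomposition is genuinely different from the paper's, and it is arguably cleaner. The paper writes the ratio as $v_n w_n$ with $w_n=\nicefrac{\p(S_n=n)}{\E[N\indic_{\{S_n+N=n\}}]}$ handled by Lemma~\ref{limSnN3}, and computes $\lim v_n$ by expanding over the full shift $Z^{(0)}+N$; in the sub-case $\ell_\bq\neq 0$ it uses a domination $\p(Z^{(0)}=j)\le \beta_j$ with $\beta_j\lesssim \bp^{(1)}(j)$, and when $\ell_\bq=0$ (where that domination fails since $\nicefrac{(1-\bq)}{\bq}$ is unbounded) it resorts to a four-interval decomposition followed by nested refinements ($K_i$, $J_3^i$), picking up a \emph{nonzero} limit from $J_3$, namely $\left(1-\mu^{(1)}\right)\nicefrac{\mathfrak{c}_N}{\aleph}$. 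You instead expand only over $W_m+Z^{(0)}$ and keep $N$ bundled inside $c_{n,\ell}$, whose pointwise limit $c_{n,\ell}\to 1$ (Lemma~\ref{limSnTR2} with $T=N$, $R=0$) already absorbs the heavy tail of $N$; as a result your target is simply $\sum_j t_0(j)=1$, the interesting range contributes nothing, and you avoid the $\ell_\bq=0$ versus $\ell_\bq\neq 0$ split entirely. That is a real simplification.

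There is, however, a concrete normalization error in your $J_3$ estimate that, taken as written, breaks the argument. The quantity ``$\frac{C}{\sqrt n}\cdot\sum_{j\in J_3}t_0(j)=O\!\left(\Le(n)\,n^{d-\nicefrac{1}{2}-\alpha}\right)$'' that you obtain is a bound on the \emph{numerator} $\sum_{j\in J_3}t_0(j)\,\E[N\indic_{\{S_n+N=n-j-k\}}]$, not on the contribution to the ratio. Dividing by $\E[N\indic_{\{S_n+N=n\}}]\asymp\Le(n)\,n^{-\alpha}$ (which is the correct order by Lemma~\ref{limSnN3} in the sub-critical case) gives $n^{d-\nicefrac{1}{2}}$, and since $d>\nicefrac{1}{2}$ this \emph{diverges}. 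The argument can be repaired: your sum estimate is too lax. Since $J_3\subset[n(1-\mu^{(1)})-n^d,+\infty)$ and $t_0$ is regularly varying with index $-(1+\alpha)$ (with the $\Lq$ and $n^{-\beta}$ factors when $\ell_\bq=1$), Karamata's theorem gives $\sum_{j\in J_3}t_0(j)=O\!\left(\Le(n)\,n^{-\alpha}\right)$; alternatively, counting $\sim n^d$ terms each of size $\sim n^{-1-\alpha}\Le(n)$ gives $O\!\left(\Le(n)\,n^{d-1-\alpha}\right)$. Either way the ratio contribution becomes $O(n^{-\nicefrac{1}{2}})$ or $O(n^{d-\nicefrac{3}{2}})$ respectively, both of which do vanish for $d<\nicefrac{3}{4}$. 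A similar care is needed on $J_1$: $c_{n,\ell}$ is not $\nicefrac{\p(S_n=n-\ell)}{\p(S_n=n)}$ but its average over the inner variable $N$, so Corollary~\ref{RV} does not bound it directly; you need to split the Gaussian and heavy-tail parts of the inner sum (exactly as \eqref{expS_nequiv2} and \eqref{borne1} do for $\p(S_n=\cdot)$) before the uniform bound on $J_1$ and dominated convergence are legitimate.
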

	\begin{proof}First, writing the ratio from \eqref{limNSn4} in the form: 
		\begin{equation*} 
			\dfrac{\E[N\indic_{\{S_{n}+W_{m}+Z^{(0)}+N=n-k\}}]}{\p(S_n=n)}\dfrac{\p(S_n=n)}{\E\left[N\indic_{\{S_{n}+N=n\}}\right]}=:v_nw_n,
		\end{equation*}
one can note that Lemma \ref{limSnN3} gives that $w_n$ tends to an explicit constant  $w$, as a result we just have to show that $\lim_{n\rightarrow+\infty}v_n=w^{-1}$.\\
		\underline{Case $\mu^{(1)}=1$:}  		
		Here, it suffices to prove that $\lim_{n\rightarrow+\infty}v_n=\E[N]$. \\
		Using the same notation as Lemma \ref{limSnN3} for $Z_n$, we have:
		\begin{align*}
			v_n=\dfrac{\E[N\indic_{\{Z_n+W_{m}+Z^{(0)}+N=-k\}}]}{\p(Z_n=0)}
			=\sum_{i\in\mathbb Z}\E[N\indic_{\{W_{m}+Z^{(0)}+N=i-k\}}]\dfrac{\p(Z_n=-i)}{\p(Z_n=0)}.
			\end{align*}
		Consequently, \eqref{bornecritique} permits us to apply Lebesgue’s dominated convergence theorem, so:
		\[\lim_{n\rightarrow+\infty}v_n=\sum_{i\in\mathbb Z}\E[N\indic_{\{W_{m}+Z^{(0)}+N=i-k\}}]=\E[N].\]

		\noindent\underline{Case $\mu^{(1)}<1$:}
		Like in Lemma \ref{limSnN3}, we can prove that for all $k,m\in\Z$:
		\begin{equation*}
		\lim_{n\rightarrow+\infty}v_n=\lim_{n\rightarrow+\infty}\dfrac{\E[N\indic_{\{S_{n+m}+Z^{(0)}+N=n-k\}}]}{\p(S_n=n)}=:\lim_{n\rightarrow+\infty}z_n,
		\end{equation*}
		and  we  write:
		\begin{equation*}
		z_n=\dfrac{\E[N\indic_{\{S_{n+m}+Z^{(0)}+N=n-k\}}]}{\E[N\indic_{\{S_{n}+Z^{(0)}+N=n\}}]} \dfrac{\E[N\indic_{\{S_{n}+Z^{(0)}+N=n\}}]}{\p(S_{n}=n)}=:a_nb_n.
		\end{equation*}
		According to Lemma \ref{limSnTR2}, with $T=N$ and $R=Z^{(0)}$, $\lim_{n\rightarrow+\infty}a_n=1$ and as a result, we need to determine the limit of $b_n$ and distinguish between the cases depending on whether $\ell_\bq=0$ or not.\\ 
		\underline{\textbf{Case $\ell_\bq\neq0$:}} 
		Our strategy here is to mimic the proof of the Lemma \ref{limSnN2}, in other words, we search $(\beta_j)_{j\in \mathbb N}$ such that for all $j\in\mathbb N$: 
		\begin{equation*}
		\p(Z^{(0)}=j)\le \beta_j, 
		\end{equation*}
		and satisfying:
		\begin{equation}\label{TCD}
		\lim_{n\rightarrow+\infty}\sum_{j\in\mathbb N}\beta_j c_{n,j}=\sum_{j\in\mathbb N}\beta_j\lim_{n\rightarrow+\infty}c_{n,j},
		\end{equation}
		where:
		\[c_{n,j}=\dfrac{\E\left[N\indic_{\{S_{n}+N=n-j\}}\right]}{\p(S_{n}=n)}.\]
		Thanks to Lemma \ref{limSnT1} with $T=N$, for all $j\in\mathbb N$, $\lim_{n\rightarrow+\infty}c_{n,j}=w^{-1}$, thus, by applying the dominated convergence theorem in \cite{kallenberg_foundations_1997} (Theorem 1.21), we obtain the claimed result as:
		\begin{align*}
		\lim_{n\rightarrow+\infty}b_n=	\lim_{n\rightarrow+\infty}\sum_{j\in\mathbb N}\p(Z^{(0)}=j)c_{n,j}=\sum_{j\in\mathbb N}\p(Z^{(0)}=j)\underset{n\to+\infty}{\lim}c_{n,j}=w^{-1}.
		\end{align*}
		To find $(\beta_j)_{j\in\mathbb N}$, first note that since $\lim_{n\to+\infty}\bq(n)=\ell_\bq> 0$, there exists $k_0$ such that for all $k\ge k_0$, $\bq(k)\ne0$, and $c$ defined by:
		\[c= \frac{\E[\bq(X)]}{1-\E[\bq(X)]}\underset{k\in \N, \bq(k)>0}{\max}\left\{\frac{1-\bq(k)}{\bq(k)}\right\},\]
		is finite, and one can write for all $k\in\N,~\bq(k)>0$:
		\[\p(X^{(0)}=k)=\frac{\bp(k)(1-\bq(k))}{1-\E[\bq(X)]}\leq c \frac{\bp(k)\bq(k)}{\E[\bq(X)]}=c\p(X^{(1)}=k).\]
		Consequently, for all $j\in\N$:
		\begin{align*}
			\p(Z^{(0)}=j)&=\p\left(\sum_{i=1}^{X^{(0)}}N_i=j\right)
			= \sum_{k=1}^{+\infty}\p\left(W_k=j\right)\p(X^{(0)}=k)\\
			&\le \sum_{k=1,\bq(k)=0}^{k_0}\p\left(W_k=j\right)\dfrac{\bp(k)}{1-\E[\bq(X)]}+c\p(Z^{(1)}=j)\\
			&\le\dfrac{1}{1-\E[\bq(X)]} \sum_{k=1,\bq(k)=0}^{k_0}\p\left(W_k=j\right)+c\bp^{(1)}(j)=:\beta_j.
		\end{align*}
In one hand, thanks to Lemmata \ref{limSnTR2} and \ref{limSnT1} with $T=N$ and $R=0$, we have for all $j\in\mathbb N$:
		\[\lim_{n\rightarrow +\infty}\sum_{j\in\mathbb N}\bp^{(1)}(j)c_{n,j}=\lim_{n\rightarrow +\infty}\dfrac{\E\left[N\indic_{\{S_{n+1}+N=n\}}\right]}{\p(S_{n}=n)}=w^{-1},\]
		implying that, as  $\lim_{n\rightarrow+\infty}c_{n,j}=w^{-1}$:
		\begin{equation}\label{first}
			\lim_{n\rightarrow +\infty}\sum_{j\in\mathbb N}\bp^{(1)}(j)c_{n,j}
			=w^{-1}=\sum_{j\in\mathbb N}\bp^{(1)}(j)\lim_{n\rightarrow +\infty}c_{n,j}.
		\end{equation}
		On the other hand, for all $k\in\llbracket0,k_0\rrbracket$ such that $\bq(k)=0$, we write:
		\begin{align*}
		\sum_{j\in\mathbb N}\p\left(W_k=j\right)c_{n,j}&=\dfrac{\E\left[N\indic_{\{S_{n}+W_k+N=n\}}\right]}{\p(S_{n}=n)}\\
                                         &=\dfrac{\E\left[N\indic_{\{S_{n}+W_k+N=n\}}\right]}{\E\left[N\indic_{\{S_{n}+N=n\}}\right]}\dfrac{\E\left[N\indic_{\{S_{n}+N=n\}}\right]}{\p(S_{n}=n)}=:r_ns_n.
		\end{align*}
		Thanks to Lemma \ref{limSnN2}, $\lim_{n\rightarrow+\infty}r_n=1$  and according to Lemma \ref{limSnT1} with $T=N$, $\lim_{n\rightarrow+\infty}s_n=w^{-1}$. Then:
		\begin{equation}\label{second}
			\lim_{n\rightarrow+\infty}\sum_{j\in\mathbb N}\p\left(W_k=j\right)c_{n,j}=w^{-1}=\sum_{j\in\mathbb N}\p\left(W_k=j\right)\lim_{n\rightarrow+\infty}c_{n,j}.
		\end{equation}
		\eqref{first} and \eqref{second} imply that $(\beta_j)_{j\in\mathbb N}$ satisfies \eqref{TCD} and we can conclude.  
		\underline{\textbf{Case $\ell_\bq=0$:}}
		As in Lemma \ref{limSnN3}, to obtain $\lim_{n\rightarrow+\infty} b_n$, we need to distinguish between several cases. To that end, we introduce $0<\varepsilon<1-\mu^{(1)}$, $\max\left(\nicefrac{2}{3}, \nicefrac{(1+\alpha)}{(2\alpha)}\right)<d<\nicefrac{3}{4}$, and for $n$ large enough, we introduce the same four  intervals:
		\begin{align*}
			J_{1}&=\left[0, n(1-\mu^{(1)}-\varepsilon)\right],~J_2=\left] n(1-\mu^{(1)}-\varepsilon), n\left(1-\mu^{(1)}\right)-n^d\right],\\
			J_{3}&= \left]n\left(1-\mu^{(1)}\right)-n^d,n\left(1-\mu^{(1)}\right)+n^d\right],
			J_{4}=\left]n\left(1-\mu^{(1)}\right)+n^d,n\right],
		\end{align*}
		and for typographical simplicity, we introduce for $i\in\llbracket1,4\rrbracket$: 
		\[I_i:=\dfrac{\E\left[N\indic_{\{S_{n}+Z^{(0)}+N=n, Z^{(0)}+N\in J_i\}}\right]}{\p(S_n=n)}.\]
		Writing the equality in distribution 
		\[N+Z^{(0)}=\sum_{i=1}^{X^{(0)}+1}N_i,\] 
		we can apply the same method as \eqref{probXbar}, and we obtain:
		\begin{equation}\label{probN+Z0}
			\p(N+Z^{(0)}=n)=h(n)\frac{\Le(n)(1-\bq(n))}{n^{1+\alpha}},
		\end{equation}
		where $h$ is a positive function such that 
		\[\lim_{x\rightarrow+\infty}h(x)=\frac{\E[N]^{\alpha}(2-\E[\bq(X)])}{(1-\E[\bq(X)])(1-\E[X(1-\bq(X))])}.\] 
		Then, with a similar reasoning as the one of the proof of Lemma \ref{limSnT1} with $T=N+Z^{(0)}$, we obtain for $i\in\lbrace 2,4\rbrace$ :
		\begin{equation*}
			I_i\le \dfrac{\E[\left(N+Z^{(0)}\right)\indic_{\{S_{n}+Z^{(0)}+N=n, Z^{(0)}+N\in J_i\}}]}{\p(S_{n}=n)}\underset{n\to+\infty}{\longrightarrow} 0,
		\end{equation*}
                 and as $N\in L^{1}$, \eqref{borne1} and the strong ratio limit Theorem allow us to apply Lebesgue’s dominated convergence theorem and:
                 \begin{align*}
			\lim_{n\rightarrow+\infty}I_1&=\sum_{k\in J_1}\E[N\mathds{1}_{\{Z^{(0)}+N=k\}}]\frac{\p(S_{n}=n-k)}{\p(S_n=n)}\\
			&=\sum_{k\in \mathbb N}\E[N\mathds{1}_{\{Z^{(0)}+N=k\}}]=\E[N].
					\end{align*}
		To study the limit of $I_3$, we need more information on $N$ and, for this purpose, we introduce three new intervals: 
		\begin{align*}
		K_1&=\left[0,n^d\right], K_2=\left[n^d+1,n\left(1-\mu^{(1)}\right)-n^d-1\right], \\
		K_3&=\left[n\left(1-\mu^{(1)}\right)-n^d,n\left(1-\mu^{(1)}\right)+n^d\right],
		\end{align*}
		and we define for all $i\in \lbrace 1,2,3\rbrace$:
		\[I_{3,i}=\frac{\E\left[N\indic_{\{S_{n}+Z^{(0)}+N=n, Z^{(0)}+N\in J_3,N\in K_i\}}\right]}{\p(S_n=n)}.\]
		Recalling \eqref{probXbar}
		\begin{equation}
		\bp^{(0)}(n)=\p(Z^{(0)}=n)=\frac{\tilde h(n)\Le(n)}{n^{1+\alpha}}
	\end{equation}
	where $\tilde h$ satisfies that $\lim_{x\rightarrow+\infty}\tilde h(x)=\mathfrak b$. 
		If $N\le n^d$, according to Corollary \ref{RV}, there exists $C>0$, such that for $n$ large enouch and  for all $k\in J_3-N$: 
		\[\frac{\p(Z^{(0)}=k)}{\p(S_n=n)}\le \frac{C}{n}.\]
		As a result, for $n$ large enough: 
		\begin{equation*}
			I_{3,1}\le \frac{C}{n} \sum_{j\le n^d}j\p(N=j)\sum_{k\in J_3-j}\p(S_n=n-k-j)
						\le\frac{C}{n}\E[N]\rightarrow0.
			\end{equation*}
		

		To study $I_{3,2}$, we still need further refinement, in other words we decompose $J_3$ in three intervals:
		\begin{align*}
		J_{3}^1&=\left[n\left(1-\mu^{(1)}\right)-n^d,n\left(1-\mu^{(1)}\right)-\varepsilon n^d\right]\\
		J_{3}^2&=\left]n\left(1-\mu^{(1)}\right)-\varepsilon n^d,n\left(1-\mu^{(1)}\right)+\varepsilon n^d\right]\\
		J_{3}^3&=\left]n\left(1-\mu^{(1)}\right)+\varepsilon n^d,n\left(1-\mu^{(1)}\right)+ n^d\right]\\
		\end{align*}
		and we define for all $i\in \lbrace 1,2,3\rbrace$:
		\[I_{3,2}^i=\frac{\E\left[N\indic_{\{S_{n}+Z^{(0)}+N=n, Z^{(0)}+N\in J_3^i,N\in K_2\}}\right]}{\p(S_n=n)}.\]
		First, note that if $k\in J_3^1$ then $n-k\in [n\mu^{(1)}+\varepsilon n^d, n\mu^{(1)}+n^d]$, and as result for any $\delta>0$, according to Corollary \ref{RV}, there exists $C>0$ such that for $n$ large enough and all $k\in J_3^1$: 
		\begin{align*}
		\frac{\p(S_n=n-k)}{\p(S_n=n)}\le \frac{C}{n^{(d-1)(\alpha+1+\delta)}}.
		\end{align*}
		As a result, for $n$ large enough: 
		\begin{align*}
		I_{3,2}^1&\le  \frac{C}{n^{(d-1)(\alpha+1+\delta)}}\sum_{k\in J_3^1}\E\left[N\indic_{\{ Z^{(0)}+N=k,N\in K_2\}}\right]\\
		&\le\frac{C}{n^{(d-1)(\alpha+1+\delta)}}\sum_{j\in K_2}j\bp_N(j)\le\frac{C}{n^{(d-1)(\alpha+1+\delta)}}\sum_{j\ge n^d}\frac{\Le(j)}{j^{\alpha}}\\
		&\le \frac{C}{n^{(d-1)(\alpha+1+\delta)}}\sum_{j\ge n^d}\frac{1}{j^{\alpha-\delta}}\le \frac{C}{n^{(d-1)(\alpha+1+\delta)}}\times \frac{1}{n^{d(\alpha-\delta-1)}}\\
		&\le \frac{C}{n^{\alpha(2d-1)-1-\delta}}\underset{n\rightarrow+\infty}{\rightarrow}0, 
		\end{align*}
for $\delta$ small enough, as $\alpha(2d-1)-1>0$ since $\alpha>2$ and $\max\left(\nicefrac{2}{3}, \nicefrac{(1+\alpha)}{(2\alpha)}\right)<d$.\\
In the second case, the reasoning is very similar; indeed for any $\delta>0$, there exists $C>0$ such that for $n$ large enough and all \\$k\in \left[n^d(1-\varepsilon),n\left(1-\mu^{(1)}\right)-n^d(1-\varepsilon)\right]$:
\begin{align*}
	\dfrac{\bp^{(0)}(k)}{\p(S_n=n)}
	&\le \frac{C}{n^{d(\alpha+1)-\alpha+\delta(d-1)}}.
\end{align*}
Then, as $n^d(1-\varepsilon)\le Z^{(0)}\le n\left(1-\mu^{(1)}\right)-n^d(1-\varepsilon)$ a.s.,  we obtain:
\begin{align*}
I_{3,2}^2\le  \frac{C}{n^{d(\alpha+1)-\alpha+\delta(d-1)}}\sum_{j\in K_2}j\bp_N(j)\le \frac{C}{n^{\alpha(2d-1)-\delta}}\underset{n\rightarrow+\infty}{\rightarrow}0.
\end{align*}
In the last case, note that for a $n$ large enough, for all $k$ satisfying \\$- n^d\le k-n\mu^{(1)}\le  -\varepsilon n^d$, according to \eqref{expS_nequiv2} 
\begin{equation*}
\p(S_n=k)\le C \frac{e^{-\frac{\varepsilon^2n^{2d-1}}{2\sigma^2}}}{\sqrt{n}}.
\end{equation*}
Consequently, as $- n^d\le S_n-n\mu^{(1)}\le  -\varepsilon n^d$ in $I_{3,2}^3$, we have:
\begin{align*}
I_{3,2}^3\le C \frac{e^{-\frac{\varepsilon^2n^{2d-1}}{2\sigma^2}}}{\sqrt{n}\p(S_n=n)}\E[N]\underset{n\rightarrow+\infty}{\rightarrow}0.
\end{align*}

		Finally, for $I_{3,3}$, we use the same method as Lemma \ref{limSnT1} for $J_3$ in the case where $\ell_\bq\neq1$. 
	We first study the limit of:
		\[\tilde I_{3,3}:=\dfrac{\E[n\left(1-\mu^{(1)}\right)\indic_{\{S_{n}+Z^{(0)}+N=n, Z^{(0)}+N\in J_3, N\in K_3\}}]}{\p(S_{n}=n)}.\]
		Let $\varepsilon>0$, according to Lemma \ref{lemprobNalpha} and Corollary \ref{RV}, for $n$ large enough and every $k\in K_3$: 
		\begin{equation}\label{BorneN}(1-\varepsilon)\frac{\mathfrak{c}_N}{n\aleph}\le  \frac{\p(N=k)}{\p(S_n=n)}\le (1+\varepsilon)\frac{\mathfrak{c}_N}{n\aleph}.
		\end{equation}
		Thus, for $n$ large enough:
		\begin{align*}
		\tilde I_{3,3}
		&\le  (1+\varepsilon)\frac{(1-\mu^{(1)})\mathfrak{c}_N}{n\aleph}\sum_{j=0}^{2n^d}\p(Z^{(0)}=j)\sum_{k\in K_3}\p(S_n=n-k-j)\\
		&\le (1+\varepsilon)\frac{(1-\mu^{(1)})\mathfrak{c}_N}{n\aleph}\p(Z^{(0)}\in[0,2n^d])\p(S_n-n\mu^{(1)}\in [-3n^d,n^d])
		\end{align*}
		and:
		\begin{align*}
		\tilde I_{3,3}
		&\ge  (1-\varepsilon)\frac{(1-\mu^{(1)})\mathfrak{c}_N}{n\aleph}\sum_{j=0}^{\frac{n^d}{2}}\p(Z^{(0)}=j)\sum_{k\in K_3}\p(S_n=n-k-j)\\
		&\ge (1-\varepsilon)\frac{(1-\mu^{(1)})\mathfrak{c}_N}{n\aleph}\p(Z^{(0)}\in [0,\nicefrac{n^d}{2}])\p(S_n-n\mu^{(1)}\in [-n^d,\nicefrac{n^d}{2}]).
		\end{align*}
		As our upper and lower bounds are true for every $\varepsilon>0$, using again the central limit Theorem, we obtain:
		 \[\lim_{n\rightarrow+\infty}\tilde I_{3,3}=\left(1-\mu^{(1)}\right)\frac{\mathfrak{c}_N}{\aleph}.\]
		 We can conclude as $\lim_{n\rightarrow+\infty}I_{3,3}=\lim_{n\rightarrow+\infty}\tilde I_{3,3}$, following the same reasoning for $I_3$ in  Lemma \ref{limSnT1}, thanks to the upper bound in \eqref{BorneN}.\\
		Finally when $\ell_\bq=0$:
		\begin{align*}
			\underset{n\to+\infty}{\lim}\dfrac{\E[N\indic_{\{S_{n}+Z^{(0)}+N=n\}}]}{\p(S_{n}=n)}
			&=\E[N]+\left(1-\mu^{(1)}\right)\frac{\mathfrak{c}_N}{\aleph}.
		\end{align*}
		We can write for all $\ell_\bq\in[0,1]$:
		\begin{equation*}
			\underset{n\to+\infty}{\lim}\dfrac{\E[N\indic_{\{S_{n}+Z^{(0)}+N=n\}}]}{\p(S_{n}=n)}=\E[N]+\left(1-\mu^{(1)}\right)\frac{(1-\ell_\bq)\mathfrak{c}_N}{\aleph}.
		\end{equation*}
		That concludes the proof of the lemma in the second case.
	\end{proof}
	Recall that $\mathfrak{c}_{Z^{(0)}}$ is explicit and given in \eqref{cz0}.
	\begin{lem}\label{limSnN5}
		Assume that $\bp^{(1)}$ is aperiodic , with $\mu^{(1)}=1$ or ($\mu^{(1)}<1$ and $\rho(\bp^{(1)})=1$), $\mathrm{Var}[Z^{(1)}]<+\infty$. 
		For all $m\in\N,$ $k\in\Z$, we have:
		\begin{equation}\label{limNSn5}
			\lim_{n\to+\infty}	\dfrac{\E[Z^{(0)}\indic_{\{S_{n}+W_{m}+Z^{(0)}+N=n-k\}}]}{\E[N\indic_{\{S_{n}+N=n\}}]}=\dfrac{\E[Z^{(0)}]+\left(1-\mu^{(1)}\right)(1-\ell_\bq)\frac{\mathfrak{c}_{Z^{(0)}}}{\aleph}}{\E[N]+\left(1-\mu^{(1)}\right)(1-\ell_\bq)\frac{\mathfrak{c}_{N}}{\aleph}}.
		\end{equation}
		If $\bp^{(1)}$ is periodic, then \eqref{limNSn5} still holds along the sub-sequence for which the denominator is positive.
	\end{lem}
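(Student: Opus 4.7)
The plan is to factor
\begin{equation*}
\frac{\E[Z^{(0)}\indic_{\{S_n+W_m+Z^{(0)}+N=n-k\}}]}{\E[N\indic_{\{S_n+N=n\}}]}=\frac{\E[Z^{(0)}\indic_{\{S_n+W_m+Z^{(0)}+N=n-k\}}]}{\p(S_n=n)}\cdot\frac{\p(S_n=n)}{\E[N\indic_{\{S_n+N=n\}}]}.
\end{equation*}
By Lemma \ref{limSnT1} applied to $T=N$, the second factor converges to $(\E[N]+(1-\mu^{(1)})(1-\ell_\bq)\mathfrak c_N/\aleph)^{-1}$, so the proof reduces to showing
\begin{equation*}
A_n:=\frac{\E[Z^{(0)}\indic_{\{S_n+W_m+Z^{(0)}+N=n-k\}}]}{\p(S_n=n)}\underset{n\to+\infty}{\longrightarrow}\E[Z^{(0)}]+(1-\mu^{(1)})(1-\ell_\bq)\frac{\mathfrak c_{Z^{(0)}}}{\aleph}.
\end{equation*}

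I would prove this by adapting the argument of Lemma \ref{limSnT1} so as to weight by $Z^{(0)}$ rather than by $T:=Z^{(0)}+W_m+N$. Note that $Z^{(0)}$ satisfies the tail hypothesis of Lemma \ref{limSnT1} with constant $\mathfrak c_{Z^{(0)}}$ by virtue of \eqref{probXbar} (or \eqref{probXbarcase0} when $\ell_\bq=1$), and $Z^{(0)}\in L^1$ by Lemma \ref{espXbar}. In the critical case $\mu^{(1)}=1$, the target simplifies to $\E[Z^{(0)}]$; the uniform bound $\sup_{n,i}\p(S_n-n=i)/\p(S_n=n)\le C$ derived from the local limit theorem \eqref{eqvcentradmwalk} (exactly as in Lemma \ref{limSnN3}) lets one conclude by Lebesgue's dominated convergence on the decomposition of $A_n$ over the value of $W_m+Z^{(0)}+N$.

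In the sub-critical case $\mu^{(1)}<1$, I would split the range of $T$ into the four intervals $J_1,\dots,J_4$ introduced in the proof of Lemma \ref{limSnT1}. On $J_1$, the bound \eqref{borne1} combined with $Z^{(0)}\in L^1$ and the strong ratio limit property \eqref{strongratio} yields, by dominated convergence, the contribution $\E[Z^{(0)}]$; on $J_2$ and $J_4$, the estimates \eqref{borne2} and the Gaussian regime \eqref{expS_nequiv2} show that the contributions vanish, since the finite expectation $\E[Z^{(0)}]$ absorbs the polynomial factor in $n$.

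The main obstacle is the $J_3$ contribution when $\ell_\bq=0$. One must verify that on the critical window $T\approx n(1-\mu^{(1)})$, it is the large values of $Z^{(0)}$ and not those of $W_m+N$ that drive the asymptotics, so that the correct constant $\mathfrak c_{Z^{(0)}}$ appears. Following the three-subinterval splitting of $J_3$ used in the proof of Lemma \ref{limSnN4} (with intermediate scale $n^d$ for some $d\in(\max(2/3,(1+\alpha)/(2\alpha)),3/4)$), one shows that the only non-negligible part comes from $Z^{(0)}\in J_3$ with $W_m+N\le n^d$; on that part the calculation in Lemma \ref{limSnT1} (case $J_3$, $\ell_\bq\ne 1$) gives exactly $(1-\mu^{(1)})(1-\ell_\bq)\mathfrak c_{Z^{(0)}}/\aleph$, which completes the proof.
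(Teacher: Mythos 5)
Your overall plan is sound and follows the same factorization $v_n w_n$ as the paper, with $w_n:=\p(S_n=n)/\E[N\indic_{\{S_n+N=n\}}]$. However, for $w_n$ you cite Lemma \ref{limSnT1} with $T=N$, which is stated only under $\mu^{(1)}<1$; the correct reference covering both the critical and sub-critical sub-cases is Lemma \ref{limSnN3}. Also, your case split for the $J_3$ analysis speaks of $\ell_\bq=0$, whereas the relevant dichotomy in Lemma \ref{limSnT1} is $\ell_\bq=1$ versus $\ell_\bq\neq 1$ (when $\ell_\bq=1$ the $J_3$ contribution vanishes because $Z^{(0)}$ has lighter tail; when $\ell_\bq<1$ it produces the constant term).

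For $v_n$ in the case $\mu^{(1)}<1$, you propose to re-run the full $J_1,\dots,J_4$ split from Lemma \ref{limSnT1} with the mismatched weight $Z^{(0)}$ (versus the variable being conditioned, $T=W_m+Z^{(0)}+N$), and to handle $J_3$ by a three-subinterval decomposition mirroring Lemma \ref{limSnN4}. This is workable: the domination $\E[Z^{(0)}\indic_{\{T=k\}}]\le k\p(T=k)$ makes the $J_1$, $J_2$, $J_4$ estimates go through as in Lemma \ref{limSnT1}, and the argument on $J_3$ that the mass concentrates on $Z^{(0)}\approx n(1-\mu^{(1)})$ rather than on $N$ or $W_m$ is exactly symmetric to the treatment of $N$ in Lemma \ref{limSnN4}. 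But the paper takes a shorter and cleaner route: after dropping $W_m$ and the shift $k$ exactly as in Lemma \ref{limSnN3}, it conditions on $N=j$ and writes $v_n$ as $\sum_j \bp_N(j)\, d_{n,j}$ with $d_{n,j}=\E[Z^{(0)}\indic_{\{S_n+Z^{(0)}=n-j\}}]/\p(S_n=n)$, to which Lemma \ref{limSnT1} applies directly with $T=Z^{(0)}$ and $R=0$; the interchange of limit and sum then follows from the generalized dominated convergence theorem in \cite{kallenberg_foundations_1997} via the already-established domination $C\bp_N(j)\le\bp^{(1)}(j)$, together with the identity $\sum_j\bp^{(1)}(j)d_{n,j}=\E[Z^{(0)}\indic_{\{S_{n+1}+Z^{(0)}=n\}}]/\p(S_n=n)$. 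This avoids redoing any of the interval decompositions. Your route buys nothing new but costs substantially more work; you should at least note that it yields the same constant $\mathscr{N}=\E[Z^{(0)}]+(1-\mu^{(1)})(1-\ell_\bq)\mathfrak{c}_{Z^{(0)}}/\aleph$ and check the $W_m$ and $k$-shift reductions carefully.
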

	\begin{rqqq}\label{rqqq2}
	\begin{enumerate}
	\item If $\mu^{(1)}=1$ or $\ell_\bq=1$, the ratio in \eqref{limNSn5} tends to $\nicefrac{\E[Z^{(0)}]}{\E[N]}$.
	\item  As in Remark \ref{rqqq1}, we can considerably simplify \eqref{limNSn5} by noting that:
	\begin{align*}
	\frac{\mathfrak{c}_{Z^{(0)}}}{\aleph}=\frac{1}{1-\E[\bq(X)]},\,\E[Z^{(0)}]=\frac{\E[X(1-\bq(X))]}{1-\E[\bq(X)]}\E[N],
	\end{align*}
	and we obtain:
	\begin{equation}\label{limNSn5bis}
			\lim_{n\to+\infty}	\dfrac{\E[Z^{(0)}\indic_{\{S_{n}+W_{m}+Z^{(0)}+N=n-k\}}]}{\E[N\indic_{\{S_{n}+N=n\}}]}=\frac{\E[X(1-\bq(X))]+(1-\ell_\bq)\left(1-\mu(\bp)\right)}{1-\E[\bq(X)]}.
		\end{equation}

	\end{enumerate}
	\end{rqqq}
	\begin{proof}This proof is very similar to those of Lemmata \ref{limSnN3} and \ref{limSnN4}, and as a result, we begin by writing the ratio on the left-hand side of \eqref{limNSn5} in the form: 
		\begin{equation*} 
			\dfrac{\E[Z^{(0)}\indic_{\{S_{n}+W_{m}+Z^{(0)}+N=n-k\}}]}{\p(S_n=n)}\dfrac{\p(S_n=n)}{\E\left[N\indic_{\{S_{n}+N=n\}}\right]}=:v_nw_n.
		\end{equation*}
Lemma \ref{limSnN3} gives that $w_n$ tends to the denominator of the right-hand side of the equation \eqref{limNSn5}, and consequently we just have to show that $v_n$ tends to the numerator that we denote by $\mathscr N$ for typographical simplicity.\\
		The roles of $N$ and $Z^{(0)}$ are completely symmetric, thus following the proof of Lemma \ref{limSnN4}, we easily obtain that $\lim_{n\rightarrow+\infty}v_n=\E[Z^{(0)}]$, if $\mu^{(1)}=1$,which completes this case, and if $\mu^{(1)}<1$, for all $k,m\in\Z$, we have:
		\begin{align*}
			\lim_{n\rightarrow+\infty}v_n&=\lim_{n\rightarrow+\infty}\dfrac{\E[Z^{(0)}\indic_{\{S_{n}+Z^{(0)}+N=n\}}]}{\p(S_n=n)}\\
			&=\lim_{n\rightarrow+\infty}\sum_{j\in\mathbb N}\bp_N(j)\dfrac{\E[Z^{(0)}\indic_{\{S_{n}+Z^{(0)}=n-j\}}]}{\p(S_n=n)}\\
			&=\lim_{n\rightarrow+\infty}\sum_{j\in\mathbb N}\bp_N(j)d_{n,j}.
		\end{align*}
		Thanks to \eqref{limTRSn2} and \eqref{limTSn1}, with $T=Z^{(0)}$ and $R=0$, we have $\lim_{n\rightarrow+\infty}d_{n,j}=\mathscr N$ and:
		\begin{align*}
			\lim_{n\rightarrow+\infty}\sum_{j\in\N}\bp^{(1)}(j)d_{n,j}&=\lim_{n\rightarrow+\infty}\dfrac{\E[Z^{(0)}\indic_{\{S_{n+1}+Z^{(0)}=n\}}]}{\p(S_{n}=n)}
			\\&=\mathscr N=\sum_{j\in\N}\bp^{(1)}(j)\lim_{n\rightarrow+\infty}d_{n,j}.
		\end{align*}
		And we have already proved in Lemma \ref{limSnN2}, that there exists $C>0$ such that for all $j\in\mathbb N,\, C\bp_N(j)\le \bp^{(1)}(j)$.
Using again Theorem 1.21 in \cite{kallenberg_foundations_1997}:
\[\lim_{n\rightarrow+\infty}\sum_{j\in\mathbb N}\bp_N(j)d_{n,j}=\sum_{j\in\mathbb N}\bp_N(j)\lim_{n\rightarrow+\infty}d_{n,j}=\mathscr N.\]
		That concludes the proof in the case $\mu^{(1)}<1$.
	\end{proof}
	Let a marked tree $\bt^{*}\in \T_0^*$, $x\in \bt$ and  recall the expression \eqref{Bni}, for all $i\in\Z$ and $n\in\NN$:
	\[B_{n,i}(x)=\underset{j>i}{\sum}\bp(j)\alpha_{j,x}(j-i)a_{n,j},\]
	where:
	\[\alpha_{j,x}=\bq(j)\eta_x(\bt)+(1-\bq(j))(1-\eta_x(\bt)) \mbox{ and }a_{n,j}=\dfrac{\E[N\indic_{\{S_{n}+W_{j-1-i}+N=n\}}]}{\E[N\indic_{\{S_n+N=n\}}]}.\]
	\begin{lem}\label{limBn,lneg}
		Assume that $\bp^{(1)}$ is aperiodic, with $\mu^{(1)}=1$ or ($\mu^{(1)}<1$ and $\rho(\bp^{(1)})=1$), $\mathrm{Var}[Z^{(1)}]<+\infty$. 
		For all nonpositive $l$, we have:
		\[\lim_{n\to+\infty}B_{n,l}(x)=\E[(X-l)\alpha_{X,x}]+(1-\mu(\bp))\left(\ell_\bq\eta_x(\bt)+\left(1-\ell_\bq\right)\left(1-\eta_x(\bt)\right)\right).\]
	\end{lem}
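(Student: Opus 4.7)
The plan is to split $B_{n,l}(x)$ according to the value of $\eta_x(\bt)$: since $\alpha_{j,x}=\bq(j)\eta_x(\bt)+(1-\bq(j))(1-\eta_x(\bt))$, one writes $B_{n,l}(x)=\eta_x(\bt)\,U_n^{(1)}+(1-\eta_x(\bt))\,U_n^{(0)}$ with $U_n^{(1)}:=\sum_{j>l}\bp(j)\bq(j)(j-l)a_{n,j}$ and $U_n^{(0)}:=\sum_{j>l}\bp(j)(1-\bq(j))(j-l)a_{n,j}$. The key identity, obtained by exchangeability of $(N_i)_{i\in\NN}$ (the free $N$ plays the role of $N_{j-l}$), is $(j-l)\,\E[N\indic_{\{S_n+W_{j-1-l}+N=n\}}]=\E[W_{j-l}\indic_{\{S_n+W_{j-l}=n\}}]$. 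Summing over $j$ and using the definition of $X^{(1)}$ from Section \ref{Model} gives
\[
U_n^{(1)}=\E[\bq(X)]\,\frac{\E[W_{X^{(1)}-l}\indic_{\{S_n+W_{X^{(1)}-l}=n\}}]}{\E[N\indic_{\{S_n+N=n\}}]},
\]
and the analogous formula for $U_n^{(0)}$ with $1-\E[\bq(X)]$ and $X^{(0)}$; the constraint $X^{(j)}>l$ is automatic for $l<0$ and can be dropped for $l=0$ since the $X^{(j)}=0$ contribution vanishes.

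For $l\le 0$, the identity $W_{X^{(1)}-l}=\sum_{i=1}^{X^{(1)}}N_i+\sum_{i=X^{(1)}+1}^{X^{(1)}-l}N_i$ provides the independence decomposition $W_{X^{(1)}-l}\stackrel{d}{=}Z^{(1)}+W_{-l}$, and expanding yields
\[
\E[W_{X^{(1)}-l}\indic_{\{S_n+W_{X^{(1)}-l}=n\}}]=\E[Z^{(1)}\indic_{\{S_n+Z^{(1)}+W_{-l}=n\}}]+\E[W_{-l}\indic_{\{S_n+Z^{(1)}+W_{-l}=n\}}].
\]
Since $Z^{(1)}$ is an independent copy of a $Z_i^{(1)}$, $S_n+Z^{(1)}\stackrel{d}{=}S_{n+1}$. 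For the second piece, exchangeability of $W_{-l}=\sum_{i=1}^{-l}N_i$ rewrites it as $(-l)\,\E[N\indic_{\{S_{n+1}+W_{-l-1}+N=n\}}]$, which is asymptotic to $(-l)\,\E[N\indic_{\{S_n+N=n\}}]$ by Lemma \ref{limSnN2} (combined with the strong-ratio-limit property to pass from $S_{n+1}$ to $S_n$ in the denominator). For the first piece, exchangeability of the $Z^{(1)}_i$'s gives $\E[Z^{(1)}\indic_{\{S_{n+1}+W_{-l}=n\}}]=\tfrac{1}{n+1}\E[(n-W_{-l})\indic_{\{S_{n+1}+W_{-l}=n\}}]$; after dividing by $\E[N\indic_{\{S_n+N=n\}}]$, the $W_{-l}$ contribution is $O(1/n)$, while $\tfrac{n}{n+1}\p(S_{n+1}+W_{-l}=n)/\E[N\indic_{\{S_n+N=n\}}]$ converges to $(\E[X\bq(X)]+\ell_\bq(1-\mu(\bp)))/\E[\bq(X)]$ by Lemma \ref{limSnN3} in the form \eqref{limNSn3bis} (with $m=-l-1$, $k=1$).

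Assembling these limits gives $\lim_{n\to+\infty}U_n^{(1)}=\E[X\bq(X)]+\ell_\bq(1-\mu(\bp))+(-l)\E[\bq(X)]=\E[(X-l)\bq(X)]+\ell_\bq(1-\mu(\bp))$. The same strategy applied to $U_n^{(0)}$, invoking Lemma \ref{limSnN5} (with $m=-l-1$, $k=0$) for the $Z^{(0)}$ piece and Lemma \ref{limSnN4} for the $W_{-l}$ piece, produces $\lim_{n\to+\infty}U_n^{(0)}=\E[(X-l)(1-\bq(X))]+(1-\ell_\bq)(1-\mu(\bp))$. Recombining through $\eta_x(\bt)\lim U_n^{(1)}+(1-\eta_x(\bt))\lim U_n^{(0)}$ and using the definition of $\alpha_{X,x}$ yields the announced formula. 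The main technical obstacle is the careful bookkeeping of the strong-ratio-limit relations that arise after the $W_{-l}$-decomposition; once the exchangeability identity $(j-l)\E[N\indic_{\cdot}]=\E[W_{j-l}\indic_{\cdot}]$ is in place, each ratio falls under one of the Lemmata \ref{limSnN2}--\ref{limSnN5} already established in this appendix.
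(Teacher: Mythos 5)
Your proof is correct and, once unwound, runs parallel to the paper's: the same split according to $\eta_x(\bt)$, the same two limiting contributions ($i\E[\bq(X)]$ resp. $i(1-\E[\bq(X)])$ from the $W_{-l}$ part, and $\E[X\bq(X)]+\ell_\bq(1-\mu(\bp))$ resp. its unmarked analogue from the other part), and the same appeal to Lemmata \ref{limSnN2}, \ref{limSnN3}, \ref{limSnN4} and \ref{limSnN5}. The one genuine difference is how the key intermediate identity is obtained: the paper recognizes $\sum_j\bp(j)\bq(j)\p_{j+i}(M(\tau^*)=n)$ as $\p_{i+1}(M(\tau^*)=n+1,\eta_{\emptyset_1}=1)$ via the forest decomposition at the root of the first tree and then applies Dwass's formula, whereas you stay entirely at the level of the random walk and derive the same expressions from the exchangeability identity $(j-l)\E[N\indic_{\{S_n+W_{j-1-l}+N=n\}}]=\E[W_{j-l}\indic_{\{S_n+W_{j-l}=n\}}]$ together with $W_{X^{(1)}-l}\overset{d}{=}Z^{(1)}+W_{-l}$. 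Your route is arguably cleaner in that it avoids the tree interpretation, at the cost of some extra strong-ratio bookkeeping to move between $S_{n+1}$ and $S_n$; the paper's route makes the probabilistic meaning of each term transparent. One small point to tidy up: at $l=0$ the second summand degenerates ($W_0=0$) and for $U_n^{(0)}$ you are left with $\E[Z^{(0)}\indic_{\{S_n+Z^{(0)}=n\}}]/\E[N\indic_{\{S_n+N=n\}}]$, which is not literally covered by Lemma \ref{limSnN5} (no ``$+N$'' in the indicator and $m=-l-1=-1\notin\N$); it is nevertheless handled by the same machinery (Lemma \ref{limSnT1} with $T=Z^{(0)}$, or the $d_{n,j}$ estimates inside the proof of Lemma \ref{limSnN5}), so this is a citation gap rather than a mathematical one.
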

	
	\begin{proof}
		 Recall that for $j,n\in\NN,~j\leq n$:
		\begin{align*}
		&\p\left(M(\tau^*)=n\right)=\dfrac{1}{n}\E[N\indic_{\{S_n+N=n\}}],\\
		&\p_j(M(\tau^*)=n)=\dfrac{j}{n}\E[N\indic_{\{S_n+W_{j-1}+N=n\}}].
		\end{align*}
		For $i\geq 0$:
		\begin{align*}
			B_{n,-i}(x)&=\underset{j>-i}{\sum}\bp(j)\dfrac{\E[N\indic_{\{S_n+W_{j-1+i}+N=n\}}]}{\E[N\indic_{\{S_n+N=n\}}]}(j+i)\alpha_{j,x}\\
			&=\dfrac{n}{\E[N\indic_{\{S_n+N=n\}}]}\underset{j>-i}{\sum}\bp(j)\alpha_{j,x}\p_{j+i}(M(\tau^*)=n).
		\end{align*}
		We suppose that $x$ is marked on $\tau$, i.e. $\eta_x=1$. By decomposing $\tau$ under $\p_{i+1}$ with respect to the number of children of the root of the first tree in the forest ($\emptyset_1$). We consider that this root is associated to $x$, we get:
		\begin{equation}\label{probaforestmark}
			\p_{i+1}\left(M(\tau^*)=n+1,\eta_{\emptyset_1}=1\right) =\underset{j\in\N}{\sum}\bp(j)\bq(j)\p_{j+i}(M(\tau^*)=n).
		\end{equation}
		We explain this result with an example, for $j=5$, and $n=8$.
		\smallbreak
		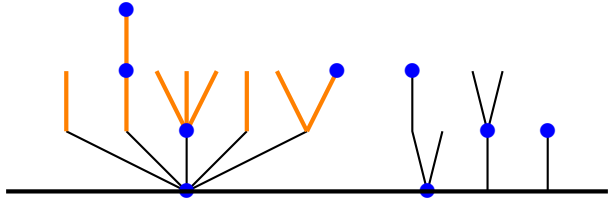
\begin{figure}[h!]
			\centering
			\begin{tikzpicture}[scale=0.8]
				\coordinate (1) at (-3,0);
				\coordinate (2) at (1,0);
				\coordinate (3) at (2,0);
				\coordinate (4) at (3,0);
				\coordinate (11) at (-5,1);
				\coordinate (12) at (-4,1);
				\coordinate (13) at (-3,1);
				\coordinate (14) at (-2,1);
				\coordinate (15) at (-1,1);
				\coordinate (111) at (-5,2);
				\coordinate (121) at (-4,2);
				\coordinate (131) at (-3.5,2);
				\coordinate (132) at (-3,2);
				\coordinate (133) at (-2.5,2);
				\coordinate (141) at (-2,2);
				\coordinate (151) at (-1.5,2);
				\coordinate (152) at (-0.5,2);
				\coordinate (1211) at (-4,3);
				\coordinate (21) at (0.75,1);
				\coordinate (22) at (1.25,1);
				\coordinate (211) at (0.75,2);
				\coordinate (31) at (2,1);
				\coordinate (311) at (1.75,2);
				\coordinate (312) at (2.25,2);
				\coordinate (41) at (3,1);
				\draw [thick](1) -- (11) ;
				\draw [ultra thick, orange](11)--(111);
				\draw [thick](1) -- (12);
				\draw [ultra thick, orange](12)--(121)--(1211);
				\draw [ultra thick, orange](13) -- (131);
				\draw [ultra thick, orange](13)--(132);
				\draw [thick](1) -- (13);
				\draw [ultra thick, orange](13)--(133);
				\draw [thick](1) -- (14);
				\draw [ultra thick, orange](14)-- (141);
				\draw [thick](1) -- (15);
				\draw [ultra thick, orange](15)-- (151);
				\draw [ultra thick, orange](15) -- (152);
				\draw [thick](2) -- (21) -- (211);
				\draw [thick](2) -- (22) ;
				\draw [thick](3) -- (31)--(311) ;
				\draw [thick](31) -- (312) ;
				\draw [thick](4) -- (41) ;
				\draw (1)node{\textcolor{blue}{\Large{$\bullet$}}} ;
				\draw (13)node{\textcolor{blue}{\Large{$\bullet$}}};
				\draw (121)node{\textcolor{blue}{\Large{$\bullet$}}};
				\draw (1211)node{\textcolor{blue}{\Large{$\bullet$}}} ;
				\draw (152)node{\textcolor{blue}{\Large{$\bullet$}}};
				\draw (2)node{\textcolor{blue}{\Large{$\bullet$}}};
				\draw (211)node{\textcolor{blue}{\Large{$\bullet$}}} ;
				\draw (31)node{\textcolor{blue}{\Large{$\bullet$}}};
				\draw (41)node{\textcolor{blue}{\Large{$\bullet$}}};
				\draw [ ultra thick] (-6,0)--(4,0);
				
			\end{tikzpicture}
			\smallbreak
			\caption{The forest of $\tau$ by decomposing under $\p_{i+1}$.}
			\label{exP_i+1}
		\end{figure}
		\begin{figure}[!h]
			\centering
			\centering
			\begin{tikzpicture}[scale=0.8]
				\coordinate (1) at (-3,0);
				\coordinate (2) at (1,0);
				\coordinate (3) at (2,0);
				\coordinate (4) at (3,0);
				\coordinate (11) at (-5,0);
				\coordinate (12) at (-4,0);
				\coordinate (13) at (-3,0);
				\coordinate (14) at (-2,0);
				\coordinate (15) at (-1,0);
				\coordinate (111) at (-5,1);
				\coordinate (121) at (-4,1);
				\coordinate (131) at (-3.5,1);
				\coordinate (132) at (-3,1);
				\coordinate (133) at (-2.5,1);
				\coordinate (141) at (-2,1);
				\coordinate (151) at (-1.5,1);
				\coordinate (152) at (-0.5,1);
				\coordinate (1211) at (-4,2);
				\coordinate (21) at (0.75,1);
				\coordinate (22) at (1.25,1);
				\coordinate (211) at (0.75,2);
				\coordinate (31) at (2,1);
				\coordinate (311) at (1.75,2);
				\coordinate (312) at (2.25,2);
				\coordinate (41) at (3,1);
				\draw [ultra thick, orange](11)--(111);
				\draw [ultra thick, orange](12)--(121)--(1211);
				\draw [ultra thick, orange](13) -- (131);
				\draw [ultra thick, orange](13)--(132);
				\draw [ultra thick, orange](13)--(133);
				\draw [ultra thick, orange](14)-- (141);
				\draw [ultra thick, orange](15)-- (151);
				\draw [ultra thick, orange](15) -- (152);
				\draw [thick](2) -- (21) -- (211);
				\draw [thick](2) -- (22) ;
				\draw [thick](3) -- (31)--(311) ;
				\draw [thick](31) -- (312) ;
				\draw [thick](4) -- (41) ;
				\draw (13)node{\textcolor{blue}{\Large{$\bullet$}}};
				\draw (121)node{\textcolor{blue}{\Large{$\bullet$}}};
				\draw (1211)node{\textcolor{blue}{\Large{$\bullet$}}} ;
				\draw (152)node{\textcolor{blue}{\Large{$\bullet$}}};
				\draw (2)node{\textcolor{blue}{\Large{$\bullet$}}};
				\draw (211)node{\textcolor{blue}{\Large{$\bullet$}}} ;
				\draw (31)node{\textcolor{blue}{\Large{$\bullet$}}};
				\draw (41)node{\textcolor{blue}{\Large{$\bullet$}}};
				\draw [ ultra thick] (-6,0)--(4,0);
				
			\end{tikzpicture}
			\caption{The forest of $\tau$ by decomposing under $\p_{i+j}$, with the first $j$ trees represent the trees resulting from the children of the root of the first tree on the figure \ref{exP_i+1}~.}
			\label{exdecP_i+1}
		\end{figure}
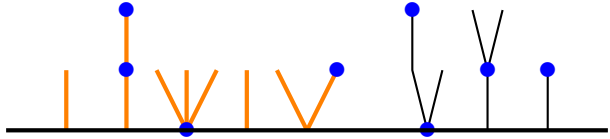
		\smallbreak
		On the figure \ref{exP_i+1}~, the first tree represents the tree with root $x$ on $\tau$. Figure \ref{exdecP_i+1} allows to explain our formula \eqref{probaforestmark}. In the sum we have the term $\bq(j)$ because we know $x$ is marked on $\tau$, since there are $n$ marks on the $j+i$ trees obtained (as the figure \ref{exdecP_i+1}), we have the term $\p_{j+i}(M(\tau^*)=n)$.
		\smallbreak
		Thereby, if $\eta_x=1$, we have:
		\[B_{n,-i}(x)
		=\dfrac{n\p_{i+1}(M(\tau^*)=n+1,\eta_{\emptyset_1}=1)}{\E[N\indic_{\{S_n+N=n\}}]}.\]
		Moreover, using Dwass formula like in \eqref{probajmark} and the fact that $\eta_{\emptyset_1}$ is independent of $(W_n)_{n\in\N}$ and $(S_n)_{n\in\N}$, we have:
		\begin{align*}
			\p_{i+1}(M(\tau^*)&=n+1,\eta_{\emptyset_1}=1)
			=\sum_{j\in\N}\p(W_{i}=j)\p_{j+1}\left(\left|  \tau^{\F_q}\right| = n+1\right)\p(\eta_{\emptyset_1}=1)\\
			&=\E[\bq(X)]\underset{j\in\N}{\sum}\p(W_{i}=j)\dfrac{j+1}{n+1}\p(S_{n+1}=n-j)\\
			&=\dfrac{\E[\bq(X)]}{n+1}\left(i\E[N\indic_{\{S_{n+1}+W_{i-1}+N=n\}}]+\E[\indic_{\{S_{n+1}+W_{i-1}+N=n\}}]\right).
		\end{align*}
		Then, thanks to Lemma \ref{limSnTR2}, for $T=N$ and $R=0$, Lemmata \ref{limSnN2} and \eqref{limNSn3bis}:
		\begin{align*}
		&\lim_{n\rightarrow+\infty}\frac{\E[N\indic_{\{S_{n+1}+W_{i-1}+N=n\}}]}{\E[N\indic_{\{S_n+N=n\}}]}=1\\
		&\lim_{n\rightarrow+\infty}\frac{\E[\indic_{\{S_{n+1}+W_{i-1}+N=n\}}]}{\E[N\indic_{\{S_n+N=n\}}]}=\dfrac{\E[X\bq(X)]+\ell_\bq(1-\mu(\bp))}{\E[\bq(X)]}.
		\end{align*}
		and as a result: 
		\begin{align*}
			\underset{n\to+\infty}{\lim}B_{n,-i}(x)= i\E[\bq(X)]+\E[X\bq(X)]+\ell_\bq\left(1-\mu(\bp)\right).
		\end{align*}
		Now, we suppose that $x$ is not marked on $\tau$, i.e. $\eta_x=0$. In the same way of \eqref{probaforestmark}, we obtain:
		$$\p_{i+1}(M(\tau^*)=n,\eta_{\emptyset_1}=0)=\underset{j\in\N}{\sum}\bp(j)(1-\bq(j))\p_{j+i}(M(\tau^*)=n),$$
		and we have:
		\begin{align*}
			&\p_{i+1}(M(\tau^*)=n,\eta_{\emptyset_1}=0)	
			=(1-\E[\bq(X)])\underset{j\in\N}{\sum}\p(W_{i}+Z^{(0)}=j)\dfrac{j}{n}\p(S_{n}=n-j)\\
			&=\dfrac{1-\E[\bq(X)]}{n}\left(i\E[N\indic_{\{S_{n}+W_{i-1}+Z^{(0)}+N=n\}}]+\E[Z^{(0)}\indic_{\{S_{n}+W_{i}+Z^{(0)}=n\}}]\right).
		\end{align*}
		Thereby, thanks to Lemma \ref{limSnN4} and \eqref{limNSn5bis}, we have:
		\begin{align*}
		&\lim_{n\rightarrow+\infty}\dfrac{\E[N\indic_{\{S_{n}+W_{i-1}+Z^{(0)}+N=n\}}]}{\E[N\indic_{\{S_{n}+N=n\}}]}=1\\
		&\lim_{n\rightarrow+\infty}\dfrac{\E[Z^{(0)}\indic_{\{S_{n}+W_{i-1}+Z^{(0)}+N=n\}}]}{\E[N\indic_{\{S_{n}+N=n\}}]}=\frac{\E[X(1-\bq(X))]+(1-\ell_\bq)\left(1-\mu(\bp)\right)}{1-\E[\bq(X)]}.
		\end{align*}
		Implying:
		\begin{align*}
			\lim_{n\rightarrow+\infty}B_{n,-i}(x)=i(1-\E[\bq(X)])+\E[X(1-\bq(X))]+(1-\ell_\bq)\left(1-\mu(\bp)\right).
		\end{align*}
		When we regroup these two results, we obtain the lemma.
	\end{proof}
	If $\bp^{(1)}$ is periodic, then Lemma \ref{limBn,lneg}~ still holds along the sub-sequence for which the denominator is positive when we use Lemma \ref{limSnN2}~.
	\bigbreak
	We adapt Lemmata 8.8 and 8.9 of \cite{abraham_local_2014}.
	In order to extend Lemma \ref{limBn,lneg} for $l>0$, we give a preliminary lemma and introduce for all $l,k\in\Z$ such that $l\geq k$:
	\[C_{n,l,x}(k):=\E\left[\alpha_{X,x} N (X-l)_+\indic_{\{S_{n}+W_{X-1-k}+N=n\}}\right].\]
	We recall that $z_+=\max{(z,0)}$. 
	\begin{lem}\label{limCn,k}
		Assume $\bp^{(1)}$ is aperiodic, with $\mu^{(1)}<1$, $\rho({\bp^{(1)}})=1$ and $0<\E[N]<+\infty$. We have for $k,l\in\Z$ such that $k\leq l$:
		\begin{equation}\label{limCn,l(k)}
			\underset{n\to+\infty}{\lim}\dfrac{C_{n,l,x}(k)}{C_{n,l,x}(l)}=1
		\end{equation}
		If $\bp^{(1)}$ is periodic, then \eqref{limCn,l(k)} still holds along the sub-sequence for which the denominator is positive.
	\end{lem}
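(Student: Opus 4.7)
The plan is to exploit the identity $W_{X-1-k} \stackrel{d}{=} W_{X-1-l} + W''$ valid on $\{X>l\}$, where $W'' := \sum_{i=1}^{l-k} N'_i$ is an independent sum of $l-k$ copies of $N$. Conditioning on $W''$ yields
\begin{equation*}
C_{n,l,x}(k) = \sum_{w \geq 0} \p(W'' = w)\,\E\!\left[\alpha_{X,x}\,N\,(X-l)_+ \indic_{\{S_n + W_{X-1-l} + N = n-w\}}\right],
\end{equation*}
so after dividing by $C_{n,l,x}(l)$ the lemma reduces to a dominated-convergence argument in $w$, once we establish that for each fixed $w \geq 0$,
\begin{equation*}
r_{n,w} := \frac{\E\!\left[\alpha_{X,x}\,N\,(X-l)_+ \indic_{\{S_n + W_{X-1-l} + N = n-w\}}\right]}{C_{n,l,x}(l)} \underset{n\to+\infty}{\longrightarrow} 1.
\end{equation*}

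For the pointwise convergence of $r_{n,w}$, I would condition on $X=j$ and normalize both sums by $\E[N \indic_{\{S_n+N=n\}}]$. For each fixed $j > l$, Lemma \ref{limSnN2} (or equivalently Lemma \ref{limSnTR2} with $T=N$, $R=W_{j-1-l}$ and a shift $-w$) gives $\E[N \indic_{\{S_n + W_{j-1-l} + N = n-w\}}]/\E[N \indic_{\{S_n+N=n\}}] \longrightarrow 1$. A first dominated-convergence step over $j$---whose summability comes from $\E[X]<+\infty$ combined with the inequality $\bp^{(1)}(j) \geq c\,\bp_N(j)$ exploited in the proof of Lemma \ref{limSnN2}---then yields $r_{n,w} \to 1$, and shows that the common limit of $C_{n,l,x}(k)/\E[N \indic_{\{S_n+N=n\}}]$ and of $C_{n,l,x}(l)/\E[N \indic_{\{S_n+N=n\}}]$ is the finite number $\E[\alpha_{X,x}(X-l)_+]$.

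The main obstacle is the second dominated-convergence step, over $w$: one needs a uniform-in-$n$ majorant $r_{n,w} \leq \bar r_w$ with $\sum_{w \geq 0} \p(W''=w)\bar r_w < +\infty$. I would obtain this by size-biasing, rewriting $\E[N \indic_{\{S_n + W_{j-1-l} + N = n-w\}}] = \E[N]\,\p(S_n + W_{j-1-l} + \hat N = n-w)$ with $\hat N$ of size-biased law, and invoking the one-big-jump principle for subcritical heavy-tailed walks (as embodied in \eqref{expS_nequiv2} and in the tail analyses of Lemmata \ref{limSnT1} and \ref{limSnN4}): the dominant contribution to this probability arises from a single large increment, either in $S_n$ or in $W_{j-1-l}$, so that shifting the target by $w$ changes the probability by a factor comparable to $\bp^{(1)}(m-w)/\bp^{(1)}(m)$ or $\bp_N(m-w)/\bp_N(m)$ for some $m$ of order $n$. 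By regular variation of $\bp^{(1)}$ and $\bp_N$ (see \eqref{probXhatalpha} and \eqref{probNalpha}) this ratio is controlled, uniformly in $n$ and $j$, by a polynomial function of $w$ that is integrable against the finite-mean law of $W''$; the delicate uniformity in $j$ is obtained by splitting the range of $j$ into a bounded part and a tail, exactly as in the proofs of Lemmata \ref{limSnT1} and \ref{limSnN4}.
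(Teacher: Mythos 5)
You correctly identify the key decomposition $W_{X-1-k}\stackrel{d}{=}W_{X-1-l}+W''$ (with $W''$ a sum of $l-k$ independent copies of $N$) and the pointwise limit $r_{n,w}\to 1$; this is also where the paper's proof starts, via its explicit reference to the structure of Lemma~\ref{limSnN3}. However, the ``main obstacle'' you flag --- producing a uniform-in-$n$ majorant $\bar r_w$ summable against the law of $W''$ --- is not an obstacle in the paper's argument, because the paper never tries to dominate the ratio pointwise. Your proposed route via the one-big-jump principle and regular variation is left at the level of a sketch (``the delicate uniformity in $j$ is obtained by splitting $\dots$ exactly as in the proofs of $\dots$''), and it is not clear it closes: the claimed ratio bound $\bp^{(1)}(m-w)/\bp^{(1)}(m)$ ``for some $m$ of order $n$'' does not give a $w$-dependent majorant that is uniform over $n$ without a genuinely new estimate, and the double dominated-convergence step (over $j$, then over $w$) you set up is never actually justified.

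What the paper does instead is the Pratt-lemma trick already used in Lemma~\ref{limSnN3}. Write
\begin{equation*}
c_{n,w}:=\dfrac{\E\left[\alpha_{X,x}\,N\,(X-l)_+\,\indic_{\{S_{n}+W_{X-1-l}+N=n-w\}}\right]}{\E\left[\alpha_{X,x}\,N\,(X-l)_+\,\indic_{\{S_{n}+W_{X-1-l}+N=n\}}\right]},
\end{equation*}
so that the ratio in question is $\sum_{w\ge0}\p(W_{l-k}=w)\,c_{n,w}$. Adapting Lemma~\ref{limSnTR2} (with the integrable weight $T=\alpha_{X,x}N(X-l)_+$ and $R=W_{X-1-l}$) yields both the pointwise limit $c_{n,w}\to 1$ \emph{and}, by decomposing $S_{n+m}=S_n+S_m$, the convergence of the $S_m$-weighted sum $\sum_{w}\p(S_m=w)\,c_{n,w}\to 1=\sum_w\p(S_m=w)\cdot\lim_n c_{n,w}$. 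Since (as shown in the proof of Lemma~\ref{limSnN3}) $\p(W_m=w)\le C^{-1}\p(S_m=w)$ for a constant $C>0$ independent of $w$, the generalized dominated-convergence theorem (Kallenberg, Theorem~1.21: pointwise convergence plus a pointwise dominating sequence whose sums converge to the sum of the limits) gives $\sum_w\p(W_m=w)\,c_{n,w}\to 1$, which with $m=l-k$ is exactly \eqref{limCn,l(k)}. So the domination is furnished \emph{structurally} by comparing the law of $W_m$ to that of $S_m$, not by tail estimates on the ratio; no regular-variation or one-big-jump argument is needed, and the splitting into bounded and tail ranges of $j$ you envisage is avoided entirely. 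Your plan is on the right track in its first half, but the DCT step it proposes is both unfinished and substantially harder than what the lemma requires.
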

	\begin{proof}
		Notice that $\alpha_{X,x} N (X-l)_+$ is integrable and, consequently, if we mimic the proof of Lemma \ref{limSnTR2}, we obtain:
				\begin{equation} \label{limCn,l(k)1}
			\underset{n\to+\infty}{\lim}\dfrac{\E\left[\alpha_{X,x} N (X-l)_+\indic_{\{S_{n+m}+W_{X-1-l}+N=n\}}\right]}{\E\left[\alpha_{X,x} N (X-l)_+\indic_{\{S_{n}+W_{X-1-l}+N=n\}}\right]}=1.
		\end{equation}
Now, to obtain our result, we use a reasoning similar to that of Lemma \ref{limSnN3}. Here, the quantity $c_{n,j}$ is defined by:
\[c_{n,j}=\dfrac{\E\left[\alpha_{X,x} N (X-l)_+\indic_{\{S_{n}+W_{X-1-l}+N=n-j\}}\right]}{\E\left[\alpha_{X,x} N (X-l)_+\indic_{\{S_{n}+W_{X-1-l}+N=n\}}\right]},\]
and using \eqref{limCn,l(k)1}, we can show that for $m\in\mathbb N$:
\begin{equation} \label{limCn,l(k)2}
			\lim_{n\rightarrow+\infty}\dfrac{\E\left[\alpha_{X,x} N (X-l)_+\indic_{\{S_{n}+W_{X-1-l+m}+N=n\}}\right]}{\E\left[\alpha_{X,x} N (X-l)_+\indic_{\{S_{n}+W_{X-1-l}+N=n\}}\right]}=1.
		\end{equation}
		Finally, with $m=l-k$ in \eqref{limCn,l(k)2}, we get the result.
	\end{proof}
	\begin{lem}\label{limBn,lpos}
		Assume that $\bp^{(1)}$ is aperiodic , with $\mu^{(1)}<1$, $\rho(\bp^{(1)})=1$, $\mathrm{Var}[Z^{(1)}]<+\infty$ and $0<\E[N]<+\infty$. 
		For $l>0$, we have 
	{\small	\begin{align*}
			\lim_{n\to+\infty}B_{n,l}(x)&=\E\left[\alpha_{X,x}(X-l)_+\right]+(1-\mu(\bp))\left(\ell_\bq\eta_x(\bt)+\left(1-\ell_\bq\right)\left(1-\eta_x(\bt)\right)\right).
		\end{align*}}
	\end{lem}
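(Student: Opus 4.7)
The plan is to adapt the proof of Lemma \ref{limBn,lneg} (the $l \leq 0$ case), using Lemma \ref{limCn,k} as a bridge to the tractable random-walk index. Since $B_{n,l}(x)\E[N\indic_{\{S_n+N=n\}}] = C_{n,l,x}(l)$ by construction, applying Lemma \ref{limCn,k} with $k=0$ yields
\[
\lim_{n\to\infty} B_{n,l}(x) = \lim_{n\to\infty} \frac{C_{n,l,x}(0)}{\E[N\indic_{\{S_n+N=n\}}]},
\]
where $C_{n,l,x}(0) = \sum_{j>l} \bp(j)\alpha_{j,x}(j-l)\E[N\indic_{\{S_n+W_{j-1}+N=n\}}]$ now carries the same random-walk index $W_{j-1}$ as the summand of $B_{n,0}(x)\E[N\indic_{\{S_n+N=n\}}]$, so the closed-form Dwass identity used in the previous proof becomes available.

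The next step is to split $(j-l) = j - l$ and write $C_{n,l,x}(0) = A_n - lF_n$, with $A_n$ carrying the weight $\bp(j)\alpha_{j,x}j$ and $F_n$ carrying only $\bp(j)\alpha_{j,x}$. The sum $A_n$ agrees with $B_{n,0}(x)\E[N\indic_{\{S_n+N=n\}}]$ up to the finite correction $\sum_{j=1}^{l}\bp(j)\alpha_{j,x}j\E[N\indic_{\{S_n+W_{j-1}+N=n\}}]$; applying Lemma \ref{limBn,lneg} to the full $B_{n,0}$ and Lemma \ref{limSnN2} to each correction term yields $\lim A_n/\E[N\indic_{\{S_n+N=n\}}] = \E[X\alpha_{X,x}\indic_{\{X>l\}}] + (1-\mu(\bp))(\ell_\bq \eta_x + (1-\ell_\bq)(1-\eta_x))$. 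Granting the companion identity $\lim F_n/\E[N\indic_{\{S_n+N=n\}}] = \E[\alpha_{X,x}\indic_{\{X>l\}}]$, the algebraic identity $\E[X\alpha_{X,x}\indic_{\{X>l\}}] - l\E[\alpha_{X,x}\indic_{\{X>l\}}] = \E[\alpha_{X,x}(X-l)_+]$ delivers exactly the stated limit; the two cases $\eta_x = 0$ and $\eta_x = 1$ proceed in parallel with $\alpha_{X,x} = \bq(X)$ replaced by $1-\bq(X)$ and $\ell_\bq$ by $1-\ell_\bq$ in the condensation term.

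The hard part is justifying the $F_n$ limit, which amounts to exchanging limit and infinite sum. Lemma \ref{limSnN2} gives pointwise convergence of each summand, but the ratio $\E[N\indic_{\{S_n+W_{j-1}+N=n\}}]/\E[N\indic_{\{S_n+N=n\}}]$ fails to be uniformly bounded in $j$: inside the ``condensation window'' $j \sim n(1-\mu^{(1)})/\E[N]$ it grows like $n^{\alpha-1/2}$ (CLT density against heavy-tail mass). The saving point is that the weight in $F_n$ is $\bp(j)\alpha_{j,x} \sim \mathrm{const}\cdot\Le(j)/j^{1+\alpha}$ \emph{without} the extra factor $j$ that appears in $A_n$; plugging the heavy-tail asymptotics $\bp^{(1)}(n) \sim \aleph\Le(n)/n^{1+\alpha}$ and $\p(S_n=n) \sim n\bp^{(1)}(n)$ from Section \ref{appendix} into the window contribution shows that it is of lower order $o(\p(S_n=n))$, so no additional condensation correction survives in $F_n$. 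Concretely, I would truncate at a threshold $J(n)\to+\infty$ slowly, treat the head $j \leq J(n)$ by finite dominated convergence based on Lemma \ref{limSnN2}, and control the tail $j > J(n)$ by the one-big-jump principle from the Appendix together with the decay of $\sum_{j>J(n)}\bp(j)\alpha_{j,x}$. This mirror-image of the condensation analysis in Lemma \ref{limSnT1} is the delicate technical step, and everything else is bookkeeping.
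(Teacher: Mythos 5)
Your overall skeleton matches the paper's: reduce $B_{n,l}(x)$ to $C_{n,l,x}(k)$ for a convenient shift index via Lemma \ref{limCn,k}, split the weight $(j-l)=j-l$ to produce an ``$A_n$'' piece carrying $j\,\bp(j)\alpha_{j,x}$ and an ``$F_n$'' piece carrying $\bp(j)\alpha_{j,x}$, identify the $A_n$ limit via Lemma \ref{limBn,lneg} at $l=0$ plus a finite correction, and close with $\E[X\alpha_{X,x}\indic_{\{X>l\}}]-l\E[\alpha_{X,x}\indic_{\{X>l\}}]=\E[\alpha_{X,x}(X-l)_+]$. All of that algebra is right. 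Where you genuinely diverge from the paper is in how you obtain $\lim F_n/\E[N\indic_{\{S_n+N=n\}}]=\E[\alpha_{X,x}\indic_{\{X>l\}}]$. You correctly diagnose that this is the delicate step (the ratio $\E[N\indic_{\{S_n+W_{j-1}+N=n\}}]/\E[N\indic_{\{S_n+N=n\}}]$ is not uniformly bounded, with a window near $j\sim n(1-\mu^{(1)})/\E[N]$ where it is of order $n^{\alpha-1/2}/\Le(n)$), and you correctly observe that the window contribution to $F_n$ is $O(1/n)$ because the weight lacks the extra factor $j$. But your plan — truncate at a slowly growing $J(n)$, dominate the head, control the tail via a one-big-jump estimate — amounts to redoing the interval decomposition of Lemma \ref{limSnT1} or Lemma \ref{limSnN4} essentially from scratch. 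The paper avoids all of this: it writes $\E[\alpha_{X,x}N\indic_{\{S_n+W_X+N=n\}}]=C_{n,-1,x}(-1)-C_{n,0,x}(-1)$, and since $C_{n,-1,x}(-1)/\E[N\indic_{\{S_n+N=n\}}]\to\xi_X+\E[\alpha_{X,x}]$ and $C_{n,0,x}(-1)/\E[N\indic_{\{S_n+N=n\}}]\to\xi_X$ are both supplied by Lemma \ref{limBn,lneg} (at $l=-1$ and $l=0$, respectively) together with Lemma \ref{limCn,k}, subtraction gives the $F_n$-type limit for free. Your route would work if carried out with the same care as the appendix lemmas, but it duplicates the heavy condensation analysis that Lemma \ref{limBn,lneg} already encapsulates, and as sketched (``finite dominated convergence'' on a head of growing length) it is not yet rigorous. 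The paper's subtraction trick is the observation you missed, and it is what keeps this proof short.
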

	\begin{proof}
		Let $l\geq -1$, We have:
		\begin{align}\label{expCn,l(-1)}
			C_{n,l,x}(-1)&=\E\left[\alpha_{X,x}(X-l)_+N \indic_{\{S_{n}+W_{X}+N=n\}}\right]\nonumber\\
			&=\sum_{j\ge l}\bp(j)\alpha_{j,x}(j-l)\E\left[N\indic_{\{S_{n}+W_{j}+N=n\}}\right]\nonumber\\
			&=C_{n,0,x}(-1)-\sum_{j=0}^{l-1}\bp(j)\alpha_{j,x}(j-l)\E\left[N\indic_{\{S_{n}+W_{j}+N=n\}}\right]\nonumber\\
			&-l\E\left[\alpha_{X,x}N \indic_{\{S_{n}+W_{X}+N=n\}}\right].
		\end{align}
		According to Lemma \ref{limBn,lneg}~, we have: 
		{\small\begin{align*}
		&\lim_{n\rightarrow+\infty}B_{n,0}(x)=\E[X\alpha_{X,x}]+(1-\mu(\bp))\left(\ell_\bq\eta_x(\bt)+\left(1-\ell_\bq\right)\left(1-\eta_x(\bt)\right)\right)=:\xi_X\\
		&\lim_{n\rightarrow+\infty}B_{n,-1}(x)=\xi_X+\E[\alpha_{X,x}]
		\end{align*}}
		Note that for all $l\in\mathbb Z$:
	\begin{equation}\label{equalidad}
		C_{n,l,x}(l)=
		B_{n,l}(x)\E[N\indic_{\{S_n+N=n\}}],
	\end{equation}
 as a result: 
		\begin{align}
		C_{n,-1,x}(-1)
		&\underset{n\to+\infty}{\sim}(\xi_X+\E[\alpha_{X,x}])\E\left[N\indic_{\{S_n+N=n\}}\right]\label{equiv1}\\
		 C_{n,0,x}(0)&\underset{n\to+\infty}{\sim}\xi_X\E\left[N\indic_{\{S_n+N=n\}}\right]\label{equiv2}.
		 \end{align}
		Thereby, using \eqref{equiv2} and  Lemma \ref{limCn,k}:
		\begin{equation}\label{firstlimBn,lpos}
			\lim_{n\to+\infty}\dfrac{C_{n,0,x}(-1)}{\E\left[N\indic_{\{S_n+N=n\}}\right]}=\underset{n\to+\infty}{\lim}\xi_X \dfrac{C_{n,0,x}(-1)}{C_{n,0,x}(0)}=\xi_X.
		\end{equation}
		Using \eqref{expCn,l(-1)} for $l=0$, $C_{n,-1,x}(-1)=C_{n,0,x}(-1)+\E\left[\alpha_{X,x}N \indic_{\{S_{n}+W_{X}+N=n\}}\right]$, thereby, with \eqref{equiv1} and \eqref{firstlimBn,lpos}, we deduce that:
		{\small\begin{align}\label{scdlimBn,lpos}
			\lim_{n\to+\infty}\dfrac{\E\left[\alpha_{X,x}N \indic_{\{S_{n}+W_{X}+N=n\}}\right]}{\E\left[N\indic_{\{S_n+N=n\}}\right]}&=\lim_{n\to+\infty}\dfrac{C_{n,-1,x}(-1)-C_{n,0,x}(-1)}{\E\left[N\indic_{\{S_n+N=n\}}\right]}
			=\E[\alpha_{X,x}].
		\end{align}}
		\smallbreak
		Let $l\geq1$, according to Lemma \ref{limSnN2}, \eqref{expCn,l(-1)}, \eqref{firstlimBn,lpos} and \eqref{scdlimBn,lpos} we obtain:
		\begin{align}\label{thirdlimBn,lpos}
			\dfrac{C_{n,l,x}(-1)}{\E\left[N\indic_{\{S_n+N=n\}}\right]}
			&\underset{n\to+\infty}{\longrightarrow} \xi_X-\overset{l-1}{\underset{j=0}{\sum}}\bp(j)\alpha_{j,x}(j-l) -l\E[\alpha_{X,x}].
		\end{align}
		Now, using successively \eqref{equalidad},  Lemma \ref{limCn,k} and \eqref{thirdlimBn,lpos}, we obtain:
		{\small\begin{align*}
			\lim_{n\to+\infty} B_{n,l}&(x) = \lim_{n\to+\infty} \dfrac{C_{n,l,x}(-1)}{\E\left[N\indic_{\{S_n+N=n\}}\right]} \dfrac{C_{n,l,x}(l)}{C_{n,l,x}(-1)}= \lim_{n\to+\infty} \dfrac{C_{n,l,x}(-1)}{\E\left[N\indic_{\{S_n+N=n\}}\right]}\\
			&=\E\left[\alpha_{X,x}(X-l)_+\right]+(1-\mu(\bp))\left(\ell_\bq\eta_x(\bt)+\left(1-\ell_\bq\right)\left(1-\eta_x(\bt)\right)\right).
		\end{align*}}
	\end{proof}
	If $\bp^{(1)}$ is periodic, then \ref{limBn,lpos}~ still holds along the sub-sequence for which the denominators are positive when we use \ref{limSnN2} and the expression \eqref{limCn,l(k)}.

		\section{Prospect}
			Unfortunately, we do not have a result in the general case where $\bp$ is sub-critical and non-generic. However, in this case, we conjecture the following results:
		\begin{conj}\label{conject}
			Let $\tau^{*}$ be a sub-critical and non-generic MGW with offspring distribution $\bp$ satisfying \eqref{condp}, $\rho_l(\bp,\bq)=1$, and mark function $\bq$ satisfying \eqref{condq}. 
			We have that:
			\begin{equation}
				\mathrm{dist}(\tau^{*}|M(\tau^*)=n)\underset{n\to+\infty}{\longrightarrow} \mathrm{dist}(\tau_C^{*}(\bp,\bq)).
			\end{equation}
		\end{conj}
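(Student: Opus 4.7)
The plan is to follow the same overall strategy as Theorem \ref{cvgcondtree}. By Lemma \ref{critconvT(t,x,k)}, it suffices to prove, for every marked tree $\bt^*\in \T_0^*$, every $x\in\bt$ and every $k\in\N$, that $\p(\tau^*\in \T_+(\bt^*,x,k)\mid M(\tau^*)=n)$ converges to $\p(\tau_C^*(\bp,\bq)\in \T_+(\bt^*,x,k))$, together with $\p(\tau^*=\bt^*\mid M(\tau^*)=n)\to 0$ (automatic once $n>M(\bt^*)$). Using the marked tree decomposition of Sub-section \ref{Model} and the Dwass-type formula \eqref{probajmark}, this reduces, exactly as at the start of the proof of Theorem \ref{cvgcondtree}, to controlling the ratio $B_{n-m,l}(x)$ defined in \eqref{Bni}, and hence to sharp ratio asymptotics of the form $\E[N\indic_{\{S_n+W_j+N=n-k\}}]/\E[N\indic_{\{S_n+N=n\}}]$.

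In the present paper, the extra assumptions \eqref{probpalpha}, \eqref{limitq} and \eqref{defqlim1} are used exclusively to establish the appendix lemmas (Lemmas \ref{limSnN3}--\ref{limBn,lpos}): through Lemma \ref{prophatp}, \eqref{probpalpha} yields that $\bp^{(1)}$ is regularly varying with index $1+\alpha$ (and $\alpha>2$ enables Doney's local limit theorem \cite{doney_local_2001}, giving the ``one big jump'' asymptotic \eqref{expS_nequiv2}), while \eqref{limitq} and \eqref{defqlim1} provide the constant $\ell_\bq$ entering those limits. The first step of the proof plan is therefore to retain only $\rho_l(\bp,\bq)=1$ and to re-establish the conclusions of those appendix lemmas in the resulting subexponential setting. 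Proposition \ref{momexpo} already gives $\rho(\bp^{(1)})=1$, and Lemma \ref{prophatp} gives $\mu^{(1)}<1$; to replace \eqref{expS_nequiv2} one should appeal to general local limit theorems for subexponential random walks (in the spirit of Denisov--Dieker--Shneer or Armendar\'iz--Loulakis), and then redo the split over the intervals $J_1,J_2,J_3,J_4$ in the proofs of Lemmas \ref{limSnT1}, \ref{limSnN4} and \ref{limSnN5} with the appropriate subexponential tail bounds.

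The second, and probably most delicate, step is to dispense with \eqref{limitq}. Without the existence of $\ell_\bq$, the statements of Lemmas \ref{limBn,lneg} and \ref{limBn,lpos} have no meaning as currently written. The natural guess is that the constant $\ell_\bq$ should be replaced by the limit of $\bq$ evaluated at the location of the ``big jump'': conditionally on $S_n\ge n-O(1)$, the large deviation is carried by a single summand $Z^{(1)}_{i_0}$ of order $n(1-\mu^{(1)})$, and this summand corresponds to a node with $X^{(1)}\approx n(1-\mu^{(1)})/\E[N]$ children, so the mark attached to the condensation vertex of $\tau_C^*(\bp,\bq)$ should be Bernoulli of parameter $\lim_{n}\bq(\lfloor n(1-\mu^{(1)})/\E[N]\rfloor)$ whenever this limit exists. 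For general $\bq$, convergence should hold only along subsequences for which this limit exists, and the statement of the conjecture would then need to be interpreted with the limiting tree indexed by the corresponding subsequential value.

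The main obstacle is the local limit theorem and the associated ``big jump localization'' under the sole assumption $\rho_l(\bp,\bq)=1$. The appendix arguments rely heavily on the uniform bounds \eqref{corequiv} for regularly varying tails, in particular when splitting the sum over the four intervals $J_1,\ldots,J_4$ and when dominating via \eqref{borne1}, \eqref{borne2}; these estimates fail in the general subexponential case. A suitable general form of the principle of a single big jump, giving enough uniformity to carry through the dominated convergence arguments of Lemmas \ref{limSnN4} and \ref{limSnN5}, would be required. Once such a result is available, the remaining combinatorial identities and decompositions of the proof of Theorem \ref{cvgcondtree} should transfer essentially verbatim, yielding the conjectured limit $\tau_C^*(\bp,\bq)$.
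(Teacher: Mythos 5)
The statement you are attempting is explicitly a \emph{conjecture} in the paper: Section 6 opens with ``Unfortunately, we do not have a result in the general case,'' and immediately after the conjecture the authors write that one ``need[s] to have the equivalents for the randomly stopped sum like in \cite{bloznelis_local_2019} but in a general case.'' So there is no proof in the paper to compare against, and your proposal is not a proof either --- you say so yourself. What you have done is diagnose, correctly and rather precisely, why the proof of Theorem~\ref{cvgcondtree} does not extend: the appendix lemmas lean on the regularly varying form \eqref{probpalpha} of $\bp$ (hence of $\bp^{(1)}$ via Lemma~\ref{prophatp}), which feeds Doney's local limit \eqref{expS_nequiv2} and the Potter-type bounds of Corollary~\ref{RV} used to control the splits over $J_1,\ldots,J_4$ in Lemmas~\ref{limSnT1}, \ref{limSnN4}, \ref{limSnN5}. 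Replacing these by a general subexponential local limit theorem with enough uniformity for the dominated-convergence steps is exactly the missing ingredient the paper itself points to. Up to this point, your route is the paper's route; there is no genuine disagreement, only the shared gap.

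Your additional remark about $\ell_\bq$ is substantive and worth flagging: as written, Conjecture~\ref{conject} drops \eqref{limitq}, yet the limit object $\tau_C^*(\bp,\bq)$ needs $\bq(\infty)$ to be specified (see $\alpha_{\infty,x}$ in Proposition~\ref{mfc}), and Lemmas~\ref{limBn,lneg}--\ref{limBn,lpos} are stated in terms of $\ell_\bq$. Your ``big-jump localization'' heuristic --- that the mark on the condensation vertex should be governed by $\lim_n \bq(\lfloor n(1-\mu^{(1)})/\E[N]\rfloor)$, so that without \eqref{limitq} one can at best hope for subsequential limits --- is a sensible reading and suggests the conjecture, as phrased, is slightly underspecified. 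In short: your proposal is consistent with the paper's own (non-)treatment, identifies the same hard obstacle, and adds a pertinent caveat; but like the paper, it does not prove the statement.
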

		To obtain this result we need to have the equivalents for the randomly stopped sum like in \cite{bloznelis_local_2019} but in a general case. Thanks to this result we obtain:
		\begin{cor}\label{corsubcritnongen}
			Assume that $\bp$ satisfies \eqref{condp}, $\mu(\bp)<1$, is non-generic, $1\le\rho(\bp)=\rho_l(\bp,\bq)=\theta_s$ satisfies \eqref{condnongensup}, $\bp_{\rho(\bp)}$ has a moment of order $2$, and $\bq$ satisfies \eqref{condq}. We have:
			\begin{equation*}
				\lim_{n\to+\infty}\mathrm{dist}(\tau^{*}|M(\tau^*)=n)= \mathrm{dist}(\tau_C^{*}(\bp_{\rho(\bp)},\bq_{\rho(\bp)})),
			\end{equation*}	
			\[ \lim_{n\to +\infty}\mathrm{dist}(\tau| M(\tau)=n)=\mathrm{dist}(\tau_C(\bp_{\rho(\bp)})).\]
		\end{cor}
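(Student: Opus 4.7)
The plan is to reduce the claim to Conjecture~\ref{conject} applied to the biased pair $(\bp_{\theta_s},\bq_{\theta_s})$, mimicking exactly the strategy used for the generic sub-critical case in Corollary~\ref{corconvdist} but with $\theta_c$ replaced by $\theta_s=\rho(\bp)$. Concretely, since $\bp$ is non-generic sub-critical, Lemma~\ref{exthetacrit} guarantees that $\theta_s\in I$, so Proposition~\ref{loicondid} gives the exact identity
\[
\mathrm{dist}(\tau^{*}\mid M(\tau^*)=n)=\mathrm{dist}(\tau^{*}_{\theta_s}\mid M(\tau^{*}_{\theta_s})=n),
\]
and it is therefore enough to prove the convergence for $\tau^{*}_{\theta_s}$.

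The second step is to verify that $(\bp_{\theta_s},\bq_{\theta_s})$ meets every hypothesis of Conjecture~\ref{conject}. For \eqref{condp}, the first two conditions are established in Subsection~\ref{resultcond}, and the moment of order two is exactly the assumption $\bp_{\rho(\bp)}\in L^2$ of the corollary. For \eqref{condq} for $\bq_{\theta_s}$, I would use \eqref{condqthetabis} which shows that $\bp_{\theta_s}(k)\bq_{\theta_s}(k)>0\Leftrightarrow \bp(k)\bq(k)>0$. For sub-criticality of $\bp_{\theta_s}$, the assumption \eqref{condnongensup} together with \eqref{expmuptheta-1} gives $\mu(\bp_{\theta_s})<1$. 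To pin down the radii of convergence, I would use formula~\eqref{gtheta} and the analogue $l_{\theta_s}(z)=\theta_s^{-1}l(z\theta_s)$ obtained from \eqref{condqthetater}, which yield
\[
\rho(\bp_{\theta_s})=\frac{\rho(\bp)}{\theta_s}=1 \quad\text{and}\quad \rho_{l}(\bp_{\theta_s},\bq_{\theta_s})=\frac{\rho_{l}(\bp,\bq)}{\theta_s}=1.
\]
Since $\rho(\bp_{\theta_s})=1$ and $\mu(\bp_{\theta_s})<1$, Lemma~\ref{proprnongencase}(1) forces $\bp_{\theta_s}$ to be non-generic, completing the verification.

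Applying Conjecture~\ref{conject} to $(\bp_{\theta_s},\bq_{\theta_s})$ then yields
\[
\mathrm{dist}(\tau^{*}_{\theta_s}\mid M(\tau^{*}_{\theta_s})=n)\underset{n\to+\infty}{\longrightarrow}\mathrm{dist}(\tau_C^{*}(\bp_{\theta_s},\bq_{\theta_s})),
\]
which, combined with the first step, gives the first conclusion. For the second (unmarked) convergence, I would reproduce verbatim the projection argument used in the proof of Corollary~\ref{corcondconvloc}: writing $\psi:\T^{*}\to\T$ for the forgetful map, the identity
\[
\p(\tau\in\T_+(\bt,x,k)\mid M(\tau)=n)=\sum_{\bt^{*}:\,\psi(\bt^{*})=\bt}\p(\tau^{*}\in\T_+(\bt^{*},x,k)\mid M(\tau^{*})=n)
\]
is a finite sum, so one may pass to the limit term by term using the first part and Lemma~\ref{critconvT(t,x,k)}. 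The limit on the right sums to $\p(\tau_C(\bp_{\theta_s})\in\T_+(\bt,x,k))$ by definition of the underlying condensation tree of a marked condensation tree.

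The main (and only) obstacle is entirely upstream: the whole argument hinges on Conjecture~\ref{conject}, for which, as noted in the Prospect section, one would need local-limit equivalents for randomly stopped sums valid beyond the regularly varying regime of Subsection~5.2. Every downstream step here is an algebraic manipulation with the change-of-measure identities \eqref{condqthetabis}--\eqref{condqthetater} and an application of the dichotomy of Lemma~\ref{exthetacrit}, so assuming the conjecture the corollary follows with no genuinely new difficulty.
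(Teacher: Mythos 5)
Your proposal is correct and takes essentially the same route as the paper: reduce to $(\bp_{\theta_s},\bq_{\theta_s})$ via the exact identity of Proposition~\ref{loicondid}, verify the hypotheses of Conjecture~\ref{conject} for the biased pair, and conclude. You are actually more careful than the paper's proof, which confines itself to the case $1<\rho(\bp)<+\infty$ and only checks $\rho_l(\bp_{\theta_s},\bq_{\theta_s})=1$ (leaving the second-moment check as an unproved claim and omitting the sub-criticality and non-genericity verifications as well as the projection argument for the unmarked statement). Your additional verifications — sub-criticality of $\bp_{\theta_s}$ from Lemma~\ref{exthetacrit}, non-genericity from $\rho(\bp_{\theta_s})=1$ via Lemma~\ref{proprnongencase}(1), the moment of order two from the stated hypothesis, \eqref{condq} for $\bq_{\theta_s}$ from \eqref{condqthetabis}, and the forgetful-map projection borrowed from Corollary~\ref{corcondconvloc} — are all accurate and fill in gaps that the paper's terser proof leaves implicit. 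Both arguments are, of course, conditional on Conjecture~\ref{conject}, as you correctly flag.
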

		
		\begin{proof}
			{We consider the case $1<\rho(\bp)<+\infty$.} 
			Thanks to Proposition \ref{loicondid},  $\mathrm{dist}(\tau^{*} | M(\tau^*)=n)=\mathrm{dist}(\tau_{\theta_s}^{*} | M(\tau^*_{\theta_s})=n)$, then, in order to apply Conjecture \ref{conject} to $(\bp_{\theta_s},\bq_{\theta_s})$ we have to check if we have $\rho_l(\bp_{\theta_s},\bq_{\theta_s})=1$ and $\E[X^2\rho(\bp)^X]$ is finite.\\
			
			For $x\ge0$, 
			we have: 
			\begin{align*}
				l_{\bp_{\theta_s},\bq_{\theta_s}}(x)&=\sum_{j\ge0}x^j\bp_{\theta_s}(j)(1-\bq_\theta(j))=\sum_{j\ge0}x^j{\theta_s}^{j-1}\bp(j)(1-\bq(j))\\
				&=\dfrac{1}{\theta_s}\E\left[(x\theta_s)^{X}\left(1-\bq(X)\right)\right].
			\end{align*}
			Consequently, $\rho_l(\bp_{\theta_s},\bq_{\theta_s})=\dfrac{\rho_l(\bp,\bq)}{\theta_s}=1$. 
		\end{proof}
		\begin{rqqq}
			Recall that:
			\[l(\theta)=\sum_{k\ge 0}\theta^k  \bp(k)(1-\bq(k)),\]
			and if $(\bq(k))_{k\ge0}$ admits a finite limit $\ell_\bq$, we have to distinguish two cases:
			\begin{itemize}
				\item $\ell_\bq\ne 1$ and clearly $\rho(\bp)=\rho_l(\bp,\bq)$; 
				\item otherwise, as \eqref{defqlim1}, there exists $\beta\ge 2$ such that:
				\[\bp(k)(1-\bq(k))=\frac{\bp(k)\Lq(k)}{k^\beta}, \]
				implying again that $\rho(\bp)=\rho_l(\bp,\bq)$.
			\end{itemize}
		\end{rqqq}
	\section*{Acknowledgements}
	
	The authors want to thank Patrick Maheux for several helpful discussions and his indications
	to obtain Lemma \ref{SV} or Corollary \ref{RV}.
	\bibliographystyle{abbrv}
	\bibliography{biblio21}
	
\end{document}